\title{On the Irreducibility and Distribution of Arithmetic Divisors}
\author{Robert Wilms}
\address{Robert Wilms\\
	Department of Mathematics and Computer Science\\
	University of Basel\\
	Spiegelgasse 1\\
	4051 Basel\\
	Switzerland}
\email{robert.wilms@unibas.ch}
\subjclass[2010]{14G40.}
\begin{document}
	\numberwithin{equation}{section}
	\newtheorem{Def}{Definition}
	\numberwithin{Def}{section}
	\newtheorem*{Con}{Condition}
	\newtheorem{Rem}[Def]{Remark}
	\newtheorem{Lem}[Def]{Lemma}
	\newtheorem{Que}[Def]{Question}
	\newtheorem{Cor}[Def]{Corollary}
	\newtheorem{Exam}[Def]{Example}
	\newtheorem{Thm}[Def]{Theorem}
	\newtheorem*{clm}{Claim}
	\newtheorem{Pro}[Def]{Proposition}
	\newcommand\gf[2]{\genfrac{}{}{0pt}{}{#1}{#2}}

\begin{abstract}
We introduce the notion of $\epsilon$-irreducibility for arithmetic cycles meaning that the degree of its analytic part is small compared to the degree of its irreducible classical part. We will show that for every $\epsilon>0$ any sufficiently high tensor power of an arithmetically ample hermitian line bundle can be represented by an $\epsilon$-irreducible arithmetic divisor. Our methods of proof also allow us to study the distribution of divisors of small sections of an arithmetically ample hermitian line bundle $\overline{\mathcal{L}}$. We will prove that for increasing tensor powers $\overline{\mathcal{L}}^{\otimes n}$ the normalized Dirac measures of these divisors almost always converge to $c_1(\overline{\mathcal{L}})$ in the weak sense. Using geometry of numbers we will deduce this result from a distribution result on divisors of random sections of positive line bundles in complex analysis. As an application, we will give a new equidistribution result for the zero sets of integer polynomials. Finally, we will express the arithmetic intersection number of arithmetically ample hermitian line bundles as a limit of classical geometric intersection numbers over the finite fibers.
\end{abstract}
\maketitle
\section{Introduction}
The property of irreducibility of a divisor or more general of a cycle is a very fundamental notion in algebraic geometry. While it is most often useful to restrict from general cycles to irreducible cycles, there are also results and constructions only working for irreducible cycles. For example, Newton--Okounkov bodies \cite{Oko96, Oko03, LM09, KK12} rely on the choice of a flag of irreducible subvarieties and they have recently been shown to be very useful in intersection theory, see for example \cite{JL21}. 

In arithmetic intersection theory one studies an analogue of classical intersection theory on varieties flat and projective over $\mathrm{Spec}(\mathbb{Z})$, where additional analytic information playing the role of a hypothetical fiber at infinity. One aim of this paper is to introduce and study a new notion of irreducibility for (horizontal) arithmetic cycles (of Green type) as the naive definition does not make sense. Our main result can be seen as an analogue of Bertini's theorem. It states that for any $\epsilon>0$ and any arithmetically ample hermitian line bundle $\overline{\mathcal{L}}$ a sufficiently high tensor power $\overline{\mathcal{L}}^{\otimes n}$ can be represented by an arithmetic divisor which is irreducible up to $\epsilon$. For a more precise formulation we refer to Section \ref{sec_intro-irreducibility}.

The other aim of this paper is to study the distribution of the divisors of small sections of any arithmetically ample hermitian line bundle $\overline{\mathcal{L}}$. This turns out as an application of our methods of proof of our main result. We will show that the normalized Dirac measures of the divisors in a sequence of sections of $\overline{\mathcal{L}}^{\otimes n}$ almost always converge to $c_1(\overline{\mathcal{L}})$ in the weak sense for $n\to\infty$. As applications, we will discuss an analogue of the generalized Bogomolov conjecture for the set of global sections and we will study the distribution of zero sets of integer polynomials in a sequence. Finally, we will show how to compute arithmetic intersection numbers of arithmetically ample hermitian line bundles as a limit of classical geometric intersection numbers over the finite fibers.

We will explain our result in more detail in several subsections.

\subsection{Irreducibility of Arithmetic Cycles}\label{sec_intro-irreducibility}
By definition an arithmetic cycle (of Green type) $\overline{\mathcal{Z}}=(\mathcal{Z},g_{\mathcal{Z}})$ consists of a cycle $\mathcal{Z}$ of an arithmetic variety $\mathcal{X}$ defined over $\mathrm{Spec}~\mathbb{Z}$ and a Green current $g_{\mathcal{Z}}$ on $\mathcal{X}_\mathbb{C}$. It is called effective, if $\mathcal{Z}$ is effective and $g_{\mathcal{Z}}\ge 0$ on $\mathcal{X}_\mathbb{C}\setminus\mathrm{Supp}(\mathcal{Z}_\mathbb{C})$. Naively, one would define an arithmetic cycle $\overline{\mathcal{Z}}$ to be irreducible if for any decomposition $\overline{\mathcal{Z}}=\overline{\mathcal{Z}}_1+\overline{\mathcal{Z}}_2$ for two effective arithmetic cycles $\overline{\mathcal{Z}}_1$ and $\overline{\mathcal{Z}}_2$ we have $\overline{\mathcal{Z}}_1=0$ or $\overline{\mathcal{Z}}_2=0$. But with this notion every irreducible arithmetic cycle will lie in a fiber over a closed point $p\in\mathrm{Spec}~\mathbb{Z}$. Indeed, if $(\mathcal{D},g_\mathcal{D})$ is an effective arithmetic divisor such that $\mathcal{D}$ is horizontal, then $g_\mathcal{D}$ is a non-trivial Green function on $\mathcal{X}(\mathbb{C})$ and we can always find a non-zero, non-negative $C^\infty$-function $f$ on $\mathcal{X}(\mathbb{C})$ such that $g_\mathcal{D}=g'_\mathcal{D}+f$ for another non-negative Green function $g'_\mathcal{D}$ associated to the divisor $\mathcal{D}$. Thus $(\mathcal{D},g_\mathcal{D})=(\mathcal{D},g'_\mathcal{D})+(0,f)$ is not irreducible. As a consequence an analogue of Bertini's theorem is not true: An arithmetically ample hermitian line bundle is not representable by an irreducible arithmetic divisor. To avoid this issue, we introduce a new notion of irreducibility for arithmetic cycles, which measures how far an arithmetic cycle is from being irreducible in the above sense.

Before we present our definition, we clarify some notions. By a \emph{generically smooth projective arithmetic variety} $\mathcal{X}$ we mean an integral scheme which is projective, separated, flat and of finite type over $\mathrm{Spec}~\mathbb{Z}$ and such that $\mathcal{X}_{\mathbb{Q}}$ is smooth. The group of arithmetic cycles of dimension $p$ on $\mathcal{X}$ is denoted by $\widehat{Z}_p(\mathcal{X})$. Its quotient by rational equivalence is denoted by $\widehat{\mathrm{CH}}_p(\mathcal{X})$. We refer to Section \ref{sec_intersection} for more details on these groups.
\begin{Def}
	Let $\mathcal{X}$ be any generically smooth projective arithmetic variety.
	\begin{enumerate}[(i)]
		\item
		Let $(\mathcal{Z},g_{\mathcal{Z}})\in \widehat{Z}_{p}(\mathcal{X})$ be any effective arithmetic cycle on $\mathcal{X}$. For any positive real number $\epsilon>0$ and any arithmetically ample hermitian line bundle $\overline{\mathcal{M}}$ we say that $(\mathcal{Z},g_{\mathcal{Z}})$ is \emph{$(\epsilon,\overline{\mathcal{M}})$-irreducible} if $\mathcal{Z}$ is irreducible, we have
		\begin{align}\label{equ_air}
			\int_{\mathcal{X}(\mathbb{C})}g_{\mathcal{Z}}\wedge c_1(\overline{\mathcal{M}})^{p}< \epsilon\cdot (\overline{\mathcal{M}}^p\cdot(\mathcal{Z},0))
		\end{align}
		and $\frac{i}{2\pi}\partial\overline{\partial}g_{\mathcal{Z}}+\delta_{\mathcal{Z}(\mathbb{C})}$ is represented by a semi-positive form.
		\item We say that a class $\alpha\in\widehat{\mathrm{CH}}_p(\mathcal{X})$ in the arithmetic Chow group of $\mathcal{X}$ is \emph{air (asymptotically irreducibly representable)} if for any positive real number $\epsilon>0$ and any arithmetically ample hermitian line bundle $\overline{\mathcal{M}}$ there exists an $n\in\mathbb{Z}_{\ge 1}$ such that $n\cdot \alpha$ can be represented by an $(\epsilon,\overline{\mathcal{M}})$-irreducible arithmetic cycle $(\mathcal{Z},g_{\mathcal{Z}})$. 
		We call $\alpha$ \emph{generically smoothly air} if the arithmetic cycle $(\mathcal{Z},g_{\mathcal{Z}})$ can always be chosen such that $\mathcal{Z}$ is horizontal and $\mathcal{Z}_\mathbb{Q}$ is smooth.
	\end{enumerate}
\end{Def}
	
In other words, $(\mathcal{Z},g_\mathcal{Z})$ is $(\epsilon,\overline{\mathcal{M}})$-irreducible, if the $\overline{\mathcal{M}}$-degree of its part in the fiber at $\infty$ is sufficiently small compared to the $\overline{\mathcal{M}}$-degree of its irreducible classical part. One may ask, why we do not just bound the integral by $\epsilon$. The answer is, that this would give a too strong condition to prove the following theorem, which can be considered as an analogue of Bertini's theorem in this setting. For an even more Bertini-like result we refer to Remark \ref{rem_dimension1} (ii).
\begin{Thm}\label{thm_air}
	Let $\mathcal{X}$ be any generically smooth projective arithmetic variety of dimension $d\ge 2$. Every arithmetically ample hermitian line bundle $\overline{\mathcal{L}}$ on $\mathcal{X}$ is generically smoothly air. Moreover, if $\alpha\in\widehat{\mathrm{CH}}_p(\mathcal{X})$ is any generically smoothly air class for some $p\ge 2$, then also the class $\overline{\mathcal{L}}\cdot \alpha\in\widehat{\mathrm{CH}}_{p-1}(\mathcal{X})$ is generically smoothly air.
\end{Thm}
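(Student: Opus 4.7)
The plan is to represent $\overline{\mathcal{L}}^{\otimes n}$ by the arithmetic divisor $\widehat{\mathrm{div}}(s_n)=(\mathrm{div}(s_n),-\log\|s_n\|^2)$ of a carefully chosen small section $s_n\in H^0(\mathcal{X},\mathcal{L}^{\otimes n})$, after fixing a positive metric on $\overline{\mathcal{L}}$ so that $\omega:=c_1(\overline{\mathcal{L}})\ge 0$. Arithmetic Hilbert--Samuel combined with Minkowski's theorem supplies, for $n\gg 0$, an abundance of such sections with $\|s_n\|_\infty\le 1$. Smoothness and irreducibility of $\mathrm{div}(s_n)_\mathbb{Q}$ hold on a Zariski-open subset of $H^0(\mathcal{X}_\mathbb{Q},\mathcal{L}_\mathbb{Q}^{\otimes n})$ by classical Bertini (using $d\ge 2$), and horizontality is equally a generic condition. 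The non-trivial content of the first statement is then the inequality \eqref{equ_air}: since $(\overline{\mathcal{M}}^{d-1}\cdot\widehat{\mathrm{div}}(s_n))$ grows linearly in $n$, it suffices to show that $\int_{\mathcal{X}(\mathbb{C})}(-\log\|s_n\|^2)\,c_1(\overline{\mathcal{M}})^{d-1}=o(n)$, i.e.\ that the $\omega$-plurisubharmonic potential $\varphi_n:=-\tfrac{1}{n}\log\|s_n\|^2\ge 0$ tends to $0$ in $L^1(c_1(\overline{\mathcal{M}})^{d-1})$.

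To secure such an $s_n$ I would invoke the equidistribution theorem for divisors of small sections announced in the abstract: for generic sequences $(s_n)$ drawn from $H^0(\mathcal{X},\mathcal{L}^{\otimes n})\cap\{\|\cdot\|_\infty\le 1\}$ one has $\tfrac{1}{n}[\mathrm{div}(s_n)]\to\omega$ weakly as currents. Via the Poincaré--Lelong identity $\tfrac{i}{2\pi}\partial\overline{\partial}\varphi_n+\omega=\tfrac{1}{n}[\mathrm{div}(s_n)]$ this reads $\tfrac{i}{2\pi}\partial\overline{\partial}\varphi_n\to 0$ weakly; combined with $\varphi_n\ge 0$ and the $L^1$-compactness of suitably normalized $\omega$-psh functions, every weak subsequential limit is a constant, and compatibility of the arithmetic intersection $\overline{\mathcal{L}}^{d-1}\cdot\widehat{\mathrm{div}}(s_n)=n\,(\overline{\mathcal{L}}^d)$ with the equidistribution forces this constant to be $0$. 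Hence $\varphi_n\to 0$ in $L^1$ against any smooth positive top form, in particular $c_1(\overline{\mathcal{M}})^{d-1}$. Intersecting the open Bertini/horizontality locus with the full-measure set of equidistributing sequences then yields, for every $\epsilon>0$, an $n$ and an $s_n$ providing the required $(\epsilon,\overline{\mathcal{M}})$-irreducible representative of $n\cdot[\overline{\mathcal{L}}]$.

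For the inductive claim, given a generically smoothly air $\alpha$ of dimension $p\ge 2$ and $\epsilon>0$, pick a representative $(\mathcal{Y},g_\mathcal{Y})$ of $n_1\alpha$ with $\mathcal{Y}_\mathbb{Q}$ smooth irreducible that is $(\epsilon',\overline{\mathcal{M}})$-irreducible for $\epsilon'$ to be chosen. Then $\mathcal{Y}$ itself is a generically smooth projective arithmetic variety of dimension $p\ge 2$ carrying the arithmetically ample $\overline{\mathcal{L}}|_\mathcal{Y}$, so the first part applied to $(\mathcal{Y},\overline{\mathcal{L}}|_\mathcal{Y})$ produces a section $\sigma\in H^0(\mathcal{Y},\mathcal{L}|_\mathcal{Y}^{\otimes n_2})$ whose arithmetic divisor on $\mathcal{Y}$ is $(\epsilon'',\overline{\mathcal{M}}|_\mathcal{Y})$-irreducible and generically smooth. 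Expanding $\overline{\mathcal{L}}\cdot(\mathcal{Y},g_\mathcal{Y})$ via the Gillet--Soul\'e star product, one obtains a representative $(\mathrm{div}(\sigma),g)$ with $g$ the sum of $(-\log\|\sigma\|^2)\,\omega_\mathcal{Y}$ and a term involving $g_\mathcal{Y}\wedge[\mathrm{div}(\sigma)]$: the first integrates against $c_1(\overline{\mathcal{M}})^{p-1}$ to a quantity controlled by the $\sigma$-inequality, while the second, after normalizing by $n_2$ and using equidistribution of $\tfrac{1}{n_2}[\mathrm{div}(\sigma)]$ on $\mathcal{Y}$, is controlled by the $(\mathcal{Y},g_\mathcal{Y})$-inequality. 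Taking $\epsilon',\epsilon''$ sufficiently small relative to $\epsilon$ produces the desired representative of $n_1n_2\cdot\overline{\mathcal{L}}\cdot\alpha$.

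The main obstacle is the $L^1$-decay of $\varphi_n$: weak convergence of $\tfrac{i}{2\pi}\partial\overline{\partial}\varphi_n$ to zero does not by itself force $\varphi_n\to 0$, so a normalization must be controlled in parallel, here supplied by the sign condition $\varphi_n\ge 0$ together with a matching upper bound on $\int\varphi_n\,\omega^{d-1}$ coming from arithmetic Hilbert--Samuel applied to $\overline{\mathcal{L}}^{d-1}\cdot\widehat{\mathrm{div}}(s_n)$. Carrying this through with a test form $c_1(\overline{\mathcal{M}})^{d-1}$ possibly different from $\omega^{d-1}$ is the most delicate point, and it is precisely what forces the formulation of \eqref{equ_air} with an $\epsilon$ multiplying the varying quantity $(\overline{\mathcal{M}}^p\cdot(\mathcal{Z},0))$ rather than a constant absolute bound.
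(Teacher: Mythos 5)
Your overall architecture matches the paper's: reduce the second assertion to finding, on an irreducible generically smooth representative of a multiple of $\alpha$, a small section of the restricted line bundle with irreducible, generically smooth divisor and $o(n)$ integral of $-\log|s|$, then take the product (\ref{equ_intersection-definition}); the paper obtains the first assertion simply as the case $\alpha=(\mathcal{X},0)$ of the second. However, two steps of your argument have genuine gaps. First, \emph{classical} Bertini on $H^0(\mathcal{X}_{\mathbb{Q}},\mathcal{L}_{\mathbb{Q}}^{\otimes n})$ does not produce the section you need: the sections at your disposal form the finite set $\widehat{H}^0(\mathcal{X},\overline{\mathcal{L}}^{\otimes n})$ of small lattice points, and a dense Zariski-open condition on the generic fibre says nothing about whether asymptotically almost all (or even one) of these lattice points satisfies it; moreover irreducibility of the arithmetic divisor $\mathrm{div}(s)$ requires the absence of vertical components, which is not a condition on $s_{\mathbb{Q}}$ at all. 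What is needed is Charles's arithmetic Bertini theorem \cite{Cha21}, asserting that the \emph{density} of small sections with $\mathrm{div}(s)$ irreducible and $\mathrm{div}(s)_{\mathbb{Q}}$ smooth tends to $1$, combined with Proposition \ref{pro_positive-density}, which extracts from any positive-density family of small sections a sequence with vanishing normalized integral.

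Second, your route to the $L^1$-decay of $\varphi_n=-\tfrac{1}{n}\log|s_n|^2$ runs the logic backwards. In the paper the primary statement is precisely this $L^1$-decay (Theorem \ref{thm_equidistribution}, proved via Bayraktar--Coman--Marinescu, the perturbation result Proposition \ref{pro_integral}, and geometry of numbers), and the weak convergence $\tfrac{1}{n}[\mathrm{div}(s_n)]\to\omega$ is \emph{deduced} from it in Lemma \ref{lem_equidistribution}. Going the other way requires, as you acknowledge, a matching upper bound on $\int_{\mathcal{X}(\mathbb{C})}\varphi_n\,\omega^{d-1}$; but by Lemma \ref{lem_intersection-restriction} that bound is equivalent to $h_{\overline{\mathcal{L}}}(\mathrm{div}(s_n))\ge h_{\overline{\mathcal{L}}}(\mathcal{X})-o(1)$ for the particular sections chosen, which is Corollary \ref{cor_height-converges}(i) and itself rests on the full equidistribution machinery; the arithmetic Hilbert--Samuel formula alone does not supply it. If you instead quote Theorem \ref{thm_equidistribution} in its actual $L^1$-average form, the plurisubharmonic-compactness detour becomes superfluous and only the extraction inside the Bertini locus (the point above) remains. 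Finally, in the inductive step your $\sigma$ lives on $\mathcal{Y}$ and must be lifted to $H^0(\mathcal{X},\mathcal{L}^{\otimes n_2})$ (using surjectivity of restriction for $n_2\gg 0$) before the product can be formed; its Green current is $[-\log|\tilde{\sigma}|^2]_{\mathcal{Y}(\mathbb{C})}+c_1(\overline{\mathcal{L}}^{\otimes n_2})\wedge g_{\mathcal{Y}}$ rather than involving $g_{\mathcal{Y}}\wedge\delta_{\mathrm{div}(\sigma)}$, and with that correction both terms are controlled exactly as you indicate.
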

To prove the theorem, we have to show that the integral in (\ref{equ_air}) is asymptotically small. We will use an equidistribution result by Bayraktar, Coman and Marinescu \cite{BCM20} on the set of holomorphic sections of the tensor powers of $\overline{\mathcal{L}}(\mathbb{C})$. In particular, they studied the asymptotic behavior of the integral in question in the average over all sections with respect to a probability measure satisfying a certain condition. Using some geometry of numbers we will deduce from their result an equidistribution result on the discrete subset $H^0(\mathcal{X},\mathcal{L})$ in $H^0(\mathcal{X}(\mathbb{C}),\mathcal{L}(\mathbb{C}))$.
To state our result, we denote 
$$\widehat{H}_{\le r}^0(\mathcal{X},\overline{\mathcal{L}})=\{s\in H^0(\mathcal{X},\mathcal{L})~|~\|s\|_{\sup}\le r\}$$
for the global sections of $\sup$-norm at most $r$, where $\|s\|_{\sup}=\sup_{x\in\mathcal{X}(\mathbb{C})}|s(x)|_{\overline{\mathcal{L}}}$. We also write $\widehat{H}^0(\mathcal{X},\overline{\mathcal{L}})=\widehat{H}_{\le 1}^0(\mathcal{X},\overline{\mathcal{L}})$.
Then our equidistribution result can be expressed in the following way.
\begin{Thm}\label{thm_equidistribution}
	Let $\mathcal{X}$ be any generically smooth projective arithmetic variety and $\mathcal{Y}\subseteq\mathcal{X}$ any generically smooth arithmetic subvariety of dimension $e\ge 1$. Let $\overline{\mathcal{L}}$ be any arithmetically ample hermitian line bundle on $\mathcal{X}$ and $(r_p)_{p\in \mathbb{Z}_{\ge 1}}$ any sequence of positive real numbers satisfying $\lim_{p\to \infty}r_p^{1/p}=\tau\in[1,\infty)$. If we write $$S_p=\left\{\left. s\in \widehat{H}^0_{\le r_{p}}\left(\mathcal{X},\overline{\mathcal{L}}^{\otimes p}\right)~\right|~s|_{\mathcal{Y}}\neq 0\right\},$$
	then it holds
	$$\lim_{p\to \infty} \frac{1}{\# S_p}\sum_{s\in S_p}\frac{1}{p}\int_{\mathcal{Y}(\mathbb{C})}\left|\log|s|_{\overline{\mathcal{L}}^{\otimes p}}-p\log\tau\right|c_1(\overline{\mathcal{L}})^{e-1}=0.$$
\end{Thm}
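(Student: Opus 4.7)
\emph{Proof plan.} The plan is to derive the statement from the $L^1$-equidistribution theorem for zero currents of random holomorphic sections of \cite{BCM20}, and to translate it from a continuous average over the complex vector space of sections to the discrete counting average over integer sections by means of geometry of numbers. Write $V_p = H^0(\mathcal{X}(\mathbb{C}),\mathcal{L}(\mathbb{C})^{\otimes p})$ and $\Lambda_p = H^0(\mathcal{X},\mathcal{L}^{\otimes p})$, so that $\Lambda_p$ is a $\mathbb{Z}$-lattice of rank $N_p = \dim_{\mathbb{C}} V_p$ inside the underlying real space of $V_p$.

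First, I reduce to $\tau=1$: since $\frac{1}{p}|\log r_p - p\log\tau|\to 0$, rescaling $\tilde s = r_p^{-1}s$ sends $S_p$ into the unit sup-ball $\mathcal{B}_p = \{t\in V_p : \|t\|_{\sup}\le 1\}$ and by the triangle inequality it suffices to prove
\[
\lim_{p\to\infty}\frac{1}{\#S_p}\sum_{s\in S_p}\frac{1}{p}\int_{\mathcal{Y}(\mathbb{C})}\bigl|\log|\tilde s|_{\overline{\mathcal{L}}^{\otimes p}}\bigr|\,c_1(\overline{\mathcal{L}})^{e-1}=0.
\]
Next I apply \cite{BCM20}: their theorem states that for a sequence of \emph{moderate} probability measures $\sigma_p$ on $V_p$ the functions $\frac{1}{p}\log|s|_{\overline{\mathcal{L}}^{\otimes p}}$ tend to $0$ in $L^1(\mathcal{Y}(\mathbb{C}),c_1(\overline{\mathcal{L}})^{e-1})$ when averaged against $\sigma_p$. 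I choose $\sigma_p$ to be the normalized Lebesgue measure on $\mathcal{B}_p$; the moderate hypothesis follows from a Bernstein--Markov comparison between the sup-norm and the $L^2$-norm on $V_p$ associated to a smooth volume form, which is available because $\overline{\mathcal{L}}$ is arithmetically ample. This yields the analogue of the theorem with the sum over $S_p$ replaced by the integral against $\sigma_p$.

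It remains to transfer this continuous average into the counting average. The rescaled lattice $r_p^{-1}\Lambda_p$ has mesh tending to $0$ inside $\mathcal{B}_p$, so a Minkowski lattice-point count produces both the asymptotics of $\#S_p$ and the weak convergence of the empirical measure $\mu_p = \frac{1}{\#S_p}\sum_{s\in S_p}\delta_{r_p^{-1}s}$ to $\sigma_p$. For a bounded continuous test function on $\mathcal{B}_p$ the continuous-to-discrete comparison would then be immediate.

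The main obstacle is the singularity of the integrand $F_p(s) = \frac{1}{p}\int_{\mathcal{Y}(\mathbb{C})}|\log|s|_{\overline{\mathcal{L}}^{\otimes p}}|\,c_1(\overline{\mathcal{L}})^{e-1}$ near sections that vanish on $\mathcal{Y}(\mathbb{C})$, which blocks a naive Riemann-sum approximation. I would handle it by truncation at a level $T = T_p\to\infty$: on the sublevel set $\{F_p\le T\}$ the integrand is uniformly continuous away from a small exceptional set and the empirical-to-continuous transfer is routine, while for the tail $\{F_p>T\}$ the continuous setting admits a uniform Skoda-type estimate $\sigma_p(\{F_p>T\})\le Ce^{-\alpha T}$ coming from the pluripotential theory of ample line bundles. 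The corresponding discrete estimate for $\mu_p$ is established by applying Minkowski's second theorem to the convex symmetric slabs $\{s\in\mathcal{B}_p : |s(x)|_{\overline{\mathcal{L}}^{\otimes p}}<e^{-pT}\}$ pointwise in $x\in\mathcal{Y}(\mathbb{C})$ and integrating the resulting lattice-point bound against $c_1(\overline{\mathcal{L}})^{e-1}$. Choosing $T_p$ to grow slowly enough for both the truncation remainder and the Minkowski error to be negligible combines the pieces into the required limit.
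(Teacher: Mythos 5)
Your overall strategy is the same as the paper's: reduce to $\tau=1$, feed the normalized Lebesgue measure on a sup-norm ball into the equidistribution machinery of \cite{BCM20}, and then pass from the continuous average to the counting average over lattice sections by geometry of numbers. However, the passage from continuous to discrete is exactly where the real difficulty lies, and your proposal has two concrete gaps there. First, $H^0(\mathcal{X},\mathcal{L}^{\otimes p})$ has rank $N_p=\dim_{\mathbb{C}}V_p$, so it is a full lattice only in the totally real subspace $H^0(\mathcal{X},\mathcal{L}^{\otimes p})_{\mathbb{R}}$, which has real dimension $N_p$ inside the $2N_p$-dimensional space $V_p$. Your empirical measures $\mu_p$ are therefore supported on a Lebesgue-null real form of $V_p$ and cannot converge weakly to the normalized Lebesgue measure on the complex ball $\mathcal{B}_p$; the reference measure must itself be taken on the real form. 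Verifying Condition (B) of \cite{BCM20} for such real-form measures is not a routine Bernstein--Markov comparison: the paper devotes Proposition \ref{pro_conditionB} to it, decomposing $u=u_1+iu_2$ and bounding hyperplane slices of the convex body via Brunn's theorem.

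Second, the "routine" empirical-to-continuous transfer on the truncated set $\{F_p\le T\}$ presupposes a modulus of continuity for $F_p(s)=\frac1p\int_{\mathcal{Y}(\mathbb{C})}|\log|s||\,c_1(\overline{\mathcal{L}})^{e-1}$ at the scale of the lattice mesh, and this fails: a perturbation of $s$ of size $O(v^p)$ can change $\log|s(x)|$ by an unbounded amount wherever $|s(x)|$ is smaller than the mesh, and a bound $F_p(s)\le T$ does not prevent this. Establishing that the vanishing of $\frac1{p}\int_Y|\log|s_p||$ survives such exponentially small perturbations is precisely the paper's main technical input (Proposition \ref{pro_integral}), proved by induction on $\dim X$ using Stokes' theorem and a carefully chosen auxiliary hypersurface (Lemma \ref{lem_divisor}); nothing in your plan substitutes for it. Relatedly, your tail estimate is not in place: the uniform Skoda-type bound $\sigma_p(\{F_p>T\})\le Ce^{-\alpha T}$ with constants independent of $p$ is asserted but not justified (the Lelong numbers of $\frac1p\log|s|$ need not be uniformly small), and even granting it you would still need a uniform upper bound on $F_p(s)$ for nonzero \emph{lattice} sections to convert a small proportion of bad sections into a small contribution to the average; the paper obtains this from arithmetic ampleness via heights (Lemma \ref{lem_integral-upper-bound}, resting on Lemma \ref{lem_bound-of-integral-of-section}), an ingredient absent from your proposal.
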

This implies that for all $\epsilon>0$ the arithmetic divisors $\widehat{\mathrm{div}}(\overline{\mathcal{L}}^{\otimes p},s)$ are $(\epsilon,\overline{\mathcal{M}})$-irreducible for almost all $s\in \widehat{H}^0\left(\mathcal{X},\overline{\mathcal{L}}^{\otimes p}\right)$ for $p\to \infty$.
Theorem \ref{thm_air} follows from Theorem \ref{thm_equidistribution} using the fact, that the sections $s$ in $\widehat{H}^0\left(\mathcal{X},\overline{\mathcal{L}}^{\otimes p}\right)$ with irreducible and generically smooth divisors $\mathrm{div}(s)$ are dense for $p\to\infty$. This has recently been proved by Charles \cite{Cha21}.

One of the main difficulties in the proof of Theorem \ref{thm_equidistribution} is to conclude from the vanishing of the integral in the theorem for a sequence of sections in the real subspace  $H^0(\mathcal{X},\mathcal{L}^{\otimes n})_{\mathbb{R}}$ of $H^0(\mathcal{X}({\mathbb{C}}),\mathcal{L}({\mathbb{C}})^{\otimes n})$ to the vanishing of the integral for some sequence of sections in the lattice $H^0(\mathcal{X},\mathcal{L}^{\otimes n})$ in $H^0(\mathcal{X},\mathcal{L}^{\otimes n})_{\mathbb{R}}$. In Proposition \ref{pro_integral} we will prove a general result that if the integral vanishes for a sequence in $H^0(\mathcal{X}({\mathbb{C}}),\mathcal{L}({\mathbb{C}})^{\otimes n})$ then it also vanishes after a small change of this sequence. By a result of Moriwaki \cite{Mor11} we can always reach a sequence of lattice points by such a small change.

\subsection{Distribution of Divisors}
Although we presented Theorem \ref{thm_equidistribution} as a step in the proof of Theorem \ref{thm_air}, it is of its own interest and has many applications on the distribution of divisors of sections. We will discuss some equidistribution results for the divisors of sections in $\widehat{H}_{\le {r_p}}^0(\mathcal{X},\overline{\mathcal{L}}^{\otimes p})$. Let us first define the normalized height of any pure dimensional cycle $\mathcal{Z}$ of $\mathcal{X}$ by the arithmetic intersection number
$$h_{\overline{\mathcal{L}}}=\frac{\left(\overline{\mathcal{L}}|_{\mathcal{Z}}^{\dim \mathcal{Z}}\right)}{\mathcal{L}_{\mathbb{C}}|_{\mathcal{Z}_{\mathbb{C}}}^{\dim \mathcal{Z}_{\mathbb{C}}}}$$
of a fixed arithmetically ample hermitian line bundle $\overline{\mathcal{L}}$ restricted to $\mathcal{Z}$, where we set $(\mathcal{L}_{\mathbb{C}}|_{\mathcal{Z}_{\mathbb{C}}})^{\dim \mathcal{Z}_{\mathbb{C}}}=1$ if $\mathcal{Z}_{\mathbb{C}}=\emptyset$. We can relate the height of $\mathcal{X}$ to the height of any section $s\in H^0(\mathcal{X},\mathcal{L}^{\otimes p})\setminus \{0\}$ by the formula
\begin{align}\label{equ_induction-formula}
h_{\overline{\mathcal{L}}}(\mathcal{X})=h_{\overline{\mathcal{L}}}(\mathrm{div}(s)) -\frac{1}{p}\int_{\mathcal{X}(\mathbb{C})}\log |s|_{\overline{\mathcal{L}}^{\otimes p}}c_1(\overline{\mathcal{L}})^{d-1}.
\end{align}
We will discuss this formula in Section \ref{sec_restriction}. 
If the integral on the right hand side tends to $\frac{1}{p}\log\|s_p\|_{\sup}$ for a sequence of sections $s_p\in \widehat{H}^0_{\le r_p}(\mathcal{X},\overline{\mathcal{L}}^{\otimes p})$, one can show by Stokes' theorem that the normalized Dirac measure of $\mathrm{div}(s_p)({\mathbb{C}})$ weakly converges to $c_1(\overline{\mathcal{L}})$. Thus, we get the following proposition. 
\begin{Pro}\label{pro_equidistribution-individual}
	Let $\mathcal{X}$ be any generically smooth projective arithmetic variety of dimension $d\ge 2$. Let $\overline{\mathcal{L}}$ be any arithmetically ample hermitian line bundle on $\mathcal{X}$. For any sequence $(s_p)_{p\in\mathbb{Z}_{\ge 1}}$ of sections $s_p\in H^0(\mathcal{X},\overline{\mathcal{L}}^{\otimes p})$ satisfying 
	$$\lim_{p\to \infty}\left(h_{\overline{\mathcal{L}}}(\mathrm{div}(s_p))-\tfrac{1}{p}\log\|s_p\|_{\sup}\right)=h_{\overline{\mathcal{L}}}(\mathcal{X})$$
	and any $(d-2,d-2)$ $C^0$-form $\Phi$ on $\mathcal{X}(\mathbb{C})$ it holds
	$$\lim_{p\to \infty}\frac{1}{p}\int_{\mathrm{div}(s_p)(\mathbb{C})}\Phi=\int_{\mathcal{X}(\mathbb{C})}\Phi\wedge c_1(\overline{\mathcal{L}}).$$	
\end{Pro}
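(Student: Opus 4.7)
The plan is to translate the numerical hypothesis into an $L^1$-vanishing for the sequence of potentials
$u_p := \tfrac{1}{p}\log|s_p|_{\overline{\mathcal{L}}^{\otimes p}}-\tfrac{1}{p}\log\|s_p\|_{\sup}$,
and then convert this into weak convergence of the normalized currents $\tfrac{1}{p}\delta_{\mathrm{div}(s_p)(\mathbb{C})}$ via Poincar\'e-Lelong and integration by parts. Note that $u_p\le 0$ on $\mathcal{X}(\mathbb{C})$ by definition of the sup-norm. Substituting formula~(\ref{equ_induction-formula}) into the hypothesis and cancelling $h_{\overline{\mathcal{L}}}(\mathcal{X})$ yields
\[
\int_{\mathcal{X}(\mathbb{C})} u_p\, c_1(\overline{\mathcal{L}})^{d-1}\longrightarrow 0,
\]
and since $c_1(\overline{\mathcal{L}})^{d-1}$ is a strictly positive smooth top-degree form on the compact manifold $\mathcal{X}(\mathbb{C})$ (arithmetic ampleness provides a positive metric), the sign condition on $u_p$ upgrades this to $u_p\to 0$ in $L^1(\mathcal{X}(\mathbb{C}),c_1(\overline{\mathcal{L}})^{d-1})$.

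For a smooth $(d-2,d-2)$-form $\Phi$, Poincar\'e-Lelong (in a fixed normalization of $dd^c$) gives the current identity $\tfrac{1}{p}\delta_{\mathrm{div}(s_p)(\mathbb{C})}-c_1(\overline{\mathcal{L}})=dd^c u_p$, where one uses that $dd^c$ kills the constant $\tfrac{1}{p}\log\|s_p\|_{\sup}$. Pairing with $\Phi$ and integrating by parts on the compact boundaryless manifold $\mathcal{X}(\mathbb{C})$ (justified because $u_p$ is locally integrable and $dd^c u_p$ is an order-zero current) produces
\[
\tfrac{1}{p}\int_{\mathrm{div}(s_p)(\mathbb{C})}\Phi-\int_{\mathcal{X}(\mathbb{C})}\Phi\wedge c_1(\overline{\mathcal{L}})=\int_{\mathcal{X}(\mathbb{C})}u_p\cdot dd^c\Phi.
\]
Since $\Phi$ is smooth we have $|dd^c\Phi|\le C\, c_1(\overline{\mathcal{L}})^{d-1}$ pointwise for some constant $C$, so the right-hand side is bounded by $C\|u_p\|_{L^1(c_1(\overline{\mathcal{L}})^{d-1})}\to 0$.

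Finally, for a general $C^0$ form $\Phi$, I would approximate uniformly by smooth $(d-2,d-2)$-forms $\Phi_\epsilon$ with $\|\Phi-\Phi_\epsilon\|_\infty<\epsilon$, measured against a fixed K\"ahler reference form $\omega_0$. The error on the classical side is $O(\epsilon)$, while on the divisor side it is bounded by $\epsilon\cdot\tfrac{1}{p}\int_{\mathrm{div}(s_p)(\mathbb{C})}\omega_0^{d-2}$; a direct Poincar\'e-Lelong computation using $dd^c\omega_0^{d-2}=0$ (K\"ahlerness of $\omega_0$) shows that this mass equals $\int_{\mathcal{X}(\mathbb{C})} c_1(\overline{\mathcal{L}})\wedge\omega_0^{d-2}$, independently of $p$. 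Letting $p\to\infty$ first, using the smooth case for $\Phi_\epsilon$, and then $\epsilon\to 0$ completes the proof. The main obstacle is precisely this passage from smooth to $C^0$ test forms: the Stokes step is only direct for smooth $\Phi$, so one must establish a uniform mass bound on the divisors along the sequence, which is provided by the K\"ahler identity $dd^c\omega_0^{d-2}=0$.
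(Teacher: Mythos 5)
Your proof is correct and follows essentially the same route as the paper: the hypothesis is converted via Equation (\ref{equ_height}) into the vanishing of $\tfrac{1}{p}\int_{\mathcal{X}(\mathbb{C})}\bigl|\log|s_p|_{\overline{\mathcal{L}}^{\otimes p}}-\log\|s_p\|_{\sup}\bigr|\,c_1(\overline{\mathcal{L}})^{d-1}$, and the conclusion then follows from Poincar\'e--Lelong plus Stokes together with the bound $|\partial\overline{\partial}\Phi|\le A\,c_1(\overline{\mathcal{L}})^{d-1}$, which is exactly the content of Lemma \ref{lem_equidistribution}~(ii) that the paper invokes. The only cosmetic difference is the passage from smooth to $C^0$ test forms, where the paper sandwiches $\Phi$ between smooth forms while you use a uniform approximation plus the $p$-independent mass identity $\tfrac{1}{p}\int_{\mathrm{div}(s_p)(\mathbb{C})}\omega_0^{d-2}=\int_{\mathcal{X}(\mathbb{C})}c_1(\overline{\mathcal{L}})\wedge\omega_0^{d-2}$; both work.
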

As a consequence of Theorem \ref{thm_equidistribution} and Equation (\ref{equ_induction-formula}) we get that the condition in Proposition \ref{pro_equidistribution-individual} is generically satisfied in $\widehat{H}^0_{\le r_p}(\mathcal{X},\overline{\mathcal{L}}^{\otimes p})$ if $\lim_{p\to \infty} r_p^{1/p}=\tau\ge 1$. This can again be used to obtain an equidistribution result on the average over all sections in $\widehat{H}^0_{\le r_p}(\mathcal{X},\overline{\mathcal{L}}^{\otimes p})\setminus\{0\}$. By arguing with subsequences we may not even assume that $r_p^{1/p}$ converges.
\begin{Cor}\label{cor_height-converges}
		Let $\mathcal{X}$ be any generically smooth projective arithmetic variety and $\mathcal{Y}\subseteq \mathcal{X}$ any generically smooth arithmetic subvariety of dimension $e\ge 2$. Let $\overline{\mathcal{L}}$ be any arithmetically ample hermitian line bundle on $\mathcal{X}$ and $(r_p)_{p\in \mathbb{Z}_{\ge 1}}$ any sequence with $r_p\in \mathbb{R}_{>0}$. We write $$S_p=\left\{\left.s\in\widehat{H}^0_{\le r_{p}}\left(\mathcal{X},\overline{\mathcal{L}}^{\otimes p}\right)~\right|~s|_{\mathcal{Y}}\neq 0\right\}.$$
	\begin{enumerate}[(i)]
		\item If $r_p^{1/p}$ converges to a value $\tau\in [1,\infty)$, then it holds
		$$\lim_{p\to \infty} \frac{1}{\# S_p}\sum_{s\in S_p}\left| h_{\overline{\mathcal{L}}}(\mathcal{Y})+\log\tau-h_{\overline{\mathcal{L}}}(\mathrm{div}(s)\cdot\mathcal{Y})\right|=0.$$
		\item If $1\le\liminf_{p\to \infty}r_p^{1/p}\le \limsup_{p\to \infty}r_p^{1/p}<\infty$, then for every $(e-2,e-2)$ $C^0$-form $\Phi$ on $\mathcal{Y}(\mathbb{C})$ it holds
		$$\lim_{p\to \infty}\frac{1}{\# S_p}\sum_{s\in S_p}\left|\frac{1}{p}\int_{\mathrm{div}(s|_{\mathcal{Y}})(\mathbb{C})}\Phi-\int_{\mathcal{Y}(\mathbb{C})}\Phi\wedge c_1(\overline{\mathcal{L}})\right|=0.$$
		In particular, we have
		$$\lim_{p\to \infty}\frac{1}{\# S_p}\sum_{s\in S_p}\frac{1}{p}\int_{\mathrm{div}(s|_{\mathcal{Y}})(\mathbb{C})}\Phi=\int_{\mathcal{Y}(\mathbb{C})}\Phi\wedge c_1(\overline{\mathcal{L}}).$$
	\end{enumerate}
\end{Cor}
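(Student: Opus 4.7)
For part (i), the plan is to apply the induction formula (\ref{equ_induction-formula}) to $\overline{\mathcal{L}}|_{\mathcal{Y}}$ on $\mathcal{Y}$ in place of $\mathcal{X}$. This expresses, for every $s\in S_p$,
$$h_{\overline{\mathcal{L}}}(\mathcal{Y}) + \log\tau - h_{\overline{\mathcal{L}}}(\mathrm{div}(s)\cdot\mathcal{Y}) \;=\; \frac{-1}{p}\int_{\mathcal{Y}(\mathbb{C})}\bigl(\log|s|_{\overline{\mathcal{L}}^{\otimes p}} - p\log\tau\bigr)\,c_1(\overline{\mathcal{L}})^{e-1},$$
where $\log\tau$ has been absorbed using that $c_1(\overline{\mathcal{L}})^{e-1}$ is normalized to unit mass on $\mathcal{Y}(\mathbb{C})$ in the paper's conventions. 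Taking absolute values, averaging over $s\in S_p$, and invoking Theorem \ref{thm_equidistribution} directly yields (i).

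For part (ii), I would argue by subsequence extraction. If the claim fails, pick a subsequence $(p_k)$ along which the averaged error is bounded below by some $\epsilon>0$; by boundedness of $r_p^{1/p}$, pass to a further subsequence with $r_{p_k}^{1/p_k}\to\tau\in[1,\infty)$, so that Theorem \ref{thm_equidistribution} applies along $(p_k)$. For a $C^\infty$ test form $\Phi$, the Poincar\'e--Lelong identity $\frac{1}{p}[\mathrm{div}(s|_{\mathcal{Y}})(\mathbb{C})] - c_1(\overline{\mathcal{L}}|_{\mathcal{Y}_{\mathbb{C}}}) = -\frac{1}{p}\,dd^c\log|s|_{\overline{\mathcal{L}}^{\otimes p}}$ (up to a standard constant factor) combined with Stokes' theorem on the closed manifold $\mathcal{Y}(\mathbb{C})$, and the vanishing $\int_{\mathcal{Y}(\mathbb{C})}dd^c\Phi=0$, yields
$$\frac{1}{p}\int_{\mathrm{div}(s|_{\mathcal{Y}})(\mathbb{C})}\Phi - \int_{\mathcal{Y}(\mathbb{C})}\Phi\wedge c_1(\overline{\mathcal{L}}) \;=\; \frac{-1}{p}\int_{\mathcal{Y}(\mathbb{C})}\bigl(\log|s|_{\overline{\mathcal{L}}^{\otimes p}} - p\log\tau\bigr)\,dd^c\Phi.$$
Since $c_1(\overline{\mathcal{L}})|_{\mathcal{Y}_{\mathbb{C}}}$ is a K\"ahler form, the smooth $(e-1,e-1)$-form $dd^c\Phi$ is dominated pointwise by $C_\Phi\,c_1(\overline{\mathcal{L}})^{e-1}$, and averaging over $S_{p_k}$ while applying Theorem \ref{thm_equidistribution} contradicts the choice of $(p_k)$.

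For a general $C^0$ form $\Phi$, I would approximate uniformly by $C^\infty$ forms $\Phi_\delta$; the error from replacing $\Phi$ by $\Phi_\delta$ is controlled by $\|\Phi-\Phi_\delta\|_\infty$ times the total mass of $\frac{1}{p}[\mathrm{div}(s|_{\mathcal{Y}})(\mathbb{C})]$ against $c_1(\overline{\mathcal{L}})^{e-2}$, which equals $(\mathcal{L}_{\mathbb{C}}|_{\mathcal{Y}_{\mathbb{C}}})^{e-1}$ uniformly in $s$ and $p$. The \emph{in particular} statement is then an immediate consequence of the first assertion of (ii) via the triangle inequality.

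The hard part will be the interaction between the subsequence argument and the $C^0$ approximation: one must verify that Theorem \ref{thm_equidistribution} really applies along every subsequence of $(r_p)$ with $r_{p_k}^{1/p_k}\to\tau\in[1,\infty)$, and that the smooth-to-$C^0$ passage is uniform in $s$ and $p$. The uniform total-mass bound on the divisor currents $\frac{1}{p}[\mathrm{div}(s|_{\mathcal{Y}})(\mathbb{C})]$ is the decisive ingredient that makes the latter transparent; beyond that the constants in Poincar\'e--Lelong require some bookkeeping but are not an obstruction.
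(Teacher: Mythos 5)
Your proposal is correct and follows essentially the same route as the paper: part (i) is the height formula (\ref{equ_height}) on $\mathcal{Y}$ combined with the triangle inequality and Theorem \ref{thm_equidistribution}, and part (ii) is the subsequence reduction (the paper packages this as Lemma \ref{lem_convergence}) followed by the Poincar\'e--Lelong/Stokes identity with $dd^c\Phi$ dominated by a multiple of $c_1(\overline{\mathcal{L}})^{e-1}$, exactly as in Equation (\ref{equ_equidistribution}) and Corollary \ref{cor_equidistribution-general}. The $C^0$-to-$C^\infty$ passage and the uniform mass bound on the divisor currents are handled the same way in the paper's proof of Lemma \ref{lem_equidistribution}.
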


\subsection{An Analogue of the Generalized Bogomolov Conjecture}
We want to discuss an analogue of the generalized Bogomolov conjecture for $\bigcup_{p\ge 1}H^0(\mathcal{X},\mathcal{L}^{\otimes p})$. First, let us recall the generalized Bogomolov conjecture. Let $A$ be an abelian variety defined over a number field $K$ and $L$ a symmetric ample line bundle on $A$. The height $h_{L}$ associated to $L$ is a height on the subvarieties of $A$ and it coincides with the N\'eron--Tate height for points. The generalized Bogomolov conjecture, proven by Zhang \cite{Zha98} based on an idea by Ullmo \cite{Ull98}, states that for any subvariety $X\subseteq A$, which is not a translate of an abelian 
subvariety by a torsion point, there exists an $\epsilon>0$, such that the geometric points $P\in X(\overline{K})$ of height $h_L(P)<\epsilon$ are not Zariski dense in $X$. But here, we focus on the previous result by Zhang in \cite[Theorem 1.10]{Zha95} that this property holds for a subvariety $X\subseteq A$ if $h_L(X)>h_L(A)$. Such subvarieties are called \emph{non-degenerate}.

We want to state an analogue result for $\bigcup_{p\ge 1}H^0(\mathcal{X},\mathcal{L}^{\otimes p})$. We may consider a global section $s\in H^0(\mathcal{X},\mathcal{L}^{\otimes p})$ as an analogue of a geometric point $P\in A(\overline{K})$, where $p$ is the analogue of the degree $[K(P):K]$. For a more detailed translation between the situation in the generalized Bogomolov conjecture and the situation in this paper we refer to Table \ref{tab_bogomolov}. Note that our situation is in some sense dual to the generalized Bogomolov conjecture, as we are not interested in embedding points of a subvariety $X$ into $A$, but in restricting global sections $s\in H^0(\mathcal{X},\mathcal{L}^{\otimes p})$ to an arithmetic subvariety $\mathcal{Y}\subseteq \mathcal{X}$.
The following corollary may be considered as an analogue of the generalized Bogomolov conjecture in our situation and is a consequence of Corollary \ref{cor_height-converges}.

\begin{Cor}\label{cor_bogomolov}
	Let $\mathcal{X}$ be any generically smooth projective arithmetic variety and $\overline{\mathcal{L}}$ any arithmetically ample hermitian line bundle on $\mathcal{X}$. Let $\mathcal{Y}\subseteq \mathcal{X}$ be any generically smooth arithmetic subvariety of dimension $e\ge 2$ with $h_{\overline{\mathcal{L}}}(\mathcal{Y})<h_{\overline{\mathcal{L}}}(\mathcal{X})$. For any $\epsilon\in (0,h_{\overline{\mathcal{L}}}(\mathcal{X})-h_{\overline{\mathcal{L}}}(\mathcal{Y}))$ and any sequence $(r_p)_{p\in\mathbb{Z}}$ of positive real numbers with $\lim_{p\to\infty}r_p^{1/p}=\tau\in [1,\infty)$ it holds
	$$\lim_{p\to\infty}\frac{\#\left\{s\in \widehat{H}^0_{\le r_p}(\mathcal{X},\overline{\mathcal{L}}^{\otimes p})~|~h_{\overline{\mathcal{L}}}(\mathrm{div}(s))-h_{\overline{\mathcal{L}}|_{\mathcal{Y}}}(\mathrm{div}(s|_{\mathcal{Y}}))\le\epsilon\right\}}{\#\widehat{H}^0_{\le r_p}(\mathcal{X},\overline{\mathcal{L}}^{\otimes p})}=0.$$
\end{Cor}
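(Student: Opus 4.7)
The plan is to deduce the corollary from two applications of Corollary \ref{cor_height-converges}(i)—one with the given subvariety $\mathcal{Y}$ and one with $\mathcal{X}$ itself in the role of $\mathcal{Y}$—and then to convert the resulting $L^1$-averages into a density estimate via a Markov inequality. The cancellations of the $\log\tau$-terms in the two limits will produce exactly the gap $h_{\overline{\mathcal{L}}}(\mathcal{X})-h_{\overline{\mathcal{L}}}(\mathcal{Y})$ that the hypothesis makes strictly larger than $\epsilon$.

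In detail, fix $\delta>0$ small enough that $h_{\overline{\mathcal{L}}}(\mathcal{X})-h_{\overline{\mathcal{L}}}(\mathcal{Y})-2\delta>\epsilon$. Since $\dim\mathcal{X}\ge\dim\mathcal{Y}\ge 2$ and $\mathcal{X}$ is generically smooth, Corollary \ref{cor_height-converges}(i) applied with $\mathcal{Y}=\mathcal{X}$ yields
\begin{align*}
\lim_{p\to\infty}\frac{1}{\#(\widehat{H}^0_{\le r_p}(\mathcal{X},\overline{\mathcal{L}}^{\otimes p})\setminus\{0\})}\sum_{0\ne s}\bigl|h_{\overline{\mathcal{L}}}(\mathrm{div}(s))-h_{\overline{\mathcal{L}}}(\mathcal{X})-\log\tau\bigr|=0,
\end{align*}
while the same corollary applied to $\mathcal{Y}\subseteq\mathcal{X}$ gives the analogous vanishing of the $L^1$-average of $|h_{\overline{\mathcal{L}}|_{\mathcal{Y}}}(\mathrm{div}(s|_{\mathcal{Y}}))-h_{\overline{\mathcal{L}}}(\mathcal{Y})-\log\tau|$ over $s\in S_p$, after identifying $h_{\overline{\mathcal{L}}}(\mathrm{div}(s)\cdot\mathcal{Y})=h_{\overline{\mathcal{L}}|_{\mathcal{Y}}}(\mathrm{div}(s|_{\mathcal{Y}}))$. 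A Markov argument then bounds the two exceptional sets
\begin{align*}
E_p&=\{s\in\widehat{H}^0_{\le r_p}(\mathcal{X},\overline{\mathcal{L}}^{\otimes p})\setminus\{0\}~|~|h_{\overline{\mathcal{L}}}(\mathrm{div}(s))-h_{\overline{\mathcal{L}}}(\mathcal{X})-\log\tau|\ge\delta\},\\
F_p&=\{s\in S_p~|~|h_{\overline{\mathcal{L}}|_{\mathcal{Y}}}(\mathrm{div}(s|_{\mathcal{Y}}))-h_{\overline{\mathcal{L}}}(\mathcal{Y})-\log\tau|\ge\delta\}
\end{align*}
in cardinality by $o(\#\widehat{H}^0_{\le r_p}(\mathcal{X},\overline{\mathcal{L}}^{\otimes p}))$; for $F_p$ this uses only $\#S_p\le\#\widehat{H}^0_{\le r_p}$, so no separate lower bound on $\#S_p/\#\widehat{H}^0_{\le r_p}$ is required.

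Finally, for any $s\in S_p\setminus(E_p\cup F_p)$ the two $\log\tau$-contributions cancel and one obtains
\begin{align*}
h_{\overline{\mathcal{L}}}(\mathrm{div}(s))-h_{\overline{\mathcal{L}}|_{\mathcal{Y}}}(\mathrm{div}(s|_{\mathcal{Y}}))\ge h_{\overline{\mathcal{L}}}(\mathcal{X})-h_{\overline{\mathcal{L}}}(\mathcal{Y})-2\delta>\epsilon.
\end{align*}
Because the expression $\mathrm{div}(s|_{\mathcal{Y}})$ is only defined for $s\in S_p$, the sections counted in the numerator of the corollary automatically lie in $S_p$ and hence in $E_p\cup F_p$, yielding the desired bound of $o(\#\widehat{H}^0_{\le r_p}(\mathcal{X},\overline{\mathcal{L}}^{\otimes p}))$. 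I expect no substantive obstacle; the whole argument is essentially bookkeeping once Corollary \ref{cor_height-converges}(i) is in hand, the only mildly delicate point being the verification that its hypotheses are inherited by $\mathcal{X}$ itself and that the $L^1$-estimate for $F_p$ can be absorbed into the larger denominator $\#\widehat{H}^0_{\le r_p}$ without any additional equidistribution input.
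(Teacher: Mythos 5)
Your argument is correct. It shares its main engine with the paper's proof --- both rest on Corollary \ref{cor_height-converges}(i) applied with $\mathcal{X}$ itself in the role of the subvariety (legitimate, since $\mathcal{X}$ is a generically smooth arithmetic subvariety of itself of dimension $\ge e\ge 2$), which forces $h_{\overline{\mathcal{L}}}(\mathrm{div}(s))$ to concentrate near $h_{\overline{\mathcal{L}}}(\mathcal{X})+\log\tau$ --- but you diverge on the other half of the estimate. You bound $h_{\overline{\mathcal{L}}|_{\mathcal{Y}}}(\mathrm{div}(s|_{\mathcal{Y}}))$ from above by a \emph{second} application of Corollary \ref{cor_height-converges}(i), now to $\mathcal{Y}$, plus a Markov bound for the exceptional set $F_p$. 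The paper instead obtains this upper bound deterministically for every single $s\in\widehat{H}^0_{\le r_p}(\mathcal{X},\overline{\mathcal{L}}^{\otimes p})$ with $s|_{\mathcal{Y}}\neq 0$: by Equation (\ref{equ_height}) one has
$$h_{\overline{\mathcal{L}}|_{\mathcal{Y}}}(\mathrm{div}(s|_{\mathcal{Y}}))=h_{\overline{\mathcal{L}}}(\mathcal{Y})+\frac{1}{p\cdot\mathcal{L}_{\mathbb{C}}|_{\mathcal{Y}_{\mathbb{C}}}^{e-1}}\int_{\mathcal{Y}(\mathbb{C})}\log|s|_{\overline{\mathcal{L}}^{\otimes p}}c_1(\overline{\mathcal{L}})^{e-1}\le h_{\overline{\mathcal{L}}}(\mathcal{Y})+\tfrac{1}{p}\log r_p,$$
and the last term is $\le\log\tau+\rho$ for large $p$. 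This makes your set $F_p$ unnecessary, so the (deep) equidistribution input is invoked only once, for the ambient variety; the remaining bookkeeping is then organized as a contradiction with a positive-density subsequence rather than your direct union bound, which is a cosmetic difference. Your approach costs an extra appeal to Theorem \ref{thm_equidistribution} on $\mathcal{Y}$ but is arguably more transparent; your observations that $\#F_p=o(\#\widehat{H}^0_{\le r_p}(\mathcal{X},\overline{\mathcal{L}}^{\otimes p}))$ needs no lower bound on $\#S_p/\#\widehat{H}^0_{\le r_p}(\mathcal{X},\overline{\mathcal{L}}^{\otimes p})$, and that the sections counted in the numerator automatically lie in $S_p$, are both correct.
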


\begin{table}
\caption{An analogue of the generalized Bogomolov conjecture.}\label{tab_bogomolov}
\begin{tabular}{|p{5.9cm}|p{5.9cm}|}
	\hline
	\underline{Generalized Bogomolov conjecture} & \underline{Situation in this paper}\\[0.2cm] \hline
	Abelian variety $A$ defined over a number field $K$ & Generically smooth projective arithmetic variety $\mathcal{X}$\\[0.2cm] \hline
	Symmetric ample line bundle $L$& Arithmetically ample hermitian line bundle $\overline{\mathcal{L}}$\\[0.2cm]\hline
	Geometric points $P\in A(\overline{K})$& Sections $s\in R(\mathcal{L})=\bigcup_{p\ge 1} H^0(\mathcal{X},\mathcal{L}^{\otimes p})$\\[0.2cm]\hline
	Degree $[K(P):K]$ of $P\in A(\overline{K})$ & $p$ such that $s\in H^0(\mathcal{X},\mathcal{L}^{\otimes p})$\\[0.2cm] \hline
	N\'eron--Tate height $h_L(P)$ & Height $h_{\overline{\mathcal{L}}}(\mathrm{div}(s))$\\[0.2cm]\hline
	Northcott property: For $M\in\mathbb{R}$, $p\in\mathbb{Z}$ there are only finitely many $P\in A(\overline{K})$ with $h_L(P)\le M$ and $[K(P):K]=p$& Northcott property: For $M\in \mathbb{R}$ and $p\in\mathbb{Z}$ there are only finitely many $s\in H^0(\mathcal{X},\mathcal{L}^{\otimes p})/H^0(\mathcal{X},\mathcal{O}_{\mathcal{X}})^*$ with $h_{\overline{\mathcal{L}}}(\mathrm{div}(s))\le M$. \\[0.2cm]\hline
	Subvariety $X\subseteq A$ & Generically smooth arithmetic subvariety $\mathcal{Y}\subseteq \mathcal{X}$\\[0.2cm] \hline
	$X$ non-degenerate: $h_L(X)>h_L(A)$ & $\mathcal{Y}$ \emph{non-degenerate}: $h_{\overline{\mathcal{L}}}(\mathcal{Y})<h_{\overline{\mathcal{L}}}(\mathcal{X})$\\[0.2cm] \hline
	Geometric point $P\in X(\overline{K})$ & Restriction $s|_{\mathcal{Y}}\in H^0(\mathcal{Y},\mathcal{L}|_{\mathcal{Y}}^{\otimes p})$ of a global section $s\in H^0(\mathcal{X},\mathcal{L}^{\otimes p})$ to $\mathcal{Y}$\\[0.2cm] \hline
	N\'eron--Tate height $h_L(P)$ on $X(\overline{K})$ & $d_{\mathcal{Y}}(s)=h_{\overline{\mathcal{L}}}(\mathrm{div}(s))-h_{\overline{\mathcal{L}}|_{\mathcal{Y}}}(\mathrm{div}(s|_{\mathcal{Y}}))$.\\[0.2cm] \hline
	Zariski density of a subset $M\subseteq X(\overline{K})$ in $X$ &
	We say that a subset of sections $M\subseteq R(\mathcal{L})$ is of \emph{positive density}, if
	$\limsup_{p\to \infty}\frac{\#M\cap \widehat{H}_{\le r^p}^0(\mathcal{X},\overline{\mathcal{L}}^{\otimes p})}{\#\widehat{H}_{\le r^p}^0(\mathcal{X},\overline{\mathcal{L}}^{\otimes p})}>0$ for some $r\ge 1$.\\[0.2cm] \hline
	Generalized Bogomolov conjecture: If $X$ is non-degenerate, then $\exists\epsilon>0$ such that
	$\{P\in X(\overline{K})~|~h_L(P)<\epsilon\}$
	is not Zariski dense in $X$.& If $\mathcal{Y}$ is non-degenerate, then $\exists \epsilon>0$, such that
	$\{s\in R(\mathcal{L})~|~d_\mathcal{Y}(s)<\epsilon\}$ is not of positive density.\\[0.2cm]\hline
	\end{tabular}
\end{table}

\subsection{Distribution of Zeros of Polynomials}
Next, we discuss results on the distribution of zeros of integer polynomials by applying the above results to $\mathcal{X}=\mathbb{P}^1_\mathbb{Z}$ and the line bundle $\mathcal{L}=\mathcal{O}(1)$ equipped with the Fubini--Study metric multiplied by some $e^{-\epsilon}$. This may be also seen as a result on the distribution of algebraic numbers. For any polynomial $P\in \mathbb{C}[X]$ of the form $P=a_n\prod_{k=1}^{n}(X-\alpha_k)=\sum_{k=0}^n a_nX^n$ with $a_n\neq 0$ we define the values
$$h_{\mathrm{FS}}(P)=\tfrac{1}{n}\log|a_n|+\tfrac{1}{2n}\sum_{k=1}^n\log(1+|\alpha_k|^2) \quad \text{and}\quad h_{B}(P)=\tfrac{1}{n}\log \max_{0\le k\le n}\frac{|a_k|}{\sqrt{\tbinom{n}{k}}},$$
which may be considered as heights associated to the Fubini--Study metric and to the Bombieri $\infty$-norm. First, we state the analogue of Proposition \ref{pro_equidistribution-individual} in this case, which can be even proved for polynomials with complex coefficients.
\begin{Pro}\label{pro_equidistribution-polynomials}
	For any sequence $(P_n)_{n\in\mathbb{Z}_{\ge 1}}$ of polynomials
	$P_n\in \mathbb{C}[X]$ of degree $\deg P_n=n$ satisfying 
	$\limsup_{n\to\infty} (h_{B}(P_n)+\frac{1}{2}-h_{\mathrm{FS}}(P_n))\le0$
	and any continuous function $f\colon \mathbb{C}\to\mathbb{C}$, such that $\lim_{|z|\to\infty}f(z)$ is well-defined and finite, it holds
	$$\lim_{n\to \infty}\frac{1}{n}\sum_{\gf{z\in\mathbb{C}}{P_n(z)=0}}f(z)=\frac{i}{2\pi}\int_\mathbb{C}f(z)\frac{dzd\overline{z}}{\left(1+|z|^2\right)^2}.$$
\end{Pro}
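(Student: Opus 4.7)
The plan is to view each $P_n$ as a global section of $\mathcal{O}(n)$ on $\mathbb{P}^1(\mathbb{C})$ endowed with the Fubini--Study metric $|P_n|_{\mathrm{FS}^{\otimes n}}(z)=|P_n(z)|/(1+|z|^2)^{n/2}$, so that the conclusion becomes the weak convergence of the normalized zero measure $\tfrac{1}{n}\sum_{P_n(\alpha)=0}\delta_\alpha$ to $\omega_{\mathrm{FS}}=\tfrac{i}{2\pi}\tfrac{dz\wedge d\bar z}{(1+|z|^2)^2}$ on the compact space $\mathbb{P}^1(\mathbb{C})$. The core argument will be the same Poincar\'e--Lelong and Stokes scheme as in Proposition \ref{pro_equidistribution-individual}; the extra work is to translate the assumption on $h_B$ and $h_{\mathrm{FS}}$ into the analytic statement $\tfrac{1}{n}\log\|P_n\|_{\sup}-\tfrac{1}{n}\int\log|P_n|_{\mathrm{FS}^{\otimes n}}\,\omega_{\mathrm{FS}}\to 0$.

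First, I would identify each of the two heights analytically. Splitting $\log|P_n|_{\mathrm{FS}^{\otimes n}}=\log|a_n|+\sum_k\log|z-\alpha_k|-\tfrac{n}{2}\log(1+|z|^2)$ and using the elementary integrals $\int\log(1+|z|^2)\,\omega_{\mathrm{FS}}=1$ and $\int\log|z-\alpha|^2\,\omega_{\mathrm{FS}}=\log(1+|\alpha|^2)$ (the latter by $PU(2)$-invariance of the chordal distance reducing to the case $\alpha=0$) produces $\tfrac{1}{n}\int\log|P_n|_{\mathrm{FS}^{\otimes n}}\,\omega_{\mathrm{FS}}=h_{\mathrm{FS}}(P_n)-\tfrac{1}{2}$. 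On the other hand, because $\{\sqrt{(n+1)\tbinom{n}{k}}\,z^k\}_{0\le k\le n}$ is an orthonormal basis of $H^0(\mathbb{P}^1,\mathcal{O}(n))$ for the $L^2(\omega_{\mathrm{FS}})$ inner product, the inequalities $\max_k|a_k|^2/\tbinom{n}{k}\le(n+1)\|P_n\|_{L^2}^2\le(n+1)\|P_n\|_{\sup}^2$ and (by Cauchy--Schwarz applied to $|P_n(z)|\le\sum_k|a_k||z|^k$ together with $\sum_k\tbinom{n}{k}|z|^{2k}=(1+|z|^2)^n$) $\|P_n\|_{\sup}\le\sqrt{n+1}\,e^{nh_B(P_n)}$ combine to give $|h_B(P_n)-\tfrac{1}{n}\log\|P_n\|_{\sup}|=O(\tfrac{\log n}{n})$. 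Hence the hypothesis is equivalent to $\limsup_{n\to\infty}\tfrac{1}{n}\int(\log\|P_n\|_{\sup}-\log|P_n|_{\mathrm{FS}^{\otimes n}})\,\omega_{\mathrm{FS}}\le 0$, which, since the integrand is pointwise $\ge 0$, forces $\phi_n:=\tfrac{1}{n}(\log\|P_n\|_{\sup}-\log|P_n|_{\mathrm{FS}^{\otimes n}})$ to converge to $0$ in $L^1(\omega_{\mathrm{FS}})$.

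The equidistribution then follows from the Poincar\'e--Lelong identity $\tfrac{1}{n}[\mathrm{div}(P_n)]=\omega_{\mathrm{FS}}-dd^c\phi_n$ together with Stokes' theorem: for any $F\in C^\infty(\mathbb{P}^1(\mathbb{C}))$,
$$\tfrac{1}{n}\sum_{P_n(\alpha)=0}F(\alpha)-\int_{\mathbb{P}^1(\mathbb{C})}F\,\omega_{\mathrm{FS}}=-\int_{\mathbb{P}^1(\mathbb{C})}\phi_n\,dd^c F\longrightarrow 0,$$
since $dd^c F$ is bounded and $\phi_n\to 0$ in $L^1(\omega_{\mathrm{FS}})$. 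To handle the continuous test functions $f$ of the statement, I extend $f$ to a continuous function on $\mathbb{P}^1(\mathbb{C})$ via its finite limit at infinity, uniformly approximate it by smooth functions, and use that both $\tfrac{1}{n}\sum_k\delta_{\alpha_k}$ and $\omega_{\mathrm{FS}}$ are probability measures to pass to the limit. The main obstacle will be the explicit comparisons in the second step---matching $h_{\mathrm{FS}}(P_n)$ with $\tfrac{1}{n}\int\log|P_n|_{\mathrm{FS}^{\otimes n}}\,\omega_{\mathrm{FS}}+\tfrac{1}{2}$, and $h_B(P_n)$ with $\tfrac{1}{n}\log\|P_n\|_{\sup}$ up to $O(\log n/n)$; once these analytic identifications are in place, the remaining Poincar\'e--Lelong / Stokes argument is the same one already used to prove Proposition \ref{pro_equidistribution-individual}.
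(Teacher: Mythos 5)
Your proof is correct and follows the same overall strategy as the paper: translate the hypothesis into the statement that $\tfrac{1}{n}\bigl(\log\|s_n\|_{\sup}\int c_1-\int\log|s_n|_{\mathrm{FS}}c_1\bigr)\to 0$ for the associated sections $s_n\in H^0(\mathbb{P}^1_{\mathbb{C}},\mathcal{O}(n))$, and then conclude by the Poincar\'e--Lelong/Stokes argument of Lemma \ref{lem_equidistribution} (ii). Two of your intermediate steps are handled differently, and in both cases more elementarily, than in the paper. First, for the identity $\tfrac{1}{n}\int\log|s_n|_{\mathrm{FS}}\,c_1(\overline{\mathcal{O}(1)})=h_{\mathrm{FS}}(P_n)-\tfrac12$ the paper (Lemma \ref{lem_hFS}) applies its Stokes lemma (Lemma \ref{lem_stokes}) against the auxiliary section $Z_1$ and the explicit integral of Lemma \ref{lem_integral-P1} (ii), whereas you split the logarithm and evaluate $\int\log|z-\alpha|^2\,\omega_{\mathrm{FS}}=\log(1+|\alpha|^2)$ by $PU(2)$-invariance of the chordal distance; both are valid. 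Second, for the comparison $|h_B(P_n)-\tfrac1n\log\|s_n\|_{\sup}|=O(\log n/n)$ the paper combines the $L^2$-orthogonality formula (\ref{equ_L2norm-FS}) with the Gromov-type inequality (\ref{equ_supl2}) $\|s\|_{\sup}\le C_1p^n\|s\|$, whereas your Cauchy--Schwarz bound $\|P_n\|_{\sup}\le\sqrt{n+1}\,e^{nh_B(P_n)}$ avoids (\ref{equ_supl2}) altogether and is sharper; this is a genuine (if small) simplification that makes the proposition self-contained for $\mathbb{P}^1$. One point worth making explicit in a write-up: since $\deg P_n=n$ the section $s_n$ does not vanish at $[1:0]$, so $\mathrm{div}(s_n)$ is exactly the zero set of $P_n$ in $\mathbb{C}\cong U$ and no mass escapes to infinity when you extend $f$ continuously to $\mathbb{P}^1(\mathbb{C})$.
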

Note that we always take the sum over the zeros of a polynomial counted with their multiplicities. We remark that under the condition in Proposition \ref{pro_equidistribution-polynomials} we automatically get
$$\lim_{n\to \infty}(h_B(P_n)+\tfrac{1}{2}-h_{\mathrm{FS}}(P_n))=0.$$
In this special situation Corollary \ref{cor_height-converges} can be explicitly formulated for polynomials as in the following corollary. In particular, we get that the condition in Proposition \ref{pro_equidistribution-polynomials} to the sequence $(P_n)_{n\in\mathbb{Z}_{\ge 1}}$ is generically satisfied in sets of polynomials of bounded height $h_B$.
\begin{Cor}\label{cor_distribution-algebraic-numbers}
	For any $n\in\mathbb{Z}_{\ge 0}$ and any $r\in\mathbb{R}_{>0}$ we define
	$$\mathcal{P}_{n,r}=\left\{\left.P\in \mathbb{Z}[X]~\right|~\deg P=n,~ h_B(P)\le r\right\}.$$
	Let $(r_n)_{n\in\mathbb{Z}_{\ge 1}}$ be any  sequence of real numbers.
	\begin{enumerate}[(i)]
		\item If $r_n$ converges to a value $\tau\in (0,\infty)$, then it holds
		$$\lim_{n\to \infty} \frac{1}{\#\mathcal{P}_{n,r_n}}\sum_{P\in \mathcal{P}_{n,r_n}} \left|\tau+\tfrac{1}{2}-h_{\mathrm{FS}}(P)\right|=0.$$
		\item If $0<\liminf_{n\to \infty} r_n\le \limsup_{n\to \infty} r_n<\infty$, then for any continuous function $f\colon \mathbb{C}\to\mathbb{C}$, such that $\lim_{|z|\to\infty}f(z)$ is well-defined and finite, it holds
		$$\lim_{n\to \infty} \frac{1}{\#\mathcal{P}_{n,r_n}}\sum_{P\in \mathcal{P}_{n,r_n}} \left|\frac{1}{n}\sum_{\gf{z\in\mathbb{C}}{P(z)=0}}f(z)-\frac{i}{2\pi}\int_\mathbb{C}f(z)\frac{dzd\overline{z}}{\left(1+|z|^2\right)^2}\right|=0.$$
		In particular, we have
		$$\lim_{n\to \infty} \frac{1}{\#\mathcal{P}_{n,r_n}}\sum_{P\in \mathcal{P}_{n,r_n}} \frac{1}{n}\sum_{\gf{z\in\mathbb{C}}{P(z)=0}}f(z)=\frac{i}{2\pi}\int_\mathbb{C}f(z)\frac{dzd\overline{z}}{\left(1+|z|^2\right)^2}.$$
	\end{enumerate}
\end{Cor}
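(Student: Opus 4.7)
The plan is to deduce Corollary \ref{cor_distribution-algebraic-numbers} as the specialization of Corollary \ref{cor_height-converges} to $\mathcal{X}=\mathcal{Y}=\mathbb{P}^1_{\mathbb{Z}}$ (so $d=e=2$) equipped with $\overline{\mathcal{L}}=(\mathcal{O}(1), e^{-\epsilon}\|\cdot\|_{\mathrm{FS}})$, where $\epsilon>0$ is chosen small enough that $\overline{\mathcal{L}}$ is arithmetically ample; the variable $p$ of Corollary \ref{cor_height-converges} plays the role of $n$. First, I translate the arithmetic quantities: for $s_P\in H^0(\mathbb{P}^1_{\mathbb{Z}}, \mathcal{O}(n))$ corresponding to $P=a_n\prod_k(X-\alpha_k)$ of degree exactly $n$, the induction formula \eqref{equ_induction-formula} together with the identities $\int_{\mathbb{P}^1(\mathbb{C})}\log|z-\alpha|\,\omega_{\mathrm{FS}}=\tfrac{1}{2}\log(1+|\alpha|^2)$ and $\int_{\mathbb{P}^1(\mathbb{C})}\log(1+|z|^2)\,\omega_{\mathrm{FS}}=1$ give $h_{\overline{\mathcal{L}}}(\mathbb{P}^1_{\mathbb{Z}})=\tfrac{1}{2}+\epsilon$ and $h_{\overline{\mathcal{L}}}(\mathrm{div}(s_P))=h_{\mathrm{FS}}(P)$, while on the affine chart $c_1(\overline{\mathcal{L}})=\omega_{\mathrm{FS}}=\tfrac{i}{2\pi}\tfrac{dz\,d\overline{z}}{(1+|z|^2)^2}$.

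Next, I compare the sup-norm defining $\widehat{H}^0_{\le r_n}$ with the Bombieri condition defining $\mathcal{P}_{n,r_n}$. The $L^2$-orthogonality of the monomial basis on $(\mathbb{P}^1,\omega_{\mathrm{FS}})$ yields $\|s_P\|_{L^2,\mathrm{FS}}^2=\|P\|_{B,2}^2/(n+1)$, which combined with the Bergman-kernel estimate $\|s_P\|_{\sup,\mathrm{FS}}^2\le(n+1)\|s_P\|_{L^2,\mathrm{FS}}^2$ and $\|P\|_B\le\|P\|_{B,2}\le\sqrt{n+1}\|P\|_B$ gives
$$\tfrac{1}{\sqrt{n+1}}\|P\|_B \;\le\; \|s_P\|_{\sup,\mathrm{FS}} \;\le\; \sqrt{n+1}\,\|P\|_B,$$
so that $\tfrac{1}{n}\log\|s_P\|_{\sup,\mathrm{FS}}-h_B(P) = O(\log n/n) \to 0$ uniformly in $P$. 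Setting $\tau_{\mathrm{in}}=e^{\tau-\epsilon}\in[1,\infty)$ (choose $\epsilon\le\tau$), the sup-norm set $S_n$ and the Bombieri box $\mathcal{P}_{n,r_n}$ each contain the other after an $o(1)$ perturbation of the threshold, and the integrand $|\tau+\tfrac{1}{2}-h_{\mathrm{FS}}(P)|$ remains uniformly bounded on either via $0\le h_{\mathrm{FS}}(P)\le h_B(P)+\tfrac{1}{2}+o(1)$.

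Part (i) then follows from Corollary \ref{cor_height-converges}(i): the integrand $|h_{\overline{\mathcal{L}}}(\mathcal{Y})+\log\tau_{\mathrm{in}}-h_{\overline{\mathcal{L}}}(\mathrm{div}(s))|$ translates to $|\tau+\tfrac{1}{2}-h_{\mathrm{FS}}(P)|$, and I transfer the vanishing average from $S_n$ to $\mathcal{P}_{n,r_n}$ by combining the sandwich with Moriwaki's quantitative lattice approximation \cite{Mor11}, which replaces each $P\in\mathcal{P}_{n,r_n}$ by a nearby sup-norm-bounded section while perturbing $h_{\mathrm{FS}}(P)$ by $o(1)$. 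Part (ii) follows from Corollary \ref{cor_height-converges}(ii) applied with $\Phi=f$: since $\lim_{|z|\to\infty}f(z)$ is finite, $f$ extends continuously to $\mathbb{P}^1(\mathbb{C})$; and since $\deg P=n$ places all $n$ zeros in $\mathbb{C}$, one has $\int_{\mathrm{div}(s_P)(\mathbb{C})}f=\sum_{P(z)=0}f(z)$ and $\int_{\mathbb{P}^1(\mathbb{C})}f\wedge c_1(\overline{\mathcal{L}})=\tfrac{i}{2\pi}\int_{\mathbb{C}}f(z)\,dz\,d\overline{z}/(1+|z|^2)^2$, matching the right-hand sides of the statement.

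The main obstacle is that, although $\tfrac{1}{n}\log\|s_P\|_{\sup,\mathrm{FS}}-h_B(P)\to 0$, the lattice counts $\#S_n$ and $\#\mathcal{P}_{n,r_n}$ can differ by multiplicative factors up to $(n+1)^{(n+1)/2}$, so inclusions alone do not transfer vanishing averages. The resolution via Moriwaki's approximation perturbs each Bombieri-bounded polynomial into a sup-norm-bounded lattice section with controlled error, absorbing the counting mismatch. A minor final step excludes the negligible subset $\{s\in S_n : \deg P_s<n\}$: fixing $a_n\ne 0$ removes only a relative factor of order $e^{-n\tau}$ from the lattice count.
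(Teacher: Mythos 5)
Your translation of the arithmetic data ($h_{\overline{\mathcal{L}}}$, $c_1(\overline{\mathcal{L}})$ on the affine chart, the continuous extension of $f$ to $\mathbb{P}^1(\mathbb{C})$, and the removal of the $a_n=0$ stratum) is fine, but the central step — transferring the vanishing average from the sup-norm balls $S_n$ of Corollary \ref{cor_height-converges} to the Bombieri boxes $\mathcal{P}_{n,r_n}$ — does not work as described; you have correctly identified why it is delicate but not actually resolved it. An average over $S_n$ tending to $0$ only controls the average over subsets of $S_n$ of \emph{positive relative density} (this is the content of Proposition \ref{pro_positive-density}); since $\#S_n/\#\mathcal{P}_{n,r_n}$ can be as large as $(n+1)^{O(n)}$, the image of $\mathcal{P}_{n,r_n}$ under any perturbation map into a sup-norm ball has vanishing relative density there, so the ``bad'' polynomials (those with $|\tau+\tfrac{1}{2}-h_{\mathrm{FS}}(P)|$ bounded below) could a priori exhaust $\mathcal{P}_{n,r_n}$ while remaining negligible in $S_n$. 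Moriwaki's result \cite{Mor11} only supplies a $\mathbb{Z}$-basis of small norm (a bound on $\lambda_{\mathbb{Z}}$); it lets one move a real section to a nearby lattice point but says nothing about comparing averages over two lattice sets of wildly different cardinality. Moreover, the sub-claim that a perturbation with $\|\delta\|^{1/n}<1$ changes $h_{\mathrm{FS}}$ by $o(1)$ is not true for an individual polynomial: Proposition \ref{pro_integral} preserves the \emph{vanishing} of the normalized integral along a sequence that already has it, and is not a uniform continuity statement for $h_{\mathrm{FS}}$.

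The gap is closed by not routing through Corollary \ref{cor_height-converges} at all: Theorem \ref{thm_equidistribution-general} and Corollary \ref{cor_equidistribution-general} are formulated for an arbitrary sequence of compact, convex, symmetric bodies $K_n$ sandwiched between $L^2$-balls $\overline{B_{t'_n}}\subseteq K_n\subseteq \overline{B_{t_n}}$ with $(t'_n)^{1/n}$ and $t_n^{1/n}$ converging to the same limit in $[1,\infty)$. Taking $K_n$ to be the Bombieri ball of radius $e^{nr_n}$ and using the comparison $e^{2n\epsilon}\|s\|_{\epsilon}^2\le\|s\|_B^2\le (n+1)e^{2n\epsilon}\|s\|_{\epsilon}^2$ from (\ref{equ_L2norm-FS}), one gets $t'_n=e^{n(r_n-\epsilon)}/\sqrt{n+1}$ and $t_n=e^{n(r_n-\epsilon)}$, both with $n$-th root tending to $e^{\tau-\epsilon}>1$ once $0<\epsilon<\tau$. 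The averaging statements then hold directly for $T_n=K_n\cap H^0(\mathbb{P}^1_{\mathbb{Z}},\mathcal{O}(1)^{\otimes n})\setminus\{0\}$, which coincides with $\mathcal{P}_{n,r_n}$ up to the negligible $a_n=0$ stratum, and Lemma \ref{lem_hFS} converts the integral into $h_{\mathrm{FS}}(P)-\tfrac{1}{2}$. For part (ii) one reduces to convergent subsequences of $(r_n)$ via Lemma \ref{lem_convergence}, exactly as you propose.
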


Let us compare this result with other known results on the distribution of the zero sets in sequences of polynomials. First, it has been worked out by Erdős--Turán \cite{ET50}, see also \cite{Gra07}, that if a sequence of polynomials
$$P_n(X)=\sum_{k=0}^na_{n,k}X^k=a_{n,n}\prod_{k=1}^n(X-\alpha_{n,k})\in\mathbb{C}[X]$$
satisfies $\lim_{n\to \infty}h_{\mathrm{ET}}(P_n)=0$ for
$$h_{\mathrm{ET}}(P_n)=\tfrac{1}{n}\log\frac{\sum_{j=0}^n|a_n|}{\sqrt{|a_{n,n}a_{n,0}|}},$$
then the measures $\mu_n=\frac{1}{n}\sum_{k=1}^n\delta_{\alpha_{n,k}}$ weakly converge to the Haar measure on the unit circle $S^1\subseteq \mathbb{C}$. Bilu \cite{Bil97} proved a stronger version of this result for integer polynomials. To state his result, recall that the height associated to the Mahler measure $M(P_n)$ of $P_n$ is given by $$h_M(P_n)=\frac{1}{n}\log M(P_n)=\frac{1}{n}\log|a_{n,n}|+\frac{1}{n}\sum_{k=1}^n\log\max\{1,|\alpha_{n,k}|\}.$$
Then for all sequences $(P_n)_{n\in\mathbb{Z}_{\ge 1}}$ with $P_n\in\mathbb{Z}[X]$, $a_{n,n}\neq 0$ and $a_{n,0}\neq 0$ for all $n$ and $\lim_{n\to \infty}h_M(P_n)=0$
the measures $\mu_n=\frac{1}{n}\sum_{k=1}^n\delta_{\alpha_{n,k}}$ weakly converge to the Haar measure on $S^1$. If $P_n$ is irreducible in $\mathbb{Z}[X]$, then $h(\alpha_{n,k})=h_M(P_n)$ is the classical height of $\alpha_{n,k}$ for all $1\le k\le n$.
Let us also mention, that Pritsker \cite{Pri11} proved that $\mu_n$ weakly converges  to the Haar measure on $S^1$ if all $P_n\in\mathbb{Z}[X]$ have only simple zeros and we have $|\alpha_{n,k}|\le 1$, $a_{n,n}\neq 0$ and $|a_{n,k}|\le M$ for some $M\in \mathbb{R}$ for all $n$ and all $k\le n$. 

While the results by Erdős--Turán und Bilu yield equidistribution on $S^1$ for the zero sets of sequences of polynomials if the non-negative heights $h_{\mathrm{ET}}$ or $h_{M}$ tend to zero, our result gives equidistribution on the whole complex plane $\mathbb{C}$ weighted by the Fubini--Study measure if the difference of heights $h_B-h_{\mathrm{FS}}$ tends to its minimal limit point $-\frac{1}{2}$. Even more, the latter condition is generically satisfied in the sets of polynomials of bounded height $h_B$.

\subsection{Applications to Arithmetic Intersections Numbers}
Let us now discuss computations of arithmetic intersection numbers as an application of air classes.
For any $\alpha\in \widehat{\mathrm{CH}}_p(\mathcal{X})$ we write $(\alpha)_{\mathrm{Eff}}\subseteq \widehat{Z}_p(\mathcal{X})$ for the subset of arithmetic cycles of Green type which are effective and represent $\alpha$. One may consider $(\alpha)_{\mathrm{Eff}}$ as an analogue of a complete linear system in classical algebraic geometry.
\begin{Pro}\label{pro_intersection-finite}
	Let $\mathcal{X}$ be any generically smooth projective arithmetic variety. 
	For any arithmetic cycle $\alpha\in\widehat{\mathrm{CH}}_p(\mathcal{X})$, which is air, and any arithmetically ample hermitian line bundles $\mathcal{L}_1,\dots,\mathcal{L}_{p}$ we can compute their arithmetic intersection number in the following way
	$$(\overline{\mathcal{L}}_1\cdots\overline{\mathcal{L}}_{p}\cdot\alpha)=\limsup_{n\to\infty}\sup_{(\mathcal{Z},g_{\mathcal{Z}}) \in (n\alpha)_{\mathrm{Eff}}} \tfrac{1}{n}(\overline{\mathcal{L}}_1\cdots\overline{\mathcal{L}}_{p}\cdot (\mathcal{Z},0)).$$
\end{Pro}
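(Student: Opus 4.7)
The plan is to split the arithmetic intersection of any effective representative of $n\alpha$ into a geometric part and an archimedean correction, bound the correction from above by air-ness, and from below by effectivity. The starting point is the multi-variable version of the formula underlying (\ref{equ_induction-formula}): for every $(\mathcal{Z},g_{\mathcal{Z}})\in(n\alpha)_{\mathrm{Eff}}$,
\begin{align*}
n(\overline{\mathcal{L}}_1\cdots\overline{\mathcal{L}}_p\cdot\alpha) = (\overline{\mathcal{L}}_1\cdots\overline{\mathcal{L}}_p\cdot(\mathcal{Z},0)) + \int_{\mathcal{X}(\mathbb{C})} g_{\mathcal{Z}}\wedge c_1(\overline{\mathcal{L}}_1)\wedge\cdots\wedge c_1(\overline{\mathcal{L}}_p),
\end{align*}
which follows from the iterative definition of the arithmetic intersection pairing and is normalized consistently with (\ref{equ_air}).

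For the inequality ``$\ge$'' I would observe that effectivity yields $g_{\mathcal{Z}}\ge 0$ almost everywhere on $\mathcal{X}(\mathbb{C})$, whereas each $c_1(\overline{\mathcal{L}}_i)$ is a positive $(1,1)$-form since $\overline{\mathcal{L}}_i$ is arithmetically ample. Hence the integral above is non-negative, giving $\tfrac{1}{n}(\overline{\mathcal{L}}_1\cdots\overline{\mathcal{L}}_p\cdot(\mathcal{Z},0))\le(\overline{\mathcal{L}}_1\cdots\overline{\mathcal{L}}_p\cdot\alpha)$ uniformly in $n$ and $(\mathcal{Z},g_{\mathcal{Z}})$; taking the supremum and the limsup gives one direction of the claimed equality.

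For the reverse inequality I would use air-ness. Fix $\epsilon>0$ and put $\overline{\mathcal{M}}=\overline{\mathcal{L}}_1\otimes\cdots\otimes\overline{\mathcal{L}}_p$, which is again arithmetically ample. By assumption there exist $n_0\ge 1$ and an $(\epsilon,\overline{\mathcal{M}})$-irreducible representative $(\mathcal{Z}_0,g_{\mathcal{Z}_0})$ of $n_0\alpha$. Expanding $c_1(\overline{\mathcal{M}})^p=(c_1(\overline{\mathcal{L}}_1)+\cdots+c_1(\overline{\mathcal{L}}_p))^p$ into a sum of non-negative $(p,p)$-forms in which the mixed term $c_1(\overline{\mathcal{L}}_1)\wedge\cdots\wedge c_1(\overline{\mathcal{L}}_p)$ appears with multiplicity $p!$, one gets $c_1(\overline{\mathcal{M}})^p\ge p!\, c_1(\overline{\mathcal{L}}_1)\wedge\cdots\wedge c_1(\overline{\mathcal{L}}_p)$ pointwise. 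Combined with (\ref{equ_air}) and with $(\overline{\mathcal{M}}^p\cdot(\mathcal{Z}_0,0))\le n_0(\overline{\mathcal{M}}^p\cdot\alpha)$ (an instance of the ``$\ge$'' inequality applied to $\overline{\mathcal{M}}$), this yields $\int g_{\mathcal{Z}_0}\wedge c_1(\overline{\mathcal{L}}_1)\wedge\cdots\wedge c_1(\overline{\mathcal{L}}_p)<\tfrac{\epsilon n_0}{p!}(\overline{\mathcal{M}}^p\cdot\alpha)$, and hence
$$\frac{1}{n_0}(\overline{\mathcal{L}}_1\cdots\overline{\mathcal{L}}_p\cdot(\mathcal{Z}_0,0)) > (\overline{\mathcal{L}}_1\cdots\overline{\mathcal{L}}_p\cdot\alpha) - \frac{\epsilon}{p!}(\overline{\mathcal{M}}^p\cdot\alpha).$$

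To pass from this single $n_0=n_0(\epsilon)$ to a lower bound on $\limsup_{n\to\infty}$, I would use a scaling trick: for every $k\ge 1$ the cycle $(k\mathcal{Z}_0,kg_{\mathcal{Z}_0})$ lies in $(kn_0\alpha)_{\mathrm{Eff}}$ and its normalized value $\tfrac{1}{kn_0}(\overline{\mathcal{L}}_1\cdots\overline{\mathcal{L}}_p\cdot(k\mathcal{Z}_0,0))$ coincides with the original one. Thus the supremum at every $n=kn_0$ is at least $(\overline{\mathcal{L}}_1\cdots\overline{\mathcal{L}}_p\cdot\alpha)-\tfrac{\epsilon}{p!}(\overline{\mathcal{M}}^p\cdot\alpha)$, so the $\limsup$ exceeds the same quantity; letting $\epsilon\to 0$ with the finite constant $(\overline{\mathcal{M}}^p\cdot\alpha)$ fixed completes the proof. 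The main subtlety is conceptual rather than computational: air-ness only supplies one good $n_0$ per $\epsilon$ and the scaling $(\mathcal{Z}_0,g_{\mathcal{Z}_0})\mapsto (k\mathcal{Z}_0,kg_{\mathcal{Z}_0})$ destroys irreducibility, which is precisely why the supremum in the proposition ranges over \emph{all} effective representatives rather than only irreducible ones, and why the result is a $\limsup$ rather than a $\lim$.
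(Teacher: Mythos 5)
Your proposal is correct and follows essentially the same route as the paper: the same splitting of $(\overline{\mathcal{L}}_1\cdots\overline{\mathcal{L}}_p\cdot(\mathcal{Z},g_{\mathcal{Z}}))$ into a geometric part plus an archimedean integral (Lemma \ref{lem_intersection-restriction}), the same positivity argument for the inequality ``$\ge$'', and the same combination of air-ness at a single $n_0$ with the scaling trick $(\mathcal{Z}_0,g_{\mathcal{Z}_0})\mapsto(k\mathcal{Z}_0,kg_{\mathcal{Z}_0})$ to reach the $\limsup$. The only cosmetic difference is that you control the mixed integral via the multinomial inequality $c_1(\overline{\mathcal{M}})^p\ge p!\,c_1(\overline{\mathcal{L}}_1)\wedge\cdots\wedge c_1(\overline{\mathcal{L}}_p)$ for $\overline{\mathcal{M}}=\overline{\mathcal{L}}_1\otimes\cdots\otimes\overline{\mathcal{L}}_p$, whereas the paper invokes Proposition \ref{pro_airness-check} (b) with the reference form $c_1(\overline{\mathcal{L}}_1)^p$ and the compactness bound $c_1(\overline{\mathcal{L}}_1)\wedge\cdots\wedge c_1(\overline{\mathcal{L}}_p)\le A\,c_1(\overline{\mathcal{L}}_1)^p$.
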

By Theorem \ref{thm_air} and induction one can completely compute the arithmetic intersection number $(\overline{\mathcal{L}}_1\cdots\overline{\mathcal{L}}_d)$ of arithmetically ample hermitian line bundles $\overline{\mathcal{L}}_1,\dots,\overline{\mathcal{L}}_d$ by a limit of classical geometric intersection numbers. With some more effort we even get the following more explicit formula.
\begin{Thm}\label{thm_intersection-line-bundles-geometric}
	Let $\mathcal{X}$ be any generically smooth projective arithmetic variety. Further, let $\mathcal{Y}\subseteq \mathcal{X}$ be any generically smooth arithmetic subvariety of dimension $e\ge 1$.	
	For any arithmetically ample hermitian line bundles $\overline{\mathcal{L}}_1,\dots,\overline{\mathcal{L}}_e$ and any $n\in\mathbb{Z}_{\ge 1}$ we define
	$$H_n=\left\{\left.(s_1,\dots,s_e)\in\prod_{i=1}^e \widehat{H}^0\left(\mathcal{X},\overline{\mathcal{L}}_i^{\otimes n}\right)~\right|~\dim\left(\mathcal{Y}\cap\bigcap_{i=1}^e \mathrm{Supp}(\mathrm{div}(s_i))\right)=0\right\}.$$
	Then the arithmetic intersection number $(\overline{\mathcal{L}}_1|_{\mathcal{Y}}\cdots\overline{\mathcal{L}}_e|_{\mathcal{Y}})$ can be computed by
	\begin{align*}
		(\overline{\mathcal{L}}_1|_{\mathcal{Y}}\cdots\overline{\mathcal{L}}_e|_{\mathcal{Y}})=\lim_{n\to\infty}\frac{1}{n^e}\max_{(s_1,\dots,s_e)\in H_n}\sum_{p\in|\mathrm{Spec}(\mathbb{Z})|}i_p(\mathcal{Y}\cdot\mathrm{div}(s_1)\cdots\mathrm{div}(s_e))\log p
	\end{align*}
	where $i_p(\mathcal{Y}\cdot\mathrm{div}(s_1)\cdots\mathrm{div}(s_e))$ denotes the degree of the $0$-cycle $\mathcal{Y}\cdot\mathrm{div}(s_1)\cdots\mathrm{div}(s_e)$ in the fiber $\mathcal{X}_p$ of $\mathcal{X}$ over $p\in |\mathrm{Spec}(\mathbb{Z})|$.
\end{Thm}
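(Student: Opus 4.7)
The starting point is the identity
$$\bigl(\overline{\mathcal{L}}_1|_{\mathcal{Y}}\cdots\overline{\mathcal{L}}_e|_{\mathcal{Y}}\bigr) = \frac{1}{n^e}\,\widehat{\deg}\bigl(\widehat{\mathrm{div}}(s_1)\cdots\widehat{\mathrm{div}}(s_e)\cdot(\mathcal{Y},0)\bigr),$$
valid for any $(s_1,\ldots,s_e)\in H_n$ since $\tfrac{1}{n}\widehat{\mathrm{div}}(s_i)$ represents $\widehat{c}_1(\overline{\mathcal{L}}_i)$. Iterating the formula (\ref{equ_induction-formula}), equivalently expanding the $*$-product of the Green currents $-\log|s_j|^2_{\overline{\mathcal{L}}_j^{\otimes n}}$, rewrites the right-hand side as the geometric contribution $\tfrac{1}{n^e}\sum_{p}i_p(\mathcal{Y}\cdot\mathrm{div}(s_1)\cdots\mathrm{div}(s_e))\log p$ plus $e$ archimedean terms of the form
$$I_j(s_j) := \frac{1}{n^e}\int_{(\mathcal{Y}\cdot\mathrm{div}(s_1)\cdots\mathrm{div}(s_{j-1}))(\mathbb{C})}\bigl(-\log|s_j|_{\overline{\mathcal{L}}_j^{\otimes n}}\bigr)\,c_1(\overline{\mathcal{L}}_{j+1})\cdots c_1(\overline{\mathcal{L}}_e),\qquad j=1,\ldots,e.$$

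For the upper bound, $\|s_j\|_{\sup}\le 1$ forces $-\log|s_j|\ge 0$, and by arithmetic ampleness every $c_1(\overline{\mathcal{L}}_k)$ is semi-positive; hence each $I_j(s_j)\ge 0$ and
$$\tfrac{1}{n^e}\sum_p i_p(\mathcal{Y}\cdot\mathrm{div}(s_1)\cdots\mathrm{div}(s_e))\log p\;\le\;\bigl(\overline{\mathcal{L}}_1|_{\mathcal{Y}}\cdots\overline{\mathcal{L}}_e|_{\mathcal{Y}}\bigr)$$
uniformly in $n$ and in the choice of sections. This already gives the $\limsup$ half of the claimed identity.

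For the lower bound I build the sections inductively. Set $\mathcal{Y}_0=\mathcal{Y}$; given a generically smooth $\mathcal{Y}_{j-1}\subseteq\mathcal{X}$ of dimension $e-j+1$, apply Theorem \ref{thm_equidistribution} (or Corollary \ref{cor_height-converges}) to $\overline{\mathcal{L}}_j$, $\mathcal{Y}_{j-1}$ and $r_p\equiv 1$: a positive proportion of $s_j\in\widehat{H}^0(\mathcal{X},\overline{\mathcal{L}}_j^{\otimes n})$ satisfies $\tfrac{1}{n}\int_{\mathcal{Y}_{j-1}(\mathbb{C})}|\log|s_j||\, c_1(\overline{\mathcal{L}}_j)^{e-j}=o(1)$. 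To replace $c_1(\overline{\mathcal{L}}_j)^{e-j}$ by the mixed form $c_1(\overline{\mathcal{L}}_{j+1})\cdots c_1(\overline{\mathcal{L}}_e)$ occurring in $I_j$, I either use the pointwise bound $c_1(\overline{\mathcal{L}}_{j+1})\cdots c_1(\overline{\mathcal{L}}_e)\le\tfrac{1}{(e-j)!}c_1(\overline{\mathcal{L}})^{e-j}$ for $\overline{\mathcal{L}}:=\overline{\mathcal{L}}_1\otimes\cdots\otimes\overline{\mathcal{L}}_e$ (a consequence of semi-positivity and the multinomial expansion) combined with Theorem \ref{thm_equidistribution} applied to $\overline{\mathcal{L}}$, or observe that the geometry-of-numbers argument reducing Theorem \ref{thm_equidistribution} to the Bayraktar--Coman--Marinescu equidistribution goes through verbatim with any positive $(e-j,e-j)$ test form. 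In either case, a positive-proportion of $s_j$ makes $I_j(s_j)=o(1)$. Simultaneously, Charles's density theorem \cite{Cha21} (invoked already below Theorem \ref{thm_equidistribution}) supplies a positive-density set of $s_j$ for which $\mathcal{Y}_j:=\mathcal{Y}_{j-1}\cdot\mathrm{div}(s_j)$ is generically smooth of the expected dimension. Intersecting the two positive-density families yields an admissible $s_j$; iterating $e$ times produces $(s_1,\ldots,s_e)\in H_n$ with every $I_j=o(1)$, which delivers the matching $\liminf$ and completes the proof.

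The principal difficulty lies in making the induction uniform in $n$: at step $j$ the subvariety $\mathcal{Y}_{j-1}$ depends on the previously chosen sections $s_1^{(n)},\ldots,s_{j-1}^{(n)}$, so Theorem \ref{thm_equidistribution} has to be applied with an $n$-dependent moving subvariety and a test form that itself varies with $n$. Either a quantitative form of Corollary \ref{cor_height-converges} with an effective rate, or a diagonal extraction over all admissible $\mathcal{Y}_{j-1}^{(n)}$, is required so that all $e$ constraints—smallness of every $I_j$ together with Charles's generic smoothness at every intermediate step—can be imposed along a single sequence. Coordinating this uniformity is the technical core of the argument; everything else is a routine consequence of the intersection-theoretic expansion and the non-negativity of the archimedean terms.
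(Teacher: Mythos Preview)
Your upper bound is correct and matches the paper: non-negativity of each archimedean term $I_j(s_j)$ follows from $\|s_j\|_{\sup}\le 1$ and positivity of the Chern forms, and this gives the $\limsup$ half uniformly in $n$ and in the choice of sections.

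For the lower bound you have correctly identified the obstruction but not removed it. Theorem \ref{thm_equidistribution} (and Proposition \ref{pro_positive-density}) are statements about a \emph{fixed} subvariety, while your $\mathcal{Y}_{j-1}$ moves with $n$ through the previously chosen $s_1^{(n)},\ldots,s_{j-1}^{(n)}$. Neither proposed fix is available: a diagonal extraction would need uniformity of the $o(1)$ over a family of subvarieties that grows with $n$, and no effective rate for Corollary \ref{cor_height-converges} is established. As written, the induction does not close.

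The paper avoids the moving-target problem altogether. In the induction step it first \emph{fixes} a single integer $n_4$ and a single section $s'_{e,n_4}\in\widehat{H}^0(\mathcal{X},\overline{\mathcal{L}}_e^{\otimes n_4})$ (obtained via Proposition \ref{pro_positive-density} together with Charles's arithmetic Bertini theorem, after the small metric twist $\overline{\mathcal{L}}_e(-\epsilon/(4c))$) such that the archimedean integral is controlled and $\mathrm{div}(s'_{e,n_4})\cdot\mathcal{Y}$ is irreducible and generically smooth. The induction hypothesis is then applied to this \emph{fixed} $(e-1)$-dimensional subvariety, yielding for all large $n$ tuples $(s_{1,n},\ldots,s_{e-1,n})\in H_n(\mathcal{Y}\cdot\mathrm{div}(s'_{e,n_4}))$. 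To produce the $e$-th section at level $n$ one writes $n=n'n_4+n''$ with $n_4\le n''<2n_4$ and sets $s_{e,n}=(s'_{e,n_4})^{\otimes n'}\otimes t_n$ for a small $t_n\in\widehat{H}^0(\mathcal{X},\overline{\mathcal{L}}_e^{\otimes n''})$ chosen (using basepoint-freeness and a small basis) to keep the intersection proper. The $n''$-part contributes $O(1/n)$ to the normalized intersection number, and the archimedean term coming from $(s'_{e,n_4})^{\otimes n'}$ is bounded once and for all by the choice of $n_4$. This tensor-power device is precisely what makes the induction uniform in $n$ without any version of Theorem \ref{thm_equidistribution} for varying subvarieties; it is the missing idea in your proposal.
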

As every hermitian line bundle can be expressed as the difference of two arithmetically ample hermitian line bundles, we can by multi-linearity compute the arithmetic intersection number of any hermitian line bundles as a limit of classical geometric intersection numbers over the finite fibers.
\subsection{Outline}
In Section \ref{sec_geometry-of-numbers} we define some notions in geometry of numbers and we discuss a recent result by Freyer and Lucas \cite{FL21}. In Section \ref{sec_complex-analysis}
we establish the complex analytic tools needed in this paper. After recalling some preliminaries from complex analysis, we discuss a distribution result on sequences of divisors of sections of an ample line bundle by Bayraktar, Coman and Marinescu \cite{BCM20}. Subsequently, we will show that the condition for such a sequence to equidistribute stays true after a small change of the sections.
We recall some definitions and basic facts from arithmetic intersection theory in Section \ref{sec_arithmetc-intersection-theory}. As an example, we will consider the projective line $\mathbb{P}_{\mathbb{Z}}^1$ and the hermitian line bundle $\overline{\mathcal{O}(1)}$ equipped with the Fubini--Study metric. We will discuss the behavior of arithmetic intersection numbers under restrictions in more details. In particular, we will discuss heights and prove Proposition \ref{pro_equidistribution-individual}.

We start Section \ref{sec_distribution} with the proof of Theorem \ref{thm_equidistribution}. Actually, we will prove a much more general version of it. Further, we will discuss the distribution of divisors in sequences of small sections. In particular, we will prove Corollaries \ref{cor_height-converges} and \ref{cor_bogomolov}. As an example, we will consider the distribution of divisors of sections of the line bundle $\overline{\mathcal{O}(1)}^{\otimes n}$ on $\mathbb{P}_{\mathbb{Z}}^1$. We will interpret this as an equidistribution result on the zero sets of integer polynomials. Especially, we give proofs of Proposition \ref{pro_equidistribution-polynomials} and Corollary \ref{cor_distribution-algebraic-numbers}. In the last section we discuss the notion of $(\epsilon,\overline{\mathcal{M}})$-irreducible arithmetic cycles and of air classes. By proving Theorem \ref{thm_air} we will show that every arithmetically ample hermitian line bundle is air. Finally, we give the proof of Theorem \ref{thm_intersection-line-bundles-geometric}.

\section{Geometry of Numbers}\label{sec_geometry-of-numbers}
In this section we recall some notions and results from geometry of numbers. For the basics of the theory of geometry of numbers we refer to Moriwaki's book \cite[Chapter 2]{Mor14}.

Let $V$ be an euclidean vector space of dimension $n$. By a \emph{lattice} $\Lambda\subseteq V$ we mean a free $\mathbb{Z}$-submodule of rank $n$ which spans $V$ as an $\mathbb{R}$-vector space. We denote $B_t=\{w\in V~|~\|w\|<t\}$ for the open ball of radius $t\in\mathbb{R}$ around the origin. For any lattice $\Lambda\subseteq V$ and any compact, convex, and symmetric subset $K\subseteq V$ of positive volume we define the successive minima by
$$\lambda_{j}(K,\Lambda)=\min\{\lambda>0~|~\dim(\mathrm{span}(\lambda K\cap \Lambda))=j\},$$
where $1\le j\le n$. Moreover, we set 
$$\lambda_j(\Lambda)=\lambda_j\left(\overline{B_1},\Lambda\right).$$
Further, we define
$$\lambda_\mathbb{Z}(\Lambda)=\min\{\lambda>0~|~\exists~ \mathbb{Z}\text{-basis } x_1,\dots,x_n\text{ of } \Lambda\text{ with } \|x_j\|\le \lambda \text{ for all } j.\}$$
By \cite[Lemma 2.18]{Mor14} it holds 
$$\lambda_{n}(\Lambda)\le \lambda_{\mathbb{Z}}(\Lambda)\le n\lambda_{n}(\Lambda).$$

Freyer and Lucas \cite{FL21} recently proved that for all compact, convex, and symmetric subsets $K\subseteq \mathbb{R}^n$ of positive volume it holds
\begin{align}\label{equ_latticebound}
	\# K\cap \mathbb{Z}^n\le \mathrm{Vol}(K)\prod_{j=1}^n\left(1+\frac{n\lambda_j(K,\mathbb{Z}^n)}{2}\right).
\end{align}
If $K$ moreover satisfies $\lambda_n(K,\mathbb{Z}^n)\le \frac{2}{n}$, then they also showed
\begin{align}\label{equ_latticebound2}
	\#\mathrm{int}(K)\cap\mathbb{Z}^n\ge \mathrm{Vol}(K)\prod_{j=1}^n\left(1-\frac{n\lambda_j(K,\mathbb{Z}^n)}{2}\right),
\end{align}
where $\mathrm{int}(K)$ denotes the interior of $K$.
Let us rewrite this result for arbitrary lattices $\Lambda\subseteq V$. We choose a $\mathbb{Z}$-basis $v_1,\dots, v_{n}$ of $\Lambda$. The determinant of $\Lambda$ is defined by
$$\det(\Lambda)=|{\det}(v_1,\dots,v_{n})|.$$
It is independent of the choice of the basis. Denote $\varphi\colon V\to \mathbb{R}^{n}$ for the isomorphism associated to the basis $v_1,\dots,v_n$. It reduces to an isomorphism $\Lambda\cong\varphi(\Lambda)=\mathbb{Z}^{n}$. For any compact, convex, and symmetric subset $K\subseteq V$ the image $\varphi(K)\subseteq \mathbb{R}^{n}$ is again compact, convex, and symmetric and its volume satisfies
$$\mathrm{Vol}(\varphi(K))=\frac{\mathrm{Vol}(K)}{\det(\Lambda)}.$$
As $\varphi$ identifies $j$-tuples of linearly independent $\Lambda$-lattice points in $K$ with $j$-tuples of linearly independent integral points in $\varphi(K)$ for every $1\le j\le n$, the successive minima satisfy
$$\lambda_j(K,\Lambda)=\lambda_j(\varphi(K),\mathbb{Z}^{n}).$$
We also have $\#K\cap \Lambda=\#\varphi(K\cap\Lambda)=\#\varphi(K)\cap \mathbb{Z}^{n}$.
Applying (\ref{equ_latticebound}) to $\varphi(K)$ and using the conclusions above we get
\begin{align}\label{equ_latticebound-general}
	\# (K\cap \Lambda) \le \frac{\mathrm{Vol}(K)}{\det(\Lambda)}\left(1+\frac{n\lambda_{n}(K,\Lambda)}{2}\right)^{n},
\end{align}
where we additionally used the trivial bounds $\lambda_j(K,\Lambda)\le \lambda_{n}(K,\Lambda)$ for all $j\le n$.
Under the condition $\lambda_n(K,\Lambda)\le \frac{2}{n}$ we similarly deduce from (\ref{equ_latticebound2}) that
\begin{align}\label{equ_latticebound-general2}
	\#(\mathrm{int}(K)\cap\Lambda)\ge\frac{\mathrm{Vol}(K)}{\det(\Lambda)}\left(1-\frac{n\lambda_{n}(K,\Lambda)}{2}\right)^{n}.
\end{align}

One directly checks that for all $\mu>0$ it holds 
$$\lambda_j(\mu K,\Lambda)=\mu^{-1}\lambda_j(K,\Lambda),\qquad \mathrm{Vol}(\mu K)=\mu^n\mathrm{Vol}(K)$$
for all $j\le n$.
If $K_1\subseteq K_2$ are two compact, convex and symmetric sets of positive volume in $V$, then it holds
$$\lambda_j(K_2,\Lambda)\le \lambda_j(K_1,\Lambda)$$
for all $j\le n$. For later use we prove the following upper bound for the quotient of the numbers of lattice points in $K$ and $\mu K$.

\begin{Lem}\label{lem_geometry-of-numbers}
	Let $\Lambda$ be a lattice in an euclidean vector space $V$ of dimension $n$.
	Let $r>0$ and $\mu>0$ be two real numbers satisfying $r\mu\ge n\lambda_{\mathbb{Z}}(\Lambda)$ and $K\subseteq V$ be a compact, convex and symmetric space satisfying $\overline{B_r}\subseteq K$.
	Then it holds
	$$\frac{\#( K\cap \Lambda)}{\#(\mu K\cap \Lambda)}\le\mu^{-n}\left(1+n(1+\mu^{-1})r^{-1}\lambda_{\mathbb{Z}}(\Lambda)\right)^n $$
\end{Lem}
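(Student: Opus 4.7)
The plan is to bound the numerator and denominator separately using the Freyer--Lucas estimates (\ref{equ_latticebound-general}) and (\ref{equ_latticebound-general2}), then extract the quotient. Write $a=\tfrac{n\lambda_n(K,\Lambda)}{2}$. Since $\overline{B_r}\subseteq K$ implies $\lambda_n(K,\Lambda)\le \lambda_n(\overline{B_r},\Lambda)=r^{-1}\lambda_n(\Lambda)\le r^{-1}\lambda_{\mathbb{Z}}(\Lambda)$, the hypothesis $r\mu\ge n\lambda_{\mathbb{Z}}(\Lambda)$ gives
$$\lambda_n(\mu K,\Lambda)=\mu^{-1}\lambda_n(K,\Lambda)\le (r\mu)^{-1}\lambda_{\mathbb{Z}}(\Lambda)\le \tfrac{1}{n}\le \tfrac{2}{n},$$
so (\ref{equ_latticebound-general2}) applies to $\mu K$ and gives
$$\#(\mu K\cap \Lambda)\ge \#(\mathrm{int}(\mu K)\cap\Lambda)\ge \mu^n\tfrac{\mathrm{Vol}(K)}{\det(\Lambda)}\left(1-\tfrac{a}{\mu}\right)^n.$$
At the same time (\ref{equ_latticebound-general}) yields $\#(K\cap\Lambda)\le \tfrac{\mathrm{Vol}(K)}{\det(\Lambda)}(1+a)^n$.

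Dividing these two estimates cancels $\mathrm{Vol}(K)/\det(\Lambda)$ and produces
$$\frac{\#(K\cap\Lambda)}{\#(\mu K\cap\Lambda)}\le \mu^{-n}\left(\frac{1+a}{1-a/\mu}\right)^n.$$
The remaining step is the elementary inequality $\frac{1+a}{1-a/\mu}\le 1+n(1+\mu^{-1})r^{-1}\lambda_{\mathbb{Z}}(\Lambda)$. Write
$$\frac{1+a}{1-a/\mu}=1+\frac{a(1+\mu^{-1})}{1-a/\mu}.$$
Using the bound $a/\mu\le \tfrac{n\lambda_{\mathbb{Z}}(\Lambda)}{2r\mu}\le \tfrac{1}{2}$ (which follows directly from $r\mu\ge n\lambda_{\mathbb{Z}}(\Lambda)$), the denominator satisfies $1-a/\mu\ge \tfrac{1}{2}$, so the extra term is at most $2a(1+\mu^{-1})\le n(1+\mu^{-1})r^{-1}\lambda_{\mathbb{Z}}(\Lambda)$, again using $a\le \tfrac{n\lambda_{\mathbb{Z}}(\Lambda)}{2r}$. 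Raising to the $n$-th power gives the claim.

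The only non-routine point is verifying that the hypothesis $r\mu\ge n\lambda_{\mathbb{Z}}(\Lambda)$ is exactly strong enough to make both (\ref{equ_latticebound-general2}) applicable to $\mu K$ and the denominator $1-a/\mu$ bounded below by $\tfrac{1}{2}$; once this is observed the argument is a direct computation. The use of $\overline{B_r}\subseteq K$ serves only to control $\lambda_n(K,\Lambda)$ by the intrinsic invariant $\lambda_{\mathbb{Z}}(\Lambda)/r$, which is what makes the final bound depend on $r$ rather than on $K$ itself.
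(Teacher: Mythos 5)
Your proposal is correct and follows essentially the same route as the paper: apply the Freyer--Lucas upper bound to $K$ and the lower bound to $\mu K$ (after checking $\lambda_n(\mu K,\Lambda)\le\frac{1}{n}$ from $\overline{B_r}\subseteq K$ and $r\mu\ge n\lambda_{\mathbb{Z}}(\Lambda)$), then estimate the resulting ratio by the same elementary manipulation with the denominator bounded below by $\tfrac12$. Your explicit insertion of $\#(\mu K\cap\Lambda)\ge\#(\mathrm{int}(\mu K)\cap\Lambda)$ is a small point of care the paper leaves implicit.
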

\begin{proof}
	Since $r\overline{B_1}=\overline{B_r}\subseteq K$, we have 
	$$\lambda_n(K,\Lambda)\le \lambda_n(r\overline{B_1},\Lambda)=r^{-1}\lambda_n(\Lambda)\le r^{-1}\lambda_{\mathbb{Z}}(\Lambda).$$
	In particular, we have $\lambda_n(\mu K,\Lambda)\le \mu^{-1} r^{-1}\lambda_{\mathbb{Z}}(\Lambda)\le \frac{1}{n}$. Hence, we can apply (\ref{equ_latticebound-general}) to $K$ and (\ref{equ_latticebound-general2}) to $\mu K$ to obtain
	\begin{align*}
		\frac{\#(K\cap \Lambda)}{\#(\mu K\cap \Lambda)}&\le \frac{(1+\frac{n}{2}\lambda_n(K,\Lambda))^n}{\mu^n(1-\frac{n}{2}\mu^{-1}\lambda_n(K,\Lambda))^n}\le\mu^{-n}\left(\frac{1+\frac{n}{2}r^{-1}\lambda_{\mathbb{Z}}(\Lambda)}{1-\frac{n}{2}\mu^{-1}r^{-1}\lambda_{\mathbb{Z}}(\Lambda)}\right)^n\\
		&=\mu^{-n}\left(1+\frac{\frac{n}{2}(1+\mu^{-1})r^{-1}\lambda_{\mathbb{Z}}(\Lambda)}{1-\frac{n}{2}\mu^{-1}r^{-1}\lambda_{\mathbb{Z}}(\Lambda)}\right)^n\\
		&\le \mu^{-n}\left(1+n(1+\mu^{-1})r^{-1}\lambda_{\mathbb{Z}}(\Lambda)\right)^n.
	\end{align*}
	This proves the lemma.
\end{proof}

\section{Complex Analysis}\label{sec_complex-analysis}
In this section we establish the complex analytic tools needed in our study of arithmetic intersection theory. After collecting some basic facts in Section \ref{sec_preliminaries} we will discuss results on the distribution of the divisors of global sections by Bayraktar, Coman and Marinescu \cite{BCM20} in Section \ref{sec_distribution-complex}. As the divisors in a sequence of sections $(s_{p})_{p\in\mathbb{Z}_{\ge 1}}$ of a positive line bundle $\overline{L}^{\otimes p}$ tends to equidistribute with respect to $c_1(\overline{L})$ for $p\to\infty$ if the integral of $|\log|s_p|^{1/p}|$ tends to $0$, their studies concentrate on this integral. 
In Section \ref{sec_integral-higher-dim} we will show, that if the integral tends to $0$, it will still do so after a small change of the sequence $(s_{p})_{p\in\mathbb{Z}}$. We study the distribution result for certain real vector subspaces of the global sections in Section \ref{sec_real-space}.
\subsection{Preliminaries}\label{sec_preliminaries}
We recall some preliminaries on complex analysis especially applied to smooth projective complex varieties.

Let $X$ be any smooth projective complex variety of dimension $n\ge 0$. Note that we do not assume that varieties	are connected.
Let $\mathscr{A}_X^{p,q}$ be the sheaf of $C^{\infty}$ $(p,q)$-forms on $X$ and set $A^{p,q}(X)=H^0(X,\mathscr{A}_X^{p,q})$. We denote the usual Dolbeault operators
$$\partial\colon A^{p,q}(X)\to A^{p+1,q}(X),\qquad \overline{\partial}\colon A^{p,q}(X)\to A^{p,q+1}(X)$$
and $d=\partial+\overline{\partial}$ on $A^r(X)=\bigoplus_{p+q=r}A^{p,q}(X)$. A \emph{current of type} $(p,q)$ on $X$ is a continuous $\mathbb{C}$-linear map $T\colon A^{n-p,n-q}(X)\to \mathbb{C}$. We write $D^{p,q}(X)$ for the vector space of currents of type $(p,q)$ on $X$. We can consider $A^{p,q}$ as a subspace of $D^{p,q}(X)$ by associating to $\omega\in A^{p,q}(X)$ the current of type $(p,q)$
$$[\omega]\colon A^{n-p,n-q}(X)\to \mathbb{C},\qquad \eta\mapsto\int_X\omega\wedge \eta.$$
We will often omit the brackets and also write $\omega$ for the associated current.
Another important family of currents is associated to subvarieties of $X$. If $Y\subseteq X$ is a reduced subvariety of codimension $p$, then the \emph{current of Dirac type} $\delta_Y\in D^{p,p}(X)$ is defined by
$$\delta_Y\colon A^{n-p,n-p}(X)\to\mathbb{C},\qquad  \eta\mapsto \int_Y\eta:= \int_{Y_{\mathrm{reg}}}\eta,$$
where $Y_{\mathrm{reg}}$ denotes the non-singular locus of $Y$. If $Y=\sum_{j=1}^r n_j Y_j$ is any cycle of pure codimension $p$, we also write $\delta_Y=\sum_{j=1}^r n_j \delta_{Y_j}\in D^{p,p}(X)$. 

We say that $T\in D^{p,p}(X)$ is \emph{real} if for all $\eta\in A^{p,p}(X)$ it holds $T(\overline{\eta})=\overline{T(\eta)}$. We say that a current $T\in D^{p,p}(X)$ is \emph{semi-positive}, written as $T\ge 0$, if it is real and for all positive real forms $\eta\in A^{p,p}(X)$ we have $T(\eta)\ge 0$. For any current $T\in D^{n,n}(X)$ we use the notation
$$\int_X T=T(1).$$
In particular, this implies $\int_X[\omega]=\int_X\omega$ for any $\omega\in A^{n,n}(X)$.

If $T\in D^{p,q}(X)$ is a current, we define $\partial T\in D^{p+1,q}$ to be the linear map $T\circ \partial\colon A^{n-p-1,n-q}(X)\to \mathbb{C}$. Analogously, we can define $\overline{\partial}T$. If $Z$ is a codimension $p$ cycle of $X$, we call $g\in D^{p-1,p-1}(X)$ a \emph{Green current} of $Z$ if $$\frac{i}{2\pi}\partial\overline{\partial}g+\delta_Z=[\omega]$$ for some $(p,p)$-form $\omega\in A^{p,p}(X)$.

If $f\colon Y\to X$ is a projective morphism of smooth projective complex varieties, we define the \emph{push-forward} $f_*T\in D^{p-\dim Y+\dim X,q-\dim Y+\dim X}(X)$ of any current $T\in D^{p,q}(Y)$ by 
$$f_*T\colon A^{\dim Y-p,\dim Y-q}(X)\to \mathbb{C},\qquad \eta\mapsto T(f^*\eta).$$

Now let $L$ be any line bundle on $X$. By a \emph{hermitian metric} $h$ on $L$ we mean a family $(h_x)_{x\in X}$ of hermitian metrics on the fiber $L_x$ of $L$ over $x\in X$ for any point $x$ of $X$. For any Zariski open subset $U\subseteq X$ and any section $s\in H^0(U,L)$ we write $|s(x)|=|s(x)|_h=\sqrt{h_x(s(x),s(x))}$ for the norm of the section at $x$.
We call a hermitian metric $h$ \emph{smooth}, if for any Zariski open subset $U\subseteq X$ and any section $s\in H^0(U,L)$ the map $U\to \mathbb{R},~x\mapsto |s(x)|^2$ is smooth. We call the pair $\overline{L}=(L,h)$ a \emph{hermitian line bundle} if $L$ is a line bundle and $h$ a smooth hermitian metric on $L$.

The hermitian line bundle $(L^{\otimes p},h^{\otimes p})$ obtained by taking the tensor power of $(L,h)$ will play an important role in the upcoming sections. 
If $f\colon Y\to X$ is a projective morphism of smooth projective complex varieties and $\overline{L}=(L,h)$ a hermitian line bundle on $X$, we define its pullback $f^*\overline{L}$ by the pair $(f^*L,f^*h)$, where the hermitian metric $f^*h$ is determined by $$|(f^*s)(y)|_{f^*h}=|s(f(y))|_{h}$$ for any open subset $U\subseteq X$, any section $s\in H^0(U,L)$ and any point $y\in f^{-1}(U)$.

If $(L,h)$ is a hermitian line bundle and $s$ a rational and non-zero section of $L$, we may think of $-\log|s|^2$ as the current sending $\omega\in A^{n,n}$ to $-\int_{X}\log|s|^2\omega$.
It turns out that $-\log |s|^2$ is a Green current of type $\mathrm{div}(s)$.  Indeed, by the Poincaré--Lelong formula we have
\begin{align}\label{equ_poincare-lelong}
\frac{i}{2\pi}\partial\overline{\partial}\left(-\log |s|^2\right)+\delta_{\mathrm{div}(s)}=[c_1(L,h)],
\end{align}
where $c_1(L,h)$ denotes the first Chern form of the metrized line bundle $(L,h)$. We call the hermitian metric $h$ \emph{positive} if $c_1(L,h)$ is a positive form.
If $L$ is ample and $h$ is a positive hermitian metric on $L$, the Chern form $c_1(L,h)$ is a Kähler form on $X$. 

In the following we fix a Kähler form $\omega$ on $X$. Then we obtain a hermitian form on the set of holomorphic sections $H^0(X,L)$ of any hermitian line bundle $(L,h)$ by setting
\begin{align}\label{equ_inner-product}
\langle s_1,s_2\rangle =\left(\int_X\omega^n\right)^{-1}\int_X h(s_1(x),s_2(x))\omega^n
\end{align}
for any $s_1,s_2\in H^0(X,L)$. We write $\|s\|=\sqrt{\langle s,s\rangle}$ for any $s\in H^0(X,L)$ and we call $\|s\|$ the $L^2$-norm of $s$.
There is another natural norm on $H^0(X,L)$, called the \emph{sup-norm}, defined by
$$\|s\|_{\sup}=\sup_{x\in X}|s(x)|$$
for any section $s\in H^0(X,L)$. 
We recall, that there exists a constant $C_1\ge 1$ depending only on $(X,\omega)$ and $(L,h)$ but not on $p$, such that
\begin{align}\label{equ_supl2}
\|s\|\le \|s\|_{\sup}\le C_1 p^{n}\|s\|
\end{align}
for all sections $s\in H^0(X,L^{\otimes p})$. While the first inequality is trivial, the second has been proved by Gillet and Soulé \cite[Lemma 30]{GS92} based on ideas by Gromov.

Finally in this section, we deduce the following application of Stokes' theorem in our setting.
\begin{Lem}\label{lem_stokes}
	Let $(L,h)$, $(L_1,h_1)$ and $(L_2,h_2)$ be any hermitian line bundles. If $s_1\in H^0(X,L_1^{\otimes p_1})$ and $s_2\in H^0(X,L_2^{\otimes p_2})$ are sections, such that $$\dim (\mathrm{div}(s_1)\cap\mathrm{div}(s_2))=n-2,$$ then it holds
	\begin{align*}
	&p_2\int_X \log|s_1|c_1(L,h)^{n-1}c_1(L_2,h_2)-\int_{\mathrm{div}(s_2)}\log|s_1|c_1(L,h)^{n-1}\\
	&=p_1\int_X \log|s_2|c_1(L,h)^{n-1}c_1(L_1,h_1)-\int_{\mathrm{div}(s_1)}\log|s_2|c_1(L,h)^{n-1}.
	\end{align*}
\end{Lem}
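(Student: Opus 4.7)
The idea is to rewrite both sides as a single pairing of the Green currents $\log|s_1|$ and $\log|s_2|$ against $c_1(L,h)^{n-1}$, using the Poincaré--Lelong formula twice, and then invoke a symmetric integration by parts which holds precisely because $\alpha:=c_1(L,h)^{n-1}$ is $d$-closed. The codimension hypothesis $\dim(\mathrm{div}(s_1)\cap\mathrm{div}(s_2))=n-2$ will be what makes the (a priori singular) manipulations legitimate.

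First I would apply Poincaré--Lelong (\ref{equ_poincare-lelong}) to each $s_i\in H^0(X,L_i^{\otimes p_i})$, which in the form we need reads
\begin{equation*}
p_i\, c_1(L_i,h_i)-\delta_{\mathrm{div}(s_i)}=-\tfrac{i}{\pi}\,\partial\overline{\partial}\log|s_i|
\end{equation*}
as an identity of currents on $X$. Using this for $i=2$, the left hand side of the claimed equality is exactly
\begin{equation*}
\int_X \log|s_1|\cdot\bigl(p_2\,c_1(L_2,h_2)-\delta_{\mathrm{div}(s_2)}\bigr)\wedge\alpha
=-\tfrac{i}{\pi}\int_X \log|s_1|\,\partial\overline{\partial}\log|s_2|\wedge\alpha,
\end{equation*}
and symmetrically the right hand side equals $-\tfrac{i}{\pi}\int_X \log|s_2|\,\partial\overline{\partial}\log|s_1|\wedge\alpha$. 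So the lemma reduces to the symmetry identity
\begin{equation*}
\int_X \log|s_1|\,\partial\overline{\partial}\log|s_2|\wedge\alpha=\int_X \log|s_2|\,\partial\overline{\partial}\log|s_1|\wedge\alpha.
\end{equation*}

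If the functions $\log|s_i|$ were smooth, this would be a one-line application of Stokes' theorem: $\partial\overline{\partial}$ is self-adjoint with respect to integration against a $d$-closed top-degree-minus-two form like $\alpha$, since all boundary terms of the form $\int_X d(\cdots\wedge\alpha)$ vanish as $d\alpha=0$ and $X$ is compact without boundary. The serious part of the proof is justifying this integration by parts in the presence of the logarithmic singularities of $\log|s_i|$ along $\mathrm{div}(s_i)$. I would do this by a standard regularization: replace $\log|s_i|$ by the truncation $\log\max(|s_i|,\varepsilon)$, or alternatively remove a tubular $\varepsilon$-neighborhood of $\mathrm{div}(s_1)\cup\mathrm{div}(s_2)$. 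On the complement the functions are smooth, Stokes applies, and one obtains boundary contributions supported near $\mathrm{div}(s_1)\cup\mathrm{div}(s_2)$. The hypothesis $\dim(\mathrm{div}(s_1)\cap\mathrm{div}(s_2))=n-2$ is exactly what ensures that when one integrates $\log|s_2|$ over a shrinking tube around $\mathrm{div}(s_1)$ (and vice versa) against $\alpha$, the integrand is in $L^1$ with respect to the relevant measure and the contributions from the tube around $\mathrm{div}(s_1)\cap\mathrm{div}(s_2)$ vanish as $\varepsilon\to 0$. Thus the boundary terms all disappear in the limit and the symmetry identity follows.

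The only real obstacle is this singular integration by parts, and it is a well-known manipulation in Arakelov geometry (essentially the same computation that underlies the star product of Green currents of Gillet--Soulé). Once the symmetry is established, substituting back through the Poincaré--Lelong formulas for $s_1$ and $s_2$ yields exactly the two sides of the lemma.
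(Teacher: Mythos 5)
Your proposal is correct and follows essentially the same route as the paper: both reduce the identity via the Poincaré--Lelong formula to the symmetry $\int_X\log|s_1|\,\partial\overline{\partial}\log|s_2|\wedge c_1(L,h)^{n-1}=\int_X\log|s_2|\,\partial\overline{\partial}\log|s_1|\wedge c_1(L,h)^{n-1}$ and then establish this by applying Stokes' theorem twice, using that $c_1(L,h)^{n-1}$ is $d$-closed and $X$ has no boundary. The paper performs the Stokes step directly at the level of currents without the explicit $\varepsilon$-regularization you sketch, but the underlying argument is identical.
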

\begin{proof}
	Note, that $c_1(L_i^{\otimes p},h_i^{\otimes p})=pc_1(L_i,h_i)$ for all $p\in\mathbb{Z}$. Thus, by the Poincaré--Lelong formula (\ref{equ_poincare-lelong}) it is enough to show that
	\begin{align}\label{equ_stokes-integrals0}
	\int_X\log|s_1| \left(\partial\overline{\partial}\log|s_2|\right)c_1(L,h)^{n-1}=\int_X \left(\partial\overline{\partial}\log|s_1|\right)\log|s_2|c_1(L,h)^{n-1}.
	\end{align}
	We consider the current $\log|s_1|\left(\overline{\partial}\log|s_2|\right)c_1(L,h)^{n-1}$. Since this current is of type $(n-1,n)$, the operator $d=\partial+\overline{\partial}$ operates like $\partial$. Thus we get
	\begin{align*}
	&d\left(\log|s_1|\left(\overline{\partial}\log|s_2|\right)c_1(L,h)^{n-1}\right)\\
	&=\left(\partial\log|s_1|\right)\left(\overline{\partial}\log|s_2|\right)c_1(L,h)^{n-1}+\log|s_1|\left(\partial\overline{\partial}\log|s_2|\right)c_1(L,h)^{n-1}.
	\end{align*}
	Note, that $c_1(L,h)$ is $d$-closed, such that also $c_1(L,h)^{n-1}$ is $d$-closed. In particular, the $(n,n-1)$-part $\partial \left(c_1(L,h)^{n-1}\right)$ of $d\left(c_1(L,h)^{n-1}\right)$ vanishes.
	Since $X$ has no boundary, we obtain by Stokes' theorem
	\begin{align}\label{equ_stokes-integrals1}
	\int_X\left(\partial\log|s_1|\right)\left(\overline{\partial}\log|s_2|\right)c_1(L,h)^{n-1}+\int_X\log|s_1|\left(\partial\overline{\partial}\log|s_2|\right)c_1(L,h)^{n-1}=0.
	\end{align}
	Applying the same argument to the $(n,n-1)$-current $\left(\partial\log|s_1|\right)\log|s_2|c_1(L,h)^{n-1}$, we obtain the identity
	\begin{align}\label{equ_stokes-integrals2}
	\int_X\left(\overline{\partial}\partial\log|s_1|\right)\log|s_2|c_1(L,h)^{n-1}-\int_X\left(\partial\log|s_1|\right)\left(\overline{\partial}\log|s_2|\right)c_1(L,h)^{n-1}=0.
	\end{align}
 	Since $\overline{\partial}\partial=-\partial\overline{\partial}$, we obtain Equation (\ref{equ_stokes-integrals0}) by summing the identities (\ref{equ_stokes-integrals1}) and (\ref{equ_stokes-integrals2}).
\end{proof}

\subsection{Distribution of Zeros of Random Sections}\label{sec_distribution-complex}
In \cite{SZ99} Shiffman and Zelditch proved an equidistribution property for the divisors of randomly chosen sections of increasing tensor powers of positive line bundles. Their result has been improved and generalized by many people like Bayraktar, Coman, Dinh, Ma, Marinescu, Nguyên, and Sibony. In this section we will recall the universality result by Bayraktar, Coman and Marinescu \cite{BCM20}, adjusted to our setup, and we will deduce some auxiliary lemmas for the next section.

Let $X$ be any smooth projective complex variety of dimension $n\ge 0$, $\omega$ a Kähler form on $X$ and $(L,h)$ an ample line bundle on $X$ equipped with a positive hermitian metric. Let us recall Condition (B) from \cite{BCM20} for sequences of probability measures on $H^0(X,L^{\otimes p})$.
\begin{Con}[B]
	Let $(p_j)_{j\in\mathbb{Z}_{\ge 1}}$ be an increasing sequence of positive integers and $\sigma=(\sigma_{p_j})_{j\in\mathbb{Z}_{\ge 1}}$ a sequence of probability measures $\sigma_{p_j}$ on $H^0(X,L^{\otimes p_j})$. We say that $\sigma$ satisfies Condition (B) if for every $j\ge 1$ there exists a constant $C_{p_j}>0$ such that
	\begin{align}\label{equ_condition-B}
		\int_{H^0(X,L^{\otimes p_j})} \left| \log|\langle s,u\rangle|\right|d\sigma_{p_j}(s)\le C_{p_j} \qquad \forall u\in H^0(X, L^{\otimes p_j}) \text{ with } \|u\|=1.
	\end{align}
	
\end{Con}
In \cite{BCM20} Bayraktar, Coman and Marinescu proved in a much broader setting that if $\lim \inf _{j\to\infty}\frac{C_{p_j}}{p_j}=0$, then there exists an increasing subsequence $(p'_j)_{j\in\mathbb{Z}_{\ge 1}}$ of $(p_j)_{j\in\mathbb{Z}_{\ge 1}}$, such that
\begin{align}\label{equ_Bayraktar-Coman-Marinescu}
\sum_{j=1}^\infty\int_{s\in H^0(X,L^{\otimes p'_j})}\frac{1}{p'_j}\int_X\left|\log |s|\right|\omega^n d\sigma_{p'_j}(s)<\infty.
\end{align}
They also showed that in this case $\sigma$-almost all sequences $(s_{p'_j})_{j\in\mathbb{Z}_{\ge1}}$ of sections $s_{p'_j}\in H^0(X,L^{\otimes p'_j})$ satisfy the equidistribution
$$\lim_{j\to \infty} \frac{1}{p'_j}([s_{p'_j}=0]-c_1(L^{\otimes p'_j},h^{\otimes p'_j}))=0$$
in the weak sense of currents on $X$ if $n\ge 1$.

Our first goal in this section is to give a more general result in our special setting using the same methods as in \cite{BCM20}. First, let us recall the definition of the Bergman kernel function. For any $k\in \mathbb{Z}_{\ge 1}$ write $d_k=\dim H^0(X,L^{\otimes k})$ and let $S_1^k,\dots,S_{d_k}^k\in H^0(X,L^{\otimes k})$ be an orthonormal basis of $H^0(X,L^{\otimes k})$. The Bergman kernel function is defined by
$$P_k(x)=\sum_{j=1}^{d_k}\left|S_j^{k}(x)\right|^2_{k}.$$
Next, we associate a vector $U^k(x)\in H^0(X,L^{\otimes k})$ with $\|U^k(x)\|=1$ to every point $x\in X$. Let $e_k$ be a local holomorphic frame of $L^{\otimes k}$ in some open neighborhood $U\subseteq X$ of $x$ and write $S_j^k=s_j^ke_k$ with $s_j^k\in \mathcal{O}_X(U)$. We set
$$u^k_j(x)=\frac{s_j^k(x)}{\sqrt{\sum_{l=1}^{d_k}|s_l^k(x)|^2}},\qquad U^k(x)=\sum_{j=1}^{d_k}u_j^k(x)S_j^k(x).$$
If $t\in H^0(X,L^{\otimes k})$ is any section and we write $t=\sum_{j=1}^k t_j S_j^k$ for some $t_j\in\mathbb{C}$, then we get
\begin{align*}
	\frac{|t(x)|}{\sqrt{P_k(x)}}=\frac{\left|\sum_{j=1}^{d_k} t_j S_j^k(x)\right|}{\sqrt{\sum_{j=1}^{d_k}\left|S_j^{k}(x)\right|^2}}=\frac{\left|\sum_{j=1}^{d_k} t_j s_j^k(x)\right|}{\sqrt{\sum_{j=1}^{d_k}\left|s_j^{k}(x)\right|^2}}=\left|\sum_{j=1}^{d_k}t_ju_j^k(x)\right|=|\langle t, U^k(x)\rangle |.
\end{align*}
Taking logarithms we get
\begin{align}\label{equ_norm-inner-product}
	\log|t(x)|=\log |\langle t, U^k(x)\rangle|+\frac{1}{2}\log P_k(x).
\end{align}
In other words, we have constructed a continuous map
$$U^k\colon X\to S^{2d_k-1},\qquad x\mapsto U^k(x),$$
where $S^{2d_k-1}\subseteq H^0(X,L^{\otimes k})$ denotes the unit sphere. 
Our first lemma generalizes Equation (\ref{equ_Bayraktar-Coman-Marinescu}) to subvarieties of $X$. 
\begin{Lem}\label{lem_integral-zero}
	Let $(p_j)_{j\in \mathbb{Z}_{\ge 1}}$ be any increasing sequence of positive integers and $\sigma=(\sigma_{p_j})_{j\in\mathbb{Z}_{\ge 1}}$ any sequence of probability measures on $H^0(X,L^{\otimes p_j})$ satisfying Condition (B) with $\lim_{j\to \infty} \frac{C_{p_j}}{p_j}=0$. Further, let $Y\subseteq X$ be any smooth projective subvariety of dimension $d_Y$. Then we have
	$$\lim_{j\to \infty}\int_{s\in H^0(X,L^{\otimes p_j})}\frac{1}{p_j}\int_Y\left|\log |s|\right|\omega^{d_Y} d\sigma_{p_j}(s)=0.$$
\end{Lem}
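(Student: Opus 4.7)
The natural approach is to exploit the factorization (\ref{equ_norm-inner-product}),
\begin{equation*}
\log|s(x)| = \log|\langle s, U^{p_j}(x)\rangle| + \tfrac{1}{2}\log P_{p_j}(x),
\end{equation*}
which splits $\log|s|$ into a piece depending linearly on $s$ through a unit vector $U^{p_j}(x)$ and a piece depending only on $x$. Taking absolute values, applying the triangle inequality, and swapping the order of integration via Fubini bounds the quantity appearing in the lemma by
\begin{equation*}
\frac{1}{p_j}\int_Y \left(\int_{H^0(X,L^{\otimes p_j})} |\log|\langle s,U^{p_j}(x)\rangle||\, d\sigma_{p_j}(s)\right)\omega^{d_Y} + \frac{1}{2p_j}\int_Y|\log P_{p_j}(x)|\,\omega^{d_Y}.
\end{equation*}

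The first summand is designed to be controlled by Condition (B). Since $\|U^{p_j}(x)\|=1$ for every $x\in X$, the hypothesis (\ref{equ_condition-B}) gives the uniform pointwise bound $C_{p_j}$ on the inner integral, so this summand is at most $\frac{C_{p_j}}{p_j}\int_Y\omega^{d_Y}$, which tends to zero by the assumption $C_{p_j}/p_j \to 0$.

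For the second summand I need a uniform estimate $|\log P_{p_j}(x)| = O(\log p_j)$ on $Y$. The upper bound $P_{p_j}(x) \le C p_j^{3n}$ is immediate from the Gillet--Soul\'e inequality (\ref{equ_supl2}) applied to the members of an orthonormal basis, combined with the Hilbert polynomial estimate $\dim H^0(X,L^{\otimes p_j}) = O(p_j^n)$. The matching lower bound $P_{p_j}(x) \ge c p_j^n$, valid on all of $X$ for $j$ large, comes from the Tian--Catlin--Zelditch asymptotic expansion for the Bergman kernel of a positive hermitian line bundle on a compact K\"ahler manifold. These two inequalities together give $\frac{1}{p_j}\int_Y |\log P_{p_j}|\omega^{d_Y} = O(\log p_j / p_j) \to 0$.

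The principal obstacle is really only the lower bound on the Bergman kernel, which is the place where positivity of the metric $h$ is used decisively; the remainder of the argument is a routine application of Fubini and an appeal to Condition (B).
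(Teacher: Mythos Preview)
Your proposal is correct and follows essentially the same route as the paper: split $\log|s(x)|$ via (\ref{equ_norm-inner-product}), bound the inner-product part uniformly by $C_{p_j}$ using Condition~(B), and control $|\log P_{p_j}|$ by $O(\log p_j)$ through the Bergman kernel asymptotics (Tian's theorem). The only cosmetic difference is that the paper extracts both the upper and lower bounds on $P_{p_j}$ directly from Tian's expansion rather than obtaining the upper bound separately via (\ref{equ_supl2}), and it explicitly verifies finiteness of the iterated integral before invoking Tonelli.
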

\begin{proof}
	First, we recall that by a Theorem of Tian \cite{Tia90}, see also \cite[Remark 3.3]{CMM17} there exists a constant $C$ not depending on $p\in\mathbb{Z}_{\ge 1}$ and $x\in X$, such that
	$$\left|\frac{P_p(x)}{p^n}\cdot\frac{\omega_x^n}{c_1(L,h)_x^n}-1\right|\le \frac{C}{p}.$$
	This implies that $\frac{1}{e}\le \frac{P_p(x)}{p^n}\cdot\frac{\omega_x^n}{c_1(L,h)_x^n}\le e$ for sufficiently large $p$. Thus,
	$$|\log P_p(x)|\le 1+\log p^n+\left|\log\frac{\omega_x^n}{c_1(L,h)_x^n}\right|$$
	for sufficiently large $p$. Since $\omega$ and $c_1(L,h)$ are positive and $X$ is compact, there is a constants $A>1$ such that
	$$\frac{1}{A}\omega\le c_1(L,h)\le A\omega.$$
	Hence, for sufficiently large $p$ we get
	$$|\log P_p(x)|\le 1+\log p^n+\log A^n.$$
		
	By Equation (\ref{equ_norm-inner-product}) we can compute for sufficiently large $j$
	\begin{align*}
		&\int_{Y}\int_{H^0(X,L^{p_j})}|\log|s(x)|| d\sigma_{p_j}\omega^{d_Y}\\
		&\le \int_{Y}\int_{H^0(X,L^{p_j})}\left(\left|\log|\langle s,U^k(x)\rangle|\right|+\frac{1}{2}|\log P_{p_j}(x)|\right) d\sigma_{p_j}\omega^{d_Y}\\
		&\le \int_Y\omega^{d_Y}\cdot\left(C_{p_j}+\frac{1}{2}(1+\log p_j^n+\log A^n)\right)<\infty.
	\end{align*}
	Thus, we can apply Tonelli's theorem to get
	\begin{align*}
		\int_{H^0(X,L^{\otimes p_j})}\int_Y\left|\log |s|\right|\omega^{d_Y} d\sigma_{p_j}&=\int_{Y}\int_{H^0(X,L^{p_j})}|\log|s|| d\sigma_{p_j}\omega^{d_Y}\\
		&\le \int_Y\omega^{d_Y}\cdot\left(C_{p_j}+\frac{1}{2}(1+\log p_j^n+\log A^n)\right).
	\end{align*}
	Finally we can compute the limit by
	\begin{align*}
		0&\le \lim_{j\to \infty}\int_{H^0(X,L^{\otimes p_j})}\frac{1}{p_j}\int_Y\left|\log |s|\right|\omega^{d_Y} d\sigma_{p_j}\\
		&\le \lim_{j\to\infty}\int_Y\omega^{d_Y}\cdot\left(\frac{C_{p_j}}{p_j}+\frac{1}{2}\left(\frac{1}{p_j}+\frac{\log p_j^n}{p_j}+\frac{\log A^n}{p_j}\right)\right)=0.
	\end{align*}
	The assertion of the lemma follows.
\end{proof}
Next, we show that the vanishing of the limit of the inner integral in Lemma \ref{lem_integral-zero} implies equidistribution of the divisors even if we only consider one fixed sequence of sections.
\begin{Lem}\label{lem_equidistribution} Assume $n\ge 1$.
	Let $(p_j)_{j\in\mathbb{Z}_{\ge 1}}$ be an increasing sequence of positive integers and $(s_{p_j})_{j\in\mathbb{Z}_{\ge 1}}$ a sequence of sections $s_{p_j}\in H^0(X, L^{\otimes p_j})$.
	\begin{enumerate}[(i)]
		\item\label{assertion1} If it holds
		$$\lim_{j\to\infty}\frac{1}{p_j}\int_X\left|\log|s_{p_j}|\right|\omega^n=0,$$
		then for every $(n-1,n-1)$ $C^0$-form $\Phi$ on $X$ we have
		$$\lim_{j\to\infty}\frac{1}{p_j}\int_{\mathrm{div}(s_{p_j})}\Phi=\int_X \Phi\wedge c_1(L,h).$$
		\item\label{assertion2} If it holds
		$$\lim_{j\to\infty}\frac{1}{p_j}\left(\log\|s_{p_j}\|_{\sup}\int_X\omega^n-\int_X\log|s_{p_j}|\omega^n\right)=0,$$
		then for every $(n-1,n-1)$ $C^0$-form $\Phi$ on $X$ we have
		$$\lim_{j\to\infty}\frac{1}{p_j}\int_{\mathrm{div}(s_{p_j})}\Phi=\int_X \Phi\wedge c_1(L,h).$$
 		\item\label{assertion3} If it holds
		$$\sum_{j=1}^\infty \frac{1}{p_j}\int_X\left|\log|s_{p_j}|\right|\omega^n<\infty,$$
		then for every $(n-1,n-1)$ $C^0$-form $\Phi$ on $X$ we have
		$$\sum_{j=1}^\infty\left|\int_X \Phi\wedge c_1(L,h)-\frac{1}{p_j}\int_{\mathrm{div}(s_{p_j})}\Phi\right|<\infty.$$
	\end{enumerate}
\end{Lem}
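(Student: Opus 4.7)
The central tool is the Poincaré--Lelong formula \eqref{equ_poincare-lelong}: for $s_{p_j} \in H^0(X, L^{\otimes p_j})$,
$$\delta_{\mathrm{div}(s_{p_j})} - p_j\, c_1(L,h) = \tfrac{i}{2\pi}\partial\overline{\partial}\log|s_{p_j}|^2.$$
My plan is to pair both sides with $\Phi$, transfer $\partial\overline{\partial}$ onto $\Phi$ by Stokes' theorem (in the spirit of Lemma \ref{lem_stokes}), and control the resulting integral using the relevant hypothesis on $\log|s_{p_j}|$.

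For smooth $\Phi$, the identity obtained reads
$$\frac{1}{p_j}\int_{\mathrm{div}(s_{p_j})}\Phi - \int_X \Phi \wedge c_1(L,h) = \frac{1}{p_j\pi}\int_X \log|s_{p_j}| \cdot i\partial\overline{\partial}\Phi,$$
and the compactness of $X$ gives a constant $C(\Phi)$ with $|i\partial\overline{\partial}\Phi| \leq C(\Phi)\, \omega^n$. Thus the modulus of the left-hand side is at most $C(\Phi)\cdot \tfrac{1}{p_j}\int_X|\log|s_{p_j}||\omega^n$, which immediately yields part (\ref{assertion1}) for smooth $\Phi$ and part (\ref{assertion3}) for smooth $\Phi$ after summing over $j$. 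Part (\ref{assertion2}) in the smooth case follows from the same identity by a small trick: since $\log\|s_{p_j}\|_{\sup}$ is a constant and $\int_X i\partial\overline{\partial}\Phi = 0$ by Stokes, one may replace $\log|s_{p_j}|$ by the non-positive function $\log|s_{p_j}| - \log\|s_{p_j}\|_{\sup}$, whose $L^1$-norm is precisely the quantity that vanishes in the hypothesis of (\ref{assertion2}).

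To upgrade from smooth to $C^0$ test forms, I approximate $\Phi$ uniformly by smooth $\Phi_\epsilon$ with $\|\Phi - \Phi_\epsilon\|_\infty < \epsilon$. The error contributed to each term is at most
$$\epsilon\left(\int_X \omega^{n-1}\wedge c_1(L,h) + \tfrac{1}{p_j}\int_{\mathrm{div}(s_{p_j})}\omega^{n-1}\right),$$
and the second factor is uniformly bounded in $j$ by applying the smooth identity to $\omega^{n-1}$ itself (so the hypothesis of the respective part controls that quantity). For parts (\ref{assertion1}) and (\ref{assertion2}), only pointwise-in-$j$ convergence is needed, so the smooth case gives $\limsup_j |\cdots| = O(\epsilon)$ and sending $\epsilon \to 0$ concludes.

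The main obstacle lies in part (\ref{assertion3}): here the $\epsilon$-error accumulates across the sum, so the naïve uniform approximation is insufficient. One must balance the mollification scale against $j$ more carefully, for instance through a diagonal argument choosing smooth $\Phi_{\epsilon_j}$ with $\epsilon_j \downarrow 0$ summably fast while keeping the blow-up of $C(\Phi_{\epsilon_j})$ compatible with the summable sequence $\tfrac{1}{p_j}\int_X|\log|s_{p_j}||\omega^n$; this is the most delicate technical point of the argument, with all other ingredients reducing to routine applications of Stokes' theorem and the Poincaré--Lelong formula.
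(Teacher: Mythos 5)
Your core argument is exactly the paper's: pair the Poincar\'e--Lelong identity with $\Phi$, move $\partial\overline{\partial}$ onto $\Phi$ by Stokes, bound $\left|\tfrac{\partial\overline{\partial}}{\pi i}\Phi\right|\le A\,\omega^n$ by compactness, and then read off (\ref{assertion1}) directly, (\ref{assertion2}) by normalizing $s_{p_j}$ to $s_{p_j}/\|s_{p_j}\|_{\sup}$ (your ``add a constant to the potential'' trick is literally the same substitution), and (\ref{assertion3}) by summing the identity over $j$. So for smooth $\Phi$ your proof and the paper's coincide.

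The one place you diverge is the reduction from $C^0$ to $C^\infty$ test forms, and here you have in fact put your finger on a point the paper passes over in a single sentence (``approximated from above and from below by a $C^\infty$-form''). That sandwich does settle (\ref{assertion1}) and (\ref{assertion2}): choosing smooth $\Phi^-\le\Phi\le\Phi^+$ with $\Phi^+-\Phi^-\le\epsilon\,\omega^{n-1}$ and using positivity of $\delta_{\mathrm{div}(s_{p_j})}$ and of $c_1(L,h)$ gives $\limsup$ and $\liminf$ bounds within $O(\epsilon)$, with the error $\tfrac{1}{p_j}\int_{\mathrm{div}(s_{p_j})}(\Phi^+-\Phi^-)\le\epsilon\int_X\omega^{n-1}\wedge c_1(L,h)$ uniform in $j$ by a cohomology class computation (no hypothesis on $s_{p_j}$ is needed for that bound, contrary to your phrasing). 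But for (\ref{assertion3}) you are right that this is insufficient: the per-term error $O(\epsilon)$ does not decay in $j$, so it destroys summability. Your proposed repair --- a diagonal choice of mollifications $\Phi_{\epsilon_j}$ --- is not carried out, and it is not clear it can be: the approximation error achievable with a smooth form whose $\partial\overline{\partial}$ is bounded by $C_j\,\omega^n$ is governed by the modulus of continuity of $\Phi$ at scale $C_j^{-1/2}$, and for an arbitrary continuous $\Phi$ this can tend to $0$ too slowly relative to the given summable sequence $\tfrac{1}{p_j}\int_X|\log|s_{p_j}||\,\omega^n$ for both error sums to converge. So your proof of (\ref{assertion3}) is complete only for smooth $\Phi$ (and, say, for $\Phi$ with a Dini-type modulus of continuity compatible with the sequence); for general $C^0$ forms it has a genuine gap --- one which, to be fair, the paper's own one-line reduction does not close either, and which matters downstream since Lemma \ref{lem_restricted-integral} applies part (\ref{assertion3}) to the merely continuous form $F_{\sigma_k}\,c_1(L,h)^{d_Y-1}$.
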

\begin{proof}
	As every $C^0$-form on $X$ can be approximated from above and from below by a $C^{\infty}$-form, we may assume, that $\Phi$ is $C^{\infty}$. Similarly as in the proof of Lemma \ref{lem_stokes} we obtain by the Poincaré--Lelong formula and by Stokes' theorem
	\begin{align}\label{equ_equidistribution}
		&\int_X \Phi\wedge c_1(L,h)-\frac{1}{p_j}\int_{\mathrm{div}(s_{p_j})}\Phi=\frac{1}{p_j}\left(\int_X \Phi\wedge c_1(L^{\otimes p_j},h^{\otimes p_j})-\int_{\mathrm{div}(s_{p_j})}\Phi\right)\\
		&=\frac{1}{p_j}\int_X\Phi\wedge \frac{\partial\overline{\partial}}{\pi i}\log|s_{p_j}|=\frac{1}{p_j}\int_X\log|s_{p_j}|\frac{\partial\overline{\partial}}{\pi i}\Phi.\nonumber
	\end{align}
	Since $X$ is compact and $\omega$ is positive, there is an $A\in\mathbb{R}$ such that $\left|\frac{\partial\overline{\partial}}{\pi i}\Phi\right|\le A\omega^n$. Thus, in (\ref{assertion1}) we have
	$$0\le\lim_{j\to\infty}\left|\frac{1}{p_j}\int_X\log|s_{p_j}|\frac{\partial\overline{\partial}}{\pi i}\Phi\right|\le A\cdot\lim_{j\to\infty}\frac{1}{p_j}\int_X\left|\log|s_{p_j}|\right|\omega^n=0.$$
	This means, that the value in Equation (\ref{equ_equidistribution}) tends to $0$ for $j\to\infty$. This proves (\ref{assertion1}). 
	Part (\ref{assertion2}) follows from (\ref{assertion1}) by replacing $s_{p_j}$ by $\frac{s_{p_j}}{\|s_{p_j}\|_{\sup}}$ as
	$$\log\|s_{p_j}\|_{\sup}\int_X\omega^n-\int_X\log|s_{p_j}|\omega^n=\int_X\left|\log\left|\frac{s_{p_j}}{\|s_{p_j}\|_{\sup}}\right|\right|\omega^n.$$
	To prove (\ref{assertion3}), we sum Equation (\ref{equ_equidistribution}) over $j$ to obtain
	$$\sum_{j=1}^\infty\left|\int_X \Phi\wedge c_1(L,h)-\frac{1}{p_j}\int_{\mathrm{div}(s_{p_j})}\Phi\right|\le A\cdot\sum_{j=1}^\infty\frac{1}{p_j} \int_{X}\left|\log|s_{p_j}|\right|\omega^n<\infty.$$
\end{proof}
In the following we restrict to special types of probability measures which are obtained by normalizations of restrictions of the Haar measure. We make the following definition.
\begin{Def}
	Let $\sigma_k$ be a probability measure on $H^0(X,L^{\otimes k})$.
	\begin{enumerate}[(i)]
		\item We say that $\sigma_k$ is of type $\mathbf{L}_{\mathbb{C}}$ if there is a compact, symmetric and convex subset $K\subseteq H^0(X,L^{\otimes k})$ with non-empty interior such that
		$$\sigma_k=\frac{1}{\mathrm{Vol}(K)}\lambda|_K$$
		for the Haar measure $\lambda$ on $H^0(X,L^{\otimes k})$.
		\item Let $V\subset H^0(X,L^{\otimes k})$ be a real vector subspace spanning $H^0(X,L^{\otimes k})$ as a complex vector space and such that $\langle v,w\rangle\in\mathbb{R}$ for all $v,w\in V$. We equip $V$ with the induced euclidean structure.
		We say that $\sigma_k$ is of type $\mathbf{L}_{\mathbb{R}}$ if there is a compact, symmetric and convex subset $K\subseteq V$ with non-empty interior such that
		$$\sigma_k=\frac{1}{\mathrm{Vol}(K)}\lambda|_K$$
		for the Haar measure $\lambda$ on $V$.
	\end{enumerate}
	We say that $\sigma_k$ is of type $\mathbf{L}$ if it is of type $\mathbf{L}_\mathbb{C}$ or $\mathbf{L}_{\mathbb{R}}$.
\end{Def}
To any probability measure $\sigma_k$ on $H^0(X,L^{\otimes})$ we associate the function
$$F_{\sigma_k}(x)\colon X\to \mathbb{R}, \qquad x\mapsto\int_{H^0(X,L^{\otimes k})}\log |t(x)|d\sigma_k(t)$$
and also the map
$$\widetilde{F}_{\sigma_k}\colon S^{2d_k-1}\to \mathbb{R},\qquad u\mapsto \int_{H^0(X,L^{\otimes k})}\log|\langle t,u\rangle|d\sigma_k(t)$$
Let us check, that these maps are continuous if $\sigma_k$ is of type $\mathbf{L}$.
\begin{Lem}\label{lem_bergman}
	Let $\sigma_k$ be a probability measure on $H^0(X,L^{\otimes k})$ of type $\mathbf{L}$. Then $F_{\sigma_k}$ and $\widetilde{F}_{\sigma_k}$ are continuous functions. Moreover, $\sigma_k$ satisfies (\ref{equ_condition-B}) for some $C_k$.
\end{Lem}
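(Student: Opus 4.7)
The plan is to reduce both continuity assertions and Condition~(\ref{equ_condition-B}) to a uniform $L^{p}$-bound on the family $\{s\mapsto\log|\langle s,u\rangle|\}_{\|u\|=1}$ of functions on $K$. First, Equation~(\ref{equ_norm-inner-product}) gives
\[
F_{\sigma_k}(x)=\widetilde F_{\sigma_k}(U^{k}(x))+\tfrac{1}{2}\log P_k(x),
\]
which is well-defined because the integrand $\log|\langle s,u\rangle|$ depends on $u$ only up to a unit complex phase, precisely matching the phase ambiguity in the definition of $U^{k}$. Since $U^{k}$ is continuous and $\log P_k$ is continuous wherever $P_k>0$ (which one may arrange for the relevant range of $k$), continuity of $F_{\sigma_k}$ reduces to continuity of $\widetilde F_{\sigma_k}$. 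Similarly, Condition~(\ref{equ_condition-B}) amounts to showing that $\int_K|\log|\langle s,u\rangle||\,d\lambda(s)$ is bounded uniformly in $u\in S^{2d_k-1}$.

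The key technical input will be the volume estimate
\[
\lambda\bigl(K\cap\{s:|\langle s,u\rangle|<r\}\bigr)\le C\,r\qquad\text{for all }r\in(0,1]\text{ and all }u\text{ with }\|u\|=1,
\]
with $C$ independent of $u$. Let $V$ denote $H^{0}(X,L^{\otimes k})$ in the type $\mathbf{L}_{\mathbb{C}}$ case and the given real subspace in the type $\mathbf{L}_{\mathbb{R}}$ case, and consider the $\mathbb{R}$-linear map $\varphi_u\colon V\to\mathbb{C}$, $s\mapsto\langle s,u\rangle$. For $u\neq 0$ its image is nonzero because $V$ spans $H^{0}(X,L^{\otimes k})$ over $\mathbb{C}$, hence has real dimension $d_u\in\{1,2\}$. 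By Fubini I can write $\lambda(K\cap\varphi_u^{-1}(B_r))$ as the integral over $B_r\cap\mathrm{im}(\varphi_u)$ of the slice volumes $\lambda(K\cap\varphi_u^{-1}(z))$; by the Brunn--Minkowski inequality and the symmetry $K=-K$, each slice is bounded by the central one, which itself is bounded by the corresponding slice of $B_R$ with $R=\mathrm{diam}(K)$, uniformly in $u$. This yields $\lambda(K\cap\{|\langle s,u\rangle|<r\})\le C_u\cdot r^{d_u}\le Cr$ for $r\le 1$.

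Combining the volume estimate with the layer-cake formula gives
\[
\int_K|\log|\langle s,u\rangle||^{p}\,d\lambda(s)\le C_p
\]
for every $p\ge 1$, uniformly in $u\in S^{2d_k-1}$, since $\int_0^{\infty}t^{p-1}e^{-t}\,dt<\infty$. Taking $p=1$ and normalizing by $\lambda(K)$ establishes Condition~(\ref{equ_condition-B}). For the continuity of $\widetilde F_{\sigma_k}$, I take a convergent sequence $u_n\to u$ on $S^{2d_k-1}$; then $\log|\langle s,u_n\rangle|\to\log|\langle s,u\rangle|$ pointwise for $s$ outside the measure-zero set $\ker\varphi_u$, and the uniform $L^{2}$-bound yields uniform integrability, so the Vitali convergence theorem gives $\widetilde F_{\sigma_k}(u_n)\to\widetilde F_{\sigma_k}(u)$. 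The main obstacle will be the convex-geometric volume estimate, in particular treating the $\mathbf{L}_{\mathbb{R}}$ case where $\varphi_u$ may have one-dimensional image; the remaining steps are standard measure theory.
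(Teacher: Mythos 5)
Your proposal is essentially the paper's own argument: the same reduction of $F_{\sigma_k}$ to $\widetilde F_{\sigma_k}$ via (\ref{equ_norm-inner-product}), the same uniform small-ball/uniform-integrability estimate obtained by slicing the convex body $K$ transversally to $u$, Vitali for continuity, and a uniform bound over $u\in S^{2d_k-1}$ for Condition (B) (the paper gets the latter slightly less directly, by rescaling $K$ into the unit ball and invoking continuity plus compactness of the sphere, whereas your uniform $L^1$ bound gives it at once). One caveat: in the $\mathbf{L}_{\mathbb{R}}$ case the Fubini-over-the-image bound $C_u r^{d_u}$ is \emph{not} uniform in $u$ when $\mathrm{im}(\varphi_u)$ is two-dimensional but $u_1,u_2$ are nearly parallel, since the Jacobian $\sqrt{\det\mathrm{Gram}(u_1,u_2)}$ degenerates; the correct uniform bound $\lambda(K\cap\{|\langle s,u\rangle|<r\})\le Cr$ comes instead from the inclusion $\{|\langle s,u\rangle|<r\}\subseteq\{|\langle s,u_j\rangle|<r\}$ for the index $j$ with $\|u_j\|\ge 1/\sqrt 2$, which is exactly the one-dimensional slab estimate the paper uses.
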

\begin{proof}
	Since $F_{\sigma_k}=\widetilde{F}_{\sigma_k}\circ U^k+\frac{1}{2}\log P_k$ by Equation (\ref{equ_norm-inner-product}), it is enough to prove that $\widetilde{F}_{\sigma_k}$ is continuous.
	If $\sigma_k$ is of type $\mathbf{L}_{\mathbb{C}}$ we also write $V=H^0(X,L^{\otimes k})$ as in the real case for the real subspace $V$.
	It is enough to show $\lim_{j\to\infty}\widetilde{F}_{\sigma_k}(u_j)=\widetilde{F}_{\sigma_k}(u)$ if $\lim_{j\to \infty}u_j=u$. Note that $\log|\langle t,u_j\rangle|$ is continuous  for  $t\in \{u_j\neq 0\}$. In particular, it is continuous on a dense open subset of $V$ as $V$ generates $H^0(X,L^{\otimes k})$ over the complex numbers. Hence, by the Vitali convergence theorem it is enough to show that 
	$$\lim_{N\to\infty}\sup_{u\in S^{2d_k-1}}\int_{\{\log|\langle t,u\rangle|<-N\}}\log|\langle t,u\rangle|d\sigma_k(t)=0.$$
	Note that we only have to consider $\log|\langle t,u\rangle|<-N$ instead of $|\log|\langle t,u\rangle||>N$ as the support of $\sigma_k$ is bounded. 
	
	First, we consider the complex case. We fix an $u\in S^{2d_k-1}$. We can choose another orthonormal basis $\widetilde{S}^k_1,\dots, \widetilde{S}^k_{d_k}$ of $V$ such that $\widetilde{S}^k_1=u$. We write $t=(t_1,\dots,t_{d_k})$ for the coefficients with respect to this basis. Then $\langle t,u\rangle= t_1$. As the support $K$ of $\sigma_k$ is compact, there is a ball $B_r$ of radius $r$ such that $K\subseteq B_r$. Thus, we can compute for $N\ge 0$ by a coordinate change $\tau=|t_1|$
	\begin{align*}
		0&\ge \int_{\{\log|\langle t,u\rangle|<-N\}}\log|\langle t,u\rangle|d\sigma_k(t)\ge 2\pi\int_0^{e^{-N}}\log\tau d\tau\cdot \frac{(2r)^{2(d_k-1)}}{\mathrm{Vol}(K)}\\
		&=2\pi e^{-N}(-N-1)\frac{(2r)^{2(d_k-1)}}{\mathrm{Vol}(K)}
	\end{align*}
	The last term goes to $0$ for $N\to \infty$ and is independent of $u$. Thus, $\widetilde{F}_{\sigma_k}$ is continuous.
	
	Now we consider the real case. We fix again an $u\in S^{2d_k-1}$. As $V$ generates $H^0(X,L^{\otimes k})$ over $\mathbb{C}$ there are two vectors $u_1,u_2\in V$ with $u=u_1+iu_2$. As $$1=\|u\|^2=\|u_1\|^2+\|u_2\|^2,$$
	we have $\max\{\|u_1\|,\|u_2\|\}\ge \frac{1}{\sqrt{2}}$. We assume $\|u_1\|\ge  \frac{1}{\sqrt{2}}$, the other case works in the same way. As $\langle t,u_1\rangle$ and $\langle t, u_2\rangle$ are real-valued, we get
	$$\log|\langle t,u\rangle|=\log \sqrt{|\langle t,u_1\rangle|^2+|\langle t,u_2\rangle|^2}\ge\log|\langle t,u_1\rangle|$$
	By this, we can bound for $N\ge 0$
	\begin{align*}
		0&\ge \int_{\{\log|\langle t,u\rangle|<-N\}}\log|\langle t,u\rangle|d\sigma_k(t)\ge\int_{\{|t,u_1|<e^{-N}\}}\log|\langle t,u_1\rangle|d\sigma_k(t)
	\end{align*}	
	As in the complex case, we choose an orthonormal basis $v_1,\dots,v_d$ of $V$ with $u_1=v_1$ and write $t=(t_1,\dots,t_d)$ with respect to this basis. Then with $\tau=|t_1|$
	$$\int_{\{|t,u_1|<e^{-N}\}}\log|\langle t,u_1\rangle|d\sigma_k(t)\ge 2\int_0^{e^{-N}}\log\tau d\tau\cdot \frac{(2r)^{d-1}}{\mathrm{Vol}(K)}=e^{-N}(-N-1)\frac{2(2r)^{d-1}}{\mathrm{Vol}(K)}.$$
	The continuity of $\widetilde{F}_{\sigma_k}$ follows in the same way as in the complex case.
	
	Now we show the second assertion of the lemma. Let $K$ denote the support of $\sigma_k$ and choose an $r>0$ such that $K\subseteq B_r$. We write $\sigma'_k$ for the probability measure of type $\mathbf{L}$ associated to the support $r^{-1}K$. Then we get
	\begin{align*}
		&\int_{H^0(X,L^{\otimes k})}\left|\log |\langle s,u\rangle|\right| d\sigma_k(s)=\frac{1}{\mathrm{Vol}(K)}\int_{K}\left|\log |\langle s,u\rangle|\right| d\lambda(s)\\
		&\le \frac{1}{\mathrm{Vol}(K)}\int_{K}\left|\log |\langle r^{-1}s,u\rangle|\right| d\lambda(s)+|\log r|\\
		&=\frac{1}{\mathrm{Vol}(r^{-1}K)}\int_{r^{-1}K}\log|\langle s,u\rangle| d\lambda(s)+|\log r|=\widetilde{F}_{\sigma'_k}(u)+|\log r|
	\end{align*}
	Since $\widetilde{F}_{\sigma'_k}$ is continuous, it attains its maximum on the compact set $S^{2d_k-1}$. Thus, the second assertion follows with $C_k=\max_{u\in S^{2d_k-1}}\widetilde{F}_{\sigma'_k}(u)+|\log r|$.
\end{proof}
We now prove that the condition in Lemma \ref{lem_equidistribution} (iii) stays true with probability $1$ if we reduce to the divisor of a randomly chosen section. We only prove it under the assumptions $\|s_p\|_{\sup}\le 1$, as this is enough for our applications and it simplifies the proof. But we also allow to take the integral over a subvariety.
\begin{Lem}\label{lem_restricted-integral}
	Let $k\ge 1$ be such that $L^{\otimes k}$ is very ample and let $\sigma_k$ be a probability measure on $H^0(X,L^{\otimes k})$ of type $\mathbf{L}$. Let $Y\subseteq X$ be a smooth projective subvariety of dimension $d_Y$, $(p_j)_{j\in\mathbb{Z}_{\ge 1}}$ be any increasing sequence of positive integers and $(s_{p_j})_{j\in\mathbb{Z}_{\ge1}}$ be a sequence of sections $s_{p_j}$ in $H^0(X,L^{\otimes p_j})$ with $\|s_{p_j}\|_{\sup}\le 1$ for all $j$ and $\sum_{j=1}^\infty\frac{1}{p_j}\int_Y\left|\log|s_{p_j}|\right|c_1(L,h)^{d_Y}<\infty$.
	Then $\sigma_k$-almost all $t\in H^0(X,L^{\otimes k})$ satisfy
	$$\sum_{j=1}^\infty\frac{1}{p_j}\int_{Y\cap\mathrm{div}(t)}\left|\log|s_{p_j}|\right|c_1(L,h)^{d_Y-1}<\infty$$
	and hence in particular,
	$\lim_{j\to\infty}\frac{1}{p_j}\int_{Y\cap \mathrm{div}(t)}\left|\log|s_{p_j}|\right|c_1(L,h)^{d_Y-1}=0.$
\end{Lem}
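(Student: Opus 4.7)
My plan is to apply Tonelli to swap the sum with the integration over $t$, and then bound each averaged quantity using the structure of the averaged current of integration. Setting $I_j(t):=\int_{Y\cap\mathrm{div}(t)}|\log|s_{p_j}||\,c_1(L,h)^{d_Y-1}$, the integrands are nonnegative so by Tonelli it suffices to prove $\sum_j J_j/p_j<\infty$ where $J_j:=\int_H I_j(t)\,d\sigma_k(t) = \int_Y |\log|s_{p_j}||\,\bar T\wedge c_1(L,h)^{d_Y-1}$ for the averaged current $\bar T:=\int_H \delta_{\mathrm{div}(t)}\,d\sigma_k(t)$. Integrating the Poincar\'e--Lelong formula (\ref{equ_poincare-lelong}) against $\sigma_k$ gives
\[
\bar T = k\,c_1(L,h) + \tfrac{i}{\pi}\partial\overline{\partial}F_{\sigma_k},
\]
so $\bar T$ is a positive closed $(1,1)$-current whose potential $F_{\sigma_k}$ is continuous by Lemma~\ref{lem_bergman}, and $\bar T|_Y\wedge c_1(L,h)^{d_Y-1}$ is a well-defined positive Radon measure on $Y$ by Bedford--Taylor theory.

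Using $|\log|s_{p_j}||=-\log|s_{p_j}|$ (valid because $\|s_{p_j}\|_{\sup}\leq 1$) together with the Poincar\'e--Lelong identity $\tfrac{i}{\pi}\partial\overline{\partial}\log|s_{p_j}|=\delta_{\mathrm{div}(s_{p_j})}-p_j\,c_1(L,h)$ and a Bedford--Taylor integration by parts (justified by the continuity of $F_{\sigma_k}$ and the quasi-plurisubharmonicity of $\log|s_{p_j}|$), one derives
\[
J_j = k\!\int_Y |\log|s_{p_j}||\,c_1(L,h)^{d_Y} + p_j\!\int_Y F_{\sigma_k}\,c_1(L,h)^{d_Y} - \!\int_{\mathrm{div}(s_{p_j})\cap Y} F_{\sigma_k}\,c_1(L,h)^{d_Y-1}.
\]
The first term divided by $p_j$ is absolutely summable in $j$ by hypothesis, so the task reduces to showing
\[
\sum_{j}\tfrac{1}{p_j}\Bigl| p_j\!\int_Y F_{\sigma_k}\,c_1(L,h)^{d_Y} - \!\int_{\mathrm{div}(s_{p_j})\cap Y} F_{\sigma_k}\,c_1(L,h)^{d_Y-1}\Bigr|<\infty,
\]
which is an absolutely summable equidistribution rate of $\tfrac{1}{p_j}[\mathrm{div}(s_{p_j})\cap Y]$ toward $c_1(L,h)|_Y$ tested against the continuous weight $F_{\sigma_k}|_Y$.

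If $F_{\sigma_k}$ were $C^2$, this would follow from Lemma~\ref{lem_equidistribution}(iii) applied on $Y$ to the test form $\Phi = F_{\sigma_k}\,c_1(L,h)^{d_Y-1}$, whose error bound in the displayed proof is controlled by $\|\partial\overline{\partial}F_{\sigma_k}\|_{L^\infty}$. Since $F_{\sigma_k}$ is only continuous, I would approximate it uniformly by smooth $F_m\to F_{\sigma_k}$, apply the $C^2$ case to each $F_m$, and control the remainder by a Chern--Levine--Nirenberg-type inequality that exploits the quasi-plurisubharmonicity of $F_{\sigma_k}$ (coming from $\bar T\geq 0$) to keep the constants uniform in $p_j$. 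Summing then yields $\sum_j J_j/p_j<\infty$, and by Tonelli $\sum_j I_j(t)/p_j<\infty$ for $\sigma_k$-almost every $t$; the ``in particular'' statement is immediate since the general term of a convergent series tends to zero. The main obstacle is precisely this bootstrap from $C^2$ to continuous test functions: a naive bound on $\|\partial\overline{\partial}F_m\|_{L^\infty}$ blows up as $m\to\infty$, so the $p_j$-uniform error estimate must be extracted from the positivity of $\bar T$ via a Bedford--Taylor/Chern--Levine--Nirenberg argument rather than from a crude estimate on the mollifier.
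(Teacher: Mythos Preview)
Your approach is essentially the paper's: both average over $t$ and reduce, via Poincar\'e--Lelong/Stokes (the paper uses Lemma~\ref{lem_stokes} pointwise in $t$ and then integrates, which is equivalent to your averaged-current computation), to proving
\[
\sum_{j}\Bigl|\,\int_Y F_{\sigma_k}\,c_1(L,h)^{d_Y}-\tfrac{1}{p_j}\!\int_{Y\cap\mathrm{div}(s_{p_j})} F_{\sigma_k}\,c_1(L,h)^{d_Y-1}\Bigr|<\infty,
\]
followed by Tonelli. The ``main obstacle'' you flag is handled in the paper simply by invoking Lemma~\ref{lem_equidistribution}(iii), which is already stated for $C^0$ test forms, with $\Phi=F_{\sigma_k}\,c_1(L,h)^{d_Y-1}$; so no separate Bedford--Taylor/CLN bootstrap is carried out there --- the regularity issue is absorbed into the approximation step at the start of the proof of Lemma~\ref{lem_equidistribution}.
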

\begin{proof}
	By $\|s_{p_j}\|_{\sup}\le 1$ we always have $\left|\log|s_{p_j}|\right|=-\log|s_{p_j}|$, such that we do not have to care about taking absolute values.
	Let $t\in H^0(X,L^{\otimes k})$. 
	We may assume that $t|_Y\neq 0$, as the set of $t|_Y=0$ is a proper vector subspace of $H^0(X,L^{\otimes k})$. By the same reason we may assume $t|_{\mathrm{div}(s_{p_j})}\neq 0$.
	By Lemma \ref{lem_stokes} we have
	\begin{align}\label{equ_stokes_st}
		&\frac{k}{p_j}\int_Y\log |s_{p_j}|c_1(L,h)^n-\frac{1}{p_j}\int_{Y\cap\mathrm{div}(t)}\log|s_{p_j}| c_1(L,h)^{n-1}\\
		&=\int_Y\log|t| c_1(L,h)^n-\frac{1}{p_j}\int_{Y\cap\mathrm{div}(s_{p_j})}\log|t|c_1(L,h)^{n-1}.\nonumber
	\end{align}
	As we are only interested in a $\sigma_k$-almost sure assertion, we may integrate this equation with respect to $\sigma_k$. First, we consider the integrals of $\log|t|$.
		If $Z\subseteq X$ is any subvariety, then Equation (\ref{equ_norm-inner-product}) and Lemma \ref{lem_bergman} imply
	\begin{align*}
	&\int_{Z} \int_{H^0(X,L^{\otimes k})}\left|\log|t(x)|\right|d\sigma_k(t) c_1(L,h)^{\dim Z}\\
	&\le \int_Z \int_{H^0(X,L^{\otimes k})}\left(\left|\log|\langle t,U^k(x)\rangle|\right|+\tfrac{1}{2}|\log P_k(x)|\right)d\sigma_k(t) c_1(L,h)^{\dim Z}\\
	&\le \left(C_k+\tfrac{1}{2}\max_{x\in Z}|\log P_k(x)|\right)\cdot \int_Z c_1(L,h)^{\dim Z} <\infty
	\end{align*}
	Hence, we can apply Tonelli's theorem to obtain
	\begin{align*}
	\int_{H^0(X,L^{\otimes k})}\int_{Z} \log|t| c_1(L,h)^{\dim Z}d\sigma_k=\int_{Z} F_{\sigma_k}(x) c_1(L,h)^{\dim Z}.
	\end{align*}
	We will apply this to the case where $Z$ is either $Y$ or $Y\cap \mathrm{div}(s_{p_j})$. As $F_{\sigma_k}$ is continuous by Lemma \ref{lem_bergman}, we can apply Lemma \ref{lem_equidistribution} (iii) on $Y$ to obtain
	\begin{align*}
		&\left|\sum_{j=1}^\infty\int_{H^0(X,L^{\otimes k})}\left(\frac{1}{p_j}\int_{Y\cap\mathrm{div}(s_{p_j})} \log|t| c_1(L,h)^{d_Y-1}-\int_Y \log|t|c_1(L,h)^{d_Y}\right)\sigma_k\right|\\
		&=\left|\sum_{j=1}^\infty\left(\frac{1}{p_j}\int_{Y\cap\mathrm{div}(s_{p_j})} F_{\sigma_k}(x) c_1(L,h)^{d_Y-1}-\int_Y F_{\sigma_k}(x)c_1(L,h)^{d_Y}\right)\right|\\
		&\le\sum_{j=1}^\infty\left|\frac{1}{p_j}\int_{Y\cap\mathrm{div}(s_{p_j})} F_{\sigma_k}(x) c_1(L,h)^{d_Y-1}-\int_Y F_{\sigma_k}(x)c_1(L,h)^{d_Y}\right|<\infty.
	\end{align*}
	Thus, if we integrate Equation (\ref{equ_stokes_st}) with respect to $\sigma_k$ and if we sum over $j$, the right hand side stays finite. The first term of the left hand side in Equation (\ref{equ_stokes_st}) does not depend on $t$ and also stays finite after summing over $j$ by the assumptions. Hence, we obtain for the remaining term
	\begin{align}\label{equ_integral0}
	\left|\sum_{j=1}^\infty \int_{H^0(X,L^{\otimes k})}\left(\frac{1}{p_j}\int_{Y\cap\mathrm{div}(t)}\log|s_{p_j}|c_1(L,h)^{d_Y-1}\right)d\sigma_k\right|<\infty.
	\end{align}
	By the assumption $\|s_{p_j}\|_{\sup}\le 1$, this is equivalent to 
	$$\sum_{j=1}^\infty \int_{H^0(X,L^{\otimes k})}\left(\frac{1}{p_j}\int_{Y\cap\mathrm{div}(t)}\left|\log|s_{p_j}|\right|c_1(L,h)^{d_Y-1}\right)d\sigma_k<\infty.$$
	Thus, we can again apply Tonelli's theorem to obtain
	$$\int_{H^0(X,L^{\otimes k})}\sum_{j=1}^\infty\left(\frac{1}{p_j}\int_{Y\cap\mathrm{div}(t)}\left|\log|s_{p_j}|\right|c_1(L,h)^{d_Y-1}\right)d\sigma_k<\infty.$$
	But this implies, that for $\sigma_k$-almost all $t\in H^0(X,L^{\otimes k})$ we have
	$$\sum_{j=1}^\infty\frac{1}{p_j}\int_{Y\cap\mathrm{div}(t)}\left|\log|s_{p_j}|\right|c_1(L,h)^{d_Y-1}<\infty$$
	and hence in particular,
	$\lim_{j\to\infty}\frac{1}{p_j}\int_{Y\cap\mathrm{div}(t)}\left|\log|s_{p_j}|\right|c_1(L,h)^{d_Y-1}=0.$
\end{proof}

\subsection{Integrals on Complex Varieties}\label{sec_integral-higher-dim}
In this section we show that the condition in Lemma \ref{lem_equidistribution} (i) stays true after a small change of the sequence. The proof works by induction on the dimension of $X$. The main result of this section is the following proposition.
\begin{Pro}\label{pro_integral}
	Let $X$ be a smooth projective complex variety with a Kähler form $\omega$ and $(L,h)$ an ample line bundle on $X$ equipped with a positive hermitian metric.
	Further, let $Y\subseteq X$ be any smooth projective subvariety of dimension $d_Y\ge 0$.
	Let $(p_j)_{j\in\mathbb{Z}_{\ge1}}$ be any increasing sequence of positive integers and $(s_{p_j})_{j\in\mathbb{Z}_{\ge1}}$ and $(s'_{p_j})_{j\in\mathbb{Z}_{\ge1}}$ two sequences of sections in $H^0(X,L^{\otimes p_j})$.
	Assume that we have
	\begin{enumerate}[(i)]
		\item $\limsup_{j\to\infty} \|s_{p_j}\|^{1/{p_j}}\le 1$,
		\item $\limsup_{j\to \infty} \|s_{p_j}-s'_{p_j}\|^{1/p_j}<1$,
		\item $\lim_{j\to\infty}\frac{1}{p_j}\int_Y\left|\log |s_{p_j}|\right|\omega^{d_Y}=0.$
	\end{enumerate}
	Then it also holds $$\lim_{j\to\infty}\frac{1}{p_j}\int_Y\left|\log |s'_{p_j}|\right|\omega^{d_Y}=0.$$
\end{Pro}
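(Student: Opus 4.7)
Let $\epsilon_j:=s_{p_j}-s'_{p_j}$. By (ii) and the Gillet--Soul\'e comparison \eqref{equ_supl2}, $\limsup_j\|\epsilon_j\|_{\sup}^{1/p_j}<1$; choose $\rho$ strictly between this value and $1$, so that $\tau_j:=\|\epsilon_j\|_{\sup}\le\rho^{p_j}$ for all sufficiently large $j$. Condition (i) analogously gives $\limsup_j\|s_{p_j}\|_{\sup}^{1/p_j}\le 1$, and combined with (iii) one in fact has $\tfrac{1}{p_j}\log\|s_{p_j}\|_{\sup}\to 0$: if instead this limit were some $c<0$ along a subsequence, the pointwise bound $|s_{p_j}|\le\|s_{p_j}\|_{\sup}$ would force $|\log|s_{p_j}||/p_j\ge -c+o(1)>0$ everywhere on $Y$, contradicting (iii). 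Since $\|s'_{p_j}\|_{\sup}$ differs from $\|s_{p_j}\|_{\sup}$ by at most $\tau_j$ and $\tau_j/\|s_{p_j}\|_{\sup}\to 0$, we also get $\tfrac{1}{p_j}\log\|s'_{p_j}\|_{\sup}\to 0$.

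I then split $Y=G_j\sqcup B_j$ with $G_j=\{x\in Y:|s_{p_j}|(x)>\rho^{p_j}\}$. On $G_j$ we have $\tau_j/|s_{p_j}|\le\tau_j\rho^{-p_j}\to 0$ by the choice of $\rho$, so $|s'_{p_j}|/|s_{p_j}|\in[\tfrac12,2]$ for all large $j$, hence $|\log|s'_{p_j}||\le|\log|s_{p_j}||+\log 2$ on $G_j$. Integrating and using (iii) gives $\tfrac{1}{p_j}\int_{G_j}|\log|s'_{p_j}||\omega^{d_Y}\to 0$. On $B_j$ the bound $|\log|s_{p_j}||/p_j\ge\log(1/\rho)>0$ together with (iii) yields $\mathrm{Vol}(B_j)\to 0$.

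To control the integral over $B_j$, I would use an $L^2$ estimate. The function $u_j:=\tfrac{1}{p_j}\log(|s'_{p_j}|/\|s'_{p_j}\|_{\sup})$ is $c_1(L,h)$-plurisubharmonic on $X$ with $\sup_X u_j=0$, and the standard $L^q$-compactness for such classes of quasi-psh functions (Skoda--Zeriahi, Guedj--Zeriahi) gives a uniform bound $\int_Y u_j^2\,\omega^{d_Y}\le C$. Combined with $\tfrac{1}{p_j}\log\|s'_{p_j}\|_{\sup}\to 0$ this upgrades to $\tfrac{1}{p_j^2}\int_Y(\log|s'_{p_j}|)^2\omega^{d_Y}\le C'$ for large $j$, and Cauchy--Schwarz yields
$$\tfrac{1}{p_j}\int_{B_j}|\log|s'_{p_j}||\,\omega^{d_Y}\le\sqrt{C'\,\mathrm{Vol}(B_j)}\to 0.$$
Combined with the previous paragraph, this proves the proposition.

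The main obstacle is the uniform $L^2$-bound above, which relies on pluripotential theory. The remark in the paper that the proof proceeds ``by induction on $\dim X$'' hints at a more self-contained alternative: reduce from $Y$ to $Y\cap\mathrm{div}(t)$ for a generic $t\in H^0(X,L^{\otimes k})$ by applying Lemma \ref{lem_restricted-integral} to a type-$\mathbf{L}$ measure and, possibly after passing to a summable subsequence, transfer the conclusion back to $Y$ via the Stokes identity of Lemma \ref{lem_stokes}; the base case $d_Y=0$ is immediate, since at each point $y\in Y$ condition (iii) forces $|s_{p_j}|(y)^{1/p_j}\to 1$, hence $|s'_{p_j}|(y)/|s_{p_j}|(y)\to 1$.
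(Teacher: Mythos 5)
Your strategy is genuinely different from the paper's. The paper proves the statement by reducing to Lemma \ref{lem_dimgen} and arguing by induction on $\dim X$: the inductive step slices by the divisor of an auxiliary section $t$ produced probabilistically in Lemma \ref{lem_divisor} and transfers the integral between $X$ and $\mathrm{div}(t)$ via the Stokes identity of Lemma \ref{lem_stokes}; no pluripotential theory is used. Your first two paragraphs are essentially sound: the derivation of $\tfrac{1}{p_j}\log\|s_{p_j}\|_{\sup}\to 0$, the control on $G_j$, and $\mathrm{Vol}(B_j)\to 0$ are all correct (up to a cosmetic slip: with $G_j=\{|s_{p_j}|>\rho^{p_j}\}$ and only $\tau_j\le\rho^{p_j}$ you get $\tau_j/|s_{p_j}|\le 1$, not $\to 0$; you need two separated thresholds $\rho_1<\rho$, which hypothesis (ii) of course provides).

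The gap is the uniform bound $\int_Y u_j^2\,\omega^{d_Y}\le C$. The Skoda--Zeriahi/Guedj--Zeriahi uniform integrability theorem for quasi-psh functions normalized by $\sup_X u=0$ gives uniform $L^q$ bounds \emph{on $X$}, not on a proper subvariety $Y$. As you state it, the claim is false: quasi-psh functions with $\sup_X u=0$ can have arbitrarily large, even infinite, $L^2$ norm on a fixed $Y\subsetneq X$ --- take $u_c=\max\{\log(|s|/\|s\|_{\sup}),-c\}$ for a section $s$ vanishing along a component of $Y$ and let $c\to\infty$. To repair this you must apply the uniform integrability on the compact K\"ahler manifold $(Y,\omega|_Y)$ to the functions $u_j|_Y-\sup_Y u_j|_Y$ and separately show that $\sup_Y u_j|_Y$ is bounded below (in fact it tends to $0$). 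That extra input does follow from your hypotheses: (iii) gives $\liminf_j \tfrac{1}{p_j}\log\|s_{p_j}|_Y\|_{\sup}\ge 0$, hence $\tfrac{1}{p_j}\log\|s'_{p_j}|_Y\|_{\sup}\to 0$ by the same comparison as in Lemma \ref{lem_supnorm}, and combined with $\tfrac{1}{p_j}\log\|s'_{p_j}\|_{\sup}\to 0$ this normalizes $u_j|_Y$. But this is precisely where hypothesis (iii) must be used a second time, and it is absent from your argument, so as written the crux of the proof does not go through. With that step supplied your route closes and is shorter than the paper's, at the price of importing the uniform Skoda-type integrability theorem; your closing sketch of the inductive alternative is indeed the paper's actual proof, but you leave it as a sketch.
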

We will deduce the proposition from a similar result in the case $Y=X$, which we will prove by induction on the dimension $n=\dim X$ of $X$. Let us first show a simpler analogue result for the $\sup$-norm instead of the integral of the logarithm. In particular, this will provide the base case $n=0$ for the induction.
\begin{Lem}\label{lem_supnorm}
	Let $X$ be a smooth projective complex variety of dimension $n\ge 0$ and $(L,h)$ a line bundle on $X$ equipped with a hermitian metric. Let $(p_j)_{j\in\mathbb{Z}_{\ge1}}$ be any increasing sequence of positive integers. If $(s_{p_j})_{j\in\mathbb{Z}_{\ge1}}$ and $(s'_{p_j})_{j\in\mathbb{Z}_{\ge1}}$ are two sequences of sections in $H^0(X,L^{\otimes p_j})$ satisfying
	\begin{enumerate}[(i)]
		\item $\lim_{j\to\infty}\|s_{p_j}\|_{\sup}^{1/p_j}=c$,
		\item $\limsup_{j\to \infty} \|s_{p_j}-s'_{p_j}\|_{\sup}^{1/p_j}<c$,
	\end{enumerate}
	for some $c\in [0,\infty]$, then it also holds $\lim_{j\to\infty}\|s'_{p_j}\|_{\sup}^{1/p_j}=c$.
\end{Lem}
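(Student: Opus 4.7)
The plan is to reduce everything to the reverse triangle inequality
\[
\bigl|\,\|s'_{p_j}\|_{\sup} - \|s_{p_j}\|_{\sup}\,\bigr|\le \|s_{p_j}-s'_{p_j}\|_{\sup},
\]
and exploit that, by hypothesis (ii), the right-hand side is exponentially smaller than $\|s_{p_j}\|_{\sup}$ in the scale set by $p_j$. Concretely, set $c':=\limsup_{j\to\infty}\|s_{p_j}-s'_{p_j}\|_{\sup}^{1/p_j}$, so $c'<c$. In particular $c>0$ (and hence the case $c=0$ is vacuous, since a limsup of non-negative quantities cannot be strictly negative). Recall also that $p_j\to\infty$, because $(p_j)$ is an increasing sequence of positive integers.

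If $c\in(0,\infty)$, I would pick auxiliary numbers $c'<c''<c$ and $\epsilon>0$ so small that $c''<c-\epsilon$. For $j$ large, (i) gives $(c-\epsilon)^{p_j}\le \|s_{p_j}\|_{\sup}\le (c+\epsilon)^{p_j}$ and the choice of $c'$ gives $\|s_{p_j}-s'_{p_j}\|_{\sup}\le (c'')^{p_j}$. The upper triangle inequality then yields
\[
\|s'_{p_j}\|_{\sup}^{1/p_j}\le \bigl((c+\epsilon)^{p_j}+(c'')^{p_j}\bigr)^{1/p_j}\le 2^{1/p_j}(c+\epsilon)\xrightarrow[j\to\infty]{} c+\epsilon.
\]

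The one step that needs a small trick is the lower bound. Here I would factor the leading power out of the difference:
\[
\|s'_{p_j}\|_{\sup}\ge (c-\epsilon)^{p_j}-(c'')^{p_j}=(c-\epsilon)^{p_j}\bigl(1-\alpha^{p_j}\bigr),\qquad \alpha:=\tfrac{c''}{c-\epsilon}<1,
\]
and then take $p_j$-th roots. Since $\alpha<1$, we have $\alpha^{p_j}\to 0$ and hence $(1-\alpha^{p_j})^{1/p_j}\to 1$, giving $\liminf_j\|s'_{p_j}\|_{\sup}^{1/p_j}\ge c-\epsilon$. Letting $\epsilon\to 0$ on both sides finishes this case.

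If $c=\infty$, the argument is even simpler: for any fixed $M>\max(1,c')$, hypotheses (i) and (ii) give $\|s_{p_j}\|_{\sup}\ge (2M)^{p_j}$ and $\|s_{p_j}-s'_{p_j}\|_{\sup}\le M^{p_j}$ for large $j$, so $\|s'_{p_j}\|_{\sup}\ge M^{p_j}(2^{p_j}-1)\ge M^{p_j}$ and thus $\|s'_{p_j}\|_{\sup}^{1/p_j}\ge M$; as $M$ is arbitrary, $\|s'_{p_j}\|_{\sup}^{1/p_j}\to\infty$. There is no genuine obstacle — the only subtle point is the factorization trick in the lower bound, which turns the subtraction of two exponentials into a single exponential times a factor tending to $1$ after taking $p_j$-th roots.
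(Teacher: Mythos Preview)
Your proof is correct and follows essentially the same approach as the paper's: both use the triangle and reverse triangle inequalities together with the fact that the perturbation $\|s_{p_j}-s'_{p_j}\|_{\sup}$ is exponentially dominated by $\|s_{p_j}\|_{\sup}$, and both rely on the same factorization trick $(1-\alpha^{p_j})^{1/p_j}\to 1$ for the lower bound. The only cosmetic difference is that the paper chooses a single ratio $\sigma\in(0,1)$ with $\|s_{p_j}-s'_{p_j}\|_{\sup}^{1/p_j}<\sigma\,\|s_{p_j}\|_{\sup}^{1/p_j}$ for large $j$, which handles the finite and infinite cases at once, whereas you split into $c\in(0,\infty)$ and $c=\infty$ with auxiliary constants $c''$ and $\epsilon$.
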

\begin{proof}
	As $\limsup_{j\to \infty} \|s_{p_j}-s'_{p_j}\|_{\sup}^{1/p_j}<\lim_{j\to\infty} \|s_{p_j}\|_{\sup}^{1/p_j}$, there exists some real number $\sigma\in (0,1)$ and an integer $N\in\mathbb{Z}_{\ge 1}$ such that
	$$\|s_{p_j}-s'_{p_j}\|_{\sup}^{1/p_j}<\sigma\cdot \|s_{p_j}\|_{\sup}^{1/p_j}$$
	for all $j\ge N$. Thus, we get by the triangle inequality
	\begin{align*}\limsup_{j\to\infty}\|s'_{p_j}\|_{\sup}^{1/p_j}&\le\limsup_{j\to\infty} \left(\|s_{p_j}\|_{\sup}+\|s'_{p_j}-s_{p_j}\|_{\sup}\right)^{1/p_j}\\
	&\le\limsup_{j\to\infty} \left(2\cdot\|s_{p_j}\|_{\sup}\right)^{1/p_j}=c
	\end{align*}
	and also, 
	\begin{align*}
		\liminf_{j\to \infty}\|s'_{p_j}\|_{\sup}^{1/p_j}&\ge \liminf_{j\to\infty}\left(\|s_{p_j}\|_{\sup}-\|s'_{p_j}-s_{p_j}\|_{\sup}\right)^{1/p_j}\\
		&\ge\liminf_{j\to\infty} \left((1-\sigma^{p_j})\cdot\|s_{p_j}\|_{\sup}\right)^{1/p_j}=c.
	\end{align*}
	We conclude $\lim_{j\to \infty}\|s'_{p_j}\|_{\sup}^{1/p_j}=c$.
\end{proof}
To make the application of the induction hypothesis possible we need a suitable hypersurface on $X$. The existence of such a hypersurface is guaranteed by the next lemma.
\begin{Lem}\label{lem_divisor}
	Let $X$ be a smooth projective complex variety of dimension $n\ge 1$ and $(L,h)$ an ample line bundle on $X$ equipped with a positive hermitian metric.
	Let $(p_j)_{j\in\mathbb{Z}_{\ge 1}}$ be an increasing sequence  of integers $p_j\in\mathbb{Z}_{\ge 1}$ and $(s_{p_j})_{j\in\mathbb{Z}_{\ge1}}$, $(s'_{p_j})_{j\in\mathbb{Z}_{\ge1}}$ sequences of sections $s_{p_j},s'_{p_j}\in H^0(X,L^{\otimes p_j})$ such that $\|s_{p_j}\|_{\sup}\le 1$ for all $j$ and $\sum_{j=1}^\infty\frac{1}{p_j}\int_X\left|\log|s_{p_j}|\right|c_1(L,h)^n<\infty$. For every $\epsilon>0$ there exist a positive integer $k\in\mathbb{Z}$ and a section $t\in H^0(X,L^{\otimes k})$ satisfying:
	\begin{enumerate}[(i)]
		\item\label{firstcondition} $\mathrm{div}(t)$ is smooth,
		\item\label{firstandhalfcondition} $\dim(\mathrm{div}(t)\cap\mathrm{div}(s'_{p_j}))=n-2$ for all $j$,
		\item\label{secondcondition} $\lim_{j\to \infty}\frac{1}{p_j}\int_{\mathrm{div}(t)}\left|\log|s_{p_j}|\right|c_1(L,h)^{n-1}=0$,
		\item\label{thirdcondition} $\frac{1}{k}\int_X\log|t|c_1(L,h)^{n}-\frac{1}{kp_j}\int_{\mathrm{div}(s'_{p_j})}\log|t|c_1(L,h)^{n-1}>-\epsilon$ for all $j$.
	\end{enumerate}
\end{Lem}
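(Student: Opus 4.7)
The plan is to fix $\epsilon>0$, choose $k$ large enough that $L^{\otimes k}$ is very ample, and pick $t\in H^0(X,L^{\otimes k})$ at random according to a probability measure $\sigma_k$ of type $\mathbf{L}$, for instance the normalized Haar measure on the $L^2$-unit ball of $H^0(X,L^{\otimes k})$. I will show that each of (i)--(iv) holds on a subset of $H^0(X,L^{\otimes k})$ of positive $\sigma_k$-measure, so their intersection is non-empty. Condition (i) holds on a Zariski open subset by Bertini applied to the very ample system $|L^{\otimes k}|$, and thus has full $\sigma_k$-measure. Condition (ii) fails precisely on a countable union of proper complex linear subspaces of $H^0(X,L^{\otimes k})$---one for each irreducible component of each $\mathrm{div}(s'_{p_j})$---and so is also satisfied $\sigma_k$-almost surely.

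Condition (iii) is a direct application of Lemma \ref{lem_restricted-integral} with $Y=X$ and the sequence $(s_{p_j})_j$; the assumptions $\|s_{p_j}\|_{\sup}\le 1$ and $\sum_j\tfrac{1}{p_j}\int_X|\log|s_{p_j}||c_1(L,h)^n<\infty$ in the current lemma are exactly those needed by Lemma \ref{lem_restricted-integral}, so $\sigma_k$-almost every $t$ satisfies the (even stronger, summable) conclusion of (iii).

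The main obstacle is condition (iv). Applying Lemma \ref{lem_stokes} with $s_1=t\in H^0(X,L^{\otimes k})$ and $s_2=s'_{p_j}\in H^0(X,L^{\otimes p_j})$ (and both line bundles equal to $(L,h)$), (iv) becomes
$$A_j(t):=\frac{1}{p_j}\int_X\log|s'_{p_j}|\,c_1(L,h)^n-\frac{1}{kp_j}\int_{\mathrm{div}(t)}\log|s'_{p_j}|\,c_1(L,h)^{n-1}>-\epsilon.$$
The idea is to compute $\mathbb{E}_{\sigma_k}[A_j(t)]$ by Tonelli and a $\partial\overline{\partial}$-integration by parts, expressing it as an integral of $\log|s'_{p_j}|$ against a smooth form derived from $\partial\overline{\partial}F_{\sigma_k}$. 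By the rotational symmetry of $\sigma_k$ together with \eqref{equ_norm-inner-product}, $F_{\sigma_k}$ equals a constant plus $\tfrac{1}{2}\log P_k$, and Tian's asymptotic expansion of the Bergman kernel $P_k$ shows that $\partial\overline{\partial}\log P_k$ is bounded uniformly in $k$. Consequently $|\mathbb{E}_{\sigma_k}[A_j(t)]|\le\tfrac{C}{k}\cdot\tfrac{1}{p_j}\int_X|\log|s'_{p_j}||c_1(L,h)^n$. Combining this with a Chebyshev-type variance estimate, the polynomial growth $\log\|s'_{p_j}\|_{\sup}=O(\log p_j)$ available via \eqref{equ_supl2} in the intended applications, and a Borel--Cantelli argument yields a single $t$ satisfying (iv) simultaneously for all $j$. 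Together with (i)--(iii) this produces the desired section.
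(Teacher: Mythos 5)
Your handling of conditions (\ref{firstcondition}), (\ref{firstandhalfcondition}) and (\ref{secondcondition}) is correct and essentially identical to the paper's (Bertini plus a countable union of proper linear subspaces for the first two, Lemma \ref{lem_restricted-integral} with $Y=X$ for the third). The gap is in condition (\ref{thirdcondition}). The lemma imposes \emph{no} hypotheses on the sections $s'_{p_j}$ beyond membership in $H^0(X,L^{\otimes p_j})$: there is no sup-norm bound and no integrability assumption on $\log|s'_{p_j}|$. Your key estimate
$$\left|\mathbb{E}_{\sigma_k}[A_j(t)]\right|\le \frac{C}{k}\cdot\frac{1}{p_j}\int_X\left|\log|s'_{p_j}|\right|c_1(L,h)^n$$
is therefore vacuous: the right-hand side is not controlled by anything in the hypotheses. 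You appeal to growth bounds ``available in the intended applications,'' but in the intended application (the induction in Lemma \ref{lem_dimgen}) the quantity $\frac{1}{p_j}\int_X|\log|s'_{p_j}||\,c_1(L,h)^n$ is exactly what one is trying to prove is small --- a priori it could even be infinite along the sequence --- so the argument is circular. The Chebyshev/Borel--Cantelli step inherits the same defect (any variance bound again involves the uncontrolled data of $s'_{p_j}$), and would at best yield (\ref{thirdcondition}) for all sufficiently large $j$, whereas the statement requires it for every $j$.

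The missing idea is to make the divisor term in (\ref{thirdcondition}) non-negative by fiat: choose the probability measure supported on sections $t$ with $\|t\|_{\sup}\le 1$ (the paper rescales the $L^2$-unit sphere by $(C_1k^n)^{-1}$ using (\ref{equ_supl2}) and checks Condition (B) for the rescaled measure). Then $\log|t|\le 0$ pointwise, so $-\frac{1}{kp_j}\int_{\mathrm{div}(s'_{p_j})}\log|t|\,c_1(L,h)^{n-1}\ge 0$ for every $j$ and every choice of $s'_{p_j}$ (given (\ref{firstandhalfcondition})), and (\ref{thirdcondition}) reduces to the single inequality $\frac{1}{k}\int_X\log|t|\,c_1(L,h)^n>-\epsilon$. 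By Lemma \ref{lem_integral-zero} the $\sigma_k$-average of $\frac{1}{k}\int_X\log|t|\,c_1(L,h)^n$ tends to $0$ as $k\to\infty$, so for a suitable $k$ (which must be chosen depending on $\epsilon$, not merely so that $L^{\otimes k}$ is very ample) this inequality holds on a set of positive $\sigma_k$-measure; intersecting with the full-measure sets for (\ref{firstcondition})--(\ref{secondcondition}) finishes the proof. No Stokes identity, expectation computation, or $C^2$-Bergman-kernel asymptotics are needed for (\ref{thirdcondition}).
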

\begin{proof}
	We consider the $2d_k-1$-dimensional unit sphere
	$$U'_k=\{t\in H^0(X,L^{\otimes k})~|~\|t\|=1\}\subseteq H^0(X,L^{\otimes k}),$$
	the surface measure $\mathcal{A}'_k$ on $U'_k$ and the probability measure $\sigma'_k=\frac{1}{\mathcal{A}'_k(U'_k)}\mathcal{A}'_k$ on $U'_k$. By \cite[Lemma 4.11]{BCM20} there exists a constant $M_1>0$ independent of $k$, such that $\sigma_k$ satisfies Condition (B) with $C'_k=M_1\cdot \log d_k$. We recall that by the Hilbert--Serre theorem there exists a constant $M_2$ such that $d_k\le M_2 k^n$. Thus, we may replace $C'_k$ by $C'_k=M_1\cdot \log (M_2 k^n)$. We modify the radius of the sphere defining
	$$U_k=\left\{t\in H^0(X,L^{\otimes k})~|~\|t\|=\tfrac{1}{C_1k^n}\right\}\subseteq H^0(X,L^{\otimes k}),$$
	where $C_1$ denotes the constant in (\ref{equ_supl2}). In particular we have $\|t\|_{\sup}\le 1$ for all $t\in U_k$ by the inequality in (\ref{equ_supl2}).
	We define the probability measure $\sigma_k$ on $U_k$ by setting $\sigma_k(x)=\sigma'_k(C_1k^n\cdot x)$.
	
	Now we check Condition (B) for $\sigma_k$. By the triangle inequality we have
	\begin{align*}
		\int_{H^0(X,L^{\otimes k})}\left|\log|\langle t,u\rangle|\right|d\sigma_k(t)&\le \int_{H^0(X,L^{\otimes k})}\left|\log|\langle C_1k^n\cdot t,u\rangle|\right|d\sigma_k(t)+\left|\log (C_1k^n)\right|\\
		&=\int_{H^0(X,L^{\otimes k})}\left|\log|\langle t,u\rangle|\right|d\sigma'_k(t)+\log (C_1k^n)\\
		&\le M_1\cdot \log (M_2k^n)+\log (C_1k^n).
	\end{align*}
	Thus, $\sigma_k$ satisfies Condition (B) for $C_k=M_1\cdot \log (M_2k^n)+\log (C_1k^n)$. In particular, we have $\lim_{k\to \infty}\frac{C_k}{k}=0$, such that 
	$$\lim_{k\to \infty}\int_{H^0(X,L^{\otimes k})}\left(\frac{1}{k}\int_X\log|t|c_1(L,h)^{n}\right)d\sigma_k(t)=0$$
	by Lemma \ref{lem_integral-zero}. Note, that we always have $|\log|t||=-\log|t|$ since $\|t\|_{\sup}\le 1$.
	We choose $k\in\mathbb{Z}$, such that $L^{\otimes k}$ is very ample and that
	$$\int_{H^0(X,L^{\otimes k})}\left(\frac{1}{k}\int_X\log|t|c_1(L,h)^{n}\right)d\sigma_k(t)> -\frac{\epsilon}{2}.$$
	Then there exists a subset $A\subseteq U_k$ with $\sigma_k(A)>0$ such that
	$$\frac{1}{k}\int_X\log|t|c_1(L,h)^{n}> -\epsilon $$
	for all $t\in A$. Since $\|t\|_{\sup}\le 1$, this also implies
	$$\frac{1}{k}\int_X\log|t|c_1(L,h)^{n}-\frac{1}{kp}\int_{\mathrm{div}(s)}\log|t|c_1(L,h)^{n-1}>-\epsilon$$
	for all $t\in A$ and all sections $s\in H^0(X,L^{\otimes p})$ for some $p\in\mathbb{Z}$ satisfying $$\dim(\mathrm{div}(t)\cap\mathrm{div}(s))=n-2.$$
	Thus, every $t\in A$ satisfies (\ref{thirdcondition}) if it satisfies (\ref{firstandhalfcondition}).
	
	The conditions (\ref{firstcondition}), (\ref{firstandhalfcondition}) and (\ref{secondcondition}) are satisfied for $\sigma_k$-almost all $t\in U_k$. For (\ref{firstcondition}) this follows since the set of $t\in U_k$, such that $\mathrm{div}(t)$ is smooth, is dense and open in $U_k$ by Bertini's theorem. Also the set of sections $t\in U_k$ such that $\dim(\mathrm{div}(t)\cap \mathrm{div}(s'_{p_j}))=n-2$ is dense and open for every $j$. Hence, (\ref{firstandhalfcondition}) holds for $\sigma_k$-almost all $t\in U_k$ for any fixed $j$. As the union of countable many sets of measure $0$ has again measure $0$, (\ref{firstandhalfcondition}) also holds for $\sigma_k$-almost all $t\in U_k$ simultaneously for all $j$.
	It follows from Lemma \ref{lem_restricted-integral} that also (\ref{secondcondition}) holds for $\sigma_k$-almost all $t\in U_k$. 
	Indeed, one can apply Lemma \ref{lem_restricted-integral} to the closed ball corresponding to the sphere $U_k$ and use that the assertion of (\ref{secondcondition}) is independent of scaling $t$ by any non-zero complex number.
	Since $A$ has positive measure with respect to $\sigma_k$, we conclude that there is always a $t\in A$ satisfying (\ref{firstcondition}), (\ref{firstandhalfcondition}), (\ref{secondcondition}), and (\ref{thirdcondition}).
\end{proof}
Instead of directly proving Proposition \ref{pro_integral} by induction, we instead prove the following lemma by induction on $n$ and we will deduce Proposition \ref{pro_integral} from this lemma.
\begin{Lem}\label{lem_dimgen}
	Let $X$ be a smooth projective complex variety of dimension $n\ge 0$, $(L,h)$ an ample line bundle on $X$ equipped with a positive hermitian metric and $\omega=c_1(L,h)$ its first Chern form.
	Let $(p_j)_{j\in\mathbb{Z}_{\ge1}}$ be any increasing sequence of positive integers.	
	If $(s_{p_j})_{j\in\mathbb{Z}_{\ge1}}$ and $(s'_{p_j})_{j\in\mathbb{Z}_{\ge1}}$ are two sequences of sections in $H^0(X,L^{\otimes p_j})$ satisfying
	\begin{enumerate}[(i)]
		\item $\limsup_{j\to\infty}\frac{1}{p_j}\log\|s_{p_j}\|_{\sup}\int_X\omega^n\le \liminf_{j\to\infty}\frac{1}{p_j}\int_X\log|s_{p_j}|\omega^n<\infty$,
		\item $\limsup_{j\to\infty}\frac{1}{p_j}\log\|s_{p_j}-s'_{p_j}\|_{\sup}\int_X\omega^n<\liminf_{j\to\infty}\frac{1}{p_j}\int_X\log|s_{p_j}|\omega^n$,
	\end{enumerate}
	then we have
	$$\lim_{j\to\infty}\frac{1}{p_j}\int_X\log |s'_{p_j}|\omega^n=\lim_{j\to \infty}\frac{1}{p_j}\log\|s'_{p_j}\|_{\sup}\int_{X}\omega^n=\lim_{j\to \infty}\frac{1}{p_j}\log\|s_{p_j}\|_{\sup}\int_{X}\omega^n.$$
\end{Lem}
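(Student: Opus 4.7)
The plan is induction on the dimension $n$ of $X$. For the base case $n = 0$, the variety $X$ is a finite set of points and the integrals become finite sums. Condition (i), together with the pointwise bound $\log|s_{p_j}(x)| \le \log\|s_{p_j}\|_{\sup}$, forces $\lim_j \frac{1}{p_j}\log|s_{p_j}(x)|$ to exist and to agree for every $x \in X$ with $\lim_j \frac{1}{p_j}\log\|s_{p_j}\|_{\sup}$, say with common value $c$. Condition (ii) together with Lemma \ref{lem_supnorm} applied separately at each point then gives $\lim_j \frac{1}{p_j}\log|s'_{p_j}(x)| = c$ for every $x$, and summing over $X$ delivers all three claimed equalities.

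For the inductive step $n \ge 1$, Lemma \ref{lem_supnorm} already yields $\lim_j \frac{1}{p_j}\log\|s'_{p_j}\|_{\sup} = \lim_j \frac{1}{p_j}\log\|s_{p_j}\|_{\sup} =: c$, so the second equality in the conclusion is free. Dividing both sequences by $\|s_{p_j}\|_{\sup}$ reduces us to the normalized case $\|s_{p_j}\|_{\sup} = 1$ and $c = 0$; one checks that condition (i) then becomes $\lim_j \frac{1}{p_j}\int_X \log|s_{p_j}|\omega^n = 0$ and that condition (ii) is preserved. The remaining task is to show $\lim_j \frac{1}{p_j}\int_X \log|s'_{p_j}|\omega^n = 0$, and the direction $\limsup \le 0$ is immediate from $\log|s'_{p_j}| \le \log\|s'_{p_j}\|_{\sup}$ combined with $\lim_j \frac{1}{p_j}\log\|s'_{p_j}\|_{\sup} = 0$.

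The inequality $\liminf \ge 0$ I would obtain by contradiction: if the $\liminf$ drops below some $-\delta < 0$, I extract a subsequence realizing this and then thin it further so that $\sum_j \frac{1}{p_j}\int_X \bigl|\log|s_{p_j}|\bigr|\omega^n < \infty$, which is possible because $\|s_{p_j}\|_{\sup} \le 1$ makes the integrand nonnegative with a normalized integral tending to $0$. Along this subsequence Lemma \ref{lem_divisor} supplies, for any prescribed $\epsilon > 0$, an integer $k$ and a section $t \in H^0(X, L^{\otimes k})$ whose divisor is smooth, meets each $\mathrm{div}(s'_{p_j})$ properly, satisfies $\lim_j \frac{1}{p_j}\int_{\mathrm{div}(t)}\bigl|\log|s_{p_j}|\bigr|\omega^{n-1} = 0$, and fulfils the lower bound in (iv) of that lemma. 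An application of Lemma \ref{lem_stokes} to the pair $(s'_{p_j}, t)$, exploiting $\omega = c_1(L,h)$, then yields
\[
\frac{1}{p_j}\int_X \log|s'_{p_j}|\omega^n = \frac{1}{kp_j}\int_{\mathrm{div}(t)} \log|s'_{p_j}|\omega^{n-1} + \frac{1}{k}\int_X \log|t|\omega^n - \frac{1}{kp_j}\int_{\mathrm{div}(s'_{p_j})}\log|t|\omega^{n-1},
\]
and property (iv) forces the last two terms to exceed $-\epsilon$.

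The decisive ingredient is the inductive hypothesis applied on the smooth projective $(n-1)$-dimensional variety $\mathrm{div}(t)$ equipped with the positively metrized ample line bundle $L|_{\mathrm{div}(t)}$ and the restricted sequences $s_{p_j}|_{\mathrm{div}(t)}$ and $s'_{p_j}|_{\mathrm{div}(t)}$. Condition (i) on $\mathrm{div}(t)$ follows from $\|s_{p_j}|_{\mathrm{div}(t)}\|_{\sup} \le 1$ and the vanishing supplied by Lemma \ref{lem_divisor} (iii); condition (ii) is inherited because restriction only decreases the sup-norm of $s_{p_j} - s'_{p_j}$. The induction hypothesis then forces $\frac{1}{kp_j}\int_{\mathrm{div}(t)}\log|s'_{p_j}|\omega^{n-1} \to 0$, whence $\liminf_j \frac{1}{p_j}\int_X \log|s'_{p_j}|\omega^n \ge -\epsilon$, and letting $\epsilon \to 0$ contradicts the assumed bound $\liminf \le -\delta$. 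I expect the main technical obstacle to lie in the two-stage subsequence extraction and in confirming that the normalization does not disturb the hypotheses needed to invoke Lemma \ref{lem_divisor} uniformly along the subsequence, in particular the summability condition on $\frac{1}{p_j}\int_X \bigl|\log|s_{p_j}|\bigr|\omega^n$ required there.
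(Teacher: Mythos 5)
Your proposal is correct and follows essentially the same route as the paper's proof: induction on $\dim X$ with the pointwise argument at $n=0$, normalization to $\|s_{p_j}\|_{\sup}=1$, a two-stage subsequence extraction to secure the summability hypothesis of Lemma \ref{lem_divisor}, and the combination of Lemma \ref{lem_stokes} with the inductive hypothesis on $\mathrm{div}(t)$ to pin the remaining limit point between $-\epsilon$ and $0$. The only cosmetic difference is that you phrase the final step as a contradiction with $\liminf<-\delta$, whereas the paper shows directly that every limit point $\rho\in[-\infty,0]$ equals $0$.
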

\begin{proof}
	First, note that by the trivial bound 
	$$\int_X\log|s_{p_j}|\omega^n\le \log\|s_{p_j}\|_{\sup}\int_X \omega^n,$$
	condition (i) is equivalent to
	\begin{align}\label{equ_limit-sup}
	\lim_{j\to\infty}\frac{1}{p_j}\log\|s_{p_j}\|_{\sup}\int_X\omega^n= \lim_{j\to\infty}\frac{1}{p_j}\int_X\log|s_{p_j}|\omega^n<\infty.
	\end{align}
	In particular, both limits are well-defined. Thus, the second equality in the consequence of the lemma directly follows from Lemma \ref{lem_supnorm}.
	
	By condition (ii) the value of (\ref{equ_limit-sup}) is also strictly bigger than $-\infty$. Thus, we may replace the sections $s_{p_j}$ by $\frac{s_{p_j}}{\|s_{p_j}\|_{\sup}}$ and the sections $s'_{p_j}$ by $\frac{s'_{p_j}}{\|s_{p_j}\|_{\sup}}$ to reduce to the case where $\|s_{p_j}\|_{\sup}=1$ for all $j\ge 1$. After this reduction, we have to show that
	$$\lim_{j\to\infty}\frac{1}{p_j}\int_X\log|s'_{p_j}|\omega^n=0.$$
		
	We want to apply induction on the dimension $n$ of $X$. If $n=0$ the integral is just a sum over the points of $X$.
	Thus, Equation (\ref{equ_limit-sup}) has the form
	$$\lim_{j\to\infty}\sum_{x\in X}\frac{1}{p_j}\log|s_{p_j}(x)|=0,$$
	Note, that every summand is non-positive by the assumption $\|s_{p_j}\|_{\sup}=1$. Hence, we get that
	$$\lim_{j\to\infty}\frac{1}{p_j}\log|s_{p_j}(x)|=0$$
	for every point $x\in X$. By condition (ii) we also have
	$$\limsup_{j\to \infty} \frac{1}{p_j}\log|(s_{p_j}-s'_{p_j})(x)|\le\limsup_{j\to \infty} \frac{1}{p_j}\log\|s_{p_j}-s'_{p_j}\|_{\sup}<0$$
	for every point $x\in X$. Thus, we can apply Lemma \ref{lem_supnorm} to the complex variety $\{x\}$ to get 
	$$\lim_{j\to\infty}\frac{1}{p_j}\log|s'_{p_j}(x)|=0$$
	for every point $x\in X$. Taking the sum over all points $x\in X$ we get the assertion of the lemma in the case $n=0$.
	 
	Next, we consider the case $n\ge 1$ and we assume that the lemma is true for smooth projective complex varieties of dimension $n-1$. As the value of (\ref{equ_limit-sup}) is $0$ by the assumption $\|s_{p_j}\|_{\sup}=1$, we get from Lemma \ref{lem_supnorm} that
	$$\limsup_{j\to \infty}\frac{1}{p_j}\int_{X}\log|s'_{p_j}|\omega^n\le \limsup_{j\to\infty}\frac{1}{p_j}\log\|s'_{p_j}\|_{\sup}\int_{X}\omega^n =0.$$
	Thus, $\frac{1}{p_j}\int_X\log |s'_{p_j}|\omega^n$ has only non-positive limit points.
	Let $\rho\in[-\infty,0]$ be such a limit point and $(p'_j)_{j\in\mathbb{Z}_{\ge 1}}\subseteq (p_j)_{j\in\mathbb{Z}_{\ge 1}}$ be a subsequence such that
	$$\lim_{j\to\infty}\frac{1}{p'_j}\int_X\log |s'_{p'_j}|\omega^n=\rho\qquad \text{and} \qquad \sum_{j=1}^\infty\frac{1}{p'_j}\int_X\left|\log |s_{p'_j}|\right|\omega^n<\infty.$$
	We want to show $\rho=0$. We take any positive real number $\epsilon>0$. Let $k\in\mathbb{Z}$ and $t\in H^0(X,L^{\otimes k})$ be associated to $\epsilon$ as in Lemma \ref{lem_divisor}. By Lemma \ref{lem_stokes} we have
	\begin{align}\label{equ_applicaiton-stokes}
	&\frac{1}{p'_j}\int_X\log|s'_{p'_j}|c_1(L,h)^n-\frac{1}{kp'_j}\int_{\mathrm{div}(t)}\log|s'_{p'_j}|c_1(L,h)^{n-1}\\
	&=\frac{1}{k}\int_X\log|t|c_1(L,h)^n-\frac{1}{kp'_j}\int_{\mathrm{div}(s'_{p'_j})}\log|t|c_1(L,h)^{n-1}.\nonumber
	\end{align}
	
	By the induction hypothesis we have
	$$\lim_{j\to \infty}\frac{1}{kp'_j}\int_{\mathrm{div}(t)}\log|s'_{p'_j}|c_1(L,h)^{n-1}=\lim_{j\to \infty}\frac{1}{kp'_j}\int_{\mathrm{div}(t)}\log|s_{p'_j}|c_1(L,h)^{n-1}=0.$$
	Indeed, conditions (i) and (ii) are satisfied for $s_{p'_j}|_{\mathrm{div}(t)}$ and $s'_{p'_j}|_{\mathrm{div}(t)}$ on $\mathrm{div}(t)$ by condition (\ref{secondcondition}) in Lemma \ref{lem_divisor}.
	By condition (\ref{thirdcondition}) in Lemma \ref{lem_divisor} we have
	$$\frac{1}{k}\int_X\log|t|c_1(L,h)^n-\frac{1}{kp'_j}\int_{\mathrm{div}(s'_{p'_j})}\log|t|c_1(L,h)^{n-1}\ge -\epsilon$$
	for all $j$.
	If we apply these two observations to Equation (\ref{equ_applicaiton-stokes}) we get
	$$0\ge\rho=\lim_{j\to\infty}\frac{1}{p'_j}\int_X\log|s'_{p'_j}|c_1(L,h)^n\ge -\epsilon.$$
	Since $\epsilon>0$ was arbitrary, we obtain $\rho=0$.
	As $\rho$ was chosen as an arbitrary limit point of $\frac{1}{p_j}\int_X\log |s'_{p_j}|\omega^n$, we get that $0$ is the only limit point. This proves the lemma.
\end{proof}

Now we can give the proof of Proposition \ref{pro_integral}.
\begin{proof}[Proof of Proposition \ref{pro_integral}]
	First, we may assume $\omega=c_1(L,h)$ as $\omega$ and $c_1(L,h)$ are both positive, such that there are constants $a>0$ and $A>0$ with $$ac_1(L,h)\le \omega \le Ac_1(L,h).$$
	As we are interested in the asymptotic vanishing of an integral of an non-negative function, it does not effect the assertion if we replace $\omega$ by $c_1(L,h)$. Also the limits $\limsup_{j\to\infty}\|s_{p_j}\|^{1/p_j}$ and $\limsup_{j\to\infty}\|s_{p_j}-s'_{p_j}\|^{1/p_j}$ does not depend on the choice of $\omega$ for the definition of the $L^2$-norm as one can check by comparing them with the corresponding limits with the $\sup$-norm using inequality (\ref{equ_supl2}).
	
	We will apply Lemma \ref{lem_dimgen} to the subvariety $Y$.
	Let us check conditions (i) and (ii) of Lemma \ref{lem_dimgen}. Condition (i) is satisfied as by Equation (\ref{equ_supl2}) and assumptions (i) and (iii) in the proposition we have
	\begin{align*}
	&\limsup_{j\to\infty}\tfrac{1}{p_j}\log \|s_{p_j}|_Y\|_{\sup}\le \limsup_{j\to\infty}\tfrac{1}{p_j}\log \|s_{p_j}\|_{\sup}\\
	&\le \limsup_{j\to\infty}\tfrac{1}{p_j}\left(\log\|s_{p_j}\|+\log (C_1 p_j^n)\right)\\
	&=\limsup_{j\to\infty}\tfrac{1}{p_j}\log\|s_{p_j}\|\le 0=\lim_{j\to \infty}\tfrac{1}{p_j}\int_Y\log|s_{p_j}|\omega^{d_Y}
	\end{align*}
	Condition (ii) of Lemma \ref{lem_dimgen} follows by the same argument using assumptions (ii) and (iii) in the proposition.
	Thus, we can apply Lemma \ref{lem_dimgen} to obtain
	\begin{align}\label{equ_applicaiton-of-lemma}
	\lim_{j\to\infty}\frac{1}{p_j}\int_Y\log|s'_{p_j}|\omega^{d_Y}=\lim_{j\to\infty}\frac{1}{p_j}\log\|s'_{p_j}|_Y\|_{\sup}\int_Y\omega^{d_Y}=0.
	\end{align}

	To get the absolute value in the integral, we do the following calculation
	\begin{align*}
		&\lim_{j\to\infty}\frac{1}{p_j}\int_Y\left|\log|s'_{p_j}|\right|\omega^{d_Y}=\lim_{j\to\infty}\frac{1}{p_j}\int_Y\left|\log\left|\frac{s'_{p_j}}{\|s'_{p_j}|_Y\|_{\sup}}\right|+\log\|s'_{p_j}|_Y\|_{\sup}\right|\omega^{d_Y}\\
		&\le -\lim_{j\to\infty}\frac{1}{p_j}\int_Y\log\left|\frac{s'_{p_j}}{\|s_{p_j}\|_{\sup}}\right|\omega^{d_Y}+\lim_{j\to\infty}\frac{1}{p_j}\left|\log\|s'_{p_j}|_Y\|_{\sup}\right|\int_X\omega^{d_Y}\\
		&\le-\lim_{j\to\infty}\frac{1}{p_j}\int_Y\log|s'_{p_j}|\omega^{d_Y}+2\lim_{j\to\infty}\frac{1}{p_j}\left|\log\|s'_{p_j}|_Y\|_{\sup}\right|\int_Y\omega^{d_Y}=0,
	\end{align*}
	where the last equality follows by Equation (\ref{equ_applicaiton-of-lemma}). As the integral at the beginning of this computation is always non-negative, we conclude that
	$$\lim_{j\to\infty}\frac{1}{p_j}\int_Y\left|\log|s'_{p_j}|\right|\omega^{d_Y}=0$$
	as claimed in the proposition.
\end{proof}
\begin{Rem}
	Proposition \ref{pro_integral} shows that there are sequences $(s_p)_{p\in \mathbb{Z}_{\ge 1}}$ with $s_p\in H^0(X,L^{\otimes p})$, $\|s_p\|=1$ and irreducible divisors $\mathrm{div}(s_p)$ satisfying
	$$\lim_{p\to \infty}\frac{1}{p}\int_{X}\left|\log|s_p|\right|>0.$$
	Indeed, if we assume for the sake of simplicity that $H^0(X,L)\neq 0$ and choose a non-zero section $s\in H^0(X,L)$ normed to $\|s\|_{\sup}=1$, then the sequence $(s^{\otimes p})_{p\in\mathbb{Z}_{\ge 1}}$ satisfies
	$$\lim_{p\to \infty}\frac{1}{p}\int_{X}\left|\log|s^{\otimes p}|\right|=\int_{X}\left|\log|s|\right|>0.$$
	Of course, $\mathrm{div}(s^{\otimes p})=p\mathrm{div}(s)$ is far from being irreducible. But by Bertini's theorem the set of $t\in H^0(X,L^{\otimes p})$ with irreducible divisor $\mathrm{div}(t)$ is dense in $H^0(X,L^{\otimes p})$ for $p$ large enough. Hence, we can choose $(s_p)_{p\in \mathbb{Z}_{\ge 1}}$ with $s_p\in H^0(X,L^{\otimes p})$ and irreducible divisor $\mathrm{div}(s_p)$, such that $\limsup_{p\to \infty}\|s^{\otimes p}-s_p\|^{1/p}<1$. Now Proposition \ref{pro_integral} shows, that we have
	$$\lim_{p\to \infty}\frac{1}{p}\int_{X}\left|\log|s_p|\right|>0.$$
	If we rescale the sections $s_p$, such that $\|s_p\|=1$, the value of the limit will not change by (\ref{equ_supl2}). Indeed, we have $\|s^{\otimes p}\|_{\sup}=1$ for all $p\ge 1$ and hence by construction, $\lim_{p\to \infty} \|s_p\|_{\sup}=1$.
\end{Rem}

\subsection{Distribution on Real Vector Subspaces}\label{sec_real-space}
In this section, we show that the sequences of probability measures on $H^0(X,L^{\otimes p_j})$ of type $\mathbf{L}_{\mathbb{R}}$ satisfy Condition (B) with $\lim_{p\to\infty}\frac{C_p}{p}=0$ if their supports are bounded by balls satisfying certain conditions. In particular, we can apply Lemma \ref{lem_integral-zero} in this case. 
\begin{Pro}\label{pro_conditionB}
	Let $X$ be a smooth projective complex variety and $(L,h)$ an ample line bundle on $X$ equipped with a positive hermitian metric.
	Let $(p_j)_{j\in\mathbb{Z}_{\ge 1}}$ be an increasing sequence of positive integers and $(\sigma_{p_j})_{j\in\mathbb{Z}_{\ge 1}}$ a sequence of probability measures $\sigma_{p_j}$ on $H^0(X,L^{\otimes p_j})$ of type $\mathbf{L}_{\mathbb{R}}$. Denote $K_j$ for the support of $\sigma_{p_j}$ and $V_j$ for the real vector space associated to $\sigma_{p_j}$. Let $(r_j)_{j\in\mathbb{Z}_{\ge 1}}$ and $(r'_j)_{j\in\mathbb{Z}_{\ge 1}}$ be two sequences of positive real numbers such that $\overline{B_{r'_j}}\subseteq K_j\subseteq \overline{B_{r_j}}$ for all $j\ge 1$, where $B_{r'_j}$ and $B_{r_j}$ denote the balls in $V_j$ around the origin of radius $r'_j$ and $r_j$. If
	$$\lim_{j\to \infty}r_j^{1/p_j}=\lim_{j\to \infty}{r'}_j^{1/p_j}=1,$$
	then $(\sigma_{p_j})_{j\in\mathbb{Z}_{\ge 1}}$ satisfies Condition (B) with $\lim_{j\to \infty} \frac{C_{p_j}}{p_j}=0$.
\end{Pro}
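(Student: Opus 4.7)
The plan is to give an explicit sub-linear bound on the integral in Condition~(B) by first reducing to a one-dimensional problem on the real subspace $V_j$ and then using the Brunn--Minkowski inequality to bound the density of the slice of $K_j$ through the origin.

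Fix a unit vector $u\in H^0(X,L^{\otimes p_j})$. Because the inner product is real on $V_j$, a short calculation shows that $V_j\cap iV_j=0$, so $V_j$ is a real form of $H^0(X,L^{\otimes p_j})$ and $u$ decomposes uniquely as $u=u_1+iu_2$ with $u_1,u_2\in V_j$ satisfying $\|u_1\|^2+\|u_2\|^2=1$. I relabel so that $\|u_1\|\ge 1/\sqrt 2$ and set $v:=u_1/\|u_1\|\in V_j$. For $s\in V_j$ one has $\langle s,u\rangle=\langle s,u_1\rangle-i\langle s,u_2\rangle$ with both summands real, so $|\langle s,u\rangle|\ge\|u_1\|\,|\langle s,v\rangle|\ge|\langle s,v\rangle|/\sqrt 2$. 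Combined with the trivial upper bound $|\langle s,u\rangle|\le\|s\|\le r_j$, this reduces the matter to showing that the quantity $\int_{V_j}\bigl|\log|\langle s,v\rangle|\bigr|\,d\sigma_{p_j}$ is $o(p_j)$ uniformly in unit vectors $v\in V_j$.

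Fixing an orthonormal basis of $V_j$ with first vector $v$, I identify $V_j\cong\mathbb{R}^d$ with $d=d_{p_j}:=\dim_{\mathbb{R}}V_j=\dim_{\mathbb{C}}H^0(X,L^{\otimes p_j})$ and set $s_1=\langle s,v\rangle$. The marginal density of $s_1$ under $\sigma_{p_j}$ is $\rho_v(s_1)=g(s_1)/\mathrm{Vol}(K_j)$ with $g(s_1)=\mathrm{Vol}_{d-1}(K_j\cap\{s\cdot v=s_1\})$. By Brunn--Minkowski, $g^{1/(d-1)}$ is concave, and by the symmetry of $K_j$ it is even, with support $[-R_v,R_v]$; the containment $r'_jv\in\overline{B_{r'_j}}\subseteq K_j$ forces $R_v\ge r'_j$. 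Concavity together with vanishing at the endpoints implies $g(s_1)\ge g(0)(1-|s_1|/R_v)^{d-1}$, so $\mathrm{Vol}(K_j)\ge 2g(0)R_v/d\ge 2g(0)r'_j/d$, and hence the crucial bound $\rho_v(0)\le d_{p_j}/(2r'_j)$.

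Since $\rho_v$ is log-concave and symmetric, $\rho_v(0)$ is its maximum, which gives the slab bound $\mathrm{Pr}_{\sigma_{p_j}}(|s_1|<\varepsilon)\le 2\varepsilon\rho_v(0)$. A standard layer-cake computation then produces $\int_{|s_1|<1}-\log|s_1|\,\rho_v\,ds_1\le\log^+(2\rho_v(0))+1$, while the trivial bound $|s_1|\le r_j$ on the support gives $\int_{|s_1|\ge 1}|\log|s_1||\,\rho_v\,ds_1\le\log^+r_j$. Putting everything together, one arrives at $C_{p_j}\le 2\log^+ r_j+\log d_{p_j}+|\log r'_j|+O(1)$. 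The Hilbert polynomial gives $d_{p_j}=O(p_j^n)$ with $n=\dim X$, so $\log d_{p_j}=O(\log p_j)$, while the hypotheses $r_j^{1/p_j}\to 1$ and ${r'_j}^{1/p_j}\to 1$ force $|\log r_j|,|\log r'_j|=o(p_j)$; hence $C_{p_j}/p_j\to 0$. The main obstacle is the density estimate in the previous paragraph: the naive combination $g(0)\le\omega_{d-1}r_j^{d-1}$ and $\mathrm{Vol}(K_j)\ge\omega_d{r'_j}^d$ would produce a factor $(r_j/r'_j)^{d-1}$ in $\rho_v(0)$, which, since $\log(r_j/r'_j)$ is only $o(p_j)$ while $d_{p_j}$ is polynomial in $p_j$, blows up super-linearly in $p_j$ and destroys the asymptotic; the Brunn--Minkowski comparison between $\mathrm{Vol}(K_j)$ and $g(0)$ is precisely what removes this ratio and leaves only the harmless polynomial factor $d_{p_j}/r'_j$.
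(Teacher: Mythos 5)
Your proof is correct and follows essentially the same route as the paper's: the same reduction from a complex unit vector $u$ to a real unit vector $v\in V_j$ via $u=u_1+iu_2$ (at the cost of $\log\sqrt{2}$), the same key slice-density estimate $\rho_v(0)\le d_{p_j}/(2r'_j)$ obtained from Brunn's theorem together with the inner ball $\overline{B_{r'_j}}\subseteq K_j$, and the same conclusion via $\log d_{p_j}=O(\log p_j)$ and $|\log r_j|,|\log r'_j|=o(p_j)$. The only differences are cosmetic (the paper rescales $K_j$ into the unit ball and truncates the integral at $\epsilon=r'_j/d_{p_j}$, where you split at $|s_1|=1$ and use a layer-cake bound), so the two arguments are interchangeable.
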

\begin{proof}
	First, note that $\dim V_j=d_{p_j}$, where $d_{p_j}=\dim_{\mathbb{C}}H^0(X,L^{\otimes p_j})$. Indeed, if $x_1,\dots, x_d\in V_j$ denotes an orthonormal basis of $V_j$, these vectors span $H^0(X,L^{\otimes p_j})$ over $\mathbb{C}$ by the assumptions and they are also orthonormal, and hence linearly independent, in $H^0(X,L^{\otimes p_j})$.
	
	We have to show that for all $u\in H^0(X,L^{\otimes p_j})$ with $\|u\|=1$ it holds
	\begin{align}\label{equ_toshow}
		\frac{1}{\mathrm{Vol}(K_j)}\int_{K_j}\left|\log|\langle x,u\rangle|\right|d\lambda_j(x)\le C_{p_j}
	\end{align}
	for some constants $C_{p_j}$ satisfying $\lim_{j\to \infty}\frac{C_{p_j}}{p_j}=0$. By the triangle inequality we have
	\begin{align*}
		&\frac{1}{\mathrm{Vol}(K_j)}\int_{K_j}\left|\log|\langle x,u\rangle|\right|d\lambda_j(x)\\
		&\le \frac{1}{\mathrm{Vol}(K_j)}\int_{K_j}\left|\log|\langle r_j^{-1}x,u\rangle|\right|d\lambda_j(x)+\left|\log r_j\right|\\
		&=\frac{1}{\mathrm{Vol}(r_j^{-1}K_j)}\int_{r_j^{-1}K_j}\left|\log|\langle x,u\rangle|\right|d\lambda_j(x)+\left|\log r_j\right|.
	\end{align*}
	Since $\lim_{j\to\infty}r_j^{1/p_j}=1$, it holds $\lim_{j\to \infty}\frac{|\log r_j|}{p_j}=0$. Thus, it is enough to show (\ref{equ_toshow}) after replacing $K_j$ by $r_j^{-1}K_j$. In particular, we can assume that $\|x\|\le 1$ for all $x\in K_j$ for all $j\ge 1$. This implies $\left|\log|\langle x,u\rangle|\right|=-\log|\langle x,u\rangle|$ for all $x\in K_j$ and $u\in H^0(X,L^{\otimes p_j})$ with $\|u\|=1$ for all $j\ge 1$.
	
	For every $u\in H^0(X,L^{\otimes p_j})$ there exist $u_1,u_2\in V_j$ such that $u=u_1+iu_2$. If $\|u\|=1$, then $\frac{1}{\sqrt{2}}\le\max(\|u_1\|,\|u_2\|)\le 1$. We choose $k\in\{1,2\}$ such that $\frac{1}{\sqrt{2}}\le\|u_k\|\le 1$. Then
	\begin{align*}
		\frac{1}{\mathrm{Vol}(K_j)}\int_{K_j}\log|\langle x,u\rangle|d\lambda_j(x)&=\frac{1}{\mathrm{Vol}(K_j)}\int_{K_j}\log\sqrt{|\langle x,u_1\rangle|^2+|\langle x,u_2\rangle|^2}d\lambda_j(x)\\
		&\ge\frac{1}{\mathrm{Vol}(K_j)}\int_{K_j}\log|\langle x,u_k\rangle|d\lambda_j(x)\\
		&\ge \frac{1}{\mathrm{Vol}(K_j)}\int_{K_j}\log|\langle x,\|u_k\|^{-1}u_k\rangle|d\lambda_j(x)-\log \sqrt{2}.
	\end{align*}
	Since $\lim_{j\to\infty}\frac{\sqrt{2}}{p_j}=0$, it remains to prove that for all $u'\in V_j$ with $\|u'\|=1$ it holds
	$$\frac{1}{\mathrm{Vol}(K_j)}\int_{K_j}\log|\langle x,u'\rangle|d\lambda_j(x)\ge -C_{p_j}$$
	with $\lim_{j\to\infty}\frac{C_{p_j}}{p_j}=0$.
	To prove this, we choose an orthonormal basis $u'_1,\dots,u'_{d_{p_j}}\in V_j$ of $V_j$ with $u'_1=u'$ and for any $x\in V_j$ we write $x=\sum_{k=1}^{d_p} x_k u'_k$ with $x_k\in \mathbb{R}$. We write $H_{j,t}=\{x\in K_j~|~x_1=t\}$. For every $\epsilon\in (0,1]$ we have
	\begin{align}\label{equ_integral-epsilon}
		&\frac{1}{\mathrm{Vol}(K_j)}\int_{K_j}\log|\langle x,u'\rangle|d\lambda_j(x)\\
		&=\frac{1}{\mathrm{Vol}(K_j)}\int_{[-1,1]}\log|t|\left(\int_{K_j\cap H_{j,t}}dx_2\cdots dx_{d_{p_j}}\right)dt\nonumber\\
		&\ge\frac{2}{\mathrm{Vol}(K_j)}\int_{[0,\epsilon]}\log t\left(\int_{K_j\cap H_{j,t}}dx_2\cdots dx_{d_{p_j}}\right)dt+\log \epsilon\nonumber
	\end{align}
	The inner integral is equal to $\mathrm{Vol}_{d_{p_j}-1}(K_j\cap H_{j,t})$, where $\mathrm{Vol}_{d_{p_j}-1}$ denotes the volume on $H_{j,t}\cong H_{j,0}\cong\mathbb{R}^{n-1}$. We bound its value in the following claim.
	\begin{clm}
		It holds
		$$\frac{\mathrm{Vol}_{{d_{p_j}}-1}(K_j\cap H_{j,t})}{\mathrm{Vol}(K_j)}\le \frac{d_{p_j}}{2r'_j}.$$
	\end{clm}
	\begin{proof}[Proof of Claim]
		Since $\overline{B_{r'_j}}\subseteq K_j$ there exists a point $x\in K_j$ with $|x_1|=r'_j$.
		Since $K_j$ is symmetric and convex, it contains the convex body spanned by $K_j\cap H_{j,0}$ and the points $x$ and $-x$. Therefore, we can bound its volume by
		$$\mathrm{Vol}(K_j)\ge \frac{2r'_j}{d_{p_j}} \cdot \mathrm{Vol}_{d_{p_j}-1}(K_j\cap H_{j,0}).$$
		By Brunn's theorem the radius function $r(t)=\mathrm{Vol}_{d_{p_j}-1}(K_j\cap H_{j,t})^{1/(d_{p_j}-1)}$ is concave. By the symmetry of $K_j$ we have $r(t)=r(-t)$. Hence, $\mathrm{Vol}_{d_{p_j}-1}(K_j\cap H_{j,t})$ has to be maximal in $t=0$. Thus, we obtain
		$$\mathrm{Vol}(K_j)\ge \frac{2r'_j}{d_{p_j}} \cdot \mathrm{Vol}_{d_{p_j}-1}(K_j\cap H_{j,0})\ge \frac{2r'_j}{d_{p_j}} \cdot \mathrm{Vol}_{d_{p_j}-1}(K_j\cap H_{j,t})$$
		for every $t$. The claim follows after multiplying this inequality with $\frac{d_{p_j}}{2r'_j\mathrm{Vol}(K_j)}$.
	\end{proof}
	
	By the claim we can bound
	\begin{align*}
		\frac{2}{\mathrm{Vol}(K_j)}\int_{[0,\epsilon]}\log t\left(\int_{K_j\cap H_{j,t}}dx_2\cdots dx_{d_{p_j}}\right)dt\ge \frac{d_{p_j}}{r'_j}\int_{[0,\epsilon]}\log tdt=\frac{d_{p_j}\epsilon(\log\epsilon-1)}{r'_j}.
	\end{align*}
	If we set $\epsilon=\min\left\{1,\frac{r'_j}{d_{p_j}}\right\}$ and apply the above estimation to the calculation in (\ref{equ_integral-epsilon}), we get
	$$\frac{1}{\mathrm{Vol}(K_j)}\int_{K_j}\log|\langle x,u'\rangle|d\lambda_j(x)\ge \min\left\{-1,2\log r'_j-2\log d_{p_j}-1\right\}.$$
	We set $C_{p_j}=\max\left\{1,1+2\log d_{p_j}-2\log r'_j\right\}$ and we have to show $\lim_{j\to \infty}\frac{C_{p_j}}{p_j}=0$. By the Hilbert--Serre theorem there exists a constant $M\in\mathbb{Z}$ such that $d_p\le Mp^n$ for all $p$. As $L$ is ample, we have $d_p\ge 1$ if $p$ is sufficiently large. Hence, we get
	$$0\le \lim_{p\to\infty}\frac{\log d_p}{p}\le \lim_{p\to\infty}\left(\log M^{1/p}+n\log p^{1/p}\right)=0,$$
	which implies $\lim_{j\to\infty}\frac{\log d_{p_j}}{p_j}=0$. As we also have $\lim_{j\to\infty} \frac{\log r'_j}{p_j}=0$ by the assumption $\lim_{j\to\infty}{r'_j}^{1/p_j}=1$, we obtain $\lim_{j\to \infty}\frac{C_{p_j}}{p_j}=0$ as desired.
\end{proof}

\section{Arithmetic Intersection Theory}\label{sec_arithmetc-intersection-theory}
The idea of arithmetic intersection theory is to give an analogue of classical intersection theory for flat and projective schemes over $\mathrm{Spec}(\mathbb{Z})$ by adding some analytic information playing the role of a hypothetical fiber at infinity. It was invented in 1974 by Arakelov \cite{Ara74} for arithmetic surfaces and generalized to higher dimensions in 1990 by Gillet and Soul\'e \cite{GS90}. In this section we discuss some definitions and basic facts in arithmetic intersection theory. First, we clarify the notion of projective arithmetic varieties and hermitian line bundles on them in Section \ref{sec_arithmetic-varieties}. As an example, we consider the projective line $\mathbb{P}_{\mathbb{Z}}^1$ and the line bundle $\mathcal{O}(1)$ equipped with the Fubini--Study metric in Section \ref{sec_projective-line}. In Section \ref{sec_intersection} we discuss arithmetic cycles, the arithmetic Chow group and the arithmetic intersection product of arithmetic cycles with hermitian line bundles. We prove some easy lemmas about arithmetic intersection numbers on restrictions in Section \ref{sec_restriction}. This also includes a proof of Proposition \ref{pro_equidistribution-individual}. Finally, we discuss some properties of the lattice of global sections of a hermitian line bundle in Section \ref{sec_lattice}. For more details we refer to Moriwaki's book \cite{Mor14}.
\subsection{Projective Arithmetic Varieties and Hermitian Line Bundles}\label{sec_arithmetic-varieties}
We discuss hermitian line bundles on projective arithmetic varieties in this section. For more details we refer to \cite[Section 5.4]{Mor14}.
By a \emph{projective arithmetic variety} we mean an integral scheme, which is projective, separated, flat and of finite type over $\mathrm{Spec}~\mathbb{Z}$. We call a projective arithmetic variety $\mathcal{X}$ \emph{generically smooth} if its generic fiber $\mathcal{X}_{\mathbb{Q}}=\mathcal{X}\times_{\mathrm{Spec}(\mathbb{Z})}\mathrm{Spec}(\mathbb{Q})$ is smooth. We call a horizontal irreducible closed subscheme $\mathcal{Y}\subseteq \mathcal{X}$ an \emph{arithmetic subvariety}.

Let $\mathcal{X}\to\mathrm{Spec}~\mathbb{Z}$ be a projective arithmetic variety of dimension $d$. The complex points $\mathcal{X}(\mathbb{C})$ form a projective complex variety.
Let $\mathcal{L}$ be a line bundle on $\mathcal{X}$. By a \emph{hermitian metric on $\mathcal{L}$}, we mean a family $h=(h_{x})_{x\in\mathcal{X}(\mathbb{C})}$ of hermitian metrics $h_x$ on the fibers $\mathcal{L}(\mathbb{C})_{x}$. For any complex point $x\in\mathcal{X}(\mathbb{C})$ considered as a morphism $\mathrm{Spec}(\mathbb{C})\xrightarrow{x}\mathcal{X}$ we denote by $\overline{x}\in\mathcal{X}(\mathbb{C})$ the point obtained by the composition of the morphism $\mathrm{Spec}(\mathbb{C})\xrightarrow{\mathrm{conj}} \mathrm{Spec}(\mathbb{C})$ induced by the complex conjugation and the morphism $x$. We write 
$$F_\infty\colon \mathcal{X}(\mathbb{C})\to\mathcal{X}(\mathbb{C}),\qquad x\mapsto \overline{x}$$
for the complex conjugation map. We also denote by $F_\infty$ the induced anti-$\mathbb{C}$-linear map $F_\infty\colon \mathcal{L}_x\xrightarrow{\sim}\mathcal{L}_{\overline{x}}$ given by $s\otimes_x\alpha\mapsto s\otimes_{\overline{x}}\overline{\alpha}$.
Recall that a continuous function $f\colon T\to \mathbb{R}$ on an $n$-dimensional complex quasi-projective variety $T$ is called smooth if for any analytic map $\phi\colon \Delta^n\to T$ from the polydisc $\Delta^n=\{z\in\mathbb{C}^n~|~|z|<1\}$ to $T$ the composition $f\circ \phi$ is smooth on $\Delta^n$. 

We call $\overline{\mathcal{L}}=(\mathcal{L},h)$ a \emph{hermitian line bundle on $\mathcal{X}$} if $\mathcal{L}$ is a line bundle on $\mathcal{X}$, $h$ is a hermitian metric on $\mathcal{L}(\mathbb{C})$, $h_x(s(x),s(x))$ is a smooth function on $U$ for any section $s$ over an open subset $U\subseteq \mathcal{X}(\mathbb{C})$ and $h$ is of real type, that is
\begin{align}\label{equ_complexconjugation}
h_x(s, s')=\overline{h_{\overline{x}}(F_\infty(s),F_\infty(s'))}
\end{align}
for all $x\in \mathcal{X}(\mathbb{C})$ and all $s,s'\in\mathcal{L}_{x}$. We will use the notation
$$|s(x)|_{\overline{\mathcal{L}}}=\sqrt{h_x(s(x),s(x))}\qquad \text{and}\qquad \|s\|_{\sup}=\sup_{x\in\mathcal{X}(\mathbb{C})}|s(x)|_{\overline{\mathcal{L}}}$$
for any section $s\in H^0(\mathcal{X}(\mathbb{C}),\mathcal{L}(\mathbb{C}))$.
We write $\widehat{\mathrm{Pic}}(\mathcal{X})$ for the group of hermitian line bundles on $\mathcal{X}$.
We call $\overline{\mathcal{L}}$ \emph{semipositive} if for any analytic map $\phi\colon \Delta^n\to \mathcal{X}(\mathbb{C})$ the curvature form $c_1(\phi^{*}\mathcal{L}(\mathbb{C}),\phi^* h)=\frac{i\partial\overline{\partial}}{2\pi}\log \phi^*h_{y}(s(y),s(y))$ for any invertible section $s$ of $\phi^*\mathcal{L}(\mathbb{C})$ over $\Delta^n$ is semipositive. Moreover, we call $\overline{\mathcal{L}}$ \emph{positive} if for any smooth function $f\colon \mathcal{X}(\mathbb{C})\to\mathbb{R}$ with compact support there exists $\lambda_0\in\mathbb{R}_{>0}$ such that $(\mathcal{L}, \exp(-\lambda f)h)$ is semipositive for any $\lambda\in \mathbb{R}$ with $|\lambda|\le \lambda_0$.

If $\mathcal{X}(\mathbb{C})$ is smooth, then a hermitian line bundle $\overline{\mathcal{L}}$ on $\mathcal{X}$ induces an hermitian line bundle $\overline{\mathcal{L}}(\mathbb{C})$ on $\mathcal{X}(\mathbb{C})$ in the sense of Section \ref{sec_preliminaries}. The definitions of smoothness and positiveness are compatible with the definitions in Section \ref{sec_preliminaries}.

If $\overline{\mathcal{L}}$ is any hermitian line bundle on $\mathcal{X}$ and $f\colon \mathcal{Y}\to\mathcal{X}$ is a morphism of projective arithmetic varieties, then we obtain a hermitian line bundle $f^*\overline{\mathcal{L}}=(f^*\mathcal{L},f^*h)$ on $\mathcal{Y}$. If in particular $f$ is an embedding $\mathcal{Y}\subseteq \mathcal{X}$, then we write $\overline{\mathcal{L}}|_{\mathcal{Y}}=f^*\overline{\mathcal{L}}$. 

Next, we define ampleness for hermitian line bundles.
Let $\overline{\mathcal{L}}=(\mathcal{L},h)$ be a hermitian line bundle on $\mathcal{X}$. A global section $s\in H^0(\mathcal{X},\mathcal{L})$ is called \emph{small} if $\|s\|_{\sup}\le 1$ and \emph{strictly small} if $\|s\|_{\sup}<1$. We call $\mathcal{L}$ \emph{arithmetically ample} if the following conditions are satisfied:
\begin{enumerate}
	\item[(A1)] The line bundle $\mathcal{L}$ is relatively ample on $\mathcal{X}\to \mathrm{Spec}(\mathbb{Z})$.
	\item[(A2)] The hermitian line bundle $\overline{\mathcal{L}}$ is positive.
	\item[(A3)] The group $H^0(\mathcal{X},\mathcal{L}^{\otimes p})$ is generated by strictly small sections for infinitely many $p\in\mathbb{Z}_{\ge0}$.
\end{enumerate}
If $\overline{\mathcal{L}}$ is arithmetically ample, one directly checks, that the restriction $\overline{\mathcal{L}}|_{\mathcal{Z}}$ to any arithmetic subvariety $\mathcal{Z}\subseteq \mathcal{X}$ is also an arithmetically ample hermitian line bundle on $\mathcal{Z}$.

If $\overline{\mathcal{L}}=(\mathcal{L},h)$ is a hermitian line bundle, we can vary the metric by a constant factor to obtain another hermitian line bundle. For any real number $\tau\in\mathbb{R}$ we denote $\overline{\mathcal{L}}(\tau)$ for the hermitian line bundle $(\mathcal{L},e^{-2\tau}\cdot h)$. By the Poincaré--Lelong formula (\ref{equ_poincare-lelong}) the curvature form of $\overline{\mathcal{L}}$ is not effected by this change, that is
\begin{align}\label{equ_c1-metric-change}
c_1(\overline{\mathcal{L}})=c_1(\overline{\mathcal{L}}(\tau)).
\end{align}
If $\overline{\mathcal{L}}$ is arithmetically ample and $\tau\ge 0$, then also $\overline{\mathcal{L}}(\tau)$ is arithmetically ample. Indeed, the underlying line bundle and the curvature form are the same, such that we only have to check (A3). Let $s_1,\dots,s_n\in H^0(\mathcal{X},\mathcal{L}^{\otimes p})$ be a set of strictly small sections with respect to $|\cdot|_{\overline{\mathcal{L}}}$ generating $H^0(\mathcal{X},\mathcal{L}^{\otimes p})$ for some $p$. Then we get
$$\sup_{x\in\mathcal{X}(\mathbb{C})} |s_j(x)|_{\overline{\mathcal{L}}(\tau)}=\sup_{x\in\mathcal{X}(\mathbb{C})}e^{-\tau}|s_j(x)|_{\overline{\mathcal{L}}}<e^{-\tau}\cdot 1\le 1$$
for all $1\le j\le n$. That means $s_1,\dots, s_n$ are also strictly small with respect to $|\cdot|_{\overline{\mathcal{L}}(\tau)}$. Thus, $\overline{\mathcal{L}}(\tau)$ satisfies (A3) and hence, it is arithmetically ample.

Finally in this section, we define the following notation
$$\widehat{H}_{\le r}^0(\mathcal{X},\overline{\mathcal{L}})=\{s\in H^0(\mathcal{X},\mathcal{L})~|~\|s\|_{\sup}\le r\}$$
for any hermitian line bundle $\overline{\mathcal{L}}$ and any real number $r\in \mathbb{R}$. Moreover, we write $\widehat{H}^0(\mathcal{X},\overline{\mathcal{L}})=\widehat{H}_{\le 1}^0(\mathcal{X},\overline{\mathcal{L}})$.
\subsection{An Example: The Arithmetic Projective Line}\label{sec_projective-line}
In this section we study the line bundle $\mathcal{O}(1)$ on the projective line $\mathbb{P}_{\mathbb{Z}}^1$ equipped with its Fubini--Study metric over $\mathbb{P}_{\mathbb{C}}^1$. In particular, we show that $\overline{\mathcal{O}(1)}$ is not arithmetically ample, but $\overline{\mathcal{O}(1)}(\epsilon)$ is arithmetically ample for any $\epsilon>0$.

We write $Z_0$ and $Z_1$ for homogeneous coordinate functions on $\mathbb{P}^1_{\mathbb{Z}}$ and for every sections $s,t\in H^0(\mathbb{P}^1_\mathbb{Z},\mathcal{O}(1))_{\mathbb{C}}\cong H^0(\mathbb{P}^1_{\mathbb{C}},\mathcal{O}(1))$ we write $s=a_0Z_0+a_1Z_1$ and $t=b_0Z_0+b_1Z_1$ with $a_0,a_1,b_0,b_1\in\mathbb{C}$. Then the Fubini--Study metric $h_{\mathrm{FS}}$ on $\mathcal{O}(1)$ is defined by
$$h_{\mathrm{FS}}(s,t)=\frac{\overline{(a_0Z_0+a_1Z_1)}(b_0Z_0+b_1Z_1)}{|Z_0|^2+|Z_1|^2}.$$
It induces the Fubini--Study norm
$$|s|^2_{\mathrm{FS}}=h_{\mathrm{FS}}(s,s)=\frac{\left|a_0Z_0+a_1Z_1\right|^2}{|Z_0|^2+|Z_1|^2}$$
as a smooth function on $\mathbb{P}^1_\mathbb{C}$. Thus, we obtain on $\mathbb{P}^1_{\mathbb{Z}}$ the hermitian line bundle $\overline{\mathcal{O}(1)}=(\mathcal{O}(1),h_{\mathrm{FS}})$.  For the induced hermitian line bundle $\overline{\mathcal{O}(1)}^{\otimes n}$ for $n\in\mathbb{Z}_{\ge 1}$ the Fubini--Study norm of a section $s=\sum_{j=0}^n a_j Z_0^{j}Z_1^{n-j}\in H^0(\mathbb{P}^1_{\mathbb{Z}},\mathcal{O}(1)^{\otimes n})_{\mathbb{C}}$ is given by
$$|s|^2_{\mathrm{FS}}=\frac{\left|\sum_{j=0}^n a_j Z_0^{j}Z_1^{n-j}\right|^2}{\left(|Z_0|^2+|Z_1|^2\right)^n}.$$
We write $S^n_j=Z_0^jZ_1^{n-j}$. Then $S_0^n,\dots,S_n^n$ form a basis of $H^0(\mathbb{P}_\mathbb{Z}^1,\mathcal{O}(1)^{\otimes n})_{\mathbb{C}}$.

We denote the affine subset $U=\{[Z_0:Z_1]\in\mathbb{P}^1_{\mathbb{C}}~|~Z_1\neq 0\}\subseteq \mathbb{P}_\mathbb{C}^1$. This can be identified with $\mathbb{C}$ by setting $Z_1=1$. We write $z=\frac{Z_0}{Z_1}$ for the corresponding coordinate function on $\mathbb{C}$. The curvature form $c_1(\overline{\mathcal{O}(1)})$ is given on $U$ by
\begin{align}\label{equ_c1FS}
	c_1(\overline{\mathcal{O}(1)})=\frac{i}{2\pi}\frac{1}{(|z|^2+1)^2}dz\wedge d\overline{z}.
\end{align}
In particular, $c_1(\overline{\mathcal{O}(1)})$ is positive.

The hermitian line bundle $\overline{\mathcal{O}(1)}$ is not arithmetically ample on $\mathbb{P}_{\mathbb{Z}}^1$. Indeed, if $s_1,\dots,s_m$ are sections generating $H^0(\mathbb{P}_{\mathbb{Z}}^1,\mathcal{O}(1)^{\otimes n})$, then there has to be at least one section $s_k$ with $s_k([0:1])\neq 0$. Thus, we can write $s_k=\sum_{j=0}^n a_j S_j^n$ for some integers $a_j\in\mathbb{Z}$ with $a_0\neq 0$. As $S_0^n([0:1])=1$ and $S_j^n([0:1])=0$ for $j\ge 1$, we get
$$\|s_k\|_{\sup}\ge |s_k([0:1])|_{\mathrm{FS}}=\left|\sum_{j=0}^n a_j S_j^n([0:1])\right|_{\mathrm{FS}}=|a_0|\cdot |S_0^n([0:1])|_{\mathrm{FS}}=|a_0|\ge 1.$$
Hence, $s_k$ is not strictly small, such that $\overline{\mathcal{O}(1)}$ is not arithmetically ample. The following lemma shows, that however $\overline{\mathcal{O}(1)}(\epsilon)$ is arithmetically ample for every $\epsilon>0$.
\begin{Lem}
	The hermitian line bundle $\overline{\mathcal{O}(1)}(\epsilon)$ on $\mathbb{P}_{\mathbb{Z}}^1$ is arithmetically ample for every $\epsilon>0$.
\end{Lem}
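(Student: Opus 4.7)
The plan is to verify the three conditions (A1), (A2), (A3) for $\overline{\mathcal{O}(1)}(\epsilon)$ separately, with the real content concentrated in (A3).

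For (A1), the underlying line bundle $\mathcal{O}(1)$ is unchanged by the metric scaling and is relatively ample on $\mathbb{P}^1_\mathbb{Z}\to\mathrm{Spec}(\mathbb{Z})$, which is immediate from the construction of $\mathbb{P}^1_\mathbb{Z}$. For (A2), I recall from (\ref{equ_c1-metric-change}) that $c_1(\overline{\mathcal{O}(1)}(\epsilon))=c_1(\overline{\mathcal{O}(1)})$, and by (\ref{equ_c1FS}) this form is a strictly positive (Kähler) $(1,1)$-form on $\mathbb{P}^1(\mathbb{C})$. Given any smooth $f\colon \mathbb{P}^1(\mathbb{C})\to\mathbb{R}$, compactness of $\mathbb{P}^1(\mathbb{C})$ lets me bound the continuous $(1,1)$-form $\tfrac{i}{2\pi}\partial\overline{\partial}f$ pointwise by some multiple of the Fubini--Study form, so for $|\lambda|$ sufficiently small the twisted curvature $c_1(\overline{\mathcal{O}(1)})+\tfrac{i\lambda}{2\pi}\partial\overline{\partial}f$ is still semipositive; this is exactly the positivity condition.

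The main step is (A3). I would exhibit the monomial basis $S_0^n,\dots,S_n^n$ with $S_j^n=Z_0^jZ_1^{n-j}$ of $H^0(\mathbb{P}^1_\mathbb{Z},\mathcal{O}(n))$, which is in fact a $\mathbb{Z}$-basis of the free $\mathbb{Z}$-module $H^0(\mathbb{P}^1_\mathbb{Z},\mathcal{O}(n))$, and compute its Fubini--Study sup-norms. On the affine chart with coordinate $z=Z_0/Z_1$, one has
\[
|S_j^n(z)|_{\mathrm{FS}}^2=\frac{|z|^{2j}}{(1+|z|^2)^n},
\]
and a one-variable calculus argument locates the maximum at $|z|^2=j/(n-j)$, yielding
\[
\|S_j^n\|_{\sup,\mathrm{FS}}^2=\frac{j^j(n-j)^{n-j}}{n^n}\le 1
\]
(with the convention $0^0=1$); the inequality follows, for instance, from the weighted AM--GM inequality applied to $j\cdot\tfrac{1}{j}+(n-j)\cdot\tfrac{1}{n-j}$, or directly from the concavity of $\log$.

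Finally, I combine these ingredients. Since $\overline{\mathcal{O}(1)}(\epsilon)^{\otimes n}=(\mathcal{O}(n),e^{-2n\epsilon}h_{\mathrm{FS}}^{\otimes n})$, one has $|s|_{\overline{\mathcal{O}(1)}(\epsilon)^{\otimes n}}=e^{-n\epsilon}|s|_{\mathrm{FS}}$ for every section $s$, so
\[
\|S_j^n\|_{\sup,\overline{\mathcal{O}(1)}(\epsilon)^{\otimes n}}=e^{-n\epsilon}\|S_j^n\|_{\sup,\mathrm{FS}}\le e^{-n\epsilon}<1
\]
for every $n\ge 1$ and every $0\le j\le n$. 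Thus for every $n\ge 1$ the $\mathbb{Z}$-module $H^0(\mathbb{P}^1_\mathbb{Z},\mathcal{O}(n))$ is generated by strictly small sections of $\overline{\mathcal{O}(1)}(\epsilon)^{\otimes n}$, verifying (A3) for all (not merely infinitely many) $n$. I do not expect any serious obstacle; the only potentially subtle point is the norm maximization, which however is a textbook computation.
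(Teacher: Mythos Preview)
Your proposal is correct and follows essentially the same route as the paper's proof: verify (A1) trivially, deduce (A2) from $c_1(\overline{\mathcal{O}(1)}(\epsilon))=c_1(\overline{\mathcal{O}(1)})$ and the explicit formula (\ref{equ_c1FS}), and for (A3) show that the monomial $\mathbb{Z}$-basis $S_0^n,\dots,S_n^n$ consists of strictly small sections because $\|S_j^n\|_{\sup,\mathrm{FS}}\le 1$ and the metric scaling contributes a factor $e^{-n\epsilon}<1$. The only difference is cosmetic: you compute the exact value $\|S_j^n\|_{\sup,\mathrm{FS}}^2=j^j(n-j)^{n-j}/n^n$, whereas the paper simply uses the cruder bound $|z_0|^{2j}|z_1|^{2(n-j)}\le(|z_0|^2+|z_1|^2)^j(|z_0|^2+|z_1|^2)^{n-j}$ to get $\|S_j^n\|_{\sup,\mathrm{FS}}\le 1$ directly.
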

\begin{proof}
	Clearly, the underlying line bundle $\mathcal{O}(1)$ is ample on $\mathbb{P}_{\mathbb{Z}}^1$. As $c_1(\overline{\mathcal{O}(1)}(\epsilon))=c_1(\overline{\mathcal{O}(1)})$ by Equation (\ref{equ_c1-metric-change}), the curvature form $c_1(\overline{\mathcal{O}(1)}(\epsilon))$ is also positive by (\ref{equ_c1FS}). Finally, for every $n\ge 1$ the basis $S_0^n,\dots,S_n^n$ of $H^0(\mathbb{P}_{\mathbb{Z}}^1,\mathcal{O}(1)^{\otimes n})$ consists of strictly small sections as
	$$\|S_j^n\|^2_{\sup}=e^{-2\epsilon n}\sup_{[z_0:z_1]\in \mathbb{P}_\mathbb{C}^1}\frac{|z_0^jz_1^{n-j}|^2}{(|z_0|^2+|z_1|^2)^n}\le e^{-2\epsilon n}<1.$$
\end{proof}

The next lemma computes some integrals with respect to the Fubini--Study metric. In particular, it shows that $S_0^n,\dots,S_n^n$ is an orthogonal basis with respect to the inner product associated to the Kähler form $c_1(\overline{\mathcal{O}(1)})$ as in (\ref{equ_inner-product}).
\begin{Lem}\label{lem_integral-P1}
	\hspace{1pt}
	\begin{enumerate}[(i)]
		\item We have
		$$\langle S_j^n,S_k^n\rangle=\int_{\mathbb{P}_{\mathbb{C}}^1}h_{\mathrm{FS}}(S_j^n,S_k^n)c_1(\overline{\mathcal{O}(1)})=\begin{cases} 0 & \text{if } j\neq k,\\ \frac{1}{\binom{n}{k}(n+1)} & \text{if } j=k.\end{cases}$$
		\item It holds
		$$\int_{\mathbb{P}^1_{\mathbb{C}}}\log|Z_0|_{\mathrm{FS}}c_1(\overline{\mathcal{O}(1)})=\int_{\mathbb{P}^1_{\mathbb{C}}}\log|Z_1|_{\mathrm{FS}}c_1(\overline{\mathcal{O}(1)})=-\frac{1}{2}.$$
	\end{enumerate}
\end{Lem}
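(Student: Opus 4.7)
\medskip

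The plan is to pass to the affine chart $U=\{Z_1\neq 0\}\cong\mathbb{C}$ with coordinate $z=Z_0/Z_1$ (the complement $\{Z_1=0\}$ is a single point, hence of measure zero), and then do everything in polar coordinates $z=re^{i\theta}$ using the explicit formula (\ref{equ_c1FS}) for $c_1(\overline{\mathcal{O}(1)})$. In these coordinates $\frac{i}{2\pi}dz\wedge d\overline{z}=\frac{r}{\pi}dr\, d\theta$, and a quick computation $2\int_0^\infty\frac{r\,dr}{(r^2+1)^2}=1$ confirms $\int_{\mathbb{P}^1_{\mathbb{C}}}c_1(\overline{\mathcal{O}(1)})=1$, so the normalization factor in (\ref{equ_inner-product}) disappears and the inner product is simply the integral written in the statement.

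For part (i), trivializing $\mathcal{O}(1)^{\otimes n}$ on $U$ via $Z_1^n$ gives $S_j^n\leftrightarrow z^j$, and the definition of the Fubini--Study metric (extended to the $n$-fold tensor power) yields
\[
h_{\mathrm{FS}}(S_j^n,S_k^n)=\frac{\overline{z}^{\,j}z^{k}}{(|z|^2+1)^n}.
\]
Plugging in and switching to polar coordinates gives
\[
\langle S_j^n,S_k^n\rangle=\int_0^{2\pi}\int_0^\infty\frac{r^{j+k}e^{i(k-j)\theta}}{(r^2+1)^{n+2}}\cdot\frac{r}{\pi}\,dr\,d\theta.
\]
The $\theta$-integral vanishes for $j\neq k$, giving orthogonality. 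For $j=k$ the substitution $t=r^2$ turns the remaining radial integral into the Beta integral $\int_0^\infty t^k(1+t)^{-(n+2)}dt = B(k+1,n-k+1)=\frac{k!(n-k)!}{(n+1)!}$, which is exactly $\frac{1}{\binom{n}{k}(n+1)}$.

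For part (ii), on $U$ we have $|Z_1|_{\mathrm{FS}}^2=\frac{1}{|z|^2+1}$, so $\log|Z_1|_{\mathrm{FS}}=-\tfrac12\log(1+|z|^2)$. The integral becomes, again in polar coordinates,
\[
\int_{\mathbb{P}^1_\mathbb{C}}\log|Z_1|_{\mathrm{FS}}\,c_1(\overline{\mathcal{O}(1)})=-\int_0^\infty\frac{r\log(r^2+1)}{(r^2+1)^2}\,dr.
\]
The substitution $u=r^2+1$ reduces this to $-\tfrac12\int_1^\infty u^{-2}\log u\, du$, and one integration by parts gives $\int_1^\infty u^{-2}\log u\, du =1$, hence $-\tfrac12$ overall. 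The case of $Z_0$ follows from the symmetry swapping the two homogeneous coordinates, which is an isometry of $(\mathbb{P}^1_{\mathbb{C}},\overline{\mathcal{O}(1)})$ and exchanges $|Z_0|_{\mathrm{FS}}$ with $|Z_1|_{\mathrm{FS}}$ (alternatively, change variables $w=1/z$ and redo the same computation).

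No step is really an obstacle; the only mild care needed is the Beta-integral identification in (i) and the elementary but slightly fiddly integration-by-parts in (ii). Everything else is bookkeeping in the single affine chart.
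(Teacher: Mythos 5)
Your proposal is correct and follows essentially the same route as the paper: restrict to the affine chart, pass to polar coordinates, and evaluate the radial integrals by the substitution $u=r^2+1$ (resp.\ $t=r^2$). The only difference is cosmetic: where you identify the radial integral in (i) as the Beta integral $B(k+1,n+1-k)$, the paper expands $(u-1)^k$ by the binomial theorem and verifies the resulting combinatorial identity by induction on $k$ --- your shortcut is slightly cleaner but the argument is the same.
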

\begin{proof}
	\begin{enumerate}[(i)]
		\item 
		We compute the integral over the affine subset $U\cong \mathbb{C}$
		\begin{align*} 
			\langle S_j^n, S_k^n\rangle=\int_{\mathbb{P}_{\mathbb{C}}^1} h_{\mathrm{FS}}(S_j^n, S_k^n)c_1(\overline{\mathcal{O}(1)})=\frac{i}{2\pi}\int_{\mathbb{C}}\frac{\overline{z}^jz^k}{(|z|^2+1)^{n+2}}dz\wedge d\overline{z}.
		\end{align*}
		Changing to polar coordinates $(r,\theta)$ with $z=re^{i\theta}$ and $dz\wedge d\overline{z}=-2ri dr\wedge d\theta$ we get 
		\begin{align*}
			\frac{i}{2\pi}\int_{\mathbb{C}}\frac{\overline{z}^jz^k}{(|z|^2+1)^{n+2}}dz\wedge d\overline{z}&=\frac{1}{\pi}\int_0^{2\pi}\int_0^\infty \frac{(r^je^{-i\theta j})(r^ke^{i\theta k})}{(r^2+1)^{n+2}} rdr\wedge d\theta\\
			&=\frac{1}{\pi}\int_0^\infty \frac{r^{j+k+1}}{(r^2+1)^{n+2}}dr\int_0^{2\pi}e^{i\theta(k-j)}d\theta.
		\end{align*}
		The case $j\neq k$ follows by
		\begin{align*}
			\int_0^{2\pi}e^{i\theta(k-j)}d\theta=\tfrac{1}{i(k-j)}(e^{2\pi i(k-j)}-e^0)=0.
		\end{align*}
		
		Let us now assume $j=k$.
		By a coordinate change $u=r^2+1$ we get
		\begin{align*}
			\langle S_k,S_k\rangle&=2\int_0^\infty\frac{r^{2k+1}}{(r^2+1)^{n+2}}dr=\int_1^\infty \frac{(u-1)^k}{u^{n+2}}du\\
			&=\sum_{j=0}^k \binom{k}{j}(-1)^{k-j}\int_1^\infty u^{j-n-2}du=\sum_{j=0}^k \binom{k}{j}\frac{(-1)^{k-j}}{n+1-j}=\frac{1}{\binom{n}{k}(n+1)}.
		\end{align*}
		Let us check the last equality by induction over $k$. It is clearly true for $k=0$. For $k>0$ the induction hypothesis allows us to compute
		\begin{align*}
			\sum_{j=0}^k \binom{k}{j}\frac{(-1)^{k-j}}{n+1-j}&=-\sum_{j=0}^{k-1}\left(\binom{k-1}{j}+\binom{k-1}{j-1}\right)\frac{(-1)^{(k-1)-j}}{n+1-j}+\frac{1}{n+1-k}\\
			&=-\frac{1}{\binom{n}{k-1}(n+1)}+\sum_{j=0}^{k-1}\binom{k-1}{j}\frac{(-1)^{(k-1)-j}}{(n-1)+1-j}\\
			&=-\frac{1}{\binom{n}{k-1}(n+1)}+\frac{1}{\binom{n-1}{k-1}n}\\
			&=\frac{(k-1)!\cdot(-(n-k+1)+(n+1))}{(n+1)\cdot\ldots\cdot (n-k+1)}=\frac{1}{\binom{n}{k}(n+1)}.
		\end{align*}
		Note that we always have $k\le n$. This proves (i).
		\item
		By symmetry it is enough to show $\int_{\mathbb{P}^1_{\mathbb{C}}}\log|Z_1|_{\mathrm{FS}}c_1(\overline{\mathcal{O}(1)})=-\frac{1}{2}$. We compute the integral on the affine set $U\cong \mathbb{C}$
		\begin{align*}
			\int_{\mathbb{P}^1_{\mathbb{C}}}\log|Z_1|_{\mathrm{FS}}c_1(\overline{\mathcal{O}(1)})=\frac{1}{2}\int_{\mathbb{C}}\left( \log \frac{1}{|z|^2+1}\right)\frac{i\cdot dz\wedge d\overline{z}}{2\pi(|z|^2+1)^2}
		\end{align*}
		We do again a coordinate change to polar coordinates $(r,\theta)$ by $z=re^{i\theta}$. This yields
		\begin{align*}
			\frac{1}{2}\int_{\mathbb{C}}\left( \log \frac{1}{|z|^2+1}\right)\frac{i\cdot dz\wedge d\overline{z}}{2\pi(|z|^2+1)^2}
			&=-\frac{1}{2\pi}\int_{0}^{2\pi}\int_0^\infty \frac{\log (r^2+1)}{(r^2+1)^2}rdr\wedge d\theta\\
			&=-\int_0^\infty \frac{\log (r^2+1)}{(r^2+1)^2}rdr
		\end{align*}
		We make another coordinate change $u=r^2+1$ to obtain
		\begin{align*}
			-\int_0^\infty \frac{\log (r^2+1)}{(r^2+1)^2}rdr=\frac{1}{2}\int_1^\infty \frac{-\log u}{u^2}du=\frac{1}{2}\left[\frac{\log u+1}{u}\right]_1^\infty=-\frac{1}{2}.
		\end{align*}
		This completes the proof.
	\end{enumerate}
\end{proof}
The lemma allows us to compute the $L^2$-norm with respect to the Fubini--Study metric of any section $s=\sum_{j=0}^n a_j S_j^n$ in $H^0(\mathbb{P}_{\mathbb{Z}}^1,\mathcal{O}(1)^{\otimes n})_{\mathbb{C}}$. Indeed, by the orthogonality of the basis $S_0^n,\dots,S_n^n$ of $H^0(\mathbb{P}_{\mathbb{Z}}^1,\mathcal{O}(1))$ we get
\begin{align}\label{equ_L2norm-FS}
	\|s\|^2=\left\|\sum_{j=0}^n a_jS_j^n\right\|^2=\sum_{j=0}^n |a_j|^2\cdot \|S_j^n\|^2=\frac{1}{n+1}\sum_{j=0}^n\frac{|a_j|^2}{\binom{n}{j}}.
\end{align}

\subsection{Arithmetic Intersection Numbers}\label{sec_intersection}
In this section we recall the definitions of arithmetic cycles, the arithmetic Chow group and the arithmetic intersection product of arithmetic cycles with hermitian line bundles. We also discuss the projection formula and the arithmetic degree function. For details we refer to \cite[Section 5.4]{Mor14}.

Let $\mathcal{X}$ be a generically smooth projective arithmetic variety of dimension $d$.
 An \emph{arithmetic cycle of codimension $p$ (or of dimension $d-p$)} is a pair $\overline{\mathcal{Z}}=(\mathcal{Z},T)$, where $\mathcal{Z}\subseteq \mathcal{X}$ is a cycle of pure codimension $p$ and $T\in D^{p-1,p-1}(\mathcal{X}(\mathbb{C}))$ is a real $(p-1,p-1)$-current on $\mathcal{X}(\mathbb{C})$ satisfying $F_{\infty}^*T=(-1)^{p-1}T$. We say that an arithmetic cycle $(\mathcal{Z},T)$ is of \emph{Green type}, if $T$ is a Green current of $\mathcal{Z}(\mathbb{C})$. We call an arithmetic cycle $(\mathcal{Z},T)$ \emph{effective} if $\mathcal{Z}$ is effective and $T\ge 0$. We write $\widehat{Z}_D^p(\mathcal{X})$ for the group of all arithmetic cycles of codimension $p$ on $\mathcal{X}$ and $\widehat{Z}^p(\mathcal{X})$ for the subgroup of $\widehat{Z}_D^p(\mathcal{X})$ of arithmetic cycles of Green type. Further, we denote $\widehat{Z}_{D,l}(\mathcal{X})=\widehat{Z}_D^{d-l}(\mathcal{X})$ and $\widehat{Z}_l(\mathcal{X})=\widehat{Z}^{d-l}(\mathcal{X})$.

We give two families of examples of arithmetic cycles of Green type, which will be defined to be rationally equivalent to $0$. Let $\mathcal{Y}$ be a positive dimensional integral subscheme of $\mathcal{X}$ and $\phi$ a non-zero rational section of $\mathcal{O}_{\mathcal{Y}}$ on $\mathcal{Y}$. Let $p-1$ denote the codimension of $\mathcal{Y}$. We obtain a current $[-\log|\phi|^2]_{\mathcal{Y}(\mathbb{C})}\in D^{p-1,p-1}(\mathcal{X}(\mathbb{C}))$ by sending any $\eta \in A^{d-p,d-p}(\mathcal{X}(\mathbb{C}))$ to
$$[-\log|\phi|^2]_{\mathcal{Y}(\mathbb{C})}(\eta)=\int_{\mathcal{Y}(\mathbb{C})}(-\log|\phi|^2)\eta.$$
Note that $\dim \mathcal{X}(\mathbb{C})=d-1$ and $\dim \mathcal{Y}(\mathbb{C})=d-p$ if it is non-empty.
Then
$$\widehat{(\phi)}:=\left(\mathrm{div}(\mathcal{O}_{\mathcal{Y}},\phi),[-\log|\phi|^2]_{\mathcal{Y}(\mathbb{C})}\right)$$
is an arithmetic cycle of codimension $p$, which is of Green type by the Poincaré--Lelong formula (\ref{equ_poincare-lelong}). If $u\in D^{p-2,p-1}(\mathcal{X}(\mathbb{C}))$ and $v\in D^{p-1,p-2}(\mathcal{X}(\mathbb{C})$ are two currents, then
$$(0,\partial u+\overline{\partial} v)\in \widehat{Z}^p(\mathcal{X})$$
is an arithmetic cycle of codimension $p$ and of Green type.
We write $\widehat{\mathrm{Rat}}^p(\mathcal{X})$ for the subgroup of $\widehat{Z}^p(\mathcal{X})$ generated by all arithmetic cycles of the form $\widehat{(\phi)}$ and $(0,\partial u+\overline{\partial} v)$ as above and we define the \emph{arithmetic Chow group (of Green type)} by
$$\widehat{\mathrm{CH}}_D^p(\mathcal{X})=\widehat{Z}_D^p(\mathcal{X})/\widehat{\mathrm{Rat}}^p(\mathcal{X}) \quad \text{and} \quad \widehat{\mathrm{CH}}^p(\mathcal{X})=\widehat{Z}^p(\mathcal{X})/\widehat{\mathrm{Rat}}^p(\mathcal{X}).$$
We set $\widehat{\mathrm{CH}}_{D,l}(\mathcal{X})=\widehat{\mathrm{CH}}_D^{d-l}(\mathcal{X})$ and $\widehat{\mathrm{CH}}_l(\mathcal{X})=\widehat{\mathrm{CH}}^{d-l}(\mathcal{X})$.

If $\overline{\mathcal{L}}$ is a hermitian line bundle on $\mathcal{X}$ and $s$ is a non-zero rational section of $\mathcal{L}$, then 
$$\widehat{\mathrm{div}}(\overline{\mathcal{L}},s)=(\mathrm{div}(s),-\log h(s,s))\in \widehat{Z}^1(\mathcal{X})$$
is an arithmetic cycle of codimension $1$, which is of Green type by the Poincaré--Lelong formula (\ref{equ_poincare-lelong}). Since the quotient of two rational sections of $\mathcal{L}$ is a rational section $\phi$ of $\mathcal{O}_\mathcal{X}$ and the quotient of their norms is the absolute value $|\phi|$, we obtain that the class of $\widehat{\mathrm{div}}(\overline{\mathcal{L}},s)$ in the arithmetic Chow group $\widehat{\mathrm{CH}}^{1}(\mathcal{X})$ does not depend on the choice of $s$. Thus, we have a well-defined class $\widehat{c_1}(\overline{\mathcal{L}})\in\widehat{\mathrm{CH}}^{1}(\mathcal{X})$.
The map $\widehat{c_1}\colon \widehat{\mathrm{Pic}}(\mathcal{X})\xrightarrow{\sim} \widehat{\mathrm{CH}}^{1}(\mathcal{X})$ is an isomorphism, see for example \cite[Proposition 1.3.4]{KMY02}. Thus, we also just write $\overline{\mathcal{L}}$ for $\widehat{c_1}(\overline{\mathcal{L}})$.

Gillet and Soulé \cite{GS90} defined an intersection pairing
$$\widehat{\mathrm{CH}}^q(\mathcal{X})\times \widehat{\mathrm{CH}}^p(\mathcal{X})\to\widehat{\mathrm{CH}}^{p+q}(\mathcal{X})\otimes_\mathbb{Z}\mathbb{Q}$$
for the arithmetic Chow groups under the assumption that $\mathcal{X}$ is regular.
We will recall its definition for $q=1$. In this case we do not have to assume that $\mathcal{X}$ is regular but only that $\mathcal{X}_{\mathbb{Q}}$ is smooth. Let $(\mathcal{Z},g_\mathcal{Z})\in \widehat{Z}_D^p(\mathcal{X})$ be an arithmetic cycle, $\overline{\mathcal{L}}$ a hermitian line bundle and $s$ a non-zero rational section of $\mathcal{L}$, such that $\dim (\mathrm{div}(s)\cap \mathcal{Z})=d-p-1$. The arithmetic intersection of $\widehat{\mathrm{div}}(\overline{\mathcal{L}},s)$ and $(\mathcal{Z},g_\mathcal{Z})$ is defined by
\begin{align}\label{equ_intersection-definition}
\widehat{\mathrm{div}}(\overline{\mathcal{L}},s)\cdot (\mathcal{Z},T)=(\mathrm{div}(s)\cdot \mathcal{Z},[-\log h(s,s)]_{\mathcal{Z}(\mathbb{C})}+c_1(\overline{\mathcal{L}})\wedge T)\in \widehat{Z}^{p+1}(\mathcal{X}),
\end{align}
where $\mathrm{div}(s)\cdot \mathcal{Z}$ denotes the classical intersection product of $\mathrm{div}(s)$ and $\mathcal{Z}$ in $\mathcal{X}$ and $[-\log h(s,s)]_{\mathcal{Z}(\mathbb{C})}$ denotes the $(p,p)$-current given by
$$[-\log h(s,s)]_{\mathcal{Z}(\mathbb{C})}\colon A^{d-1-p,d-1-p}\to\mathbb{C},\qquad \eta\mapsto \int_{\mathcal{Z}(\mathbb{C})}(-\log h(s,s))\eta.$$
The class of $\widehat{\mathrm{div}}(\overline{\mathcal{L}},s)\cdot (\mathcal{Z},T)$ in $\widehat{\mathrm{CH}}_D^{p+1}(\mathcal{X})$ does not depend on the choice of $s$ and only depends on the class of $(\mathcal{Z},T)$ in $\widehat{\mathrm{CH}}_D^p(\mathcal{X})$. Thus, we obtain a bilinear intersection product
\begin{align}\label{equ_intersection-pairing}
\widehat{\mathrm{Pic}}(\mathcal{X})\times \widehat{\mathrm{CH}}_D^p(\mathcal{X})\to \widehat{\mathrm{CH}}_D^{p+1}(\mathcal{X}).
\end{align}
One can check directly that this pairing respects arithmetic cycles of Green type. Hence, we get by restriction also a bilinear intersection product
$$\widehat{\mathrm{Pic}}(\mathcal{X})\times \widehat{\mathrm{CH}}^p(\mathcal{X})\to \widehat{\mathrm{CH}}^{p+1}(\mathcal{X}).$$

Next, we recall the projection formula for the pairing (\ref{equ_intersection-pairing}). Let $f\colon \mathcal{Y}\to\mathcal{X}$ be any projective morphism of generically smooth projective arithmetic varieties. 
Further, let $\overline{\mathcal{L}}$ be any hermitian line bundle on $\mathcal{X}$ and $\alpha$ any class in $\widehat{\mathrm{CH}}_D^p(\mathcal{Y})$. If $\alpha$ is represented by some arithmetic cycle $(\mathcal{Z},T)$, then its push-forward $f_*\alpha$ is given by the class of $(f_*\mathcal{Z},f_*T)$, where $f_*\mathcal{Z}$ is the push-forward of $\mathcal{Z}$ as known in classical intersection theory and $f_*T$ is the push-forward of the current $T$ as defined in Section \ref{sec_preliminaries}. The arithmetic projection formula \cite[Proposition 5.18]{Mor14} states that
$$f_*((f^*\overline{\mathcal{L}})\cdot \alpha)=\overline{\mathcal{L}}\cdot f_*\alpha.$$

We now give the definition of the arithmetic degree.
Let $(\mathcal{Z},T)\in \widehat{Z}_{D,0}(\mathcal{X})$ be any arithmetic cycle of dimension $0$ on $\mathcal{X}$.
We can write $\mathcal{Z}=\sum_{x\in \mathcal{X}_{(0)}} n_x x$ for some $n_x\in\mathbb{Z}$, where $\mathcal{X}_{(0)}$ denotes the scheme theoretic $0$-dimensional points of $\mathcal{X}$.
The arithmetic degree of $(\mathcal{Z},T)$ is defined by
\begin{align}\label{equ_degree-definition}
\widehat{\deg}(\mathcal{Z},T)=\sum_{x\in\mathcal{X}_{(0)}}n_x\log \# k(x)+\frac{1}{2}\int_{\mathcal{X}(\mathbb{C})}T,
\end{align}
where $k(x)$ denotes the residue field at $x$. One directly checks, that $\widehat{\deg}$ is zero on $\widehat{\mathrm{Rat}}^{d}(\mathcal{X})$. Hence, we obtain a linear map
$$\widehat{\deg}\colon \widehat{\mathrm{CH}}_{D,0}(\mathcal{X})\to\mathbb{R}.$$
If we apply the intersection product (\ref{equ_intersection-pairing}) inductively and compose it with the arithmetic degree map, we obtain a multi-linear intersection pairing
$$\widehat{\mathrm{Pic}}(\mathcal{X})^l\times \widehat{\mathrm{CH}}_{D,l}(\mathcal{X})\to \mathbb{R},\qquad (\overline{\mathcal{L}}_1,\dots,\overline{\mathcal{L}}_l,\alpha)\mapsto\widehat{\deg}(\overline{\mathcal{L}}_1\cdots\overline{\mathcal{L}}_l\cdot\alpha).$$
We also just write
$$(\overline{\mathcal{L}}_1\cdots\overline{\mathcal{L}}_l\cdot(\mathcal{Z},T))=\widehat{\deg}(\overline{\mathcal{L}}_1\cdots\overline{\mathcal{L}}_l\cdot\alpha),$$
if $\alpha$ is represented by the arithmetic cycle $(\mathcal{Z},T)\in\widehat{Z}_{D,l}(\mathcal{X})$. Moreover, if $(\mathcal{Z},T)=(\mathcal{X},0)$, we omit it in the notation.

If $f\colon \mathcal{Y}\to\mathcal{X}$ is a projective morphism of projective arithmetic varieties with smooth generic fiber, then it holds $\widehat{\deg}~f_*(\mathcal{Z},T)=\widehat{\deg}(\mathcal{Z},T)$ for any arithmetic cycle $(\mathcal{Z},T)\in\widehat{Z}_{D,0}(\mathcal{Y})$, see \cite[Proposition 5.23 (1)]{Mor14}. Thus, an inductive application of the arithmetic projection formula gives
\begin{align}\label{equ_arithmetic-projection-formula}
	(f^*\overline{\mathcal{L}}_1\cdots f^*\overline{\mathcal{L}}_l\cdot (\mathcal{Z},T))=(\overline{\mathcal{L}}_1\cdots\overline{\mathcal{L}}_l\cdot f_*(\mathcal{Z},T))
\end{align}
for any hermitian line bundles $\overline{\mathcal{L}}_1,\dots\overline{\mathcal{L}}_l\in\widehat{\mathrm{Pic}}(\mathcal{X})$ and any $(\mathcal{Z},T)\in\widehat{Z}_{D,l}(\mathcal{Y})$.

The arithmetic projection formula allows us to define arithmetic intersection numbers even if the generic fiber is no longer smooth. Assume that $\mathcal{Y}$ is a projective arithmetic variety of dimension $d$, which is not necessarily generically smooth. Let $\overline{\mathcal{L}}_1,\dots,\overline{\mathcal{L}}_d\in\widehat{\mathrm{Pic}}(\mathcal{Y})$ be hermitian line bundles on $\mathcal{Y}$ and $\mu\colon \mathcal{Y}'\to\mathcal{Y}$ a generic resolution of singularities of $\mathcal{Y}$. By the arithmetic projection formula the number
$$(\overline{\mathcal{L}}_1\cdots\overline{\mathcal{L}}_d):=(\mu^*\overline{\mathcal{L}}_1\cdots\mu^*\overline{\mathcal{L}}_d)$$
does not depend on the choice of $\mu$ as for any two generic resolutions of singularities there exists a third resolution factorizing over both.

\subsection{Restricted Arithmetic Intersection Numbers}\label{sec_restriction}
In this section we study relations between the arithmetic intersection number of hermitian line bundles with some arithmetic cycle $(\mathcal{Z},T)$ and the arithmetic intersection numbers of the same line bundles restricted to the components of $\mathcal{Z}$. We also define heights and prove Proposition \ref{pro_equidistribution-individual}.
Again, we assume $\mathcal{X}$ to be a generically smooth projective arithmetic variety of dimension $d$.

First, we treat the case $T=0$. Here, we get the following lemma.
\begin{Lem}\label{lem_intersection-Z0}
	Let $\mathcal{Z}$ be any cycle of $\mathcal{X}$ of pure dimension $l$ and let $$\mathcal{Z}=\sum_{j=1}^{n_0}a_{0,j}\mathcal{Z}_{0,j}+\sum_{p\in|\mathrm{Spec}(\mathbb{Z})|}\sum_{j=1}^{n_p}a_{p,j}\mathcal{Z}_{p,j}$$
	be its decomposition into irreducible cycles, where all coefficients are integers, we write $|\mathrm{Spec}(\mathbb{Z})|$ for the closed points of $\mathrm{Spec}(\mathbb{Z})$, and under the structure map $\pi\colon\mathcal{X}\to\mathrm{Spec}(\mathbb{Z})$ we have $\pi(\mathcal{Z}_{0,j})=\mathrm{Spec}(\mathbb{Z})$ and $\pi(\mathcal{Z}_{p,j})=\{p\}$ for all primes $p\in|\mathrm{Spec}(\mathbb{Z})|$. For any hermitian line bundles $\overline{\mathcal{L}}_1,\dots,\overline{\mathcal{L}}_l$ on $D$ it holds
	\begin{align*}
	&(\overline{\mathcal{L}}_1\cdots\overline{\mathcal{L}}_l\cdot(\mathcal{Z},0))\\
	&=\sum_{j=1}^{n_0}a_{0,j}(\overline{\mathcal{L}}_1|_{\mathcal{Z}_{0,j}}\cdots\overline{\mathcal{L}}_l|_{\mathcal{Z}_{0,j}})+\sum_{p\in|\mathrm{Spec}(\mathbb{Z})|}\sum_{j=1}^{n_p}a_{p,j}(\mathcal{L}_1|_{\mathcal{Z}_{p,j}}\cdots\mathcal{L}_l|_{\mathcal{Z}_{p,j}})\log p,
	\end{align*}
	where $(\mathcal{L}_1|_{\mathcal{Z}_{p,j}}\cdots\mathcal{L}_l|_{\mathcal{Z}_{p,j}})$ denotes the classical intersection number of the line bundles $\mathcal{L}_1|_{\mathcal{Z}_{p,j}}$, ..., $\mathcal{L}_l|_{\mathcal{Z}_{p,j}}$ on the irreducible projective $\mathbb{F}_p$-variety $\mathcal{Z}_{p,j}$.
\end{Lem}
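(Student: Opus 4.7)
The plan is to exploit multilinearity to reduce to the case that $\mathcal{Z}$ is irreducible, and then handle horizontal and vertical components separately.

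First I would use that both sides of the claimed identity are $\mathbb{Z}$-linear in the cycle $\mathcal{Z}$. The left-hand side is linear because the arithmetic intersection pairing (\ref{equ_intersection-pairing}) is bilinear and the arithmetic degree map is linear, and $(\mathcal{Z},0)\mapsto \widehat{\deg}(\overline{\mathcal{L}}_1\cdots\overline{\mathcal{L}}_l\cdot (\mathcal{Z},0))$ is linear in the first component. The right-hand side is linear by inspection. So it suffices to treat an irreducible cycle $\mathcal{Z}$, which is either horizontal (surjecting onto $\mathrm{Spec}(\mathbb{Z})$) or vertical (contained in a single fiber $\mathcal{X}_p$).

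For an irreducible horizontal component $\mathcal{Z}_{0,j}$, I would pick a generic resolution of singularities $\mu_j\colon \mathcal{Z}'_{0,j}\to \mathcal{Z}_{0,j}$ and form the composition $f_j=\iota_j\circ\mu_j\colon \mathcal{Z}'_{0,j}\to\mathcal{X}$, where $\iota_j$ is the inclusion. Both $\mathcal{X}$ and $\mathcal{Z}'_{0,j}$ are generically smooth projective arithmetic varieties, so (\ref{equ_arithmetic-projection-formula}) applies. Since $\mu_j$ is birational we have $f_{j*}(\mathcal{Z}'_{0,j},0)=(\mathcal{Z}_{0,j},0)$ in $\widehat{Z}_l(\mathcal{X})$, and therefore
\begin{align*}
(\overline{\mathcal{L}}_1\cdots\overline{\mathcal{L}}_l\cdot (\mathcal{Z}_{0,j},0))
&=(f_j^*\overline{\mathcal{L}}_1\cdots f_j^*\overline{\mathcal{L}}_l\cdot(\mathcal{Z}'_{0,j},0))\\
&=(\mu_j^*(\overline{\mathcal{L}}_1|_{\mathcal{Z}_{0,j}})\cdots\mu_j^*(\overline{\mathcal{L}}_l|_{\mathcal{Z}_{0,j}}))
=(\overline{\mathcal{L}}_1|_{\mathcal{Z}_{0,j}}\cdots\overline{\mathcal{L}}_l|_{\mathcal{Z}_{0,j}})
\end{align*}
by the very definition given in Section \ref{sec_intersection} for arithmetic intersection numbers via generic resolutions.

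For an irreducible vertical component $\mathcal{Z}_{p,j}\subseteq \mathcal{X}_p$ we have $\mathcal{Z}_{p,j}(\mathbb{C})=\emptyset$. I would compute the intersection number directly from the inductive definition (\ref{equ_intersection-definition}). Choose non-zero rational sections $s_i$ of $\mathcal{L}_i$ such that all iterated intersections $\mathrm{div}(s_1)\cdots\mathrm{div}(s_i)\cdot \mathcal{Z}_{p,j}$ have the expected dimension (this is possible by the classical moving lemma, since the ambient ample line bundles supply plenty of sections, and it is enough that the sections restrict to a generic system on $\mathcal{Z}_{p,j}$). Because the complex locus of every intermediate cycle is empty, the current $[-\log h_i(s_i,s_i)]_{(\cdot)(\mathbb{C})}$ in (\ref{equ_intersection-definition}) vanishes at each step, and the Green current stays identically zero. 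After $l$ steps we obtain the $0$-cycle $\mathrm{div}(s_1)\cdots\mathrm{div}(s_l)\cdot \mathcal{Z}_{p,j}$ paired with the zero current, entirely supported in $\mathcal{X}_p$. Applying the arithmetic degree (\ref{equ_degree-definition}) and using $\log\#k(x)=[k(x):\mathbb{F}_p]\log p$ for every closed point $x\in\mathcal{X}_p$, the sum equals $\log p$ times the classical $\mathbb{F}_p$-degree of this $0$-cycle, which by classical intersection theory on the irreducible $\mathbb{F}_p$-variety $\mathcal{Z}_{p,j}$ is $(\mathcal{L}_1|_{\mathcal{Z}_{p,j}}\cdots\mathcal{L}_l|_{\mathcal{Z}_{p,j}})$. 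Combining the two cases and resumming gives the stated formula.

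The only delicate step is the horizontal case, where one must invoke the generic resolution of singularities to make sense of $(\overline{\mathcal{L}}_1|_{\mathcal{Z}_{0,j}}\cdots\overline{\mathcal{L}}_l|_{\mathcal{Z}_{0,j}})$ and then match it with the intersection on $\mathcal{X}$ via (\ref{equ_arithmetic-projection-formula}); the vertical case is essentially bookkeeping once one observes that the Green current contribution drops out because the complex points are empty.
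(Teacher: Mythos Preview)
Your proof is correct and follows essentially the same approach as the paper: reduce by linearity to an irreducible cycle, treat the horizontal case via a generic resolution and the arithmetic projection formula (\ref{equ_arithmetic-projection-formula}), and treat the vertical case by observing that the complex locus is empty so no Green current contributions arise. The only difference is that for the vertical case the paper simply cites \cite[Proposition~5.23~(3)]{Mor14}, whereas you unpack that computation explicitly by choosing rational sections and tracking the arithmetic degree; one minor slip is your parenthetical that ``the ambient ample line bundles supply plenty of sections'', since the $\overline{\mathcal{L}}_i$ are not assumed ample here, but this is harmless because you only need non-zero \emph{rational} sections in proper position, which always exist.
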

\begin{proof}
	By linearity we can assume that $\mathcal{Z}$ is irreducible. If $\mathcal{Z}=\mathcal{Z}_{p,1}$ for some prime $p$, then the we have
	$$(\overline{\mathcal{L}}_1\cdots\overline{\mathcal{L}}_l\cdot(\mathcal{Z},0))=\widehat{\deg}(\mathcal{L}_1\cdots\mathcal{L}_l\cdot\mathcal{Z},0)$$
	by the definition of the arithmetic intersection product as $\mathcal{Z}(\mathbb{C})=\emptyset$. Thus, in this case the assertion follows from \cite[Proposition 5.23 (3)]{Mor14}.
	
	If $\mathcal{Z}=\mathcal{Z}_{0,1}$, then $\mathcal{Z}$ is a projective arithmetic variety. Let $\mu\colon \mathcal{Z}'\to\mathcal{Z}$ be a generic resolution of singularities of $\mathcal{Z}$. We denote $\mu'\colon \mathcal{Z}'\to \mathcal{X}$ for the composition $\mathcal{Z}'\xrightarrow{\mu}\mathcal{Z}\xrightarrow{\subseteq}\mathcal{X}$. As $\mu'_*\mathcal{Z}'=\mathcal{Z}$, the arithmetic projection formula (\ref{equ_arithmetic-projection-formula}) gives
	$$({\mu'}^*\overline{\mathcal{L}}_1\cdots{\mu'}^*\overline{\mathcal{L}}_l)=({\mu'}^*\overline{\mathcal{L}}_1\cdots{\mu'}^*\overline{\mathcal{L}}_l\cdot(\mathcal{Z}',0))=(\overline{\mathcal{L}}_1\cdots\overline{\mathcal{L}}_l\cdot (\mathcal{Z},0)).$$
	If we denote $\iota\colon \mathcal{Z}\to\mathcal{X}$ for the inclusion map, we get by the definition of arithmetic intersection numbers on not necessarily generically smooth arithmetic varieties
	$$(\overline{\mathcal{L}}_1|_{\mathcal{Z}}\cdots\overline{\mathcal{L}}_l|_{\mathcal{Z}})=(\iota^*\overline{\mathcal{L}}_1\cdots\iota^*\overline{\mathcal{L}}_l)=({\mu'}^*\overline{\mathcal{L}}_1\cdots{\mu'}^*\overline{\mathcal{L}}_l).$$
	The assertion of the lemma follows by combing both equations.
\end{proof}

To reduce the general case to the case $T=0$, we compute their difference in the next lemma.
\begin{Lem}\label{lem_intersection-restriction}
	For any arithmetic cycle $(\mathcal{Z},T)\in \widehat{Z}_{D,l}(\mathcal{X})$ and any hermitian line bundles $\overline{\mathcal{L}}_1,\dots \overline{\mathcal{L}}_l\in \widehat{\mathrm{Pic}}(\mathcal{X})$ it holds
	\begin{align*}
		\left(\overline{\mathcal{L}}_1\cdots \overline{\mathcal{L}}_l\cdot (\mathcal{Z},T)\right)=\left(\overline{\mathcal{L}}_1\cdots \overline{\mathcal{L}}_{l}\cdot(\mathcal{Z},0)\right)+\frac{1}{2}\int_{\mathcal{X}(\mathbb{C})}T\wedge c_1(\overline{\mathcal{L}}_1)\wedge\dots\wedge c_1(\overline{\mathcal{L}}_l).
	\end{align*}	
\end{Lem}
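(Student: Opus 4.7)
The plan is to iterate the definition of the arithmetic intersection product (\ref{equ_intersection-definition}) exactly $l$ times and keep track of how the analytic part $T$ propagates at each step.

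First, I would choose non-zero rational sections $s_j$ of $\mathcal{L}_j$ for $j=1,\dots,l$ such that every successive intersection $\mathrm{div}(s_j)\cdot\mathrm{div}(s_{j+1})\cdots\mathrm{div}(s_l)\cdot\mathcal{Z}$ has the expected codimension; since the pairing $\widehat{\mathrm{Pic}}(\mathcal{X})\times\widehat{\mathrm{CH}}_D^p(\mathcal{X})\to\widehat{\mathrm{CH}}_D^{p+1}(\mathcal{X})$ descends to classes, this causes no loss of generality. Applying (\ref{equ_intersection-definition}) once gives
\begin{align*}
\overline{\mathcal{L}}_l\cdot(\mathcal{Z},T) &= \bigl(\mathrm{div}(s_l)\cdot\mathcal{Z},\; [-\log h_l(s_l,s_l)]_{\mathcal{Z}(\mathbb{C})}+c_1(\overline{\mathcal{L}}_l)\wedge T\bigr),\\
\overline{\mathcal{L}}_l\cdot(\mathcal{Z},0) &= \bigl(\mathrm{div}(s_l)\cdot\mathcal{Z},\; [-\log h_l(s_l,s_l)]_{\mathcal{Z}(\mathbb{C})}\bigr),
\end{align*}
so the underlying cycles coincide and the analytic parts differ by precisely $c_1(\overline{\mathcal{L}}_l)\wedge T$.

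The key observation I would then formalize by induction on $k$ is the following: after intersecting with $\overline{\mathcal{L}}_{l-k+1},\dots,\overline{\mathcal{L}}_l$, the two resulting arithmetic cycles have identical underlying geometric cycles, while their analytic parts differ by $c_1(\overline{\mathcal{L}}_{l-k+1})\wedge\cdots\wedge c_1(\overline{\mathcal{L}}_l)\wedge T$. The induction step is immediate from (\ref{equ_intersection-definition}): because the underlying cycles already agree, the new $[-\log h_{l-k}(s_{l-k},s_{l-k})]$-term appears on both sides and cancels, while the previous analytic discrepancy simply gets wedged with $c_1(\overline{\mathcal{L}}_{l-k})$.

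After $l$ steps I would thus obtain two $0$-dimensional arithmetic cycles representing $\overline{\mathcal{L}}_1\cdots\overline{\mathcal{L}}_l\cdot(\mathcal{Z},T)$ and $\overline{\mathcal{L}}_1\cdots\overline{\mathcal{L}}_l\cdot(\mathcal{Z},0)$ respectively, with identical geometric part and analytic discrepancy $c_1(\overline{\mathcal{L}}_1)\wedge\cdots\wedge c_1(\overline{\mathcal{L}}_l)\wedge T$. Applying the arithmetic degree formula (\ref{equ_degree-definition}), the geometric contributions $\sum_x n_x\log\#k(x)$ cancel, and only the term $\tfrac{1}{2}\int_{\mathcal{X}(\mathbb{C})}T\wedge c_1(\overline{\mathcal{L}}_1)\wedge\cdots\wedge c_1(\overline{\mathcal{L}}_l)$ survives, producing the asserted identity. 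The only mild subtlety is the bookkeeping needed to ensure properness of the iterated intersections, which is handled by the standard moving argument for arithmetic Chow groups; no genuine analytic or algebraic obstacle arises, so I expect this to be essentially a formal consequence of the definitions.
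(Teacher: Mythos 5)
Your proposal is correct and follows essentially the same route as the paper: an inductive application of the definition (\ref{equ_intersection-definition}) followed by the degree formula (\ref{equ_degree-definition}). The paper merely phrases the same computation more compactly by using bilinearity to reduce to the term $\left(\overline{\mathcal{L}}_1\cdots\overline{\mathcal{L}}_l\cdot(0,T)\right)$ rather than tracking the two parallel computations and subtracting.
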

\begin{proof}
	By linearity and an inductive application of Equation (\ref{equ_intersection-definition}) we get
	\begin{align*}
			\left(\overline{\mathcal{L}}_1\cdots \overline{\mathcal{L}}_l\cdot (\mathcal{Z},T)\right)-\left(\overline{\mathcal{L}}_1\cdots \overline{\mathcal{L}}_{l}\cdot(\mathcal{Z},0)\right)&=\left(\overline{\mathcal{L}}_1\cdots \overline{\mathcal{L}}_l\cdot (0,T)\right)\\
			&=\widehat{\deg}(0,c_1(\overline{\mathcal{L}}_l)\wedge\dots\wedge c_1(\overline{\mathcal{L}}_1)\wedge T)\\
			&=\frac{1}{2}\int_{\mathcal{X}(\mathbb{C})}T\wedge c_1(\overline{\mathcal{L}}_1)\wedge\dots\wedge c_1(\overline{\mathcal{L}}_l).
	\end{align*}
\end{proof}

If the arithmetic cycle $(\mathcal{Z},T)$ represents an hermitian line bundle $\overline{\mathcal{L}}_d$, Lemma \ref{lem_intersection-restriction} can be read as
\begin{align}\label{equ_chambert-loir}
\left(\overline{\mathcal{L}}_1\cdots\overline{\mathcal{L}}_d\right)=\left(\overline{\mathcal{L}}_1\cdots \overline{\mathcal{L}}_{d-1}\cdot(\mathrm{div}(s),0)\right)-\int_{\mathcal{X}(\mathbb{C})}\log |s|_{\overline{\mathcal{L}}_d} c_1(\overline{\mathcal{L}}_1)\cdots c_1(\overline{\mathcal{L}}_{d-1}),
\end{align}
where $s$ is any non-zero rational section of $\mathcal{L}_d$. If $\overline{\mathcal{L}}$ is an arithmetically ample hermitian line bundle we can use Lemmas \ref{lem_intersection-Z0} and \ref{lem_intersection-restriction} to bound the integral in Lemma \ref{lem_intersection-restriction} by the arithmetic intersection number.
\begin{Lem}\label{lem_integral-bound}
	Let $(\mathcal{Z},T)\in \widehat{Z}_p(\mathcal{X})$ be any arithmetic cycle of dimension $p$ of $\mathcal{X}$, such that $\mathcal{Z}$ is effective and non-zero. Further, let $\overline{\mathcal{L}}$ be any arithmetically ample hermitian line bundle on $\mathcal{X}$. It holds
	$$\frac{1}{2}\int_{\mathcal{X}(\mathbb{C})}T\wedge c_1(\overline{\mathcal{L}})^l< \left(\overline{\mathcal{L}}^l\cdot(\mathcal{Z},T)\right).$$
\end{Lem}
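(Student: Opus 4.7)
The plan is to reduce to the purely classical case $T=0$ by using Lemma~\ref{lem_intersection-restriction}, and then invoke positivity of ample line bundles (both arithmetic and classical). Applying Lemma~\ref{lem_intersection-restriction} with $l=p$ copies of $\overline{\mathcal{L}}$ gives
\begin{align*}
(\overline{\mathcal{L}}^{p}\cdot(\mathcal{Z},T)) - \tfrac{1}{2}\int_{\mathcal{X}(\mathbb{C})}T\wedge c_1(\overline{\mathcal{L}})^{p} = (\overline{\mathcal{L}}^{p}\cdot(\mathcal{Z},0)),
\end{align*}
so the claimed strict inequality is equivalent to showing $(\overline{\mathcal{L}}^{p}\cdot(\mathcal{Z},0)) > 0$.

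Next I would write $\mathcal{Z}$ as its decomposition into irreducible components as in Lemma~\ref{lem_intersection-Z0}, with horizontal parts $\mathcal{Z}_{0,j}$ and vertical parts $\mathcal{Z}_{q,j}$ (sitting over closed points $q\in|\mathrm{Spec}(\mathbb{Z})|$). Since $\mathcal{Z}$ is effective and non-zero, all coefficients $a_{0,j}, a_{q,j}$ are non-negative integers and at least one is strictly positive. By Lemma~\ref{lem_intersection-Z0} we obtain
\begin{align*}
(\overline{\mathcal{L}}^{p}\cdot(\mathcal{Z},0)) = \sum_{j} a_{0,j}\bigl(\overline{\mathcal{L}}|_{\mathcal{Z}_{0,j}}^{p}\bigr) + \sum_{q,j} a_{q,j}\bigl(\mathcal{L}|_{\mathcal{Z}_{q,j}}^{p}\bigr)\log q,
\end{align*}
so it suffices to show that each summand on the right-hand side is strictly positive.

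For the vertical terms this is standard: as $\overline{\mathcal{L}}$ is arithmetically ample, $\mathcal{L}$ is relatively ample over $\mathrm{Spec}(\mathbb{Z})$ by condition (A1), hence $\mathcal{L}|_{\mathcal{Z}_{q,j}}$ is ample on the projective $\mathbb{F}_{q}$-variety $\mathcal{Z}_{q,j}$, so its classical self-intersection number is positive by the Nakai--Moishezon criterion. For the horizontal terms I would use the observation recalled in Section~\ref{sec_arithmetic-varieties} that arithmetic ampleness restricts to any arithmetic subvariety, together with the fundamental positivity theorem for arithmetically ample hermitian line bundles (an arithmetic Hilbert--Samuel/Nakai--Moishezon statement, due to Zhang), which asserts $(\overline{\mathcal{L}}|_{\mathcal{Z}_{0,j}}^{\dim\mathcal{Z}_{0,j}}) > 0$.

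The main obstacle, or rather the only non-elementary input, is this last positivity statement for the arithmetic self-intersection number of arithmetically ample hermitian line bundles on horizontal arithmetic subvarieties; the rest of the proof is a direct combination of Lemmas~\ref{lem_intersection-Z0} and~\ref{lem_intersection-restriction}. Once this input is available, the strict inequality follows because at least one $a_{0,j}$ or $a_{q,j}\log q$ contributes a strictly positive summand.
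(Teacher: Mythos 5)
Your proposal is correct and follows essentially the same route as the paper: reduce to $(\overline{\mathcal{L}}^{l}\cdot(\mathcal{Z},0))>0$ via Lemma \ref{lem_intersection-restriction}, decompose via Lemma \ref{lem_intersection-Z0}, and conclude from classical ampleness on the vertical components and the positivity of arithmetic self-intersection numbers of arithmetically ample hermitian line bundles (the paper cites \cite[Proposition 5.39]{Mor14} for this last input) on the horizontal ones.
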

\begin{proof}
	By Lemma \ref{lem_intersection-restriction} we only have to show
	$$\left(\overline{\mathcal{L}}^l\cdot(\mathcal{Z},0)\right)> 0.$$
	By Lemma \ref{lem_intersection-Z0} and by the effectiveness of $\mathcal{Z}$ it is enough to prove 
	$$\left(\overline{\mathcal{L}}|_{\mathcal{Z}_{0,j}}^l\right)> 0\quad \text{and}\quad \left(\mathcal{L}|_{\mathcal{Z}_{p,j}}^l\right)> 0$$
	for any $\mathcal{Z}_{0,j}$ and $\mathcal{Z}_{p,j}$ as in the decomposition in Lemma \ref{lem_intersection-Z0}. The first inequality follows from \cite[Proposition 5.39]{Mor14} and the second inequality follows from the ampleness of $\mathcal{L}$.
\end{proof}
We again consider the case where $(\mathcal{Z},T)$ represents a hermitian line bundle. In this case Lemma \ref{lem_integral-bound} can be used to bound the integral in Equation (\ref{equ_chambert-loir}) independently of the choice of the section $s$.
\begin{Lem}\label{lem_bound-of-integral-of-section}
	Let $\overline{\mathcal{L}}$ be an arithmetically ample hermitian line bundle on $\mathcal{X}$. For all $p\in\mathbb{Z}_{\ge 1}$ and all non-zero sections $s\in H^0(\mathcal{X},\mathcal{L}^{\otimes p})$ we have
	$$-\frac{1}{p}\int_{\mathcal{X}(\mathbb{C})}\log|s|_{\overline{\mathcal{L}}^{\otimes p}}c_1(\overline{\mathcal{L}})^{d-1}\le \left(\overline{\mathcal{L}}^{d}\right).$$
\end{Lem}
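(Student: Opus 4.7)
My plan is to derive the bound as a direct rearrangement of Equation (\ref{equ_chambert-loir}) combined with the positivity statement in Lemma \ref{lem_integral-bound}.

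First, I would apply Equation (\ref{equ_chambert-loir}) to the hermitian line bundles $\overline{\mathcal{L}}_1=\cdots=\overline{\mathcal{L}}_{d-1}=\overline{\mathcal{L}}$ and $\overline{\mathcal{L}}_d=\overline{\mathcal{L}}^{\otimes p}$, using the given non-zero section $s\in H^0(\mathcal{X},\mathcal{L}^{\otimes p})$. By multilinearity of the arithmetic intersection pairing the left-hand side becomes $p\cdot(\overline{\mathcal{L}}^d)$, so we obtain
\begin{align*}
p\cdot(\overline{\mathcal{L}}^d)=\left(\overline{\mathcal{L}}^{d-1}\cdot(\mathrm{div}(s),0)\right)-\int_{\mathcal{X}(\mathbb{C})}\log|s|_{\overline{\mathcal{L}}^{\otimes p}}\,c_1(\overline{\mathcal{L}})^{d-1}.
\end{align*}

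Rearranging and dividing by $p$ yields
\begin{align*}
-\frac{1}{p}\int_{\mathcal{X}(\mathbb{C})}\log|s|_{\overline{\mathcal{L}}^{\otimes p}}\,c_1(\overline{\mathcal{L}})^{d-1}=(\overline{\mathcal{L}}^d)-\frac{1}{p}\left(\overline{\mathcal{L}}^{d-1}\cdot(\mathrm{div}(s),0)\right),
\end{align*}
so the desired inequality is equivalent to the assertion that $\left(\overline{\mathcal{L}}^{d-1}\cdot(\mathrm{div}(s),0)\right)\ge 0$.

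This remaining assertion is exactly what Lemma \ref{lem_integral-bound} gives us, applied with the arithmetic cycle $(\mathcal{Z},T)=(\mathrm{div}(s),0)$: since $s\neq 0$ the divisor $\mathrm{div}(s)$ is effective, the current $T=0$ trivially satisfies $\frac{1}{2}\int_{\mathcal{X}(\mathbb{C})} T\wedge c_1(\overline{\mathcal{L}})^{d-1}=0$, and Lemma \ref{lem_integral-bound} yields the strict inequality $\left(\overline{\mathcal{L}}^{d-1}\cdot(\mathrm{div}(s),0)\right)>0$ whenever $\mathrm{div}(s)$ is non-zero. In the degenerate case $\mathrm{div}(s)=0$ the intersection number is trivially zero, so equality holds in the lemma. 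Either way the bound follows. There is no real obstacle here; the only thing to be careful about is book-keeping the factor of $p$ coming from $c_1(\overline{\mathcal{L}}^{\otimes p})=p\,c_1(\overline{\mathcal{L}})$ and the compatibility of the intersection product with tensor powers.
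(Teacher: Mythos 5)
Your proof is correct and is essentially the paper's argument in a slightly unpacked form: the paper applies Lemma \ref{lem_integral-bound} directly to the arithmetic divisor $(\mathrm{div}(s),-\log|s|^2_{\overline{\mathcal{L}}^{\otimes p}})$ representing $\overline{\mathcal{L}}^{\otimes p}$, whereas you first split off the integral via Equation (\ref{equ_chambert-loir}) and then invoke Lemma \ref{lem_integral-bound} with $T=0$ — but both reduce to the same positivity of $\left(\overline{\mathcal{L}}^{d-1}\cdot(\mathrm{div}(s),0)\right)$, since Equation (\ref{equ_chambert-loir}) is itself the specialization of Lemma \ref{lem_intersection-restriction} used inside the proof of Lemma \ref{lem_integral-bound}. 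The bookkeeping of the factor $p$ and the degenerate case $\mathrm{div}(s)=0$ are handled correctly.
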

\begin{proof}
	We can apply Lemma \ref{lem_integral-bound} to $(\mathcal{Z},T)=(\mathrm{div}(s),-\log|s|^2_{\overline{\mathcal{L}}^{\otimes p}})$, which represents the hermitian line bundle $\overline{\mathcal{L}}^{\otimes p}$. This gives
	\begin{align*}
	&-\int_{\mathcal{X}(\mathbb{C})}\log |s|_{\overline{\mathcal{L}}^{\otimes p}}c_1(\overline{\mathcal{L}})^{d-1}=\frac{1}{2}\int_{\mathcal{X}(\mathbb{C})}\left(-\log |s|^2_{\overline{\mathcal{L}}^{\otimes p}}\right)c_1(\overline{\mathcal{L}})^{d-1}\\
	&\le \left(\overline{\mathcal{L}}^{d-1}\cdot(\mathcal{Z},T)\right)=\left(\overline{\mathcal{L}}^{d-1}\cdot\overline{\mathcal{L}}^{\otimes p}\right)=p\cdot \left(\overline{\mathcal{L}}^{d}\right).
	\end{align*}
	We get the inequality in the lemma after dividing by $p$ on both sides.
\end{proof}

Next, we discuss heights. We fix an arithmetically ample hermitian line bundle $\overline{\mathcal{L}}$. Let $\mathcal{Z}$ be any effective and equidimensional cycle on $\mathcal{X}$ with non-zero horizontal part. We define the normalized height of $\mathcal{Z}$ by the arithmetic self-intersection number
$$h_{\overline{\mathcal{L}}}(\mathcal{Z})=\frac{(\overline{\mathcal{L}}|_{\mathcal{Z}}^{\dim\mathcal{Z}})}{\mathcal{L}_{\mathbb{C}}|_{\mathcal{Z}_{\mathbb{C}}}^{\dim \mathcal{Z}_{\mathbb{C}}}}$$
of the hermitian line bundle $\overline{\mathcal{L}}$ restricted to $\mathcal{Z}$. Here, $\mathcal{L}_{\mathbb{C}}|_{\mathcal{Z}_{\mathbb{C}}}^{\dim \mathcal{Z}_{\mathbb{C}}}$ denotes the geometric self-intersection number of $\mathcal{L}_{\mathbb{C}}$ on $\mathcal{Z}_{\mathbb{C}}$. This is classically defined if $\mathcal{Z}_{\mathbb{C}}$ is irreducible, and in general we extend the definition by linearity. In particular, if $\dim \mathcal{Z}_{\mathbb{C}}=0$, then $\mathcal{L}_{\mathbb{C}}|_{\mathcal{Z}_{\mathbb{C}}}^{\dim \mathcal{Z}_{\mathbb{C}}}=\deg \mathcal{Z}_{\mathbb{C}}$ is the degree of $\mathcal{Z}_\mathbb{C}$ as a $0$-dimensional cycle of $\mathcal{X}_{\mathbb{C}}$. If $s$ is any non-zero global section of $\mathcal{L}^{\otimes p}$ and $d\ge 2$, we get by Equation (\ref{equ_chambert-loir})
\begin{align}\label{equ_height}
h_{\overline{\mathcal{L}}}(\mathcal{X})&=\frac{(\overline{\mathcal{L}}^d)}{\mathcal{L}_\mathbb{C}^{d-1}}=\frac{(\overline{\mathcal{L}}\cdots\overline{\mathcal{L}}\cdot\overline{\mathcal{L}}^{\otimes p})}{p\cdot\mathcal{L}_\mathbb{C}^{d-1}}
=\frac{(\overline{\mathcal{L}}|_{\mathrm{div}(s)}^{d-1})-\int_{\mathcal{X}(\mathbb{C})}\log|s|_{\overline{\mathcal{L}}^{\otimes p}}c_1(\overline{\mathcal{L}})^{d-1}}{p\cdot \mathcal{L}_{\mathbb{C}}^{d-1}}\\
&=\frac{(\overline{\mathcal{L}}|_{\mathrm{div}(s)}^{d-1})}{\mathcal{L}_{\mathbb{C}}|_{\mathrm{div}(s)_{\mathbb{C}}}^{d-2}}-\frac{1}{p\cdot \mathcal{L}_{\mathbb{C}}^{d-1}}{\int_{\mathcal{X}(\mathbb{C})}\log|s|_{\overline{\mathcal{L}}^{\otimes p}}c_1(\overline{\mathcal{L}})^{d-1}}\nonumber\\
&=h_{\overline{\mathcal{L}}}(\mathrm{div}(s))-\frac{1}{p\cdot \mathcal{L}_{\mathbb{C}}^{d-1}}{\int_{\mathcal{X}(\mathbb{C})}\log|s|_{\overline{\mathcal{L}}^{\otimes p}}c_1(\overline{\mathcal{L}})^{d-1}}.\nonumber
\end{align}
As an application of this formula, we can prove the following proposition, which provides a sufficient condition for sequences of sections, such that their divisors tend to equidistribute with respect to $c_1(\overline{\mathcal{L}})$. 
\begin{Pro}[= Proposition \ref{pro_equidistribution-individual}]\label{pro_equidistribution-individual-text}
	Let $\mathcal{X}$ be any generically smooth projective arithmetic variety of dimension $d\ge 2$. Let $\overline{\mathcal{L}}$ be any arithmetically ample hermitian line bundle on $\mathcal{X}$. For any sequence $(s_p)_{p\in\mathbb{Z}_{\ge 1}}$ of sections $s_p\in H^0(\mathcal{X},\overline{\mathcal{L}}^{\otimes p})$ satisfying 
	$$\lim_{p\to \infty}\left(h_{\overline{\mathcal{L}}}(\mathrm{div}(s_p))-\tfrac{1}{p}\log\|s_p\|_{\sup}\right)=h_{\overline{\mathcal{L}}}(\mathcal{X})$$
	and any $(d-2,d-2)$ $C^0$-form $\Phi$ on $\mathcal{X}(\mathbb{C})$ it holds
	$$\lim_{p\to \infty}\frac{1}{p}\int_{\mathrm{div}(s_p)(\mathbb{C})}\Phi=\int_{\mathcal{X}(\mathbb{C})}\Phi\wedge c_1(\overline{\mathcal{L}}).$$	
\end{Pro}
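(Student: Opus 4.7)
The plan is to reduce the proposition to Lemma \ref{lem_equidistribution}(ii), the complex-analytic equidistribution criterion already established. The bridge from the arithmetic hypothesis to the integral condition demanded by that lemma is the height formula (\ref{equ_height}).

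Concretely, I would apply the lemma to $X = \mathcal{X}(\mathbb{C})$, $(L,h) = \overline{\mathcal{L}}(\mathbb{C})$, and the Kähler form $\omega = c_1(\overline{\mathcal{L}})$, which is positive because $\overline{\mathcal{L}}$ is arithmetically ample; the hypothesis $d \ge 2$ ensures $n = \dim \mathcal{X}(\mathbb{C}) = d-1 \ge 1$, and $\mathcal{X}(\mathbb{C})$ is a smooth projective complex variety because $\mathcal{X}_\mathbb{Q}$ is assumed smooth. Using $\int_{\mathcal{X}(\mathbb{C})} c_1(\overline{\mathcal{L}})^{d-1} = \mathcal{L}_\mathbb{C}^{d-1}$, Equation (\ref{equ_height}) rewrites as
\begin{equation*}
\frac{1}{p}\int_{\mathcal{X}(\mathbb{C})} \log|s_p|_{\overline{\mathcal{L}}^{\otimes p}}\, c_1(\overline{\mathcal{L}})^{d-1} \;=\; \mathcal{L}_\mathbb{C}^{d-1} \cdot \bigl(h_{\overline{\mathcal{L}}}(\mathrm{div}(s_p)) - h_{\overline{\mathcal{L}}}(\mathcal{X})\bigr).
\end{equation*}
Subtracting $\tfrac{1}{p}\log\|s_p\|_{\sup}\cdot \mathcal{L}_\mathbb{C}^{d-1}$ from both sides and invoking the hypothesis on $s_p$ then yields
\begin{equation*}
\lim_{p\to\infty}\frac{1}{p}\left(\log\|s_p\|_{\sup}\int_{\mathcal{X}(\mathbb{C})}\omega^{d-1} \;-\; \int_{\mathcal{X}(\mathbb{C})}\log|s_p|_{\overline{\mathcal{L}}^{\otimes p}}\,\omega^{d-1}\right) = 0,
\end{equation*}
which is exactly the hypothesis of Lemma \ref{lem_equidistribution}(ii). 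The conclusion of that lemma, applied to the given $C^0$-form $\Phi$, is precisely what we want.

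There is no serious obstacle here; the work is entirely book-keeping. The only small items to check are that $\mathcal{L}_\mathbb{C}^{d-1}>0$ (so we may divide by it), which follows from classical ampleness of $\mathcal{L}_\mathbb{C}$, and that Lemma \ref{lem_equidistribution}(ii) indeed does not require any special hypothesis on the particular Kähler form chosen, so that taking $\omega = c_1(\overline{\mathcal{L}})$ is legitimate.
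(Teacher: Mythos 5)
Your proposal is correct and follows exactly the paper's argument: translate the height hypothesis into the integral condition via Equation (\ref{equ_height}), note $\mathcal{L}_\mathbb{C}^{d-1}=\int_{\mathcal{X}(\mathbb{C})}c_1(\overline{\mathcal{L}})^{d-1}$, and conclude by Lemma \ref{lem_equidistribution}(ii) with $\omega=c_1(\overline{\mathcal{L}})$. Nothing further is needed.
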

\begin{proof}
	By the assumptions we have
	$$\lim_{p\to \infty}\left(h_{\overline{\mathcal{L}}}(\mathrm{div}(s_p))-\tfrac{1}{p}\log\|s_p\|_{\sup}-h_{\overline{\mathcal{L}}}(\mathcal{X})\right)=0.$$
	By Equation (\ref{equ_height}) this is equivalent to
	$$\lim_{p\to \infty}\frac{1}{p}\left(\frac{1}{\mathcal{L}_{\mathbb{C}}^{d-1}}\int_{\mathcal{X}(\mathbb{C})}\log|s_p|_{\overline{\mathcal{L}}^{\otimes p}}c_1(\overline{\mathcal{L}})^{d-1}-\log\|s_p\|_{\sup}\right)=0.$$
	As $\mathcal{L}_{\mathbb{C}}^{d-1}=\int_{\mathcal{X}(\mathbb{C})}c_1(\overline{\mathcal{L}})^{d-1}$, the assertion of the proposition now follows from Lemma \ref{lem_equidistribution} (ii).
\end{proof}

\subsection{The Lattice of Global Sections}\label{sec_lattice}
We discuss some relations between the global sections $H^0(\mathcal{X},\mathcal{L}^{\otimes p})$, the real vector space $H^0(\mathcal{X},\mathcal{L}^{\otimes p})_\mathbb{R}$ and the complex vector space $H^0(\mathcal{X}(\mathbb{C}),\mathcal{L}^{\otimes p}(\mathbb{C}))$ for an arithmetically ample line bundle $\overline{\mathcal{L}}$. It turns out, that the first one is a lattice in the second one and the third one is equipped with an inner product induced by the metric of $\overline{\mathcal{L}}$, which takes real values on $H^0(\mathcal{X},\mathcal{L}^{\otimes p})_\mathbb{R}$. 

By $\mathcal{X}$ we always mean a generically smooth projective arithmetic variety of dimension $d$ and $\overline{\mathcal{L}}$ always denotes an arithmetically ample hermitian line bundle on $\mathcal{X}$. In the following we write
$$H^0(\mathcal{X},\mathcal{L}^{\otimes p})_\mathbb{R}=
H^0(\mathcal{X},\mathcal{L}^{\otimes p})\otimes_\mathbb{Z}\mathbb{R},\qquad H^0(\mathcal{X},\mathcal{L}^{\otimes p})_\mathbb{C}=
H^0(\mathcal{X},\mathcal{L}^{\otimes p})\otimes_\mathbb{Z}\mathbb{C}.$$
Since $H^0(\mathcal{X},\mathcal{L}^{\otimes p})_\mathbb{C}\cong H^0(\mathcal{X}(\mathbb{C}),\mathcal{L}^{\otimes p}(\mathbb{C}))$, the hermitian metric of $\overline{\mathcal{L}}^{\otimes p}$ defines an inner product $\langle\cdot,\cdot\rangle$ on $H^0(\mathcal{X},\mathcal{L}^{\otimes p})_\mathbb{C}$ by setting
\begin{align}\label{equ_innerproduct}
\langle s_1,s_2\rangle=\int_{\mathcal{X}(\mathbb{C})} h(s_1(x),s_2(x))\frac{c_1(\overline{\mathcal{L}})^{d-1}}{\mathcal{L}_{\mathbb{C}}^{d-1}}.
\end{align}
for every $s_1,s_2\in H^0(\mathcal{X},\mathcal{L}^{\otimes p})_\mathbb{C}$.
If we denote the map
$$F_\infty^*\colon H^0(\mathcal{X},\mathcal{L})\otimes_\mathbb{Z}\mathbb{C}\to H^0(\mathcal{X},\mathcal{L})\otimes_\mathbb{Z}\mathbb{C},\qquad s\otimes \alpha\mapsto s\otimes \overline{\alpha},$$
then it holds $(F_\infty^*(s))(\overline{x})=F_\infty(s(x))$ by \cite[Equation (5.1)]{Mor14}.

As the hermitian metric of $\overline{\mathcal{L}}$ is of real type, it holds 
\begin{align*}
|s(x)|^2_{\overline{\mathcal{L}}}=\overline{|F_\infty(s(x))|^2_{\overline{\mathcal{L}}}}=|F_\infty(s(x))|^2_{\overline{\mathcal{L}}}=|(F_\infty^*(s))(\overline{x})|^2_{\overline{\mathcal{L}}}=|s(\overline{x})|^2_{\overline{\mathcal{L}}}
\end{align*}
for every $x\in \mathcal{X}(\mathbb{C})$ and $s\in H^0(\mathcal{X}(\mathbb{C}),\mathcal{L}(\mathbb{C}))$.
Thus, we get
$$F_\infty^*\left(\partial\overline{\partial}\log |s(x)|\right)=\overline{\partial}\partial\log |s(\overline{x})|=-\partial\overline{\partial}\log |s(x)|$$
if $s$ is non-zero.
It follows from the Poincaré--Lelong formula (\ref{equ_poincare-lelong}) that $c_1(\overline{\mathcal{L}})^{d-1}$ is a $(d-1,d-1)$-form of real type, that is $F_\infty^*c_1(\overline{\mathcal{L}})^{d-1}=(-1)^{d-1}c_1(\overline{\mathcal{L}})^{d-1}$.
Hence, the inner product (\ref{equ_innerproduct}) satisfies 
\begin{align}\label{equ_innerproduct-real}
\langle s_1,s_2\rangle\in \mathbb{R} \text{ for all } s_1,s_2\in H^0(\mathcal{X},\mathcal{L}^{\otimes p})_{\mathbb{R}}
\end{align}
as shown in \cite[Section 5.1.2]{Mor14}. In particular, $\langle\cdot,\cdot\rangle$ defines an inner product on the real vector space $H^0(\mathcal{X},\mathcal{L}^{\otimes p})_\mathbb{R}$ and we denote this inner product space by $H^0(\mathcal{X},\overline{\mathcal{L}}^{\otimes p})_\mathbb{R}$. 

It follows from \cite[Ch. I, Proposition 7.4.5]{GD60} and \cite[Proposition III.9.7.]{Har77} that $H^0(\mathcal{X},\mathcal{L}^{\otimes p})$ is free. Thus, $H^0(\mathcal{X},\mathcal{L}^{\otimes p})$ injects into $H^0(\mathcal{X},\overline{\mathcal{L}}^{\otimes p})_\mathbb{R}$ and its image is a lattice which we denote by $H^0(\mathcal{X},\overline{\mathcal{L}}^{\otimes p})$.
As $\overline{\mathcal{L}}$ is arithmetically ample, the proof of \cite[Proposition 5.41]{Mor14} shows, that there are real numbers $v\in (0,1)$ and $A>0$, such that $\lambda_{\mathbb{Z}}(H^0(\mathcal{X},\overline{\mathcal{L}}^{\otimes p}))\le A d_p v^p$
for all $p\ge 1$, where $d_p=\dim H^0(\mathcal{X},\overline{\mathcal{L}}^{p})_\mathbb{R}$.
Since we have $d_p\le Mp^{d-1}$ for some constant $M>0$ by the Hilbert--Serre theorem, we get 
\begin{align}\label{equ_successive-bound}
\lambda_{\mathbb{Z}}(H^0(\mathcal{X},\overline{\mathcal{L}}^{\otimes p}))\le B p^{d-1} v^p
\end{align}
for some constant $B>0$.

For later use, we define a decomposition of $H^0(\mathcal{X},\overline{\mathcal{L}}^{\otimes p})_\mathbb{R}$ into cells induced by a minimal $\mathbb{Z}$-basis of $H^0(\mathcal{X},\overline{\mathcal{L}}^{\otimes p})$. That means, for every $p\ge 1$ we fix a $\mathbb{Z}$-basis $s_{p,1},\dots, s_{p,d_p}$ of $H^0(\mathcal{X},\overline{\mathcal{L}}^{\otimes p})$ such that 
$$\max_{1\le j\le d_p}\{\|s_{p,j}\|\}=\lambda_{\mathbb{Z}}(H^0(\mathcal{X},\overline{\mathcal{L}}^{\otimes p})).$$ 
To every lattice point $x\in H^0(\mathcal{X},\overline{\mathcal{L}}^{p})$ we associate the following cell in $H^0(\mathcal{X},\overline{\mathcal{L}}^{p})_\mathbb{R}$
$$Q_{p,x}=\left\{\left.x+\sum_{j=1}^{d_p}a_js_{p,j}~\right|~a_j\in [0,1)\right\}\subseteq H^0(\mathcal{X},\overline{\mathcal{L}}^{p})_\mathbb{R}.$$
This induces a disjoint decomposition
\begin{align}\label{equ_cell-decomposition}
H^0(\mathcal{X},\overline{\mathcal{L}}^{\otimes p})_\mathbb{R}=\coprod_{x\in H^0(\mathcal{X},\overline{\mathcal{L}}^{\otimes p})} Q_{p,x}.
\end{align}
Note, that $\mathrm{Vol}(Q_{p,x})$ does not depend on the choice of the point $x\in H^0(\mathcal{X},\overline{\mathcal{L}}^{\otimes p})$ and we will in the following use the notation $\mathrm{Vol}(Q_{p,x})$ without specifying the point $x\in H^0(\mathcal{X},\overline{\mathcal{L}}^{\otimes p})$.

\section{The Distribution of Small Sections}\label{sec_distribution}
We study the distribution of the divisors in a sequence of small sections of the tensor powers of an arithmetically ample hermitian line bundle $\overline{\mathcal{L}}$ on a generically smooth projective arithmetic variety $\mathcal{X}$ in this section. By Section \ref{sec_distribution-complex} we know, that this is closely related to the vanishing of the limit of the normalized integrals of the logarithm of the norm of these sections. Thus, we first study the integral in this setting in Section \ref{sec_integral-small-sections}. In particular, we give the proof of Theorem \ref{thm_equidistribution}. In Section \ref{sec_distribution-small-sections} we deduce results on the distribution of the divisors. Especially, we prove Corollary \ref{cor_height-converges}. In Section \ref{sec_bogomolov} we discuss a result on the set of small section, which can be seen as an analogue of the generalized Bogomolov conjecture. More precisely, we show that for any given horizontal closed subvariety $\mathcal{Y}\subseteq \mathcal{X}$ the ratio of small sections of $\overline{\mathcal{L}}^{\otimes p}$, whose height is near to the height of their restriction to $\mathcal{Y}$, is asymptotically zero for $p\to\infty$. This proves Corollary \ref{cor_bogomolov}. We also discuss the Northcott property in this setting. Finally in Section \ref{sec_algebraic-numbers}, we study as an application the distribution of zero sets of integer polynomials in sequences with increasing degree. In particular, we give the proof of Proposition \ref{pro_equidistribution-polynomials} and Corollary \ref{cor_distribution-algebraic-numbers}.
\subsection{Integrals of the Logarithm of Small Sections}\label{sec_integral-small-sections}
In this section we prove Theorem \ref{thm_equidistribution}. In fact, we will prove the following even more general version of Theorem \ref{thm_equidistribution}. By $\mathcal{X}$ we always denote a generically smooth projective arithmetic variety of dimension $d$ and by $\overline{\mathcal{L}}$ an arithmetically ample hermitian line bundle on $\mathcal{X}$.
\begin{Thm}\label{thm_equidistribution-general}
	Let $\mathcal{X}$ be any generically smooth projective arithmetic variety and $\mathcal{Y}\subseteq \mathcal{X}$ any generically smooth arithmetic subvariety of dimension $e\ge 1$. Let $\overline{\mathcal{L}}$ be any arithmetically ample hermitian line bundle on $\mathcal{X}$. Further, let $(p_j)_{j\in \mathbb{Z}_{\ge 1}}$ be any increasing sequence of positive integers and $(r_j)_{j\in \mathbb{Z}_{\ge 1}}$ and $(r'_j)_{j\in \mathbb{Z}_{\ge 1}}$ two sequences of positive real numbers, such that
	$$\lim_{j\to \infty}r_j^{1/p_j}=\lim_{j\to \infty}{r'}_j^{1/p_j}=\tau\in[1,\infty).$$
	Denote by $B_{r_j}$ and $B_{r'_j}$ the balls in $H^0(\mathcal{X},\overline{\mathcal{L}}^{\otimes p_j})_\mathbb{R}$ of radius $r_j$ and $r'_j$ around the origin. Let $(K_j)_{j\in\mathbb{Z}_{\ge 1}}$ be any sequence of compact, convex and symmetric sets such that $\overline{B_{r'_j}}\subseteq K_j\subseteq \overline{B_{r_j}}$ for all $j$. If we set
	$$S_j=\left\{\left.s\in K_j\cap H^0(\mathcal{X},\overline{\mathcal{L}}^{\otimes p_j})~\right|~s|_{\mathcal{Y}}\neq 0\right\},$$
	then it holds
	$$\lim_{j\to \infty} \frac{1}{\#S_j}\sum_{s\in S_j}\frac{1}{p_j}\int_{\mathcal{Y}(\mathbb{C})}\left|\log|s|_{\overline{\mathcal{L}}^{\otimes p_j}}-p_j\log\tau\right|c_1(\overline{\mathcal{L}})^{e-1}=0.$$
\end{Thm}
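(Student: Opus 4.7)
The plan is to derive Theorem~\ref{thm_equidistribution-general} from a continuous-average result (Section~\ref{sec_complex-analysis}) by transferring to the discrete lattice average via the geometry of numbers of Section~\ref{sec_geometry-of-numbers} and Proposition~\ref{pro_integral}. First I would normalize: $\overline{\mathcal{L}}(\log\tau)$ is arithmetically ample (since $\log\tau \geq 0$), and under the corresponding change of metric the integrand $|\log|s|_{\overline{\mathcal{L}}^{\otimes p_j}} - p_j\log\tau|$ becomes $|\log|s|_{\overline{\mathcal{L}}(\log\tau)^{\otimes p_j}}|$, while all $L^2$- and sup-norms on $H^0(\mathcal{X},\overline{\mathcal{L}}^{\otimes p_j})$ rescale by the factor $\tau^{-p_j}$; thus the rescaled radii $\tau^{-p_j}r_j, \tau^{-p_j}r'_j$ still have $p_j$-th roots converging to $1$, and one may assume $\tau = 1$. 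Setting $F_j(s) := \frac{1}{p_j}\int_{\mathcal{Y}(\mathbb{C})}|\log|s||\, c_1(\overline{\mathcal{L}})^{e-1}$, the measures $\sigma_j := \mathrm{Vol}(K_j)^{-1}\lambda|_{K_j}$ of type $\mathbf{L}_\mathbb{R}$ satisfy Condition~(B) with $C_{p_j}/p_j\to 0$ by Proposition~\ref{pro_conditionB}, and Lemma~\ref{lem_integral-zero} applied to $Y = \mathcal{Y}(\mathbb{C})$ with the Kähler form $c_1(\overline{\mathcal{L}})|_{\mathcal{Y}(\mathbb{C})}$ yields the continuous-average vanishing
$$A_j := \frac{1}{\mathrm{Vol}(K_j)}\int_{K_j} F_j\, d\lambda \longrightarrow 0.$$

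Next I would use the cell decomposition \eqref{equ_cell-decomposition} together with the successive-minima bound \eqref{equ_successive-bound} $\lambda_\mathbb{Z} \leq B p_j^{d-1}v^{p_j}$ (some $v \in (0,1)$) to observe that the cell diameter $d_{p_j}\lambda_\mathbb{Z}$ is exponentially small on the $p_j$-th root scale. Combining Lemma~\ref{lem_geometry-of-numbers} with the sandwich $\overline{B_{r'_j}} \subseteq K_j \subseteq \overline{B_{r_j}}$ yields $\# S_j \cdot \mathrm{Vol}(Q_{p_j})/\mathrm{Vol}(K_j) \to 1$ (with negligible boundary contribution from $\bigcup_{s\in S_j} Q_{p_j,s} \triangle K_j$), so the cell-averaged quantity
$$\tilde B_j := \frac{1}{\# S_j}\sum_{s \in S_j}\frac{1}{\mathrm{Vol}(Q_{p_j})}\int_{Q_{p_j, s}} F_j(y)\, d\lambda(y)$$
is asymptotic to $A_j$ and also tends to $0$. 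Furthermore Lemma~\ref{lem_bound-of-integral-of-section} applied to $\mathcal{Y}$, combined with the sup-norm bound of \eqref{equ_supl2} to estimate $\tfrac{1}{p_j}\log^+\|s\|_{\sup}$, gives the uniform estimate $F_j(s) \leq (\overline{\mathcal{L}}|_{\mathcal{Y}}^e) + o(1) =: M$ for all $s \in S_j$, and analogously at cell points.

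The main obstacle is to bridge from $\tilde B_j \to 0$ to the desired $B_j := \# S_j^{-1}\sum_{s\in S_j} F_j(s) \to 0$. For any sequence of pairs $(s_j, y_j)$ with $s_j \in S_j$ and $y_j \in Q_{p_j, s_j}$ one has $\|s_j\|^{1/p_j} \leq r_j^{1/p_j} \to 1$ and $\|s_j - y_j\|^{1/p_j} \to v < 1$; since Proposition~\ref{pro_integral} is symmetric in $s$ and $s'$ (apply it with the roles interchanged), it yields the pointwise equivalence $F_j(s_j) \to 0 \Leftrightarrow F_j(y_j) \to 0$. From this I would derive the averaged equivalence $F_j(s_j) \to 0 \Leftrightarrow \tilde F_j(s_j) \to 0$ for any sequence $(s_j)$, where $\tilde F_j(s) := \mathrm{Vol}(Q_{p_j})^{-1}\int_{Q_{p_j, s}} F_j\, d\lambda$: the forward direction uses that $F_j(s_j) \to 0$ forces $\sup_{v \in Q_{p_j}} F_j(s_j + v) \to 0$ (otherwise a sequence of argmax points would contradict Proposition~\ref{pro_integral}), and the reverse uses Markov's inequality to extract from $\tilde F_j(s_j) \to 0$ a selector $v_j^\star \in Q_{p_j}$ with $F_j(s_j + v_j^\star) \to 0$, whence $F_j(s_j) \to 0$ by Proposition~\ref{pro_integral}.

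Finally, I would conclude by a contradiction argument: Markov applied to $\tilde B_j \to 0$ yields a density-$1$ subset $G_j \subseteq S_j$ with $\tilde F_j(s) \leq \tilde B_j^{1/2}$. If $\#\{s \in S_j : F_j(s) > \delta\}/\#S_j \not\to 0$ for some $\delta > 0$, then intersecting with $G_j$ produces a sequence $(s_j)$ with $F_j(s_j) > \delta$ and $\tilde F_j(s_j) \leq \tilde B_j^{1/2} \to 0$, contradicting the averaged equivalence. Hence $F_j \to 0$ in probability on $S_j$, and the uniform bound $F_j \leq M$ upgrades this to $L^1$-convergence, i.e.\ $B_j \to 0$. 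The hardest technical step is establishing the averaged equivalence in the previous paragraph: Proposition~\ref{pro_integral} is only an asymptotic, one-sequence-at-a-time statement, so turning it into a cell-average comparison requires the argmax-and-Markov extraction above, which crucially uses that cell diameters are exponentially small on the $p_j$-th root scale.
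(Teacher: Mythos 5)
Your proposal is correct and follows essentially the same route as the paper: reduction to $\tau=1$ via $\overline{\mathcal{L}}(\log\tau)$, Condition (B) for the type-$\mathbf{L}_{\mathbb{R}}$ measures (Proposition \ref{pro_conditionB}) feeding Lemma \ref{lem_integral-zero}, transfer from the cell average to lattice points via the cell decomposition, the successive-minima bound and Proposition \ref{pro_integral}, and a final contradiction using the uniform bound from Lemma \ref{lem_bound-of-integral-of-section}. The paper merely packages your ``averaged equivalence plus Markov'' step as Proposition \ref{pro_positive-density} (stated for arbitrary positive-density subsets, and restricting to cells fully contained in $K_j$ so the boundary-cell contribution you wave away never arises); note also that only the reverse direction of your equivalence is used, so the slightly delicate argmax argument for the forward direction can be dropped.
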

In the following we shortly write
$$R_j=K_j\cap H^0(\mathcal{X},\overline{\mathcal{L}}^{\otimes p_j}).$$
Thus, $S_j$ is the subset of $R_j$ consisting of sections not identically vanishing on $\mathcal{Y}$.
The idea of the proof of the theorem is to apply the distribution result from Section \ref{sec_distribution-complex} to the probability measures associated to the Haar measures on the $K_j$'s. These satisfy Condition (B) by an application of Proposition \ref{pro_conditionB}. As in Section \ref{sec_lattice}, we will decompose $K_j$ into cells associated to the points in $R_j$ and use Proposition \ref{pro_integral} to deduce, that also the points in $R_j$ satisfy such a distribution result. Finally, the theorem will follow as $S_j$ has asymptotical density $1$ in $R_j$.
Let us first prove a helpful lemma. It shows that if a cell $Q_{p_j,x}$ in the decomposition (\ref{equ_cell-decomposition}) intersects $K_j$, it is most times already completely contained in $K_j$.
\begin{Lem}\label{lem_ratio-of-latticepoints}
	With the same notation and assumptions as in Theorem \ref{thm_equidistribution-general} it holds
	$$\lim_{j\to \infty}\frac{\#\{x\in H^0(\mathcal{X},\overline{\mathcal{L}}^{\otimes p_j})~|~\emptyset\subsetneq Q_{p_j,x}\cap K_j\subsetneq Q_{p_j,x}\}}{\# R_j}=0.$$
	Moreover, it holds $\lim_{j\to \infty}\frac{\mathrm{Vol}(K_j)}{\#R_j\cdot \mathrm{Vol}(Q_{p_j,x})}=1$.
\end{Lem}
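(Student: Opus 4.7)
The plan is to control the cells that straddle the boundary of $K_j$ by a tubular-neighborhood argument, using the fact that the diameter of a cell decays (at worst) like $p_j^{2(d-1)} v^{p_j}$ thanks to the bound (\ref{equ_successive-bound}), whereas the inradius $r'_j$ of $K_j$ grows at least like ${\tau'}^{p_j}$ for some $\tau' > v$. I would partition the lattice into
$$A_j = \{x \in H^0(\mathcal{X},\overline{\mathcal{L}}^{\otimes p_j})\mid Q_{p_j,x}\subseteq K_j\},\qquad B_j = \{x\mid \emptyset\subsetneq Q_{p_j,x}\cap K_j\subsetneq Q_{p_j,x}\},$$
and note, from the inclusion $x\in Q_{p_j,x}$, that $A_j\subseteq R_j\subseteq A_j\cup B_j$. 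Since the cells are disjoint translates of a fundamental domain, $|A_j|\cdot\mathrm{Vol}(Q_{p_j,x})\le \mathrm{Vol}(K_j)\le (|A_j|+|B_j|)\cdot\mathrm{Vol}(Q_{p_j,x})$, so both conclusions reduce to showing $|B_j|/|A_j|\to 0$.

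Next, let $\delta_j$ denote the maximum diameter of a cell; since each $Q_{p_j,x}$ is a translate of the parallelepiped spanned by $s_{p_j,1},\ldots,s_{p_j,d_{p_j}}$, we have $\delta_j\le d_{p_j}\cdot\lambda_{\mathbb{Z}}(H^0(\mathcal{X},\overline{\mathcal{L}}^{\otimes p_j}))$. Set $\epsilon_j=\delta_j/r'_j$. The key convex-geometric observation is that, because $K_j$ is symmetric and contains $B_{r'_j}$, one has $B_{\delta_j}\subseteq \epsilon_j K_j$ and therefore
$$(1-\epsilon_j)K_j+B_{\delta_j}\subseteq K_j\subseteq (1+\epsilon_j)K_j\setminus B_{\delta_j},$$
by convexity. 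Hence any cell meeting $(1-\epsilon_j)K_j$ is entirely inside $K_j$ (i.e.\ in $A_j$), and any cell meeting $K_j$ is entirely inside $(1+\epsilon_j)K_j$. Comparing volumes gives
$$|A_j|\cdot\mathrm{Vol}(Q_{p_j,x})\ge (1-\epsilon_j)^{d_{p_j}}\mathrm{Vol}(K_j),\qquad |B_j|\cdot\mathrm{Vol}(Q_{p_j,x})\le \bigl((1+\epsilon_j)^{d_{p_j}}-(1-\epsilon_j)^{d_{p_j}}\bigr)\mathrm{Vol}(K_j),$$
so it suffices to prove $d_{p_j}\epsilon_j\to 0$.

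The main estimate is precisely this decay. Combining $\lambda_{\mathbb{Z}}(H^0(\mathcal{X},\overline{\mathcal{L}}^{\otimes p_j}))\le B p_j^{d-1}v^{p_j}$ from (\ref{equ_successive-bound}) with the Hilbert--Serre bound $d_{p_j}\le Mp_j^{d-1}$ gives $\delta_j\le MB\, p_j^{2(d-1)}v^{p_j}$. Since $v\in(0,1)$ and $\tau\ge 1$, pick any $\tau'\in(v,1]$ with $\tau'\le\tau$; from $(r'_j)^{1/p_j}\to\tau\ge\tau'$ one has $r'_j\ge {\tau'}^{p_j}$ for $j$ sufficiently large, whence
$$d_{p_j}\epsilon_j \le M^{2}B\, p_j^{3(d-1)}\left(\tfrac{v}{\tau'}\right)^{p_j}\xrightarrow{j\to\infty}0,$$
exponentially. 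Consequently $(1\pm\epsilon_j)^{d_{p_j}}\to 1$ and the ratio $|B_j|/|A_j|$ tends to $0$, which yields both $\#\{x\colon\emptyset\subsetneq Q_{p_j,x}\cap K_j\subsetneq Q_{p_j,x}\}/\#R_j\to 0$ and $\mathrm{Vol}(K_j)/(\#R_j\cdot\mathrm{Vol}(Q_{p_j,x}))\to 1$. The only delicate point is arranging $\tau'>v$: this is possible thanks to $v<1\le\tau$, which is exactly why the inradius condition $(r'_j)^{1/p_j}\to\tau\in[1,\infty)$ and the Moriwaki-type bound on $\lambda_\mathbb{Z}$ interact in our favor.
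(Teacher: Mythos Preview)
Your argument is correct and in fact more elementary than the paper's. The one cosmetic slip is the displayed inclusion $K_j\subseteq (1+\epsilon_j)K_j\setminus B_{\delta_j}$, which as written is not what you need; the relevant statement (which you actually use) is $K_j+B_{\delta_j}\subseteq (1+\epsilon_j)K_j$, and this follows from $B_{\delta_j}\subseteq\epsilon_j K_j$ and convexity exactly as you indicate.

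The paper takes a different route. Instead of comparing \emph{volumes} of $(1\pm\epsilon_j)K_j$, it compares \emph{lattice point counts} in $\mu_j K_j$, $K_j$, and $\mu_j^{-1}K_j$ (with $\mu_j=1-d_{p_j}\lambda_{\mathbb{Z},p_j}/r'_j$, essentially your $1-\epsilon_j$), invoking the recent Freyer--Lucas inequalities (\ref{equ_latticebound-general}) and (\ref{equ_latticebound-general2}) via Lemma~\ref{lem_geometry-of-numbers}. Both proofs rest on the same core estimate $d_{p_j}^2\lambda_{\mathbb{Z},p_j}/r'_j\to 0$, which forces $(1\pm\epsilon_j)^{d_{p_j}}\to 1$. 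Your tiling/volume argument bypasses the Freyer--Lucas machinery entirely: once you know that boundary cells sit in the annulus $(1+\epsilon_j)K_j\setminus(1-\epsilon_j)K_j$, disjointness of the cells gives the count directly. The paper's approach, by contrast, would continue to work in situations where one only controls lattice-point counts rather than volumes, but for the present lemma your argument is both shorter and self-contained.
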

\begin{proof}
	We shortly write $\lambda_{\mathbb{Z},p}=\lambda_{\mathbb{Z}}(H^0(\mathcal{X},\overline{\mathcal{L}}^{\otimes p}))$. By (\ref{equ_successive-bound}) it holds
	$$\limsup_{p\to \infty}\left(3pd_p^2\lambda_{\mathbb{Z},p}\right)^{1/p}\le \limsup_{p\to \infty} \left(3pM^2p^{2d-2} B p^{d-1} v^p\right)^{1/p}=v<1\le\lim_{j\to\infty}{r'_j}^{1/p_j}.$$
	Thus, there exists an integer $m\ge 0$ such that $3p_jd_{p_j}^2\lambda_{\mathbb{Z},p_j}\le r'_j$ for all $j\ge m$. We set $\mu_j=1-\frac{d_{p_j}\lambda_{\mathbb{Z},p_j}}{r'_j}$. It holds $\mu_j\in\left[\frac{2}{3},1\right]$ for all $j\ge m$.
	\begin{clm}
		Let $j\ge m$ and $x\in H^0(\mathcal{X},\overline{\mathcal{L}}^{\otimes p_j})$. 
		\begin{enumerate}[(i)]
			\item If $Q_{p_j,x}\not\subseteq K_j$, then $x\notin \mu_j K_j$.
			\item If $Q_{p_j,x}\cap K_j\neq \emptyset$ then $x\in \mu_j^{-1} K_j$. 
		\end{enumerate}
		
	\end{clm}
	\begin{proof}[Proof of Claim]
		Assume that $x\in \mu_j K_j$. Then $\mu_j^{-1}x\in K_j$. Since $K_j$ is convex, this implies $x+(1-\mu_j)y\in K_j$ for all $y\in K_j$. Since $\overline{B_{r'_j}}\subseteq K_j$, it follows that $x+y\in K_j$ if $\|y\|\le (1-\mu_j)r'_j=d_{p_j}\lambda_{\mathbb{Z},p_j}$. Now let $z=x+\sum_{k=1}^{d_{p_j}}a_ks_{p_j,k}$ be an arbitrary element of $Q_{p_j,x}$ and set $y=z-x$. Then 
		$$\|y\|\le \sum_{k=1}^{d_{p_j}}a_k\|s_{p_j,k}\|\le d_{p_j} \lambda_{\mathbb{Z},p_j}.$$
		Hence, $z=x+y\in K_j$ and thus, $Q_{p_j,x}\subseteq K_j$. This proves the (i).
		
		To prove (ii) let $z\in Q_{p_j,x}\cap K_j$. We can again write $z=x+y$ with $\|y\|\le d_{p_j} \lambda_{\mathbb{Z},p_j}$, which implies that $y\in (1-\mu_j)K_j$. Thus, there exists $y'\in K_j$ with $y=(1-\mu_j)y'$. Since $\mu_j\le 1$ and $K_j$ is symmetric, we also have $-\mu_j y'\in K_j$. Hence, we obtain
		$$\mu_j x=\mu_j z-\mu_j y=\mu_j z+(1-\mu_j)(-\mu_j y')\in K_j$$
		by the convexity of $K_j$. This is equivalent to $x\in\mu_j^{-1} K_j$.
	\end{proof}
	By the claim it is enough to show that
	\begin{align}\label{equ_claim-limit}
	\lim_{j\to\infty}\frac{\#(\mu_j K_j\cap H^0(\mathcal{X},\overline{\mathcal{L}}^{\otimes p_j}))}{\#(K_j\cap H^0(\mathcal{X},\overline{\mathcal{L}}^{\otimes p_j}))}=\lim_{j\to\infty}\frac{\#( K_j\cap H^0(\mathcal{X},\overline{\mathcal{L}}^{\otimes p_j}))}{\#(\mu_j^{-1}K_j\cap H^0(\mathcal{X},\overline{\mathcal{L}}^{\otimes p_j}))}=1
	\end{align}
	to get the first assertion of the lemma.
	In the following we always assume $j\ge m$.
	Since $3d_{p_j}\lambda_{\mathbb{Z},p_j}\le r'_j$ and $\mu_j\ge \frac{2}{3}$, we can apply Lemma \ref{lem_geometry-of-numbers} to get
	\begin{align}\label{equ_quotient}
	1\le \frac{\#(K_j\cap H^0(\mathcal{X},\overline{\mathcal{L}}^{\otimes p_j}))}{\#(\mu_j K_j\cap H^0(\mathcal{X},\overline{\mathcal{L}}^{\otimes p_j}))}\le \mu_j^{-d_{p_j}}\left(1+2d_{p_j}(1+\mu_j^{-1}){r'_j}^{-1}\lambda_{\mathbb{Z},p_j}\right)^{d_{p_j}}.
	\end{align}
	Let us compute the limit of the right hand side separately for both factors.
	Let $\epsilon>0$ be an arbitrary positive real number. Note that by $p_jd_{p_j}^2\lambda_{\mathbb{Z},p_j}\le r'_j$ we have $\mu_j\ge 1-\frac{\epsilon}{d_{p_j}}$ for $j\ge \max\{m,\epsilon^{-1}\}$. Thus, we compute
	$$1\ge \lim_{j\to \infty}\mu_j^{d_{p_j}}\ge \lim_{j\to\infty}\left(1-\frac{\epsilon}{d_{p_j}}\right)^{d_{p_j}}=e^{-\epsilon},$$
	where the last equality follows since $\lim_{p\to \infty}d_p=\infty$ as $\mathcal{L}$ is ample. Hence, we have $\lim_{j\to \infty}\mu_j^{-d_{p_j}}=1$.
	For the second factor note, that we have $d_{p_j}{r'_j}^{-1}\lambda_{\mathbb{Z},p_j}=1-\mu_j$. For sufficiently large $j$ we may assume that $\epsilon< \frac{d_{p_j}}{2}$. Therefore, we can compute
	\begin{align*}
	1&\le \lim_{j\to\infty} \left(1+2d_{p_j}(1+\mu_j^{-1}){r'_j}^{-1}\lambda_{\mathbb{Z},p_j}\right)^{d_{p_j}}=\lim_{j\to\infty} \left(1+2(\mu_j^{-1}-\mu_j)\right)^{d_{p_j}}\\
	&\le\lim_{j\to\infty}\left(1+2\left(\frac{1}{1-\frac{\epsilon}{d_{p_j}}}-1+\frac{\epsilon}{d_{p_j}}\right)\right)^{d_{p_j}}=\lim_{j\to\infty}\left(1+2\epsilon\left(\frac{1}{d_{p_j}-\epsilon}+\frac{1}{d_{p_j}}\right)\right)^{d_{p_j}}\\
	&\le \lim_{j\to\infty}\left(1+\frac{6\epsilon}{d_{p_j}}\right)^{d_{p_j}}=e^{6\epsilon}.
	\end{align*}
	Thus we get $\lim_{j\to\infty} \left(1+2d_{p_j}(1+\mu_j^{-1}){r'_j}^{-1}\lambda_{\mathbb{Z},p_j}\right)^{d_{p_j}}=1$. If we apply this to inequality (\ref{equ_quotient}) we get that the first limit in Equation (\ref{equ_claim-limit}) is $1$. To show that the second limit is also $1$, one may do the same computation for $K_j$ replaced by $\mu_j^{-1}K_j$. Note, that $\mu_j^{-1}K_j$ still satisfies the assumptions in Theorem \ref{thm_equidistribution-general} as $\lim_{j\to\infty}\mu_j=1$.
	
	To prove the second assertion of the lemma, 
	let us define 
	\begin{align*}
	A_j&=\#\{x\in H^0(\mathcal{X},\overline{\mathcal{L}}^{\otimes p_j})~|~Q_{p_j,x}\subseteq K_j\},\\
	B_j&=\#\{x\in H^0(\mathcal{X},\overline{\mathcal{L}}^{\otimes p_j})~|~Q_{p_j,x}\cap K_j\neq \emptyset\}.
	\end{align*}
	By the first assertion we have 
	$$\lim_{j\to\infty}\frac{A_j}{\#R_j}=\lim_{j\to\infty}\frac{B_j}{\#R_j}=1.$$
	We can bound the volume of $K_j$ by
	$$A_j\mathrm{Vol}(Q_{p_j,x})\le \mathrm{Vol}(K_j)\le B_j \mathrm{Vol}(Q_{p_j,x})$$
	Thus, we get 
	$$\lim_{j\to \infty}\frac{\mathrm{Vol}(K_j)}{\#R_j\cdot \mathrm{Vol}(Q_{p_j,x})}=1$$
	as claimed.
\end{proof}
As a first step to the proof of Theorem \ref{thm_equidistribution-general}, we prove that in a certain sense most of the sequences of sections satisfy a vanishing property as in Theorem \ref{thm_equidistribution-general}.
\begin{Pro}\label{pro_positive-density}
	We use the same notation and assumptions as in Theorem \ref{thm_equidistribution-general}.
	If $(T_{j})_{j\in\mathbb{Z}_{\ge1}}$ is any sequence of subsets $T_{j}\subseteq R_j$ such that 
	$$\liminf_{j\to \infty}\frac{\#T_{j}}{\#R_j}>0,$$
	then there exist an $m\in\mathbb{Z}_{\ge 1}$ and a sequence $(s_{p_j})_{j\in\mathbb{Z}_{\ge m}}$
	of sections $s_{p_j}\in T_{j}$, such that
	 	$$\lim_{j\to \infty} \frac{1}{p_j}\int_{\mathcal{Y}(\mathbb{C})}\left|\log|s_{p_j}|_{\overline{\mathcal{L}}^{\otimes p_j}}-p_j\log\tau\right|c_1(\overline{\mathcal{L}})^{e-1}=0.$$
	 In particular, we have $\lim_{j\to\infty}\frac{\#S_{j}}{\#R_j}=1$.
\end{Pro}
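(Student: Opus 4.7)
The plan is to combine the average-vanishing statement of Lemma \ref{lem_integral-zero} with the lattice cell decomposition of Lemma \ref{lem_ratio-of-latticepoints}, using Proposition \ref{pro_integral} to transfer from continuous sections to lattice sections. First I would rescale by $\tau^{-p_j}$: the sets $\tau^{-p_j}K_j$ then satisfy the hypotheses of Proposition \ref{pro_conditionB}, so the normalized Haar measures (of type $\mathbf{L}_{\mathbb{R}}$) they induce satisfy Condition~(B) with $\lim_{j\to\infty}C_{p_j}/p_j=0$. Applying Lemma \ref{lem_integral-zero} with the Kähler form $c_1(\overline{\mathcal{L}})$ and the subvariety $\mathcal{Y}(\mathbb{C})$, and undoing the rescaling, yields
\[
\lim_{j\to\infty}\frac{1}{\mathrm{Vol}(K_j)}\int_{K_j}F_j(s)\,ds=0,\qquad F_j(s):=\frac{1}{p_j}\int_{\mathcal{Y}(\mathbb{C})}\bigl|\log|s|_{\overline{\mathcal{L}}^{\otimes p_j}}-p_j\log\tau\bigr|\,c_1(\overline{\mathcal{L}})^{e-1},
\]
the shift $-p_j\log\tau$ being produced by the rescaling.

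Next I would pass to the lattice via the cell decomposition (\ref{equ_cell-decomposition}). Setting $R'_j:=\{x\in R_j:Q_{p_j,x}\subseteq K_j\}$ and $G_j(x):=\mathrm{Vol}(Q_{p_j,x})^{-1}\int_{Q_{p_j,x}}F_j(s)\,ds$ for $x\in R'_j$, the cells $\{Q_{p_j,x}\}_{x\in R'_j}$ are pairwise disjoint and contained in $K_j$, so
\[
\frac{1}{\#R'_j}\sum_{x\in R'_j}G_j(x)\le\frac{1}{\#R'_j\cdot\mathrm{Vol}(Q_{p_j,x})}\int_{K_j}F_j(s)\,ds\longrightarrow 0
\]
by the previous step and Lemma \ref{lem_ratio-of-latticepoints}. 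Markov's inequality produces a sequence $\eta_j\to 0$ such that $\mathrm{Good}_j:=\{x\in R'_j:G_j(x)<\eta_j\}$ has $\#\mathrm{Good}_j/\#R_j\to 1$. Because $\liminf\#T_j/\#R_j>0$, the intersection $T_j\cap\mathrm{Good}_j$ is non-empty for all sufficiently large $j$, and I would pick $x_{p_j}\in T_j\cap\mathrm{Good}_j$; the task then reduces to proving $F_j(x_{p_j})\to 0$.

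For this I would apply Proposition \ref{pro_integral}. A further Markov's inequality inside $Q_{p_j,x_{p_j}}$ produces $s_{p_j}\in Q_{p_j,x_{p_j}}$ with $F_j(s_{p_j})\le\sqrt{\eta_j}\to 0$, while by definition of the cell and by (\ref{equ_successive-bound}) one has $\limsup\|s_{p_j}-x_{p_j}\|^{1/p_j}\le v<1$. Passing to $\tilde s_{p_j}:=\tau^{-p_j}s_{p_j}$ and $\tilde x_{p_j}:=\tau^{-p_j}x_{p_j}$, Proposition \ref{pro_integral} on $\mathcal{X}(\mathbb{C})$ with subvariety $\mathcal{Y}(\mathbb{C})$ applies: hypothesis (i) is $\|\tilde s_{p_j}\|^{1/p_j}\le\tau^{-1}r_j^{1/p_j}\to 1$, hypothesis (ii) is $\|\tilde s_{p_j}-\tilde x_{p_j}\|^{1/p_j}\le v\tau^{-1}\le v<1$, and hypothesis (iii) is exactly $F_j(s_{p_j})\to 0$ in the rescaled variables. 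The conclusion of Proposition \ref{pro_integral} is precisely $F_j(x_{p_j})\to 0$, proving the first assertion.

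For the ``in particular'' statement I would argue by contradiction: if $\limsup\#(R_j\setminus S_j)/\#R_j>0$, then passing to a subsequence along which the $\liminf$ is positive and applying the first assertion with $T_j=R_j\setminus S_j$ would produce sections $x_{p_j}$ with $x_{p_j}|_{\mathcal{Y}}\equiv 0$ yet $F_j(x_{p_j})\to 0$, contradicting the fact that the integrand of $F_j$ equals $+\infty$ pointwise on $\mathcal{Y}(\mathbb{C})$ whenever $x|_{\mathcal{Y}}\equiv 0$. The hard part is the transfer from the average vanishing on $K_j$ (furnished by Lemma \ref{lem_integral-zero}) to the asserted vanishing along an individual sequence of lattice points in $T_j$; this forces the two-stage averaging (integral over $K_j\to$ cell averages $\to$ lattice points) to be coupled with Proposition \ref{pro_integral}, and the rescaling by $\tau^{-p_j}$ is essential so that the normalization $\limsup\|\cdot\|^{1/p_j}\le 1$ required by that proposition is satisfied.
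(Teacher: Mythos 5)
Your proposal is correct and follows essentially the same route as the paper: Proposition \ref{pro_conditionB} plus Lemma \ref{lem_integral-zero} for the average vanishing over $K_j$, the cell decomposition and Lemma \ref{lem_ratio-of-latticepoints} to pass to cell averages over lattice points, and Proposition \ref{pro_integral} to transfer from a point of the cell to its lattice corner. The only cosmetic differences are that the paper reduces to $\tau=1$ by twisting the metric to $\overline{\mathcal{L}}(\log\tau)$ rather than rescaling sections by $\tau^{-p_j}$ (these are equivalent), and that it selects $x_{p_j}\in T_j$ by restricting the non-negative sum to $T_j$ and using $\liminf\#T_j/\#R_j>0$ directly, rather than via your Markov-inequality ``good set''.
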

Before we give the proof of the proposition, let us reduce the proofs of Theorem \ref{thm_equidistribution-general} and Proposition \ref{pro_positive-density} to the case $\tau=0$ by the following lemma.
\begin{Lem}
	If Theorem \ref{thm_equidistribution-general} holds for $\tau=1$, then it holds for any $\tau\in[1,\infty)$. The same holds for Proposition \ref{pro_positive-density}.
\end{Lem}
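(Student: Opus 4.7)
The plan is to use the metric-rescaling trick $\overline{\mathcal{L}} \rightsquigarrow \overline{\mathcal{L}}(\log\tau)$. Recall from Section \ref{sec_arithmetic-varieties} that for $\tau\ge 1$ (equivalently $\log\tau\ge 0$) the hermitian line bundle $\overline{\mathcal{L}}(\log\tau)=(\mathcal{L},e^{-2\log\tau}h)$ is again arithmetically ample, and by Equation (\ref{equ_c1-metric-change}) we have $c_1(\overline{\mathcal{L}}(\log\tau))=c_1(\overline{\mathcal{L}})$. For any $p\ge 1$ and any section $s\in H^0(\mathcal{X},\mathcal{L}^{\otimes p})$ one checks directly from the definitions that
$$|s|_{\overline{\mathcal{L}}(\log\tau)^{\otimes p}}=\tau^{-p}|s|_{\overline{\mathcal{L}}^{\otimes p}},$$
so by the definition (\ref{equ_innerproduct}) of the $L^2$-inner product (which uses the metric on the line bundle but the \emph{same} volume form $c_1(\overline{\mathcal{L}})^{d-1}/\mathcal{L}_{\mathbb{C}}^{d-1}$ in both cases), the norm on $H^0(\mathcal{X},\overline{\mathcal{L}}(\log\tau)^{\otimes p})_{\mathbb{R}}$ is $\tau^{-p}$ times the norm on $H^0(\mathcal{X},\overline{\mathcal{L}}^{\otimes p})_{\mathbb{R}}$, while the underlying lattice $H^0(\mathcal{X},\mathcal{L}^{\otimes p})$ and the sets $K_j$ (as sets of sections) are unchanged.

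First I would apply the $\tau=1$ version of Theorem \ref{thm_equidistribution-general} to the arithmetically ample hermitian line bundle $\overline{\mathcal{L}}(\log\tau)$, with the same sequence $(p_j)$, the same sets $K_j$, and the rescaled radii $\tilde{r}_j=\tau^{-p_j}r_j$, $\tilde{r}_j'=\tau^{-p_j}r_j'$. By the scaling above these radii satisfy $\lim_{j\to\infty}\tilde{r}_j^{1/p_j}=\lim_{j\to\infty}(\tilde{r}_j')^{1/p_j}=\tau^{-1}\tau=1$, and the inclusions $\overline{B_{\tilde{r}_j'}}\subseteq K_j\subseteq \overline{B_{\tilde{r}_j}}$ hold with respect to the new norm. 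The set $S_j$ depends only on the condition $s|_{\mathcal{Y}}\neq 0$, which is independent of the choice of metric, so it is unaffected by the substitution.

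Next I would translate the conclusion back. Using $c_1(\overline{\mathcal{L}}(\log\tau))=c_1(\overline{\mathcal{L}})$ and
$$\log|s|_{\overline{\mathcal{L}}(\log\tau)^{\otimes p_j}}-p_j\log 1=\log|s|_{\overline{\mathcal{L}}^{\otimes p_j}}-p_j\log\tau,$$
the integral in the conclusion of the $\tau=1$ statement for $\overline{\mathcal{L}}(\log\tau)$ is identical to the integral appearing in the general-$\tau$ statement for $\overline{\mathcal{L}}$. Hence the limit is zero, as required.

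For Proposition \ref{pro_positive-density} the argument is word-for-word the same: the integrand $|\log|s|_{\overline{\mathcal{L}}^{\otimes p_j}}-p_j\log\tau|$ becomes $|\log|s|_{\overline{\mathcal{L}}(\log\tau)^{\otimes p_j}}|$ under the substitution, the set $T_j\subseteq R_j$ with $\liminf_{j\to\infty}\#T_j/\#R_j>0$ is unchanged, and the conclusion about the existence of a sequence $(s_{p_j})\in T_j$ translates back verbatim. No new obstacles arise; the only thing to check carefully is the scaling of the $L^2$-norm (which is a direct computation from (\ref{equ_innerproduct}), since both factors $h$ and the reference form $c_1(\overline{\mathcal{L}})^{d-1}/\mathcal{L}_{\mathbb{C}}^{d-1}$ are explicit), and the preservation of the lattice structure, which is obvious because $H^0(\mathcal{X},\mathcal{L}^{\otimes p})$ is defined in purely algebraic terms.
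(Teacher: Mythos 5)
Your proposal is correct and follows essentially the same route as the paper: both rescale the metric to $\overline{\mathcal{L}}(\log\tau)$ (arithmetically ample since $\log\tau\ge 0$), observe that the $L^2$-norm scales by $\tau^{-p}$ so the radii become $r_j/\tau^{p_j}$ and $r'_j/\tau^{p_j}$ with $p_j$-th roots tending to $1$, and translate the conclusion back via $\log|s|_{\overline{\mathcal{L}}(\log\tau)^{\otimes p_j}}=\log|s|_{\overline{\mathcal{L}}^{\otimes p_j}}-p_j\log\tau$. Your explicit remarks that the lattice, the sets $K_j$, and the condition $s|_{\mathcal{Y}}\neq 0$ are metric-independent are exactly the checks implicit in the paper's argument.
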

\begin{proof}
	Assume that Theorem \ref{thm_equidistribution-general} (respectively Proposition \ref{pro_positive-density}) holds for $\tau=1$ and consider an arbitrary $\tau\in [1,\infty)$. The idea is to apply the known case to the hermitian line bundle $\overline{\mathcal{L}}(\log\tau)$. By $\log \tau\ge 0$ the hermitian line bundle $\overline{\mathcal{L}}(\log\tau)$ is still arithmetically ample. Note that
	\begin{align}\label{equ_metric-change}
	\log|s|_{\overline{\mathcal{L}}(\log\tau)^{\otimes p}}=\log\sqrt{e^{-2 p\log \tau}h^{\otimes p}(s,s)}=\log |s|_{\overline{\mathcal{L}}^{\otimes p}}-p\log\tau,
	\end{align}
	where $h$ is the hermitian metric on $\mathcal{L}({\mathbb{C}})$ and $s\in H^0(\mathcal{X},\mathcal{L}^{\otimes p})_{\mathbb{R}}$.
	Thus, we get for the sets of sections of bounded $L^2$-norm
	\begin{align*}
		&\left\{s\in H^0(\mathcal{X},\overline{\mathcal{L}}^{\otimes p})_{\mathbb{R}}~\left|~\|s\|^2=\int_{\mathcal{X}(\mathbb{C})}|s(x)|_{\overline{\mathcal{L}}^{\otimes p}}^2c_1(\overline{\mathcal{L}})^{d-1}\le r^2\right.\right\}\\
		&=\left\{s\in H^0(\mathcal{X},\overline{\mathcal{L}}(\log\tau)^{\otimes p})_{\mathbb{R}}~\left|~\|s\|^2=\int_{\mathcal{X}(\mathbb{C})}|s(x)|_{\overline{\mathcal{L}}(\log\tau)^{\otimes p}}^2c_1(\overline{\mathcal{L}})^{d-1}\le \frac{r^2}{\tau^{2p}}\right.\right\}
	\end{align*}	
	as subsets of the linear space $H^0(\mathcal{X},\mathcal{L}^{\otimes p})_{\mathbb{R}}$.
	Hence, if we replace $\overline{\mathcal{L}}$ by $\overline{\mathcal{L}}(\log\tau)$ in Theorem \ref{thm_equidistribution-general} (respectively Proposition \ref{pro_positive-density}), we have to replace the balls $B_{r_j}$ and $B_{r'_j}$ by $B_{r_j/\tau^{p_j}}$ and $B_{r'_j/\tau^{p_j}}$ if we still want the same assertion. But
	$$\lim_{j\to \infty}\left(\frac{r_j}{\tau^{p_j}}\right)^{1/p_j}=\frac{\lim_{j\to \infty}r_j^{1/p_j}}{\tau}=\frac{\tau}{\tau}=1$$
	and similar $\lim_{j\to \infty}\left(\frac{r'_j}{\tau^{p_j}}\right)^{1/p_j}=1$. Thus, by the hypothesis we can apply Theorem \ref{thm_equidistribution-general} (respectively Proposition \ref{pro_positive-density}) to the line bundle $\overline{\mathcal{L}}(\log\tau)$. By Equation (\ref{equ_metric-change}) this gives exactly the assertion of Theorem \ref{thm_equidistribution-general} (respectively Proposition \ref{pro_positive-density}) for general $\tau\in[1,\infty)$.
\end{proof}
Thus, we may and will assume $\tau=1$ in the following. We continue with the proof of Proposition \ref{pro_positive-density}.
\begin{proof}[Proof of Proposition \ref{pro_positive-density}]
	We choose $m\in\mathbb{Z}_{\ge 0}$, such that $T_j\neq \emptyset$ for all $j\ge m$.
	We decompose every set $T_{j}$ into the two disjoint sets
	$$T'_{j}=\{x\in T_{j}~|~Q_{p_j,x}\subseteq K_{j}\},\qquad T''_{j}=\{x\in T_{j}~|~Q_{p_j,x}\not\subseteq K_{j}\}.$$
	By Lemma \ref{lem_ratio-of-latticepoints}, it holds $\lim_{j\to \infty} \frac{ \#T''_{j}}{\#R_{j}}=0$. Thus, by $\liminf_{j\to \infty}\frac{\# T_{j}}{\# R_{j}}>0$ we also have $\liminf_{j\to \infty}\frac{\# T'_{j}}{\# S_{j}}>0$. Replacing $T_{j}$ by $T'_{j}$ we may assume that $Q_{p_j,x}\subseteq K_{j}$ for all $x\in T_{j}$.
	
	By the property in (\ref{equ_innerproduct-real}) the real vector subspace $H^0(\mathcal{X},\overline{\mathcal{L}}^{\otimes p_j})_\mathbb{R}$ of the complex vector space $H^0(\mathcal{X}(\mathbb{C}),\mathcal{L}(\mathbb{C})^{\otimes p_j})$ satisfies the assumptions of Proposition \ref{pro_conditionB}. That means that the probability measures
	$$\sigma_{p_j}=\frac{1}{\mathrm{Vol}(K_{j})}\lambda_{p_j}|_{K_{j}}$$
	for the normed Haar measure $\lambda_{p_j}$ on $H^0(\mathcal{X},\overline{\mathcal{L}}^{\otimes p_j})_\mathbb{R}$ satisfy Condition (B) with $\lim_{j\to \infty} \frac{C_{p_j}}{p_j}=0$. By Lemma \ref{lem_integral-zero} this implies, that 
	$$\lim_{j\to\infty}\frac{1}{\mathrm{Vol}(K_{j})}\int_{s\in K_{j}}\frac{1}{p_j}\int_{\mathcal{Y}(\mathbb{C})}\left|\log|s|_{\overline{\mathcal{L}}^{\otimes p_{j}}}\right|c_1(\overline{\mathcal{L}})^{e-1}d\lambda_{p_j}(s)=0.$$
	Applying the decomposition in (\ref{equ_cell-decomposition}) we obtain
	$$\lim_{j\to \infty}\frac{1}{\mathrm{Vol}(K_{j})}\sum_{x\in H^0(\mathcal{X},\overline{\mathcal{L}}^{\otimes p_j})} \int_{Q_{p_j,x}\cap K_{j}}\frac{1}{p_j}\int_{\mathcal{Y}(\mathbb{C})}\left|\log |s|_{\overline{\mathcal{L}}^{\otimes p_{j}}}\right|c_1(\overline{\mathcal{L}})^{e-1}d\lambda_{p_j}(s)=0.$$
	As every term in the sum is non-negative, also any limit of a partial sums has to be zero. In particular, we obtain
	$$\lim_{j\to \infty}\frac{1}{\#R_j\mathrm{Vol}(Q_{p_j,x})}\sum_{x\in T_{j}} \int_{Q_{p_j,x}}\frac{1}{p_j}\int_{\mathcal{Y}(\mathbb{C})}\left|\log |s|_{\overline{\mathcal{L}}^{\otimes p_{j}}}\right|c_1(\overline{\mathcal{L}})^{e-1}d\lambda_{p_j}(s)=0,$$
	where we used Lemma \ref{lem_ratio-of-latticepoints} to replace the prefactor and the assumption $Q_{p_j,x}\subseteq K_{p_j}$ for $x\in T_{p_j}$ to replace the domain of integration. By $\liminf_{j\to \infty}\frac{\#T_{j}}{\#R_j}>0$, the above equation implies
	$$\lim_{j\to \infty}\frac{1}{\#T_{j}}\sum_{x\in T_{j}} \frac{1}{\mathrm{Vol}(Q_{p_j,x})}\int_{Q_{p_j,x}}\frac{1}{p_j}\int_{\mathcal{Y}(\mathbb{C})}\left|\log |s|_{\overline{\mathcal{L}}^{\otimes p_{j}}}\right|c_1(\overline{\mathcal{L}})^{e-1}d\lambda_{p_j}(s)=0.$$
	This guarantees, that we can choose a sequence $(x_{p_j})_{j\in\mathbb{Z}_{\ge m}}$ with $x_{p_j}\in T_{j}$ and a sequence of sections $(s_{p_j})_{j\in \mathbb{Z}_{\ge m}}$ with $s_{p_j}\in Q_{p_j,x_{p_j}}$, such that 
	\begin{align}\label{equ_{p_j}roperty4}
	\lim_{j\to \infty}\frac{1}{p_j}\int_{\mathcal{Y}(\mathbb{C})}\left|\log |s_{p_j}|_{\overline{\mathcal{L}}^{\otimes p_{j}}}\right|c_1(\overline{\mathcal{L}})^{e-1}=0.
	\end{align}
	We will apply Proposition \ref{pro_integral} to the sequences $(s_{p_j})_{p\in \mathbb{Z}_{\ge m}}$ and $(x_{p_j})_{p\in \mathbb{Z}_{\ge m}}$. Since $s_{p_j}$ is contained in $K_{j}\subseteq \overline{B_{r_{j}}}$ with $\lim_{j\to \infty}r_{j}^{1/p_j}=\tau=1$, it holds
	\begin{align}\label{equ_{p_j}roperty12}
	\limsup_{j\to \infty} \|s_{p_j}\|^{1/p_j}\le 1.
	\end{align}
	Since $s_{p_j}\in Q_{p_j,x_{p_j}}$, we can write it in the form $s_{p_j}=x_{p_j}+\sum_{k=1}^{d_{p_j}}a_ks_{p_j,k}$ for some $a_k\in [0,1)$.
	Thus
	$$\|s_{p_j}-x_{p_j}\|=\left\|\sum_{k=1}^{d_{p_j}}a_ks_{p_j,k}\right\|\le d_{p_j}\lambda_{\mathbb{Z}}(H^0(\mathcal{X},\overline{\mathcal{L}}^{\otimes p_j}))\le M\cdot B\cdot p_j^{2d-2}v^{p_j},$$
	where the last inequality follows by the Hilbert--Serre theorem for some constant $M\in\mathbb{Z}$ and by inequality (\ref{equ_successive-bound}). From this we deduce
	\begin{align}\label{equ_{p_j}roperty3}
	\limsup_{j\to \infty}\|s_{p_j}-x_{p_j}\|^{1/p_j}\le \limsup_{j\to\infty} \left(M\cdot B\cdot p_j^{2d-2}v^{p_j}\right)^{1/p_j}=v<1.
	\end{align}
	By (\ref{equ_{p_j}roperty12}), (\ref{equ_{p_j}roperty3}) and (\ref{equ_{p_j}roperty4}) the assumptions (i)-(iii) of Proposition \ref{pro_integral} are satisfied, such that we obtain
	$$\lim_{j\to \infty}\frac{1}{p_j}\int_{\mathcal{Y}(\mathbb{C})}\left|\log |x_{p_j}|_{\overline{\mathcal{L}}^{\otimes p_{j}}}\right|c_1(\overline{\mathcal{L}})^{e-1}=0.$$
	This completes the proof of the first assertion of the proposition. 
	
	To show the second assertion, we assume that $\lim_{j\to \infty}\frac{\#S_j}{\#R_j}\neq 1$. Thus, there exists some sequence $(j_k)_{k\in\mathbb{Z}_{\ge 1}}$ such that $\lim_{k\to \infty}\frac{\#(R_{j_k}\setminus S_{j_k})}{\#R_{j_k}}>0$. By the first assertion of the lemma, we can find a sequence $(s_k)_{k\in\mathbb{Z}_{\ge 1}}$ of sections $s_k\in R_{j_k}\setminus S_{j_k}$ with 
	$$\lim_{k\to \infty}\frac{1}{p_{j_k}}\int_{\mathcal{Y}(\mathbb{C})}\left|\log |s_k|_{\overline{\mathcal{L}}^{\otimes p_{j_k}}}\right| c_1(\overline{\mathcal{L}})^{e-1}=0.$$
	But as $s_k|_{\mathcal{Y}}=0$ for all $k$, this is absurd. Hence, we must have $\lim_{j\to \infty}\frac{\#S_j}{\#R_j}=1$.
\end{proof}
Before we proof Theorem \ref{thm_equidistribution-general}, we show in the following lemma that the integral of
$\frac{1}{p_j}\left|\log\|s\|_{\overline{\mathcal{L}}^{\otimes p_j}}\right|$
is uniformly bounded for $s\in S_j$.
\begin{Lem}\label{lem_integral-upper-bound}
	Let $S_j$ be as in Theorem \ref{thm_equidistribution-general}.
	There is a constant $C_3$ only depending on $\mathcal{X}$, $\mathcal{Y}$, $\overline{\mathcal{L}}$ and $(r_j)_{j\in\mathbb{Z}_{\ge 1}}$, but independent of $j$, such that
	$$\frac{1}{p_j}\int_{\mathcal{Y}(\mathbb{C})}\left|\log|s|_{\overline{\mathcal{L}}^{\otimes p_j}}\right| c_1(\overline{\mathcal{L}})^{e-1}\le C_3$$
	for all sections $s\in S_j$.
\end{Lem}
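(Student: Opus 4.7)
\emph{Strategy.} I will split the absolute value $\left|\log|s|_{\overline{\mathcal{L}}^{\otimes p_j}}\right|$ into its positive and negative parts and bound each uniformly in $s\in S_j$. The positive part is controlled by the sup-norm of $s$, which is in turn controlled by the $L^2$-bound coming from $s\in K_j\subseteq \overline{B_{r_j}}$ and the Gillet--Soul\'e inequality (\ref{equ_supl2}). The negative part is controlled by Lemma \ref{lem_bound-of-integral-of-section} applied on $\mathcal{Y}$; here we need the hypothesis $s|_{\mathcal{Y}}\neq 0$ in order to invoke the lemma at all.

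\emph{Bounding the positive part.} Since $s\in K_j\subseteq \overline{B_{r_j}}$ we have $\|s\|\le r_j$, and inequality (\ref{equ_supl2}) applied to the Kähler form $c_1(\overline{\mathcal{L}})$ on $\mathcal{X}(\mathbb{C})$ (of complex dimension $d-1$) gives $\|s\|_{\sup}\le C_1 p_j^{d-1}r_j$. Hence pointwise on $\mathcal{Y}(\mathbb{C})$
$$\tfrac{1}{p_j}\max\!\left\{\log|s|_{\overline{\mathcal{L}}^{\otimes p_j}},0\right\}\;\le\; \tfrac{1}{p_j}\log\|s\|_{\sup}\;\le\; \tfrac{\log C_1+(d-1)\log p_j+\log r_j}{p_j}.$$
The hypothesis $\lim_{j\to\infty} r_j^{1/p_j}=\tau<\infty$ implies $\limsup_{j\to\infty}\frac{\log r_j}{p_j}=\log\tau$, and $\frac{\log p_j}{p_j}\to 0$, so the right-hand side is bounded above by a constant $C'$ depending only on $\mathcal{X}$, $\overline{\mathcal{L}}$, and $(r_j)_{j\ge 1}$. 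Integrating against $c_1(\overline{\mathcal{L}})^{e-1}$ over $\mathcal{Y}(\mathbb{C})$ produces a contribution of at most $C'\cdot(\mathcal{L}_{\mathbb{C}}|_{\mathcal{Y}_{\mathbb{C}}}^{e-1})$.

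\emph{Bounding the negative part.} Since $s\in S_j$ the restriction $s|_{\mathcal{Y}}$ is a non-zero global section of $\overline{\mathcal{L}}|_{\mathcal{Y}}^{\otimes p_j}$, and $\overline{\mathcal{L}}|_{\mathcal{Y}}$ is arithmetically ample on the generically smooth projective arithmetic variety $\mathcal{Y}$ (of dimension $e$). Lemma \ref{lem_bound-of-integral-of-section}, applied on $\mathcal{Y}$ with this section, yields
$$-\frac{1}{p_j}\int_{\mathcal{Y}(\mathbb{C})}\log|s|_{\overline{\mathcal{L}}^{\otimes p_j}}\,c_1(\overline{\mathcal{L}})^{e-1}\;\le\; (\overline{\mathcal{L}}|_{\mathcal{Y}}^{\,e}).$$

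\emph{Combining.} Writing $|f|=-f+2\max\{f,0\}$ for $f=\tfrac{1}{p_j}\log|s|_{\overline{\mathcal{L}}^{\otimes p_j}}$ and combining the two bounds,
$$\tfrac{1}{p_j}\int_{\mathcal{Y}(\mathbb{C})}\!\left|\log|s|_{\overline{\mathcal{L}}^{\otimes p_j}}\right|\,c_1(\overline{\mathcal{L}})^{e-1}\;\le\;(\overline{\mathcal{L}}|_{\mathcal{Y}}^{\,e})+2C'\cdot(\mathcal{L}_{\mathbb{C}}|_{\mathcal{Y}_{\mathbb{C}}}^{e-1}),$$
and the right-hand side depends only on $\mathcal{X}$, $\mathcal{Y}$, $\overline{\mathcal{L}}$ and $(r_j)_{j\ge 1}$, so we may take $C_3$ to be this quantity. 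There is no real obstacle: the essential point is that the hypothesis $s|_{\mathcal{Y}}\neq 0$ is precisely what legitimises the application of Lemma \ref{lem_bound-of-integral-of-section} on $\mathcal{Y}$, and the decay $\lim_{j\to\infty}r_j^{1/p_j}=\tau<\infty$ is what keeps the sup-norm contribution bounded.
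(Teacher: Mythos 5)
Your proof is correct and follows essentially the same route as the paper's: the negative part of $\log|s|$ is controlled by Lemma \ref{lem_bound-of-integral-of-section} applied to $\overline{\mathcal{L}}|_{\mathcal{Y}}$ on $\mathcal{Y}$ (legitimate since $s|_{\mathcal{Y}}\neq 0$), and the positive part by $\|s\|_{\sup}\le C_1p_j^{d-1}r_j$ together with $\lim_j r_j^{1/p_j}=\tau<\infty$; the paper merely phrases the splitting as a triangle inequality after normalizing by $\|s\|_{\sup}$, treating $\|s\|_{\sup}\le 1$ as a separate trivial case. One cosmetic slip: the pointwise bound $\max\{\log|s|,0\}\le\log\|s\|_{\sup}$ fails when $\|s\|_{\sup}<1$, so you should write $\max\{\log\|s\|_{\sup},0\}$ there and take $C'=\max\{0,\sup_j(\cdots)\}$ in the final constant.
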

\begin{proof}
	If $\|s\|_{\sup}\le 1$, we have $\left|\log|s|_{\overline{\mathcal{L}}^{\otimes p_j}}\right|=-\log|s|_{\overline{\mathcal{L}}^{\otimes p_j}}$ such that the assertion follows from Lemma \ref{lem_bound-of-integral-of-section} with $C_3=(\overline{\mathcal{L}}|_{\mathcal{Y}}^{e})$. Thus, we may assume $\|s\|_{\sup}> 1$. Then we can bound by the triangle inequality
	\begin{align}\label{equ_integral-bound-arithmetic}
	&\frac{1}{p_j}\int_{\mathcal{Y}(\mathbb{C})}\left|\log|s|_{\overline{\mathcal{L}}^{\otimes p_j}}\right| c_1(\overline{\mathcal{L}})^{e-1}\\
	&\le -\frac{1}{p_j}\int_{\mathcal{Y}(\mathbb{C})}\log\left|\frac{s}{\|s\|_{\sup}}\right|_{\overline{\mathcal{L}}^{\otimes p_j}} c_1(\overline{\mathcal{L}})^{e-1}+\log\|s\|_{\sup}\frac{1}{p_j}\int_{\mathcal{Y}(\mathbb{C})}c_1(\overline{\mathcal{L}})^{e-1}\nonumber\\
	&= -\frac{1}{p_j}\int_{\mathcal{Y}(\mathbb{C})}\log|s|_{\overline{\mathcal{L}}^{\otimes p_j}} c_1(\overline{\mathcal{L}})^{e-1}+2\log\|s\|^{1/p_j}_{\sup}\int_{\mathcal{Y}(\mathbb{C})}c_1(\overline{\mathcal{L}})^{e-1}\nonumber\\
	&\le (\overline{\mathcal{L}}|_{\mathcal{Y}}^{e})+2\log\|s\|^{1/p_j}_{\sup}\int_{\mathcal{Y}(\mathbb{C})}c_1(\overline{\mathcal{L}})^{e-1},\nonumber
	\end{align}
	where the last inequality follows by Lemma \ref{lem_bound-of-integral-of-section}. By $s\in S_j\subseteq B_{r_j}$ we have $\|s\|\le r_j$ such that inequality (\ref{equ_supl2}) implies $\|s\|_{\sup}^{1/p_j}\le (r_jC_1p_j^{d-1})^{1/p_j}$. By $\lim_{j\to \infty} r_j^{1/p_j}=\tau$ we also have $\lim_{j\to\infty}(r_jC_1p_j^{d-1})^{1/p_j}=\tau$. Thus, there exists a constant $C'_3\ge 1$ with $(r_jC_1p_j^{d-1})^{1/p_j}\le C'_3$ for all $j\ge 1$. Applying this to (\ref{equ_integral-bound-arithmetic}), we get
	$$\frac{1}{p_j}\int_{\mathcal{Y}(\mathbb{C})}\left|\log|s|_{\overline{\mathcal{L}}^{\otimes p_j}}\right| c_1(\overline{\mathcal{L}})^{e-1}\le (\overline{\mathcal{L}}|_{\mathcal{Y}}^{e})+2(\log C'_3)\int_{\mathcal{Y}(\mathbb{C})}c_1(\overline{\mathcal{L}})^{e-1}.$$
	Setting $C_3=(\overline{\mathcal{L}}|_{\mathcal{Y}}^{e})+2(\log C'_3)\int_{\mathcal{Y}(\mathbb{C})}c_1(\overline{\mathcal{L}})^{e-1}$ we get the assertion of the lemma. Note that this constant is also large enough to cover the case $\|s\|_{\sup}\le 1$.
\end{proof}
Now we are able to give the proof of Theorem \ref{thm_equidistribution-general}.
\begin{proof}[Proof of Theorem \ref{thm_equidistribution-general}]
	The limit of a sequence of non-negative values is zero if and only if its limit superior is zero. Thus, we may replace $\lim_{j\to \infty}$ by $\limsup_{j\to \infty}$.
	To simplify notations we write
	$$F_j(s)=\frac{1}{p_j}\int_{\mathcal{Y}(\mathbb{C})}\left|\log|s|_{\overline{\mathcal{L}}^{\otimes p_j}}\right|c_1(\overline{\mathcal{L}})^{e-1}$$
	for every $s\in S_j$ in the following.
	Let us assume that 
	\begin{align}\label{equ_assumption}
	\limsup_{j\to \infty} \frac{1}{\#S_j}\sum_{s\in S_j}F_j(s)>0.
	\end{align}
	Let $\mu>0$ be a positive limit point of this sequence and $(j_k)_{k\in\mathbb{Z}_{\ge 1}}$ an increasing sequence such that
	$$\lim_{k\to \infty} \frac{1}{\#S_{j_k}}\sum_{s\in S_{j_k}}F_{j_k}(s)=\mu.$$
	Then there exists a number $N\in \mathbb{Z}$ such that
	$$\frac{1}{\#S_{j_k}}\sum_{s\in S_{j_k}}F_{j_k}(s)>\frac{\mu}{2}$$
	for all $k\ge N$. For any $k$ we decompose $S_{j_k}=S'_{j_k}\cup S''_{j_k}$ into the disjoint subsets
	$$S'_{j_k}=\left\{s\in S_{j_k}~|~F_{j_k}(s)\le \frac{\mu}{4}\right\},\qquad S''_{j_k}=\left\{s\in S_{j_k}~|~F_{j_k}(s)>\frac{\mu}{4}\right\}.$$
	To estimate the cardinalities of these sets, we compute by Lemma \ref{lem_integral-upper-bound}
	\begin{align*}
		\frac{\mu}{2}&<\frac{1}{\#S_{j_k}}\sum_{s\in S'_{j_k}}F_{j_k}(s)+\frac{1}{\#S_{j_k}}\sum_{s\in S''_{j_k}}F_{j_k}(s)\\
		&\le \frac{\#S'_{j_k}}{\#S_{j_k}}\cdot\frac{\mu}{4}+\frac{\#S''_{j_k}}{\#S_{j_k}}\cdot C_3\\
		&\le \frac{\mu}{4}+\frac{\#S''_{j_k}}{\#S_{j_k}}\cdot \left( C_3-\frac{\mu}{4}\right)
	\end{align*} 
	for all $k\ge N$. Thus, we get
	$$\frac{\#S''_{j_k}}{\#S_{j_k}}>\frac{\mu}{4\left(C_3-\frac{\mu}{4}\right)}>0.$$
	As $\frac{\mu}{2\left(C_3-\frac{\mu}{4}\right)}$ does not depend on $k$, we have $\liminf_{k\to \infty}\frac{\#S''_{j_k}}{\#S_{j_k}}>0$. But now Proposition \ref{pro_positive-density} implies that there are an $m\in\mathbb{Z}_{\ge 1}$ and a sequence $(s_{p_{j_k}})_{k\in\mathbb{Z}_{\ge m}}$ of sections $s_{p_{j_k}}\in S''_{j_k}$ such that 
	$$\lim_{k\to \infty} F_{j_k}(s_{p_{j_k}})=0.$$
	But this contradicts, that $F_{j_k}(s)>\frac{\mu}{4}$ for all $k\ge 1$ and all $s\in S''_{j_k}$. Hence, our assumption (\ref{equ_assumption}) has to be wrong and we must have
	$$\limsup_{j\to \infty} \frac{1}{\#S_j}\sum_{s\in S_j}\frac{1}{p_j}\int_{\mathcal{Y}(\mathbb{C})}\left|\log|s|_{\overline{\mathcal{L}}^{\otimes p_j}}\right|c_1(\overline{\mathcal{L}})^{e-1}=0$$
	as claimed.
\end{proof}
We can now present the following theorem as a special case of Theorem \ref{thm_equidistribution-general}.

\begin{Thm}[= Theorem \ref{thm_equidistribution}]\label{thm_equidistribution-text}
	Let $\mathcal{X}$ be any generically smooth projective arithmetic variety and $\mathcal{Y}\subseteq\mathcal{X}$ any generically smooth arithmetic subvariety of dimension $e\ge 1$. Let $\overline{\mathcal{L}}$ be any arithmetically ample hermitian line bundle on $\mathcal{X}$ and $(r_p)_{p\in \mathbb{Z}_{\ge 1}}$ any sequence of positive real numbers satisfying $\lim_{p\to \infty}r_p^{1/p}=\tau\in[1,\infty)$. If we write $$S_p=\left\{\left. s\in \widehat{H}^0_{\le r_{p}}\left(\mathcal{X},\overline{\mathcal{L}}^{\otimes p}\right)~\right|~s|_{\mathcal{Y}}\neq 0\right\},$$
	then it holds
	$$\lim_{p\to \infty} \frac{1}{\# S_p}\sum_{s\in S_p}\frac{1}{p}\int_{\mathcal{Y}(\mathbb{C})}\left|\log|s|_{\overline{\mathcal{L}}^{\otimes p}}-p\log\tau\right|c_1(\overline{\mathcal{L}})^{e-1}=0.$$
\end{Thm}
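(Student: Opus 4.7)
The plan is to deduce Theorem \ref{thm_equidistribution-text} directly from the more general Theorem \ref{thm_equidistribution-general}, which has already been established. The only difference between the two statements is that Theorem \ref{thm_equidistribution-text} bounds sections by the sup-norm, whereas Theorem \ref{thm_equidistribution-general} uses arbitrary convex symmetric bodies sandwiched between two $L^2$-balls. So the task reduces to exhibiting $\widehat{H}^0_{\le r_p}(\mathcal{X},\overline{\mathcal{L}}^{\otimes p})$ as the lattice points of such a body.

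First I would define
$$K_p = \{s \in H^0(\mathcal{X},\overline{\mathcal{L}}^{\otimes p})_{\mathbb{R}} \mid \|s\|_{\sup}\le r_p\}.$$
This is compact, convex and symmetric in the real vector space $H^0(\mathcal{X},\overline{\mathcal{L}}^{\otimes p})_{\mathbb{R}}$ because $\|\cdot\|_{\sup}$ is a norm on this finite-dimensional space. By construction $K_p \cap H^0(\mathcal{X},\overline{\mathcal{L}}^{\otimes p}) = \widehat{H}^0_{\le r_p}(\mathcal{X},\overline{\mathcal{L}}^{\otimes p})$, so the sets $S_p$ appearing in the two theorems coincide.

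Next, I would compare the sup-norm with the $L^2$-norm induced by $\overline{\mathcal{L}}$ on $H^0(\mathcal{X},\overline{\mathcal{L}}^{\otimes p})_{\mathbb{R}}$ via the inequality (\ref{equ_supl2}) applied to $\mathcal{X}(\mathbb{C})$ (complex dimension $d-1$): for all $s$,
$$\|s\| \le \|s\|_{\sup} \le C_1 p^{d-1}\|s\|.$$
Setting $\widetilde{r}_p = r_p$ and $\widetilde{r}'_p = r_p/(C_1 p^{d-1})$, these inequalities immediately give $\overline{B_{\widetilde{r}'_p}} \subseteq K_p \subseteq \overline{B_{\widetilde{r}_p}}$, where the balls are taken with respect to the $L^2$-norm. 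Since $(C_1 p^{d-1})^{1/p}\to 1$ as $p\to\infty$, both sequences satisfy
$$\lim_{p\to\infty}\widetilde{r}_p^{1/p} = \lim_{p\to\infty}(\widetilde{r}'_p)^{1/p} = \tau.$$

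All hypotheses of Theorem \ref{thm_equidistribution-general} are then met with the sequence $K_p$ and radii $\widetilde{r}_p$, $\widetilde{r}'_p$, and its conclusion is exactly the statement of Theorem \ref{thm_equidistribution-text}. There is no real obstacle here: the entire substance of the argument sits in Theorem \ref{thm_equidistribution-general} (whose proof combines Proposition \ref{pro_conditionB}, Lemma \ref{lem_integral-zero}, Proposition \ref{pro_integral} and the cell-decomposition argument via Lemma \ref{lem_ratio-of-latticepoints}), and what remains is only to check that the sup-norm body falls within the class of admissible $K_j$'s, which is handled cleanly by the standard norm comparison $\|s\|\le \|s\|_{\sup}\le C_1 p^{d-1}\|s\|$.
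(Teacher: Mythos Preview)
Your proposal is correct and follows essentially the same approach as the paper: define $K_p$ as the sup-norm ball of radius $r_p$, use the norm comparison (\ref{equ_supl2}) to sandwich it between $L^2$-balls of radii $r_p/(C_1 p^{d-1})$ and $r_p$, note that both radii raised to the $1/p$ tend to $\tau$, and then invoke Theorem \ref{thm_equidistribution-general}.
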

\begin{proof}
	To deduce the theorem from Theorem \ref{thm_equidistribution-general} we set
	$$K_p=\left.\left\{s\in H^0(\mathcal{X},\overline{\mathcal{L}}^{\otimes p})_{\mathbb{R}}~\right|~\|s\|_{\sup}\le r_p\right\}.$$
	With this choice, the definition of $S_p$ in this theorem coincides with definition of $S_p$ in Theorem \ref{thm_equidistribution-general}.	Let us show, that $K_p$ satisfies the assumptions of Theorem \ref{thm_equidistribution-general}. Since $\|\cdot\|_{\sup}$ is a norm on $H^0(\mathcal{X},\overline{\mathcal{L}}^{\otimes p})_{\mathbb{R}}$, the set $K_p$ is compact, convex and symmetric for all $p$. By inequality (\ref{equ_supl2}) we obtain that
	$$\overline{B_{\frac{r_p}{C_1p^{d-1}}}}\subseteq K_p\subseteq \overline{B_{r_p}}.$$
	As we have $\lim_{p\to\infty} r_p^{1/p}=\tau$ for some $\tau\in[1,\infty)$, we also get
	$$\lim_{p\to \infty}\left(\frac{r_p}{C_1p^{d-1}}\right)^{1/p}=\lim_{p\to\infty} r_p^{1/p}=\tau.$$
	Thus, $K_p$ satisfies all assumptions in Theorem \ref{thm_equidistribution-general}. The theorem follows.
\end{proof}
For future research let us ask the following question.
\begin{Que}\label{que_integral}
	Is there an analogue of Theorem \ref{thm_equidistribution-general} if one equips $\overline{\mathcal{L}}$ also with a metric $|\cdot|$ on the Berkovich space $\mathcal{X}_{\mathbb{Q}_p}^{\mathrm{an}}$ for some prime $p$ and one considers the integral
	$$\int_{\mathcal{Y}_{\mathbb{Q}_p}^{\mathrm{an}}}\left|\log|s|\right|c_1(\overline{\mathcal{L}})_p^{e-1},$$
	where $c_1(\overline{\mathcal{L}})_p$ denotes the measure on $\mathcal{X}_{\mathbb{Q}_p}^{\mathrm{an}}$ associated to the metric $|\cdot|$ on $\overline{\mathcal{L}}$ and defined by Chambert-Loir \cite{Cha06}. As we are interested in the average of this integral over finite subsets of $H^0(\mathcal{X},\mathcal{L})$, we will still need the metric on $\mathcal{X}(\mathbb{C})$. Finally, one may also ask for an adelic analogue, where $L$ is a line bundle on a smooth projective variety $X$ over $\mathbb{Q}$ equipped with a metric $|\cdot|_v$ on $X_{\mathbb{Q}_v}^{\mathrm{an}}$ for every place $v$ of $\mathbb{Q}$. In this case one can also define a finite set $\widehat{H}^0(X,\overline{L})$ of global sections which are small with respect to the metric $|\cdot|_v$ for every $v$. Do we have
	$$\lim_{p\to \infty}\frac{1}{\#\widehat{H}^0(X,\overline{L}^{\otimes p})}\sum_{s\in \widehat{H}^0(X,\overline{L}^{\otimes p})\setminus\{0\}}\frac{1}{p}\int_{X_{\mathbb{Q}_v}^{\mathrm{an}}}\left|\log|s|_v\right|c_1(\overline{L})_v^{\dim X}=0$$
	for every place $v$ if $\overline{L}$ is an ample adelic line bundle? We refer to \cite{YZ21} for more details on adelic line bundles.
\end{Que}

\subsection{Distribution of Small Sections}\label{sec_distribution-small-sections}
In this section we deduce a result on the distribution of the divisors of sections in $\widehat{H}_{\le r_p}^0(\mathcal{X},\overline{\mathcal{L}}^{\otimes p})$ for $p\to \infty$ from Theorem \ref{thm_equidistribution-general}. As a special case we will prove Corollary \ref{cor_height-converges}. But first, we give the following more general result.
\begin{Cor}\label{cor_equidistribution-general}
	Let $\mathcal{X}$ be any generically smooth projective arithmetic variety, $\mathcal{Y}\subseteq \mathcal{X}$ any generically smooth arithmetic subvariety of dimension $e\ge 2$ and $\overline{\mathcal{L}}$ any arithmetically ample hermitian line bundle on $\mathcal{X}$. Further, let $(p_j)_{j\in\mathbb{Z}_{\ge 1}}$ be any increasing sequence of positive integers and $(r_j)_{j\in \mathbb{Z}_{\ge 1}}$ and $(r'_j)_{j\in \mathbb{Z}_{\ge 1}}$ two sequences of positive real numbers, such that
	$$\lim_{j\to \infty}r_j^{1/p_j}=\lim_{j\to \infty}{r'}_j^{1/p_j}=\tau\in[1,\infty).$$
	Denote by $B_{r_j}$ and $B_{r'_j}$ the balls in $H^0(\mathcal{X},\overline{\mathcal{L}}^{\otimes p_j})_\mathbb{R}$ of radius $r_j$ and $r'_j$ around the origin. Let $(K_j)_{j\in\mathbb{Z}_{\ge 1}}$ be any sequence of compact, convex and symmetric sets such that $\overline{B_{r'_j}}\subseteq K_j\subseteq \overline{B_{r_j}}$ for all $j$. If we set 
	$$S_j=\left\{\left.s\in K_j\cap H^0(\mathcal{X},\overline{\mathcal{L}}^{\otimes p_j})~\right|~s|_{\mathcal{Y}}\neq 0\right\},$$ then for every $(e-2,e-2)$ $C^0$-form $\Phi$ on $\mathcal{Y}(\mathbb{C})$ it holds
	$$\lim_{j\to\infty}\frac{1}{\#S_j}\sum_{s\in S_j}\left|\frac{1}{p_j}\int_{\mathrm{div}(s|_{\mathcal{Y}})(\mathbb{C})}\Phi-\int_{\mathcal{Y}(\mathbb{C})}\Phi\wedge c_1(\overline{\mathcal{L}})\right|=0.$$
\end{Cor}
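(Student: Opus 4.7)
The plan is to derive the corollary from Theorem~\ref{thm_equidistribution-general} by repeating the integration-by-parts argument of Lemma~\ref{lem_equidistribution} for each individual $s\in S_j$ and then averaging over $S_j$.

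First I would reduce to the case where $\Phi$ is smooth. Exactly as in the proof of Lemma~\ref{lem_equidistribution}, every $C^0$ form on the compact manifold $\mathcal{Y}(\mathbb{C})$ is sandwiched between $C^\infty$ forms $\Phi^-\le\Phi\le\Phi^+$ of arbitrarily small $C^0$-distance, and both $\int_{\mathcal{Y}(\mathbb{C})}\Phi\wedge c_1(\overline{\mathcal{L}})$ and the positive-current pairings $\frac{1}{p_j}\int_{\mathrm{div}(s|_\mathcal{Y})(\mathbb{C})}\Phi$ are controlled uniformly by this sandwich once the smooth case is known; so it suffices to treat $\Phi\in A^{e-2,e-2}(\mathcal{Y}(\mathbb{C}))$.

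For smooth $\Phi$ and any $s\in S_j$, applying the Poincar\'e--Lelong formula~(\ref{equ_poincare-lelong}) to $s|_\mathcal{Y}\in H^0(\mathcal{Y},\overline{\mathcal{L}}^{\otimes p_j}|_\mathcal{Y})$ and integrating by parts as in Equation~(\ref{equ_equidistribution}) yields
\begin{align*}
\frac{1}{p_j}\int_{\mathrm{div}(s|_\mathcal{Y})(\mathbb{C})}\Phi - \int_{\mathcal{Y}(\mathbb{C})}\Phi\wedge c_1(\overline{\mathcal{L}}) = \frac{1}{p_j}\int_{\mathcal{Y}(\mathbb{C})}\log|s|_{\overline{\mathcal{L}}^{\otimes p_j}}\cdot\frac{i}{\pi}\partial\overline{\partial}\Phi.
\end{align*}
Because $\frac{i}{\pi}\partial\overline{\partial}\Phi$ is exact and $\mathcal{Y}(\mathbb{C})$ has no boundary, one may freely subtract the constant $p_j\log\tau$ from $\log|s|_{\overline{\mathcal{L}}^{\otimes p_j}}$ without affecting the right-hand side. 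Moreover, positivity of $c_1(\overline{\mathcal{L}})$ together with compactness of $\mathcal{Y}(\mathbb{C})$ furnish a constant $A=A(\Phi,\mathcal{Y},\overline{\mathcal{L}})>0$ (independent of $s$ and $j$) with $\left|\frac{i}{\pi}\partial\overline{\partial}\Phi\right|\le A\cdot c_1(\overline{\mathcal{L}})^{e-1}$, so
\begin{align*}
\left|\frac{1}{p_j}\int_{\mathrm{div}(s|_\mathcal{Y})(\mathbb{C})}\Phi - \int_{\mathcal{Y}(\mathbb{C})}\Phi\wedge c_1(\overline{\mathcal{L}})\right| \le \frac{A}{p_j}\int_{\mathcal{Y}(\mathbb{C})}\bigl|\log|s|_{\overline{\mathcal{L}}^{\otimes p_j}}-p_j\log\tau\bigr|\,c_1(\overline{\mathcal{L}})^{e-1}.
\end{align*}

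Averaging this pointwise bound over $s\in S_j$ and invoking Theorem~\ref{thm_equidistribution-general} (whose hypotheses on $(r_j)$, $(r'_j)$, $(K_j)$ and $S_j$ are precisely those of the corollary) immediately makes the right-hand side tend to zero as $j\to\infty$, yielding the claim. The only point requiring any real care is the initial $C^0$-to-$C^\infty$ approximation, since the Stokes step only makes sense for $\Phi$ of class at least $C^2$; apart from this, the argument is essentially a pointwise-then-average version of Lemma~\ref{lem_equidistribution}.
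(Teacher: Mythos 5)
Your proposal is correct and follows essentially the same route as the paper: both proofs invoke the Poincaré--Lelong/Stokes identity of Equation~(\ref{equ_equidistribution}), bound $\left|\frac{\partial\overline{\partial}\Phi}{\pi i}\right|$ by $A\,c_1(\overline{\mathcal{L}})^{e-1}$, absorb the constant $p_j\log\tau$ (you via exactness of $\partial\overline{\partial}\Phi$, the paper via the equivalent metric change $\overline{\mathcal{L}}(\log\tau)$ and Equation~(\ref{equ_metric-change})), and then average and apply Theorem~\ref{thm_equidistribution-general}. No gaps.
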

\begin{proof}
	Fix any $(e-2,e-2)$ $C^0$-form $\Phi$ on $\mathcal{Y}(\mathbb{C})$.
	As in the proof of Lemma \ref{lem_equidistribution} there is an $A\in\mathbb{R}$ such that $\left|\frac{\partial\overline{\partial}\Phi}{\pi i}\right|\le Ac_1(\overline{\mathcal{L}})^{e-1}$. Thus, using Equations (\ref{equ_equidistribution}) and (\ref{equ_metric-change}) and Theorem \ref{thm_equidistribution-general} we can bound
	\begin{align*}
	&\lim_{j\to\infty}\frac{1}{\#S_j}\sum_{s\in S_j}\left|\frac{1}{p_j}\int_{\mathrm{div}(s|_{\mathcal{Y}})(\mathbb{C})}\Phi-\int_{\mathcal{Y}(\mathbb{C})}\Phi\wedge c_1(\overline{L})\right|\\
	&\le \lim_{j\to\infty}\frac{1}{\#S_j}\sum_{s\in S_j} \frac{A}{p_j}\int_{\mathcal{Y}(\mathbb{C})}\left|\log|s|_{\overline{\mathcal{L}}(\log\tau)^{\otimes p_j}}\right|c_1(\overline{\mathcal{L}})^{e-1}\\
	&= A\cdot\lim_{j\to\infty}\frac{1}{\#S_j}\sum_{s\in S_j} \frac{1}{p_j}\int_{\mathcal{Y}(\mathbb{C})}\left|\log|s|_{\overline{\mathcal{L}}^{\otimes p_j}}-p_j\log \tau\right|c_1(\overline{\mathcal{L}})^{e-1}=0.
	\end{align*}
	This proves the corollary.
\end{proof}
In the special case where $K_j$ is defined by balls of the $\sup$-norm on $H^0(\mathcal{X},\mathcal{L}^{\otimes p_j})_{\mathbb{R}}$ we would like to get rid of the convergence of the sequences $r_j^{1/p_j}$ and ${r'}_j^{1/p_j}$. For this purpose, let us prove the following easy lemma about convergence.
\begin{Lem}\label{lem_convergence}
	Let $(x_n)_{n\in\mathbb{Z}_{\ge1}}$ be any sequence of real numbers with
	$$-\infty<\liminf_{n\to\infty} x_n \le \limsup_{n\to\infty} x_n<\infty$$
	and $F\colon \mathbb{R}\to \mathbb{R}$ any function. If we have $\lim_{j\to \infty} F(x_{n_j})=\alpha$ for some fixed value $\alpha\in \mathbb{R}\cup\{-\infty,\infty\}$ and for any subsequence $(n_j)_{j\in\mathbb{Z}_{\ge 1}}$, for which $(x_{n_j})_{j\in\mathbb{Z}_{\ge 1}}$ converges in $\mathbb{R}$, then we also have $\lim_{n\to \infty} F(x_n)=\alpha$.
\end{Lem}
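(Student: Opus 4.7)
The plan is to argue by contradiction, using the Bolzano--Weierstrass theorem to extract from $(x_n)$ a convergent subsequence that will then give a contradiction via the hypothesis. Suppose $\lim_{n\to\infty}F(x_n)\neq \alpha$. Then there exists a neighborhood $U$ of $\alpha$ in the extended real line $\mathbb{R}\cup\{-\infty,\infty\}$ and a subsequence $(n_k)_{k\in\mathbb{Z}_{\ge 1}}$ such that $F(x_{n_k})\notin U$ for all $k$. Here $U$ can be chosen as the open interval $(\alpha-\epsilon,\alpha+\epsilon)$ if $\alpha\in\mathbb{R}$, as $(M,\infty]$ if $\alpha=\infty$, or as $[-\infty,M)$ if $\alpha=-\infty$, for suitable $\epsilon>0$ or $M\in\mathbb{R}$.

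Since $(x_n)$ is assumed to satisfy $-\infty<\liminf x_n\le \limsup x_n<\infty$, it is bounded, and hence so is the subsequence $(x_{n_k})$. By Bolzano--Weierstrass I extract a further subsequence $(x_{n_{k_l}})_{l\in\mathbb{Z}_{\ge 1}}$ converging to some $x\in\mathbb{R}$. Applying the hypothesis of the lemma to this convergent subsequence gives $\lim_{l\to\infty}F(x_{n_{k_l}})=\alpha$, which forces $F(x_{n_{k_l}})\in U$ for all sufficiently large $l$. This contradicts the construction, which ensured $F(x_{n_{k_l}})\notin U$ for every $l$.

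The argument is essentially routine, and there is no real obstacle beyond setting up the cases $\alpha\in\mathbb{R}$ and $\alpha=\pm\infty$ uniformly. Working with a neighborhood $U$ of $\alpha$ in the two-point compactification $\mathbb{R}\cup\{-\infty,\infty\}$, rather than with the absolute value $|F(x_n)-\alpha|$, handles all three cases simultaneously and keeps the proof compact.
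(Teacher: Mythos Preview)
Your proof is correct and follows essentially the same approach as the paper's: both extract from a bad subsequence a further subsequence along which $(x_n)$ converges (using boundedness from the $\liminf$/$\limsup$ hypothesis and Bolzano--Weierstrass), then invoke the hypothesis to reach a contradiction. The paper phrases this as showing every limit point of $(F(x_n))$ in $\mathbb{R}\cup\{\pm\infty\}$ equals $\alpha$, whereas you frame it as a direct contradiction via a neighborhood $U$ of $\alpha$; these are equivalent formulations of the same argument.
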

\begin{proof}
	Let $\beta\in \mathbb{R}\cup\{-\infty,\infty\}$ be a limit point of the sequence $(F(x_n))_{n\in\mathbb{Z}_{\ge 1}}$. Then there exists a subsequence $(n_j)_{j\in\mathbb{Z}_{\ge 1}}$ of $\mathbb{Z}_{\ge 1}$, such that $\lim_{j\to \infty}F(x_{n_j})=\beta$. As $-\infty<\liminf_{n\to\infty} x_n \le \limsup_{n\to\infty} x_n<\infty$, we can again choose a subsequence $(n'_j)_{j\in\mathbb{Z}_{\ge 1}}$ of $(n_j)_{j\in\mathbb{Z}_{\ge 1}}$, such that $(x_{n'_j})_{j\in\mathbb{Z}_{\ge 1}}$ converges in $\mathbb{R}$. By our assumptions we get $\lim_{j\to \infty}F(x_{n'_j})=\alpha$. Thus, we must have $\beta=\alpha$, such that $\alpha$ is the only limit point of $(F(x_n))_{n\in\mathbb{Z}_{\ge 1}}$ in $\mathbb{R}\cup\{-\infty,\infty\}$. Hence, $\lim_{n\to \infty} F(x_n)=\alpha$.
\end{proof}

The following corollary shows that the condition in Proposition \ref{pro_equidistribution-individual-text} for sequences of sections to equidistribute is generically satisfied. Moreover, we get an equidistribution result for almost all sections in $\widehat{H}_{\le r_p}^0(\mathcal{X},\overline{\mathcal{L}}^{\otimes p})$ for $p\to \infty$. In particular, they are equidistributed on average. Here, we do not have to assume that $r_p^{1/p}$ converges. 
\begin{Cor}[= Corollary \ref{cor_height-converges}]\label{cor_height-converges-text}
	Let $\mathcal{X}$ be any generically smooth projective arithmetic variety and $\mathcal{Y}\subseteq \mathcal{X}$ any generically smooth arithmetic subvariety of dimension $e\ge 2$. Let $\overline{\mathcal{L}}$ be any arithmetically ample hermitian line bundle on $\mathcal{X}$ and $(r_p)_{p\in \mathbb{Z}_{\ge 1}}$ any sequence with $r_p\in \mathbb{R}_{>0}$. We write $$S_p=\left\{\left.s\in\widehat{H}^0_{\le r_{p}}\left(\mathcal{X},\overline{\mathcal{L}}^{\otimes p}\right)~\right|~s|_{\mathcal{Y}}\neq 0\right\}.$$
	\begin{enumerate}[(i)]
		\item If $r_p^{1/p}$ converges to a value $\tau\in [1,\infty)$, then it holds
		$$\lim_{p\to \infty} \frac{1}{\# S_p}\sum_{s\in S_p}\left| h_{\overline{\mathcal{L}}}(\mathcal{Y})+\log\tau-h_{\overline{\mathcal{L}}}(\mathrm{div}(s)\cdot\mathcal{Y})\right|=0.$$
		\item If $1\le\liminf_{p\to \infty}r_p^{1/p}\le \limsup_{p\to \infty}r_p^{1/p}<\infty$, then for every $(e-2,e-2)$ $C^0$-form $\Phi$ on $\mathcal{Y}(\mathbb{C})$ it holds
		$$\lim_{p\to \infty}\frac{1}{\# S_p}\sum_{s\in S_p}\left|\frac{1}{p}\int_{\mathrm{div}(s|_{\mathcal{Y}})(\mathbb{C})}\Phi-\int_{\mathcal{Y}(\mathbb{C})}\Phi\wedge c_1(\overline{\mathcal{L}})\right|=0.$$
		In particular, we have
		$$\lim_{p\to \infty}\frac{1}{\# S_p}\sum_{s\in S_p}\frac{1}{p}\int_{\mathrm{div}(s|_{\mathcal{Y}})(\mathbb{C})}\Phi=\int_{\mathcal{Y}(\mathbb{C})}\Phi\wedge c_1(\overline{\mathcal{L}}).$$
	\end{enumerate}
\end{Cor}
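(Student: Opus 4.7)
The plan is to derive both parts directly from the induction formula \eqref{equ_height} applied on $\mathcal{Y}$, combined with Theorem \ref{thm_equidistribution-text} for part (i) and Corollary \ref{cor_equidistribution-general} for part (ii).

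For part (i), I would fix $s \in S_p$ and apply \eqref{equ_height} to $(\mathcal{Y},\overline{\mathcal{L}}|_{\mathcal{Y}})$ and the nonzero section $s|_{\mathcal{Y}} \in H^0(\mathcal{Y}, \mathcal{L}|_{\mathcal{Y}}^{\otimes p})$. Since $s|_{\mathcal{Y}} \neq 0$ the cycle $\mathrm{div}(s)\cdot \mathcal{Y}$ coincides with $\mathrm{div}(s|_{\mathcal{Y}})$ and is supported in $\mathcal{Y}$, so its height computed in $\mathcal{X}$ or in $\mathcal{Y}$ agrees. Using moreover the identity $\int_{\mathcal{Y}(\mathbb{C})} c_1(\overline{\mathcal{L}})^{e-1} = \mathcal{L}_{\mathbb{C}}|_{\mathcal{Y}_{\mathbb{C}}}^{e-1}$ to absorb the $\log\tau$ term, I would rewrite
$$h_{\overline{\mathcal{L}}}(\mathcal{Y}) + \log \tau - h_{\overline{\mathcal{L}}}(\mathrm{div}(s)\cdot \mathcal{Y}) = -\frac{1}{p\cdot \mathcal{L}_{\mathbb{C}}|_{\mathcal{Y}_{\mathbb{C}}}^{e-1}} \int_{\mathcal{Y}(\mathbb{C})}\bigl(\log|s|_{\overline{\mathcal{L}}^{\otimes p}} - p\log\tau\bigr) c_1(\overline{\mathcal{L}})^{e-1}.$$
Taking absolute values, bringing them inside the integral, averaging over $S_p$, and invoking Theorem \ref{thm_equidistribution-text} then yields the claim.

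For part (ii), I would fix a $C^0$-form $\Phi$ and exhibit the vanishing via a standard subsequence argument. For any subsequence $(p_j)$ of $\mathbb{Z}_{\ge 1}$, boundedness of $r_p^{1/p}$ allows me to extract a further sub-subsequence, still denoted $(p_j)$, along which $r_{p_j}^{1/p_j} \to \tau$ for some $\tau \in [\liminf r_p^{1/p}, \limsup r_p^{1/p}] \subseteq [1,\infty)$. Setting $K_j = \{s \in H^0(\mathcal{X},\overline{\mathcal{L}}^{\otimes p_j})_{\mathbb{R}} : \|s\|_{\sup} \le r_{p_j}\}$, the inequality \eqref{equ_supl2} gives $\overline{B_{r_{p_j}/(C_1 p_j^{d-1})}} \subseteq K_j \subseteq \overline{B_{r_{p_j}}}$, with both radii having $p_j$-th roots converging to $\tau$. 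Thus Corollary \ref{cor_equidistribution-general} applies along $(p_j)$ and produces the vanishing of the average we want. Because every subsequence admits a further sub-subsequence along which the average tends to $0$, the full sequence tends to $0$. The final "in particular" statement follows by the triangle inequality from the one just proved.

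The arguments are essentially assembly of tools already proved; the main care points are the identification $\mathrm{div}(s)\cdot\mathcal{Y} = \mathrm{div}(s|_{\mathcal{Y}})$ and the compatibility of heights on $\mathcal{Y}$ versus on $\mathcal{X}$ (both routine from the projection formula), together with the verification that the sup-ball $K_j$ sandwiches the correct $L^2$-balls so that Corollary \ref{cor_equidistribution-general} is applicable along each subsequence. Neither step is delicate, so I do not anticipate a genuine obstacle beyond bookkeeping.
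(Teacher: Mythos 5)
Your proposal is correct and follows essentially the same route as the paper: part (i) is the identity from Equation (\ref{equ_height}) on $\mathcal{Y}$ combined with the triangle inequality and Theorem \ref{thm_equidistribution-text}, and part (ii) is the sup-norm ball sandwich from (\ref{equ_supl2}) plus the subsequence-extraction argument (which the paper packages as Lemma \ref{lem_convergence}) feeding into Corollary \ref{cor_equidistribution-general}. No gaps.
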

\begin{proof}
	We first prove (i). Applying the triangle inequality for integrals to Theorem \ref{thm_equidistribution-text} we get
	$$\lim_{p\to \infty}\frac{1}{\# S_p}\sum_{s\in S_p}\left|\frac{1}{p}\int_{\mathcal{Y}(\mathbb{C})}\left(\log|s|_{\overline{\mathcal{L}}^{\otimes p}}-p\log\tau \right)c_1(\overline{\mathcal{L}})^{e-1}\right|=0.$$
	After reordering terms and dividing by $\mathcal{L}_{\mathbb{C}}|_{\mathcal{Y}_{\mathbb{C}}}^{e-1}=\int_{\mathcal{Y}(\mathbb{C})}c_1(\overline{\mathcal{L}})^{e-1}$ we get
	$$\lim_{p\to\infty}\frac{1}{\#S_p}\sum_{s\in S_p}\left|\frac{1}{p\cdot\mathcal{L}_{\mathbb{C}}|_{\mathcal{Y}_{\mathbb{C}}}^{e-1}}\int_{\mathcal{Y}(\mathbb{C})}\log|s|_{\overline{\mathcal{L}}^{\otimes p}}c_1(\overline{\mathcal{L}})^{e-1}-\log\tau\right|=0.$$
	Now we get by an application of Equation (\ref{equ_height}) that
	$$\lim_{p\to\infty}\frac{1}{\#S_p}\sum_{s\in S_p}\left|h_{\overline{\mathcal{L}}|_{\mathcal{Y}}}(\mathrm{div}(s|_{\mathcal{Y}}))-h_{\overline{\mathcal{L}}|_{\mathcal{Y}}}(\mathcal{Y})-\log\tau\right|=0,$$
	which proves (i), as $h_{\overline{\mathcal{L}}|_{\mathcal{Y}}}(\mathrm{div}(s|_{\mathcal{Y}}))=h_{\overline{\mathcal{L}}}(\mathrm{div}(s)\cdot\mathcal{Y})$ and $h_{\overline{\mathcal{L}}|_{\mathcal{Y}}}(\mathcal{Y})=h_{\overline{\mathcal{L}}}(\mathcal{Y})$.
	
	To prove (ii), we set 
	$$K_p=\left.\left\{s\in H^0(\mathcal{X},\overline{\mathcal{L}}^{\otimes p})_{\mathbb{R}}~\right|~\|s\|_{\sup}\le r_p\right\}.$$
	With this choice of $K_p$ the definition of $S_p$ coincides with the definition of $S_p$ in Corollary \ref{cor_equidistribution-general}.
	By Lemma \ref{lem_convergence} it is enough to prove
	\begin{align}\label{equ_distribution-subsequence}
		\lim_{j\to \infty}\frac{1}{\# S_{p_j}}\sum_{s\in S_{p_j}}\left|\frac{1}{p_j}\int_{\mathrm{div}(s|_{\mathcal{Y}})(\mathbb{C})}\Phi-\int_{\mathcal{Y}(\mathbb{C})}\Phi\wedge c_1(\overline{\mathcal{L}})\right|=0
	\end{align}
	for any subsequence $(p_j)_{j\in\mathbb{Z}_{\ge 0}}$ of $\mathbb{Z}_{\ge 0}$, such that $r_{p_j}^{1/p_j}$ converges. If $(p_j)_{j\in\mathbb{Z}_{\ge 0}}$ is such a sequence and $\tau=\lim_{j\to \infty} r_{p_j}^{1/p_j}$, then it follows by the same argument as in the proof of Theorem \ref{thm_equidistribution-text} that we can apply Corollary \ref{cor_equidistribution-general} to obtain Equation (\ref{equ_distribution-subsequence}) as desired.
\end{proof}
Let us also ask some questions for future research related to the results in this section.
\begin{Que}
	\begin{enumerate}[(i)]
		\item What happens if we allow in Corollary \ref{cor_equidistribution-general} $\tau$ to be $\infty$? One may still assume $\lim \left(\frac{r_j}{r_j'}\right)^{1/p_j}=1$ to avoid that the sets $K_j$ are stretched too much to allow equidistribution. But the finiteness of $\tau$ seems not to be necessary for the result. Indeed, if one considers the line bundles $\overline{\mathcal{L}}(r_{p_j})^{\otimes p_j}$, the lattice points $H^0(\mathcal{X},\overline{\mathcal{L}}(r_{p_j})^{\otimes p_j})$ are getting dense in $H^0(\mathcal{X},\overline{\mathcal{L}}(r_{p_j})^{\otimes p_j})_{\mathbb{R}}$ much faster for $j\to\infty$. The only serious obstruction to treat the case $\tau=\infty$ seems to be Lemma \ref{lem_integral-upper-bound}. Everything else can be resolved by working with the arithmetically ample hermitian line bundles $\overline{\mathcal{L}}^{\otimes p_j}(\log^+r_j)$ instead of $\overline{\mathcal{L}}^{\otimes p_j}$ respectively $\overline{\mathcal{L}}(\log\tau)^{\otimes p_j}$.
		\item Similar to Question \ref{que_integral} one may ask for a non-archimedean and an adelic analogue of Corollary \ref{cor_equidistribution-general}.
	\end{enumerate}
\end{Que}
\subsection{A Bogomolov-type Result}\label{sec_bogomolov}
In this section we discuss an analogue of the generalized Bogomolov conjecture for the set of global sections of an arithmetically ample hermitian line bundle $\overline{\mathcal{L}}$ on a generically smooth projective arithmetic variety $\mathcal{X}$. In particular, we will prove Corollary \ref{cor_bogomolov}. Moreover, we will discuss the Northcott property in this setting.

Let us first recall the statement of the generalized Bogomolov conjecture. If $A$ is an abelian variety defined over a number field $K$, $L$ a symmetric ample line bundle on $A$ and $X\subseteq A$ a subvariety, which is not a translate of an abelian subvariety by a torsion point, then there is an $\epsilon>0$ such that the set
\begin{align}\label{equ_bogomolov}
\{P\in X(\overline{K})~|~h_L(P)<\epsilon\}
\end{align}
is not Zariski dense in $X$, where $h_L$ denotes the height associated to $L$. This has been proven by Zhang \cite{Zha98} based on an idea by Ullmo \cite{Ull98}. It has been already proven by Zhang \cite{Zha95} before, that the set (\ref{equ_bogomolov}) is not Zariski dense for some $\epsilon>0$ if and only if $h_L(X)>0=h_L(A)$, where $X$ denotes any subvariety of $A$.

Now we consider the setting of a generically smooth projective arithmetic variety $\mathcal{X}$ and an arithmetically ample hermitian line bundle $\overline{\mathcal{L}}$ on $\mathcal{X}$. Instead of considering $\overline{K}$-rational points, we want to have an analogue result for the sections in $H^0(\mathcal{X},\mathcal{L}^{\otimes p})$. Here, we may consider $p$ as an analogue of the degree of the field of definition of a point $P\in A(\overline{K})$. If we consider an arithmetic subvariety $\mathcal{Y}\subseteq \mathcal{X}$, then we get a restriction map
$$H^0(\mathcal{X},\mathcal{L}^{\otimes p})\to H^0(\mathcal{Y},\mathcal{L}^{\otimes p}), \qquad s\mapsto s|_{\mathcal{Y}}.$$
This is in some sense dual to the classical situation, where we have an embedding $X(\overline{K})\subseteq A(\overline{K})$. Thus, we need also a dual variant of the condition in the set (\ref{equ_bogomolov}). If we consider $h_{\overline{\mathcal{L}}}(\mathrm{div}(s))$ as an analogue of the height $h_L$ for $s\in H^0(\mathcal{X},\mathcal{L}^{\otimes p})$, a good candidate for a dual variant of (\ref{equ_bogomolov}) seems to be
$$\left\{s\in H^0(\mathcal{X},\mathcal{L}^{\otimes p})~|~p\in\mathbb{Z}_{\ge 1}, h_{\overline{\mathcal{L}}}(\mathrm{div}(s))-h_{\overline{\mathcal{L}}|_{\mathcal{Y}}}(\mathrm{div}(s|_{\mathcal{Y}}))\le\epsilon\right\}.$$
As we are now interested in a subset of a union $\bigcup_{p\ge 1}H^0(\mathcal{X},\mathcal{L}^{\otimes p})$ of free $\mathbb{Z}$-modules instead of a subset of geometric points of a subvariety, it does not make any sense to speak about Zariski density. Instead we can speak about the asymptotic density of this subset for $p\to \infty$. To compute it, we will restrict to the sections lying at the same time in some ball under the $\sup$-norm. If we replace the condition $h_L(X)>h_L(A)$ by its dual analogue $h_{\overline{\mathcal{L}}}(\mathcal{Y})<h_{\overline{\mathcal{L}}}(\mathcal{X})$ we can prove the following statement about the density of the above set.
\begin{Cor}[= Corollary \ref{cor_bogomolov}]
	Let $\mathcal{X}$ be any generically smooth projective arithmetic variety and $\overline{\mathcal{L}}$ any arithmetically ample hermitian line bundle on $\mathcal{X}$. Let $\mathcal{Y}\subseteq \mathcal{X}$ be any generically smooth arithmetic subvariety of dimension $e\ge 2$ with $h_{\overline{\mathcal{L}}}(\mathcal{Y})<h_{\overline{\mathcal{L}}}(\mathcal{X})$. For any $\epsilon\in (0,h_{\overline{\mathcal{L}}}(\mathcal{X})-h_{\overline{\mathcal{L}}}(\mathcal{Y}))$ and any sequence $(r_p)_{p\in\mathbb{Z}}$ of positive real numbers with $\lim_{p\to\infty}r_p^{1/p}=\tau\in [1,\infty)$ it holds
	$$\lim_{p\to\infty}\frac{\#\left\{s\in \widehat{H}^0_{\le r_p}(\mathcal{X},\overline{\mathcal{L}}^{\otimes p})~|~h_{\overline{\mathcal{L}}}(\mathrm{div}(s))-h_{\overline{\mathcal{L}}|_{\mathcal{Y}}}(\mathrm{div}(s|_{\mathcal{Y}}))\le\epsilon\right\}}{\#\widehat{H}^0_{\le r_p}(\mathcal{X},\overline{\mathcal{L}}^{\otimes p})}=0.$$
\end{Cor}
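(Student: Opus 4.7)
The plan is to deduce Corollary~\ref{cor_bogomolov} from Corollary~\ref{cor_height-converges}(i) applied twice: once with the pair $(\mathcal{X},\mathcal{Y})$ as given, and once with $\mathcal{Y}$ replaced by $\mathcal{X}$ itself (both applications are legitimate since $e\ge 2$ forces $d=\dim\mathcal{X}\ge 2$). Write $\widehat{H}_p := \widehat{H}^0_{\le r_p}(\mathcal{X},\overline{\mathcal{L}}^{\otimes p})$, $S_p := \{s\in\widehat{H}_p \mid s|_{\mathcal{Y}}\neq 0\}$, and $T_p := \widehat{H}_p\setminus\{0\}$. Since the sup-norm balls defining $\widehat{H}_p$ satisfy the hypotheses of Proposition~\ref{pro_positive-density} (exactly as in the proof of Theorem~\ref{thm_equidistribution-text}), we have $\#S_p/\#\widehat{H}_p\to 1$ and $\#T_p/\#\widehat{H}_p\to 1$, so replacing $\widehat{H}_p$ by either $S_p$ or $T_p$ is harmless in the conclusion.

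The two invocations of Corollary~\ref{cor_height-converges}(i) produce
\begin{align*}
\lim_{p\to\infty}\frac{1}{\#S_p}\sum_{s\in S_p}\bigl|h_{\overline{\mathcal{L}}}(\mathcal{Y})+\log\tau-h_{\overline{\mathcal{L}}|_{\mathcal{Y}}}(\mathrm{div}(s|_{\mathcal{Y}}))\bigr| &= 0,\\
\lim_{p\to\infty}\frac{1}{\#T_p}\sum_{s\in T_p}\bigl|h_{\overline{\mathcal{L}}}(\mathcal{X})+\log\tau-h_{\overline{\mathcal{L}}}(\mathrm{div}(s))\bigr| &= 0.
\end{align*}
Since $S_p\subseteq T_p$ with $\#T_p/\#S_p\to 1$ and the summands of the second display are non-negative, the second average stays equal to $0$ in the limit when the sum is restricted to $S_p$ and normalised by $\#S_p$. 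Setting $\delta := h_{\overline{\mathcal{L}}}(\mathcal{X})-h_{\overline{\mathcal{L}}}(\mathcal{Y})>0$, the triangle inequality then yields
$$\lim_{p\to\infty}\frac{1}{\#S_p}\sum_{s\in S_p}\Bigl|\delta-\bigl(h_{\overline{\mathcal{L}}}(\mathrm{div}(s))-h_{\overline{\mathcal{L}}|_{\mathcal{Y}}}(\mathrm{div}(s|_{\mathcal{Y}}))\bigr)\Bigr| = 0.$$

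For any $s\in S_p$ satisfying $h_{\overline{\mathcal{L}}}(\mathrm{div}(s))-h_{\overline{\mathcal{L}}|_{\mathcal{Y}}}(\mathrm{div}(s|_{\mathcal{Y}}))\le\epsilon$, the corresponding summand is at least $\delta-\epsilon>0$. Markov's inequality therefore bounds the cardinality of such $s$ in $S_p$ by $(\delta-\epsilon)^{-1}$ times the left-hand sum, so its density in $S_p$ tends to $0$. The remaining sections $s\in\widehat{H}_p\setminus S_p$ cannot contribute to the event in Corollary~\ref{cor_bogomolov} at all (the quantity $h_{\overline{\mathcal{L}}|_{\mathcal{Y}}}(\mathrm{div}(s|_{\mathcal{Y}}))$ being undefined there), and they form a subset of $\widehat{H}_p$ of vanishing asymptotic density, so the same conclusion holds with $\widehat{H}_p$ in the denominator, which is exactly the claim.

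The whole argument is a short Markov-type deduction; the real work is already packaged into Corollary~\ref{cor_height-converges}(i). The only mildly delicate point is the bookkeeping needed to ensure that both equidistribution statements are averaged over the \emph{same} index set after the passage between the three asymptotically equivalent families $\widehat{H}_p$, $S_p$, $T_p$. Once this is arranged, the positivity of the gap $\delta-\epsilon>0$, which is precisely encoded by the hypothesis $\epsilon\in(0,\,h_{\overline{\mathcal{L}}}(\mathcal{X})-h_{\overline{\mathcal{L}}}(\mathcal{Y}))$, does the rest.
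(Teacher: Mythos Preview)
Your proof is correct, and the deduction via Markov's inequality is clean. The paper argues a little differently: instead of invoking Corollary~\ref{cor_height-converges}(i) for the pair $(\mathcal{X},\mathcal{Y})$, it bounds the $\mathcal{Y}$-side by hand. Concretely, from the height formula on $\mathcal{Y}$ and the trivial estimate
\[
\frac{1}{p\cdot\mathcal{L}_{\mathbb{C}}|_{\mathcal{Y}_{\mathbb{C}}}^{e-1}}\int_{\mathcal{Y}(\mathbb{C})}\log|s|_{\overline{\mathcal{L}}^{\otimes p}}\,c_1(\overline{\mathcal{L}})^{e-1}\le \tfrac{1}{p}\log r_p\le \log\tau+\rho
\]
(valid for large $p$, with $\rho=\tfrac{1}{2}(h_{\overline{\mathcal{L}}}(\mathcal{X})-h_{\overline{\mathcal{L}}}(\mathcal{Y})-\epsilon)>0$), any section in the exceptional set satisfies $h_{\overline{\mathcal{L}}}(\mathcal{X})+\log\tau-h_{\overline{\mathcal{L}}}(\mathrm{div}(s))\ge\rho$. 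A single application of Corollary~\ref{cor_height-converges}(i) with $\mathcal{Y}=\mathcal{X}$ then yields the contradiction. So the paper trades your second equidistribution input for the crude sup-norm bound $\|s\|_{\sup}\le r_p$; your version is more symmetric and avoids introducing the auxiliary parameter $\rho$, while the paper's version uses the heavy Corollary~\ref{cor_height-converges} only once.
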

\begin{proof}
	We choose any section $s_p$ in the set in question
	\begin{align}\label{equ_section-in-set}
		s_p\in \left\{s\in \widehat{H}^0_{\le r_p}(\mathcal{X},\overline{\mathcal{L}}^{\otimes p})~|~h_{\overline{\mathcal{L}}}(\mathrm{div}(s))-h_{\overline{\mathcal{L}}|_\mathcal{Y}}(\mathrm{div}(s|_\mathcal{Y}))\le\epsilon\right\}.
	\end{align}
	As $h_{\overline{\mathcal{L}}|_{\mathcal{Y}}}(\mathcal{Y})=h_{\overline{\mathcal{L}}}(\mathcal{Y})$, we obtain by Equation (\ref{equ_height}) that
	\begin{align}\label{equ_height-restricted}
	h_{\overline{\mathcal{L}}|_{\mathcal{Y}}}(\mathrm{div}(s_p|_{\mathcal{Y}}))=h_{\overline{\mathcal{L}}}(\mathcal{Y})+\frac{1}{p\cdot\mathcal{L}_{\mathbb{C}}|_{\mathcal{Y}_{\mathbb{C}}}^{e-1}}\int_{\mathcal{Y}(\mathbb{C})}\log|s_p|_{\overline{\mathcal{L}}^{\otimes p}}c_1(\overline{\mathcal{L}})^{e-1}.
	\end{align}
	We will estimate all three terms in this equation.
	First, it follows from (\ref{equ_section-in-set}) that
	$$h_{\overline{\mathcal{L}}|_\mathcal{Y}}(\mathrm{div}(s_p|_\mathcal{Y}))\ge h_{\overline{\mathcal{L}}}(\mathrm{div}(s_p))-\epsilon.$$
	Let us write $\rho=\frac{h_{\overline{\mathcal{L}}}(\mathcal{X})-h_{\overline{\mathcal{L}}}(\mathcal{Y})-\epsilon}{2}$. By the assumptions we have $\rho>0$. We get
	$$h_{\overline{\mathcal{L}}}(\mathcal{Y})=h_{\overline{\mathcal{L}}}(\mathcal{X})-2\rho-\epsilon.$$
	By $\|s_p\|_{\sup}\le r_p$ we get for the integral
	$$\frac{1}{p\cdot\mathcal{L}|_{\mathcal{Y}_{\mathbb{C}}}^{e-1}}\int_{\mathcal{Y}(\mathbb{C})}\log|s_p|_{\overline{\mathcal{L}}^{\otimes p}}c_1(\overline{\mathcal{L}})^{e-1}\le \frac{1}{p}\log r_p.$$
	As $\lim_{p\to\infty}\frac{1}{p}\log r_p=\log\tau$, there is an $N\in\mathbb{Z}$ such that $\frac{1}{p}\log r_p\le \log \tau +\rho$ for all $p\ge N$.
	If we apply all this to Equation (\ref{equ_height-restricted}) and reorder terms, we get
	\begin{align}\label{equ_lower-bound-rho}
	\rho\le h_{\overline{\mathcal{L}}}(\mathcal{X})+\log\tau-h_{\overline{\mathcal{L}}}(\mathrm{div}(s_p))
	\end{align}
	for $p\ge N$, where $\rho$ is a positive number independent of $s_p$ and of $p$.
	If we now assume that 
	\begin{align}\label{equ_quotient-of-sets}
		\frac{\#\left\{s\in \widehat{H}^0_{\le r_p}(\mathcal{X},\overline{\mathcal{L}}^{\otimes p})~|~h_{\overline{\mathcal{L}}}(\mathrm{div}(s))-h_{\overline{\mathcal{L}}|_{\mathcal{Y}}}(\mathrm{div}(s|_{\mathcal{Y}}))\le\epsilon\right\}}{\#\widehat{H}^0_{\le r_p}(\mathcal{X},\overline{\mathcal{L}}^{\otimes p})}
	\end{align}
	does not converge to $0$, then there exists an increasing sequence $(p_j)_{j\in\mathbb{Z}_{\ge 1}}$ of integers and some real number $\mu>0$ such that
	\begin{align}\label{equ_limit-mu}
	\lim_{j\to\infty} \frac{\#\left\{s\in \widehat{H}^0_{\le r_{p_j}}(\mathcal{X},\overline{\mathcal{L}}^{\otimes {p_j}})~|~h_{\overline{\mathcal{L}}}(\mathrm{div}(s))-h_{\overline{\mathcal{L}}|_{\mathcal{Y}}}(\mathrm{div}(s|_{\mathcal{Y}}))\le\epsilon\right\}}{\#\widehat{H}^0_{\le r_{p_j}}(\mathcal{X},\overline{\mathcal{L}}^{\otimes {p_j}})}=\mu.
	\end{align}
	If we set $S_{p}=\widehat{H}^0_{\le r_{p}}(\mathcal{X},\overline{\mathcal{L}}^{\otimes {p}})\setminus \{0\}$, Equations (\ref{equ_lower-bound-rho}) and (\ref{equ_limit-mu}) imply
	$$\liminf_{j\to \infty} \frac{1}{\#S_{p_j}}\sum_{s\in S_{p_j}}\left|h_{\overline{\mathcal{L}}}(\mathcal{X})+\log\tau-h_{\overline{\mathcal{L}}}(\mathrm{div}(s))\right|\ge \mu\rho>0.$$
	But this contradicts Corollary \ref{cor_height-converges-text} (i). Thus, the value in (\ref{equ_quotient-of-sets}) must converge to $0$. This proves the corollary.
\end{proof}

For a further discussion on the analogy between $A(\overline{K})$ and $(H^0(\mathcal{X},\mathcal{L}^{\otimes p}))_{p\in\mathbb{Z}_{\ge 1}}$ let us consider the Northcott property. For the abelian variety $A$ the Northcott property states that for any $d\in\mathbb{Z}_{\ge 1}$ and any $M\in \mathbb{R}$ the set
$$\left\{ P\in A(\overline{K})~|~[K(P):K]=d, h_L(P)\le M\right\}$$
is finite. In general, we say that a pair $((S_p)_{p\in\mathbb{Z}_{\ge 1}},h)$ of a sequence of sets together with a function $h\colon \bigsqcup_{p\ge 1}S_p\to \mathbb{R}$ satisfies the \emph{Northcott property} if for any $p\in\mathbb{Z}_{\ge 1}$ and any $M\in \mathbb{R}$ the set
$$\{x\in S_p~|~h(x)\le M\}$$
is finite. It will turn out that $((H^0(\mathcal{X},\mathcal{L}^{\otimes p})\setminus\{0\})_{p\in\mathbb{Z}_{\ge 1}},h_{\overline{\mathcal{L}}}(\mathrm{div}(\cdot)))$ in general does not satisfies the Northcott property. The reason is, that the units $H^0(\mathcal{X},\mathcal{O}_{\mathcal{X}})^*$ of $H^0(\mathcal{X},\mathcal{O}_{\mathcal{X}})$ are acting on $H^0(\mathcal{X},\mathcal{L}^{\otimes p})$ without changing the divisor. If we divide out this action, we always get the Northcott property.
\begin{Pro}\label{pro_northcott}
	Let $\mathcal{X}$ be any generically smooth projective arithmetic variety and $\overline{\mathcal{L}}$ an arithmetically ample hermitian line bundle on $\mathcal{X}$.
	\begin{enumerate}[(i)]
		\item The pair $((H^0(\mathcal{X},\mathcal{L}^{\otimes p})\setminus\{0\})_{p\in\mathbb{Z}_{\ge 1}},h_{\overline{\mathcal{L}}}(\mathrm{div}(\cdot)))$ satisfies the Northcott property if and only if $\mathcal{X}_{\mathbb{R}}$ is connected.
		\item The pair $(((H^0(\mathcal{X},\mathcal{L}^{\otimes p})\setminus\{0\})/H^0(\mathcal{X},\mathcal{O}_{\mathcal{X}})^*)_{p\in\mathbb{Z}_{\ge 1}},h_{\overline{\mathcal{L}}}(\mathrm{div}(\cdot)))$ always satisfies the Northcott property.
	\end{enumerate}
\end{Pro}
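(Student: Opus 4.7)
The proof hinges on the Stein factorization. Writing $R=H^0(\mathcal X,\mathcal O_{\mathcal X})$, integrality and properness of $\mathcal X$ over $\mathrm{Spec}\,\mathbb Z$ force $R$ to be an order in a number field $K$, and we obtain a factorization $\mathcal X\to\mathrm{Spec}\,R\to\mathrm{Spec}\,\mathbb Z$ with $H^0(\mathcal X_K,\mathcal O_{\mathcal X_K})=K$. Hence $\mathcal X_K$ is geometrically connected, so $\mathcal X(\mathbb C)=\bigsqcup_{\sigma:K\hookrightarrow\mathbb C}\mathcal X_\sigma$ with all components Galois-conjugate and sharing a common value $V=\int_{\mathcal X_\sigma}c_1(\overline{\mathcal L})^{d-1}$ independent of $\sigma$ (this integral depends only on the Chern class, hence only on the topology of $\mathcal X_\sigma$ and the class of $\mathcal L$). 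The connected components of $\mathcal X_\mathbb R$ are in bijection with the archimedean places of $K$, so by Dirichlet's unit theorem for orders, $R^*$ is finite if and only if $\mathcal X_\mathbb R$ is connected.

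For the ``only if'' direction of (i), suppose $\mathcal X_\mathbb R$ is disconnected, so $R^*$ is infinite. Pick any $p$ with $H^0(\mathcal X,\mathcal L^{\otimes p})\neq 0$ and any non-zero section $s$. The action of $R^*$ on sections is free on non-zero elements (as $R$ is a domain and $H^0(\mathcal X,\mathcal L^{\otimes p})$ is torsion-free over $R$), and $\mathrm{div}(us)=\mathrm{div}(s)$ for every $u\in R^*$ since units have no zeros or poles, hence all elements of the infinite orbit $R^*\cdot s$ share the same height $h_{\overline{\mathcal L}}(\mathrm{div}(s))$, and Northcott fails.

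For (ii), and consequently the ``if'' direction of (i), it suffices to show that for each fixed $p$ and $M$ only finitely many divisors $\mathrm{div}(s)$ arise from non-zero sections with $h_{\overline{\mathcal L}}(\mathrm{div}(s))\le M$. Combining Equation~(\ref{equ_height}) with Lemma~\ref{lem_bound-of-integral-of-section} (which yields $h_{\overline{\mathcal L}}(\mathrm{div}(s))\ge 0$), the hypothesis pins $\int_{\mathcal X(\mathbb C)}\log|s|_{\overline{\mathcal L}^{\otimes p}}\,c_1(\overline{\mathcal L})^{d-1}$ in a compact interval depending only on $p$ and $M$. Split the integral as $\sum_\sigma I_\sigma(s)$ with $I_\sigma(s)=\int_{\mathcal X_\sigma}\log|s|\,c_1(\overline{\mathcal L})^{d-1}$. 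For $u\in R^*$ one has $I_\sigma(us)=I_\sigma(s)+V\log|\sigma(u)|$, and the product formula $\sum_\sigma\log|\sigma(u)|=0$ ensures the total integral is $R^*$-invariant (as required by the $R^*$-invariance of the height). By Dirichlet, the vector $(V\log|\sigma(u)|)_\sigma$ ranges over a lattice of full rank $r_1+r_2-1$ in the subspace $\{x\in\mathbb R^{r_1+2r_2}:\sum_\sigma x_\sigma=0,\ x_\sigma=x_{\bar\sigma}\}$, which captures all variability of $(I_\sigma)$ modulo the invariant sum. Choosing $u\in R^*$ to move $(I_\sigma(us))_\sigma$ into a fixed fundamental domain gives $|I_\sigma(us)|\le B(p,M)$ for all $\sigma$.

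It remains to turn the integral bounds into a sup-norm bound. By Poincar\'e--Lelong, $\frac{1}{p}\log|s|^2_{\overline{\mathcal L}^{\otimes p}}$ is $\omega$-plurisubharmonic on each $\mathcal X_\sigma$ with $\omega=c_1(\overline{\mathcal L})$, uniformly in $p$ and $s$. The classical mean-value estimate for $\omega$-plurisubharmonic functions on a compact K\"ahler manifold then yields $\sup_{\mathcal X_\sigma}\frac{1}{p}\log|s|^2\le C+\frac{1}{pV}\int_{\mathcal X_\sigma}\log|s|^2\,c_1(\overline{\mathcal L})^{d-1}$, with $C$ depending only on $(\mathcal X_\sigma,\omega)$. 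Rearranging gives $\log\|s\|_{\sup,\sigma}\le\frac{Cp}{2}+I_\sigma(s)/V$; applied to $us$, the bound $|I_\sigma(us)|\le B(p,M)$ yields $\|us\|_{\sup}\le C'(p,M)$. Using (\ref{equ_supl2}) to compare sup- and $L^2$-norms, $us$ lies in a bounded ball in $H^0(\mathcal X,\overline{\mathcal L}^{\otimes p})_\mathbb R$, which the lattice $H^0(\mathcal X,\overline{\mathcal L}^{\otimes p})$ meets in a finite set. Hence only finitely many orbits have height at most $M$, proving (ii), and (i) in the connected case follows by combining (ii) with the finiteness of $R^*$. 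The principal technical point is the uniform mean-value estimate for $\omega$-plurisubharmonic functions, which is a standard result from pluripotential theory.
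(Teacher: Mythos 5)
Your proof is correct, and its skeleton --- identifying $R=H^0(\mathcal{X},\mathcal{O}_{\mathcal{X}})$ as an order in a number field, deducing the finiteness criterion for $R^*$ from Dirichlet's unit theorem, disproving the Northcott property via the infinite constant-divisor orbit $R^*\cdot s$ when $\mathcal{X}_{\mathbb{R}}$ is disconnected, and reducing the ``if'' direction of (i) to (ii) --- coincides with the paper's. Where you genuinely diverge is in the proof of (ii). The paper observes that $(H^0(\mathcal{X},\mathcal{L}^{\otimes p})\setminus\{0\})/R^*$ injects via $s\mapsto\mathrm{div}(s)$ into the set of effective cycles of fixed geometric degree and then simply invokes the Northcott property for cycles of bounded degree and bounded height proved by Bost--Gillet--Soul\'e \cite[Proposition 3.2.5]{BGS94}. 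You instead give a self-contained argument: Equation (\ref{equ_height}) together with Lemma \ref{lem_bound-of-integral-of-section} confines the total archimedean integral $\int_{\mathcal{X}(\mathbb{C})}\log|s|_{\overline{\mathcal{L}}^{\otimes p}}c_1(\overline{\mathcal{L}})^{d-1}$ to a compact interval; Dirichlet's unit theorem lets you renormalize $s$ by a unit so that this integral is controlled on each connected component of $\mathcal{X}(\mathbb{C})$ separately (a step that is genuinely needed, since a unit can make the integral arbitrarily negative on one component while the sum stays bounded); the uniform sub-mean-value inequality for $\omega$-plurisubharmonic functions converts the component-wise integral bounds into a sup-norm bound; and lattice-point finiteness in $H^0(\mathcal{X},\overline{\mathcal{L}}^{\otimes p})_{\mathbb{R}}$ concludes. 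The paper's route is shorter and inherits the stronger cycle-theoretic statement for free; yours avoids the external black box at the cost of importing the standard but nontrivial compactness estimate $\sup_X\varphi\le C+\frac{1}{V}\int_X\varphi\,\omega^{d-1}$ from pluripotential theory, and it covers only divisors of sections --- which is exactly what the proposition requires.
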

We first state some basic facts on the ring of global regular functions $H^0(\mathcal{X},\mathcal{O}_{\mathcal{X}})$.
\begin{Lem}\label{lem_regular-functions}
	Let $\mathcal{X}$ be any projective arithmetic variety. Write $R=H^0(\mathcal{X},\mathcal{O}_{\mathcal{X}})$ and $K=R_{\mathbb{Q}}$.
	\begin{enumerate}[(i)]
		\item The ring $K$ is a number field and $R$ is an order in $K$.
		\item We have $\#R^*<\infty$ for the units $R^*$ of $R$ if and only if $\mathcal{X}_{\mathbb{R}}$ is connected.
	\end{enumerate}
\end{Lem}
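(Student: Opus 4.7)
The plan is to deduce part (i) from general properties of projective morphisms over $\mathrm{Spec}(\mathbb{Z})$, and to deduce part (ii) from Dirichlet's unit theorem applied to orders in number fields, together with a base change computation identifying connected components of $\mathcal{X}_{\mathbb{R}}$ with factors of $K\otimes_{\mathbb{Q}}\mathbb{R}$.

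For (i), write $\pi\colon \mathcal{X}\to \mathrm{Spec}(\mathbb{Z})$ for the structure map. Since $\pi$ is projective, $\pi_*\mathcal{O}_{\mathcal{X}}$ is a coherent $\mathcal{O}_{\mathrm{Spec}(\mathbb{Z})}$-module, so $R$ is a finitely generated $\mathbb{Z}$-module. Because $\mathcal{X}$ is integral and flat over $\mathbb{Z}$, $R$ is a torsion-free domain, hence a free $\mathbb{Z}$-module of some finite rank $n\ge 1$. Therefore $K=R\otimes_{\mathbb{Z}}\mathbb{Q}$ is a finite-dimensional $\mathbb{Q}$-algebra which is also a domain, and any such algebra is a field. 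Thus $K$ is a number field, $R$ spans $K$ over $\mathbb{Q}$, and $R$ is a $\mathbb{Z}$-order in $K$.

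For (ii), the plan is to compute the ring of global regular functions after base change to $\mathbb{R}$. Since $\pi$ is projective and flat, flat base change gives
\[
H^0(\mathcal{X}_{\mathbb{R}},\mathcal{O}_{\mathcal{X}_{\mathbb{R}}})=R\otimes_{\mathbb{Z}}\mathbb{R}=K\otimes_{\mathbb{Q}}\mathbb{R}\cong \mathbb{R}^{r_1}\times\mathbb{C}^{r_2},
\]
where $r_1$ and $r_2$ denote the numbers of real and pairs of complex embeddings of $K$. The connected components of $\mathcal{X}_{\mathbb{R}}$ correspond to the primitive idempotents of this $\mathbb{R}$-algebra (a consequence of Stein factorization together with the fact that $\mathcal{X}_{\mathbb{R}}\to \mathrm{Spec}(R\otimes_{\mathbb{Z}}\mathbb{R})$ has connected geometric fibers over each factor, since each factor is a field). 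Hence $\mathcal{X}_{\mathbb{R}}$ is connected if and only if $r_1+r_2=1$, i.e., either $K=\mathbb{Q}$ or $K$ is an imaginary quadratic field.

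To conclude, apply Dirichlet's unit theorem to the order $R\subseteq K$: the unit group $R^*$ is finitely generated of rank $r_1+r_2-1$ (the rank agrees with that of $\mathcal{O}_K^*$ since $R\subseteq \mathcal{O}_K$ is of finite index). Therefore $R^*$ is finite precisely when $r_1+r_2=1$, which by the previous paragraph is equivalent to $\mathcal{X}_{\mathbb{R}}$ being connected. The only non-routine step is the identification of connected components with factors of $K\otimes_{\mathbb{Q}}\mathbb{R}$, but this follows cleanly from flat base change and the Stein factorization of $\mathcal{X}_{\mathbb{R}}\to \mathrm{Spec}(\mathbb{R})$ once part (i) is in hand.
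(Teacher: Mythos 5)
Your proposal is correct and follows essentially the same route as the paper: part (i) via coherence of the pushforward, flatness, and the fact that a finite-dimensional integral $\mathbb{Q}$-algebra is a field, and part (ii) by identifying $H^0(\mathcal{X}_{\mathbb{R}},\mathcal{O}_{\mathcal{X}_{\mathbb{R}}})\cong K\otimes_{\mathbb{Q}}\mathbb{R}$, relating connectedness of $\mathcal{X}_{\mathbb{R}}$ to this étale $\mathbb{R}$-algebra being a field, and invoking Dirichlet's unit theorem for the order $R$. The only cosmetic difference is that you route the connectedness criterion through idempotents and Stein factorization, whereas the paper states directly that $\mathcal{X}_{\mathbb{R}}$ is connected if and only if $K_{\mathbb{R}}$ is a field; both amount to the same observation.
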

\begin{proof}
	\begin{enumerate}[(i)]
		\item As $\mathcal{X}$ is integral, also $K=H^0(\mathcal{X},\mathcal{O}_{\mathcal{X}})_{\mathbb{Q}}$ has to be integral. As it is a finite dimensional $\mathbb{Q}$-vector space, it has to be a number field. Since $\mathcal{X}$ is projective and finitely generated over $\mathrm{Spec}(\mathbb{Z})$, $R=H^0(\mathcal{X},\mathcal{O}_{\mathcal{X}})$ is a finitely generated $\mathbb{Z}$-module. By the flatness of $\mathcal{X}$ over $\mathrm{Spec}(\mathbb{Z})$ the ring $R$ is also a free $\mathbb{Z}$-module and since $R\otimes_\mathbb{Z}\mathbb{Q}=K$, it generates $K$ over $\mathbb{Q}$. Thus, $R$ is an order of the number field $K$.
		\item 
		We have that $\mathcal{X}_{\mathbb{R}}$ is connected if and only if $H^0(\mathcal{X}_{\mathbb{R}},\mathcal{O}_{\mathcal{X}_{\mathbb{R}}})\cong K_{\mathbb{R}}$ is a field. But $K_\mathbb{R}$ is a field if and only if $K_{\mathbb{R}}\cong \mathbb{R}$ or $K_{\mathbb{R}}\cong \mathbb{C}$. If we write $r$ for the number of real embeddings $K\to\mathbb{R}$ and $s$ for the number of conjugate pairs of non-real embeddings $K\to\mathbb{C}$, the first case is equivalent to $r=1$ and $s=0$ and the second case occurs exactly if $r=0$ and $s=1$. Thus, $\mathcal{X}_{\mathbb{R}}$ is connected if and only if $r+s-1=0$ which is equivalent to $\#R^*<\infty$ by Dirichlet's unit theorem.
	\end{enumerate}
\end{proof}
We now give the proof of the proposition.
\begin{proof}[Proof of Proposition \ref{pro_northcott}]
	For any section $s\in H^0(\mathcal{X},\mathcal{L}^{\otimes p})\setminus\{0\}$ and any invertible regular function $\alpha\in H^0(\mathcal{X},\mathcal{O}_{\mathcal{X}})^*$ we get for the section 
	$\alpha\cdot s\in H^0(\mathcal{X},\mathcal{L}^{\otimes p})$ that 
	$$\mathrm{div}(\alpha\cdot s)=\mathrm{div}(s)$$
	as the divisor of $\alpha$ is trivial. On the other hand, if $s,s'\in H^0(\mathcal{X},\mathcal{L}^{\otimes p})\setminus\{0\}$ satisfy $\mathrm{div}(s)=\mathrm{div}(s')$, then $\frac{s}{s'}\in H^0(\mathcal{X},\mathcal{O}_{\mathcal{X}})^*$ is an invertible regular function as its divisor is trivial. In other words, we have the following diagram 
	$$\xymatrix{H^0(\mathcal{X},\mathcal{L}^{\otimes p})\setminus\{0\}\ar[dr] \ar[r] & S_p:=\left\{\mathcal{Z}\in Z^1(\mathcal{X})~|~\mathcal{L}_\mathbb{C}|_{\mathcal{Z}_{\mathbb{C}}}^{\dim \mathcal{X}_{\mathbb{C}}-1}=p\cdot \mathcal{L}_{\mathbb{C}}^{\dim \mathcal{X}_{\mathbb{C}}}\right\}\\ & (H^0(\mathcal{X},\mathcal{L}^{\otimes p})\setminus\{0\})/H^0(\mathcal{X},\mathcal{O}_X)^*\ar@{^{(}->}[u]},$$
	where the upper map sends $s$ to $\mathrm{div}(s)$. Note that
	$$\mathcal{L}_\mathbb{C}|_{\mathrm{div}(s)_{\mathbb{C}}}^{\dim\mathcal{X}_{\mathbb{C}}-1}=p\cdot \mathcal{L}_{\mathbb{C}}^{\dim \mathcal{X}_{\mathbb{C}}}$$
	as $\mathrm{div}(s)_{\mathbb{C}}$ represents the line bundle $\mathcal{L}_{\mathbb{C}}^{\otimes p}$ on $\mathcal{X}_{\mathbb{C}}$. Thus, to prove (ii) it is enough to prove the Northcott property for $((S_p)_{p\in\mathbb{Z}_{\ge 1}},h_{\overline{\mathcal{L}}})$. But this has been shown by Bost--Gillet--Soul\'e \cite[Proposition 3.2.5]{BGS94} even for cycles of higher codimension. Hence, (ii) follows.
	
	As $\mathcal{L}$ is ample, we can always choose $p\in\mathbb{Z}_{\ge 1}$ such that $H^0(\mathcal{X},\mathcal{L}^{\otimes p})\neq \{0\}$. But then there are at least $\# H^0(\mathcal{X},\mathcal{O}_{\mathcal{X}})^*$ many sections $s\in H^0(\mathcal{X},\mathcal{L}^{\otimes p})\neq \{0\}$ with the same height $h_{\overline{\mathcal{L}}}(\mathrm{div}(s))$. Thus, by Lemma \ref{lem_regular-functions} the pair $$((H^0(\mathcal{X},\mathcal{L}^{\otimes p})\setminus\{0\})_{p\in\mathbb{Z}_{\ge 1}},h_{\overline{\mathcal{L}}}(\mathrm{div}(\cdot)))$$ can not satisfy the Northcott property if $\mathcal{X}_{\mathbb{R}}$ is not connected. This shows the \emph{``only if''}-part of (i). Lemma \ref{lem_regular-functions} also shows that the \emph{``if''}-direction of (i) follows from (ii), which we already have proven.
\end{proof}
\subsection{Distribution of Algebraic Numbers}\label{sec_algebraic-numbers}
In this section we apply the results on the distribution of small sections in Section \ref{sec_distribution-small-sections} to the case $\mathcal{X}=\mathbb{P}^1_\mathbb{Z}$ and $\overline{\mathcal{L}}=\overline{\mathcal{O}(1)}(\epsilon)$, the line bundle $\mathcal{O}(1)$ equipped with the Fubini--Study metric multiplied by $e^{-2\epsilon}$ for some $\epsilon>0$. This can be interpreted as a result on the distribution of the zero sets of integer polynomials and hence, on the distribution of algebraic numbers. In particular, we will prove Proposition \ref{pro_equidistribution-polynomials} and Corollary \ref{cor_distribution-algebraic-numbers}. We continue the notation from Section \ref{sec_projective-line}. Especially, $Z_0, Z_1$ are homogeneous coordinate functions on $\mathbb{P}_\mathbb{Z}^1$, $U\subseteq\mathbb{P}_{\mathbb{C}}^1$ is defined by $Z_1\neq 0$ and $S_j^n=Z_0^jZ_1^{j-n}\in H^0(\mathbb{P}_{\mathbb{Z}}^1,\mathcal{O}(1)^{\otimes n})$.

First we discuss relations between polynomials and sections in $H^0(\mathbb{P}_{\mathbb{Z}}^1,\mathcal{O}(1))$. By choosing $[1:0]\in\mathbb{P}_{\mathbb{Z}}^1$ as the point at infinity, we get a group isomorphism
\begin{align}\label{equ_polynomial-map}
	H^0(\mathbb{P}_{\mathbb{Z}}^1,\mathcal{O}(1)^{\otimes n})&\to \{P\in \mathbb{Z}[X]~|~\deg P\le n\}\\
	s=\sum_{j=0}^n a_jZ_0^{j}Z_1^{n-j}&\mapsto P(X)=\sum_{j=1}^n a_j X^j.\nonumber
\end{align}
After tensoring with $\mathbb{C}$ we also get an isomorphism of linear spaces
$$H^0(\mathbb{P}_{\mathbb{Z}}^1,\mathcal{O}(1)^{\otimes n})_{\mathbb{C}}\to \{P\in \mathbb{C}[X]~|~\deg P\le n\}.$$
Let us recall the definitions of $h_B$ and $h_{\mathrm{FS}}$ from the introduction. If $P\in \mathbb{C}[X]$ is a polynomial of degree $\deg P=n$ with $P=a_n\prod_{j=1}^{n}(X-\alpha_j)=\sum_{j=0}^n a_j X^j$, then we define the heights
$$h_{\mathrm{FS}}(P)=\tfrac{1}{n}\log|a_n|+\tfrac{1}{2n}\sum_{j=1}^n\log(1+|\alpha_j|^2) \quad \text{and}\quad h_{B}(P)=\tfrac{1}{n}\log \max_{0\le j\le n}\frac{|a_j|}{\sqrt{\tbinom{n}{j}}}.$$
The following lemma connects $h_{\mathrm{FS}}$ to integrals of sections as occurring in Lemma \ref{lem_stokes}. 
\begin{Lem}\label{lem_hFS}
	Let $P\in \mathbb{C}[X]$ be any polynomial of degree $\deg P=n$ with
	$$P(X)=\sum_{k=0}^n a_k X^k=a_n\prod_{k=1}^n(X-\alpha_k)$$
	for complex numbers $a_0,\dots, a_n,\alpha_1,\dots,\alpha_n\in \mathbb{C}$. Let $s=\sum_{j=0}^n a_jS_j^n$ be the corresponding section in $H^0(\mathbb{P}_{\mathbb{Z}}^1,\mathcal{O}(1)^{\otimes n})_{\mathbb{C}}$. Then the following holds
	$$\tfrac{1}{n}\int_{\mathbb{P}_{\mathbb{C}}^1}\log |s|_{\mathrm{FS}}c_1(\overline{\mathcal{O}(1)})=h_{\mathrm{FS}}(P)-\tfrac{1}{2}.$$
\end{Lem}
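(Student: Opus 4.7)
The plan is to factor $s$ into linear sections, use the additivity of $\log$, and reduce the problem to a single model integral, which can be evaluated by a Fubini--Study invariance argument together with Lemma~\ref{lem_integral-P1}~(ii).

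First, observe that writing $P(X) = a_n\prod_{k=1}^n(X-\alpha_k)$ and dehomogenizing by $X = Z_0/Z_1$, the section $s \in H^0(\mathbb{P}^1_\mathbb{Z},\mathcal{O}(1)^{\otimes n})_\mathbb{C}$ associated to $P$ factors as $s = a_n\prod_{k=1}^n (Z_0-\alpha_k Z_1)$. Since the Fubini--Study metric on $\mathcal{O}(n)$ is the $n$-th tensor power of the one on $\mathcal{O}(1)$, this gives the pointwise identity $|s|_{\mathrm{FS}} = |a_n|\prod_{k=1}^n |Z_0 - \alpha_k Z_1|_{\mathrm{FS}}$. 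Taking logarithms and integrating against $c_1(\overline{\mathcal{O}(1)})$ (recalling that $\int_{\mathbb{P}^1_\mathbb{C}} c_1(\overline{\mathcal{O}(1)}) = 1$), we reduce the statement to verifying that for every $\alpha \in \mathbb{C}$,
\begin{align*}
\int_{\mathbb{P}^1_\mathbb{C}} \log |Z_0 - \alpha Z_1|_{\mathrm{FS}} \, c_1(\overline{\mathcal{O}(1)}) = \tfrac{1}{2}\log(1+|\alpha|^2) - \tfrac{1}{2}.
\end{align*}

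To prove this model identity I will exploit the $U(2)$-invariance of the Fubini--Study form. Consider the unitary matrix
\[
U = \tfrac{1}{\sqrt{1+|\alpha|^2}}\begin{pmatrix}\bar\alpha & 1 \\ 1 & -\alpha\end{pmatrix},
\]
which sends the point $[\alpha:1]$ to $[1:0]$ and induces the automorphism $\phi\colon [w_0:w_1]\mapsto [\bar\alpha w_0+w_1 : w_0-\alpha w_1]/\sqrt{1+|\alpha|^2}$ of $\mathbb{P}^1_\mathbb{C}$. By unitarity, for any section $t \in H^0(\mathbb{P}^1,\mathcal{O}(1))_\mathbb{C}$ the pullback satisfies $|\phi^* t|_{\mathrm{FS}}(w) = |t|_{\mathrm{FS}}(\phi(w))$, and $\phi^* c_1(\overline{\mathcal{O}(1)}) = c_1(\overline{\mathcal{O}(1)})$. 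Since $\phi^* Z_1 = (Z_0-\alpha Z_1)/\sqrt{1+|\alpha|^2}$, applying $\log$ and integrating, the change-of-variables $w \mapsto \phi(w)$ together with Lemma~\ref{lem_integral-P1}~(ii) yields
\[
-\tfrac{1}{2}\log(1+|\alpha|^2) + \int_{\mathbb{P}^1_\mathbb{C}} \log|Z_0-\alpha Z_1|_{\mathrm{FS}}\, c_1(\overline{\mathcal{O}(1)}) = \int_{\mathbb{P}^1_\mathbb{C}} \log |Z_1|_{\mathrm{FS}}\, c_1(\overline{\mathcal{O}(1)}) = -\tfrac{1}{2},
\]
which is the desired identity. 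Combining this with the factorization and dividing by $n$ gives exactly $h_{\mathrm{FS}}(P) - \tfrac{1}{2}$.

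The only mild subtlety is the choice of the unitary matrix $U$ adapted to the point $\alpha$; everything else is a short calculation from Section~\ref{sec_projective-line}. No integrability issues arise since $\log|Z_0-\alpha Z_1|_{\mathrm{FS}}$ has only a logarithmic singularity at the single zero $[\alpha:1]$, which is harmless against the smooth measure $c_1(\overline{\mathcal{O}(1)})$.
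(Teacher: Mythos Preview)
Your proof is correct and takes a genuinely different route from the paper's. The paper applies the Stokes-type identity of Lemma~\ref{lem_stokes} to the pair $(s,Z_1)$, which relates $\int_{\mathbb{P}^1_{\mathbb{C}}}\log|s|_{\mathrm{FS}}\,c_1(\overline{\mathcal{O}(1)})$ to the two ``boundary'' terms $\int_{\mathrm{div}(Z_1)}\log|s|_{\mathrm{FS}}=\log|a_n|$ and $-\int_{\mathrm{div}(s)}\log|Z_1|_{\mathrm{FS}}=\tfrac{1}{2}\sum_k\log(1+|\alpha_k|^2)$, and then invokes Lemma~\ref{lem_integral-P1}~(ii) for the remaining term. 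You instead factor $s$ into linear sections and reduce to a single model integral, which you evaluate via the $U(2)$-invariance of the Fubini--Study metric and form, again landing on Lemma~\ref{lem_integral-P1}~(ii). Your argument is more elementary in that it bypasses the Poincar\'e--Lelong/Stokes machinery entirely and exploits the symmetry specific to $(\mathbb{P}^1,\overline{\mathcal{O}(1)})$; the paper's approach, while slightly heavier here, is the one that fits naturally into the general framework of Section~\ref{sec_complex-analysis} and would carry over to situations without such a transitive isometry group.
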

\begin{proof}
	As $\deg P=n$, we have that $a_n\neq 0$. Thus, $s$ does not vanish at the point at infinity $[1:0]\in\mathbb{P}^1_{\mathbb{C}}$. Hence, the divisor $\mathrm{div}(s)$ of $s$ consists exactly of the zeros of $P$, more precisely $\mathrm{div}(s)=\sum_{j=1}^n [\alpha_j:1]$.
	Especially, $\mathrm{div}(s)$ is not supported in $[1:0]\in\mathbb{P}_{\mathbb{C}}^1$, such that $\mathrm{div}(s_n)\cap \mathrm{div}(Z_1)=\emptyset$. Thus, we can apply Lemmas \ref{lem_stokes} to obtain
	\begin{align}\label{equ_stokes-p1}
		&\tfrac{1}{n}\int_{\mathbb{P}_{\mathbb{C}}^1}\log|s|_{\mathrm{FS}}c_1(\overline{\mathcal{O}(1)})-\tfrac{1}{n}\int_{\mathrm{div}(Z_1)}\log|s|_{\mathrm{FS}}\\
		&=\int_{\mathbb{P}_{\mathbb{C}}^1}\log|Z_1|_{\mathrm{FS}}c_1(\overline{\mathcal{O}(1)})-\tfrac{1}{n}\int_{\mathrm{div}(s)}\log|Z_1|_{\mathrm{FS}}.\nonumber
	\end{align}
	Let us compute the integrals separately. First, we get 
	\begin{align*}
		\tfrac{1}{n}\int_{\mathrm{div}(Z_1)}\log|s|_{\mathrm{FS}}=\tfrac{1}{2n}\int_{[Z_0:Z_1]=[1:0]}\log\frac{\left|\sum_{j=0}^n a_j Z_0^j Z_1^{n-j}\right|^2}{(|Z_0|^2+|Z_1|^2)^n}=\tfrac{1}{n}\log |a_n|.
	\end{align*}
	By $\mathrm{div}(s)=\sum_{j=1}^n [\alpha_j:1]$ we have
	\begin{align*}
		-\tfrac{1}{n}\int_{\mathrm{div}(s)}\log|Z_1|_{\mathrm{FS}}=-\tfrac{1}{2n}\sum_{j=1}^n\int_{[\alpha_j:1]}\log \frac{|Z_1|^2}{|Z_0|^2+|Z_1|^2}=\tfrac{1}{2n}\sum_{j=1}^n\log(1+|\alpha_j|^2)
	\end{align*}
	Summing both formulas yields
	\begin{align}\label{equ_hFS}
	h_{\mathrm{FS}}(P)=\tfrac{1}{n}\int_{\mathrm{div}(Z_1)}\log|s|_{\mathrm{FS}}-\tfrac{1}{n}\int_{\mathrm{div}(s)}\log|Z_1|_{\mathrm{FS}}.
	\end{align}
	If we now apply Equation (\ref{equ_hFS}) and Lemma \ref{lem_integral-P1}(ii) to Equation (\ref{equ_stokes-p1}), we get
	$$\tfrac{1}{n}\int_{\mathbb{P}_{\mathbb{C}}^1}\log |s|_{\mathrm{FS}}c_1(\overline{\mathcal{O}(1)})=-\tfrac{1}{2}+h_{\mathrm{FS}}(P).$$
	as desired.
\end{proof}
The following Proposition gives a sufficient condition for a sequence of polynomials, such that their zero sets equidistribute on $\mathbb{P}_{\mathbb{C}}^1$ with respect to $c_1(\overline{\mathcal{O}(1)})$.
\begin{Pro}[= Proposition \ref{pro_equidistribution-polynomials}]
	For any sequence $(P_n)_{n\in\mathbb{Z}_{\ge 1}}$ of polynomials
	$P_n\in \mathbb{C}[X]$ of degree $\deg P_n=n$ satisfying 
	$\limsup_{n\to\infty} (h_{B}(P_n)+\frac{1}{2}-h_{\mathrm{FS}}(P_n))\le0$
	and any continuous function $f\colon \mathbb{C}\to\mathbb{C}$, such that $\lim_{|z|\to\infty}f(z)$ is well-defined and finite, it holds
	$$\lim_{n\to \infty}\frac{1}{n}\sum_{\gf{z\in\mathbb{C}}{P_n(z)=0}}f(z)=\frac{i}{2\pi}\int_\mathbb{C}f(z)\frac{dzd\overline{z}}{\left(1+|z|^2\right)^2}.$$
\end{Pro}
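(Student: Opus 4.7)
The plan is to translate the statement into the arithmetic-geometric setup on $\mathbb{P}^1_{\mathbb{Z}}$ and then invoke the complex-analytic equidistribution result Lemma \ref{lem_equidistribution}\,(ii). Under the $\mathbb{C}$-linear extension of the isomorphism (\ref{equ_polynomial-map}), the polynomial $P_n=\sum_{j=0}^n a_{n,j}X^j$ of degree $n$ corresponds to a non-zero section $s_n=\sum_{j=0}^n a_{n,j}S_j^n$ in $H^0(\mathbb{P}^1_{\mathbb{Z}},\mathcal{O}(1)^{\otimes n})_{\mathbb{C}}$. Since $\deg P_n=n$, the section $s_n$ does not vanish at the point $[1:0]$ at infinity, so on the affine chart $U\cong\mathbb{C}$ the divisor $\mathrm{div}(s_n)$ consists exactly of the zeros of $P_n$ counted with multiplicity.

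Next I will relate the two heights appearing in the hypothesis to analytic data of $s_n$. Lemma \ref{lem_hFS} gives directly
$$\tfrac{1}{n}\int_{\mathbb{P}^1_{\mathbb{C}}}\log|s_n|_{\mathrm{FS}}\,c_1(\overline{\mathcal{O}(1)})=h_{\mathrm{FS}}(P_n)-\tfrac{1}{2},$$
and for the Bombieri height I will show the two-sided bound
$$\bigl|\tfrac{1}{n}\log\|s_n\|_{\sup}-h_B(P_n)\bigr|\le \tfrac{\log(n+1)}{2n}.$$
The upper bound follows from Cauchy--Schwarz, giving $|s_n(z_0:z_1)|^2_{\mathrm{FS}}\le\sum_{j}|a_{n,j}|^2/\binom{n}{j}\le(n+1)e^{2nh_B(P_n)}$; the lower bound follows from $\|s_n\|_{\sup}\ge\|s_n\|$ (cf.\ (\ref{equ_supl2})) together with the explicit formula (\ref{equ_L2norm-FS}) for the $L^2$-norm.

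Since $\int_{\mathbb{P}^1_{\mathbb{C}}}c_1(\overline{\mathcal{O}(1)})=1$, the trivial bound $\int\log|s_n|_{\mathrm{FS}}\,c_1\le\log\|s_n\|_{\sup}$ combined with the two identifications above shows
$$\liminf_{n\to\infty}\bigl(h_B(P_n)+\tfrac{1}{2}-h_{\mathrm{FS}}(P_n)\bigr)\ge 0.$$
Together with the hypothesis $\limsup\le 0$, this forces $\lim(h_B(P_n)+\tfrac{1}{2}-h_{\mathrm{FS}}(P_n))=0$, which is exactly the condition (ii) of Lemma \ref{lem_equidistribution} for the sequence $(s_n)$ on $X=\mathbb{P}^1_{\mathbb{C}}$ with Kähler form $\omega=c_1(\overline{\mathcal{O}(1)})$ and positive hermitian line bundle $(\mathcal{O}(1),h_{\mathrm{FS}})$.

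Finally, a continuous function $f\colon\mathbb{C}\to\mathbb{C}$ with a finite limit at infinity extends to a continuous function on $\mathbb{P}^1_{\mathbb{C}}$, hence to a $(0,0)$-form of class $C^0$. Lemma \ref{lem_equidistribution}\,(ii) applied with $\Phi=f$ yields
$$\lim_{n\to\infty}\frac{1}{n}\sum_{\substack{z\in\mathbb{C}\\P_n(z)=0}}f(z)=\lim_{n\to\infty}\frac{1}{n}\int_{\mathrm{div}(s_n)}f=\int_{\mathbb{P}^1_{\mathbb{C}}}f\,c_1(\overline{\mathcal{O}(1)}),$$
and the conclusion follows by writing out the last integral in the affine coordinate $z=Z_0/Z_1$ via Equation (\ref{equ_c1FS}), noting that the singleton $\{[1:0]\}$ contributes nothing. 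The only step requiring any computation is the uniform comparison $\frac{1}{n}\log\|s_n\|_{\sup}=h_B(P_n)+O(\log n/n)$; the rest is a direct dictionary between Lemma \ref{lem_equidistribution}\,(ii) and the polynomial side via Lemma \ref{lem_hFS} and (\ref{equ_c1FS}).
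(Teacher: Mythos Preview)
Your proof is correct and follows essentially the same approach as the paper: translate to sections $s_n$ on $\mathbb{P}^1_{\mathbb{C}}$, use Lemma~\ref{lem_hFS} and an asymptotic comparison of $h_B(P_n)$ with $\tfrac{1}{n}\log\|s_n\|_{\sup}$, and then apply Lemma~\ref{lem_equidistribution}\,(ii). The only minor difference is that you obtain the upper bound $\tfrac{1}{n}\log\|s_n\|_{\sup}\le h_B(P_n)+\tfrac{\log(n+1)}{2n}$ directly via Cauchy--Schwarz, whereas the paper routes through the Gromov constant $C_1$ of (\ref{equ_supl2}); your estimate is in fact slightly sharper and more self-contained.
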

Note that we always take the sum over the zeros of $P_n$ counted with multiplicity.
\begin{proof}
	Let $(P_n)_{n\in\mathbb{Z}_{\ge 1}}$ be a sequence of polynomials $P_n\in\mathbb{C}[X]$ as in the proposition. Let us write $P_n=\sum_{j=0}^n a_{n,j}X^j=a_n\prod_{j=1}^n(X-\alpha_{n,j})$ and $s_n=\sum_{j=0}^n a_{n,j}S_j^n$ for the section $s_n\in H^0(\mathbb{P}^1_{\mathbb{C}},\mathcal{O}(1)^{\otimes n})\cong H^0(\mathbb{P}^1_{\mathbb{Z}},\mathcal{O}(1)^{\otimes n})_\mathbb{C}$ associated to $P_n$. By Lemma \ref{lem_hFS} the assumption in the proposition can be read as
	\begin{align}\label{equ_assumption-limit}
		\limsup_{n\to \infty}\left(h_B(P_n)-\tfrac{1}{n}\int_{\mathbb{P}_{\mathbb{C}}^1}\log|s_n|_{\mathrm{FS}} c_1(\overline{\mathcal{O}(1)})\right)\le 0.
	\end{align}
	Our goal is to apply Lemma \ref{lem_equidistribution} (ii). For this purpose, we have to compare $h_B(P_n)$ with $\|s_n\|_{\sup}$. First, let us compare $h_B(P_n)$ with the $L^2$-norm by using Equation (\ref{equ_L2norm-FS})
	\begin{align*}
		&h_B(P_n)-\tfrac{\log(n+1)}{2n}=\tfrac{1}{2n}\log\left(\tfrac{1}{n+1}\max_{0\le j\le n}\frac{|a_j|^2}{\binom{n}{j}}\right)\le\tfrac{1}{n}\log\|s_n\|\\
		&=\tfrac{1}{2n}\log\left(\frac{1}{n+1}\sum_{j=0}^n\frac{|a_j|^2}{\binom{n}{j}}\right)\le\tfrac{1}{2n}\log\max_{0\le j\le n}\frac{|a_j|^2}{\binom{n}{j}}=h_B(P_n).
	\end{align*}
	If we combine this with Equation (\ref{equ_supl2}) we get
	$$h_B(P_n)-\frac{\log(n+1)}{2n}\le\tfrac{1}{n}\log\|s_n\|_{\mathrm{sup}}\le h_B(P_n)+\frac{\log (C_1n)}{n}.$$
	This shows that $\lim_{n\to \infty} h_B(P_n)=\lim_{n\to\infty}\tfrac{1}{n}\log\|s_n\|_{\sup}$. Thus, the assumption (\ref{equ_assumption-limit}) is equivalent to
	$$\limsup_{n\to\infty}\frac{1}{n}\left(\log\|s_n\|_{\sup}-\int_{\mathbb{P}^1_{\mathbb{C}}}\log|s_n|_{\mathrm{FS}}c_1(\overline{\mathcal{O}(1)})\right)\le 0.$$
	As the value in the brackets is always non-negative, this is again equivalent to
	$$\lim_{n\to\infty}\frac{1}{n}\left(\log\|s_n\|_{\sup}-\int_{\mathbb{P}^1_{\mathbb{C}}}\log|s_n|_{\mathrm{FS}}c_1(\overline{\mathcal{O}(1)})\right)= 0.$$
	Now by Lemma \ref{lem_equidistribution} (ii) we get
	$$\lim_{n\to \infty}\frac{1}{n}\sum_{j=1}^n f(\alpha_{n,j})=\lim_{n\to\infty}\frac{1}{n}\int_{\mathrm{div}(s_n)}\overline{f}=\int_{\mathbb{P}_{\mathbb{C}}^1} \overline{f}c_1(\overline{\mathcal{O}(1)})=\int_{\mathbb{C}}f(z)\frac{dzd\overline{z}}{(1+|z|^2)^2},$$
	where $\overline{f}\colon \mathbb{P}_{\mathbb{C}}^1\to \mathbb{C}$ denotes the unique continuous extension of $f\colon \mathbb{C}\cong U\to \mathbb{C}$, which exists as $\lim_{|z|\to\infty}f(z)$ is well-defined and finite. Note that we used the identification $\mathbb{C}\cong U$ by setting $Z_1=1$ in the first and the last equality and that $$\mathrm{div}(s_n)=\{[\alpha_{n,j}:1]\in \mathbb{P}_{\mathbb{C}}^1~|~1\le j\le n\}.$$
\end{proof}
Our next result shows that in sets of integer polynomials of bounded height $h_B$ the condition in the last proposition is almost always satisfied if the degree goes to infinity. This corresponds to an equidistribution result on average, which we will deduce from the general equidistribution result in Corollary \ref{cor_equidistribution-general}.
\begin{Cor}[= Corollary \ref{cor_distribution-algebraic-numbers}]\label{cor_distribution-algebraic-numbers-text}
	For any $n\in\mathbb{Z}_{\ge 0}$ and any $r\in\mathbb{R}_{>0}$ we define
	$$\mathcal{P}_{n,r}=\left\{\left.P\in \mathbb{Z}[X]~\right|~\deg P=n,~ h_B(P)\le r\right\}.$$
	Let $(r_n)_{n\in\mathbb{Z}_{\ge 1}}$ be any  sequence of real numbers.
	\begin{enumerate}[(i)]
		\item If $r_n$ converges to a value $\tau\in (0,\infty)$, then it holds
		$$\lim_{n\to \infty} \frac{1}{\#\mathcal{P}_{n,r_n}}\sum_{P\in \mathcal{P}_{n,r_n}} \left|\tau+\tfrac{1}{2}-h_{\mathrm{FS}}(P)\right|=0.$$
		\item If $0<\liminf_{n\to \infty} r_n\le \limsup_{n\to \infty} r_n<\infty$, then for any continuous function $f\colon \mathbb{C}\to\mathbb{C}$, such that $\lim_{|z|\to\infty}f(z)$ is well-defined and finite, it holds
		$$\lim_{n\to \infty} \frac{1}{\#\mathcal{P}_{n,r_n}}\sum_{P\in \mathcal{P}_{n,r_n}} \left|\frac{1}{n}\sum_{\gf{z\in\mathbb{C}}{P(z)=0}}f(z)-\frac{i}{2\pi}\int_\mathbb{C}f(z)\frac{dzd\overline{z}}{\left(1+|z|^2\right)^2}\right|=0.$$
		In particular, we have
		$$\lim_{n\to \infty} \frac{1}{\#\mathcal{P}_{n,r_n}}\sum_{P\in \mathcal{P}_{n,r_n}} \frac{1}{n}\sum_{\gf{z\in\mathbb{C}}{P(z)=0}}f(z)=\frac{i}{2\pi}\int_\mathbb{C}f(z)\frac{dzd\overline{z}}{\left(1+|z|^2\right)^2}.$$		
	\end{enumerate}
\end{Cor}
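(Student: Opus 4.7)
The plan is to apply the general equidistribution results of Section~\ref{sec_distribution-small-sections} to $\mathcal{X} = \mathcal{Y} = \mathbb{P}^1_{\mathbb{Z}}$ (an arithmetic variety of dimension $d=2$) equipped with the arithmetically ample hermitian line bundle $\overline{\mathcal{L}} = \overline{\mathcal{O}(1)}(\epsilon)$, where I fix a parameter $\epsilon>0$ with $\epsilon < \liminf_{n\to\infty} r_n$. Under the isomorphism (\ref{equ_polynomial-map}) I identify a polynomial $P=\sum a_j X^j$ with the section $s=\sum a_j S_j^n$ and encode the Bombieri condition $h_B(P)\le r_n$ by the compact, convex, symmetric set
$$K_n = \Bigl\{s = \sum_{j=0}^n a_j S_j^n \in H^0(\mathbb{P}^1_\mathbb{Z},\mathcal{O}(1)^{\otimes n})_\mathbb{R} \,\Big|\, |a_j| \le e^{nr_n}\sqrt{\tbinom{n}{j}} \text{ for all } j\Bigr\}.$$
Using (\ref{equ_L2norm-FS}) and $|s|_{\overline{\mathcal{L}}^{\otimes n}} = e^{-n\epsilon}|s|_{\mathrm{FS}}$ one checks the sandwich $\overline{B_{R'_n}} \subseteq K_n \subseteq \overline{B_{R_n}}$ with $R_n = e^{n(r_n-\epsilon)}$ and $R'_n = e^{n(r_n-\epsilon)}/\sqrt{n+1}$, so that $R_n^{1/n}$ and ${R'_n}^{1/n}$ share a common limit in $[1,\infty)$ in case~(i), and coinciding $\liminf$ and $\limsup$ in $[1,\infty)$ in case~(ii).

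The lattice points of $K_n$ are precisely the integer polynomials of degree at most $n$ with $h_B(P)\le r_n$, so $\mathcal{P}_{n,r_n}$ corresponds to those with $a_n \ne 0$. The sections with $a_n=0$ form an $n$-dimensional coordinate slice of $K_n$ whose lattice-point count is asymptotically smaller by a factor of order $e^{-nr_n}$; together with $\liminf r_n > 0$ this gives $\#\mathcal{P}_{n,r_n}/\#S_n \to 1$, where $S_n = (K_n \setminus \{0\}) \cap H^0(\mathbb{P}^1_{\mathbb{Z}}, \overline{\mathcal{L}}^{\otimes n})$. Combined with uniform boundedness of the relevant summands (using Lemma~\ref{lem_integral-upper-bound} to bound $h_{\mathrm{FS}}(P)$ and the boundedness of $f$ in part~(ii)), replacing averages over $\mathcal{P}_{n,r_n}$ by averages over $S_n$ preserves the limits.

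For part~(ii) I extend a continuous $f\colon \mathbb{C}\to\mathbb{C}$ with finite limit at infinity to a continuous $\bar f \colon \mathbb{P}^1_\mathbb{C} \to \mathbb{C}$, viewed as a $(0,0)$-form (matching $e-2=0$). I apply Corollary~\ref{cor_equidistribution-general} with $\Phi = \bar f$ along every subsequence where $r_n$ converges, use (\ref{equ_c1FS}) to rewrite $c_1(\overline{\mathcal{O}(1)})$ on the affine chart, and invoke Lemma~\ref{lem_convergence} to lift the conclusion to the full sequence; the outer statement reduces to the inner by the triangle inequality. For part~(i), I run Theorem~\ref{thm_equidistribution-general} with the same data and transit via (\ref{equ_height}): using Lemma~\ref{lem_hFS} to identify $h_{\overline{\mathcal{L}}}(\mathrm{div}(s)) = \epsilon + h_{\mathrm{FS}}(P_s)$, together with the direct calculation $h_{\overline{\mathcal{L}}}(\mathbb{P}^1_\mathbb{Z}) = 2\epsilon+\tfrac12$ from (\ref{equ_chambert-loir}) applied to $s=Z_1$ and Lemma~\ref{lem_integral-P1}(ii), the inner integral collapses, after the triangle inequality, to $|\tau+\tfrac12 - h_{\mathrm{FS}}(P)|$.

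The main technical obstacle is book-keeping rather than conceptual depth: verifying the $\sqrt{n+1}$ sandwich, adjusting for the degree condition $a_n\ne 0$, and ensuring $R_n^{1/n}\ge 1$ eventually, which is precisely what the shift by $\epsilon$ buys, given that $\overline{\mathcal{O}(1)}$ itself is not arithmetically ample. The substantive content of the corollary is then essentially an application of Theorem~\ref{thm_equidistribution-general} (respectively Corollary~\ref{cor_equidistribution-general}) to the Bombieri box $K_n$.
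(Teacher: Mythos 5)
Your proposal is correct and follows essentially the same route as the paper: the Bombieri box $K_n$, the sandwich $\overline{B_{R'_n}}\subseteq K_n\subseteq\overline{B_{R_n}}$ via (\ref{equ_norm-bombieri-l2}), the density-one reduction from all lattice points to those with $a_n\neq 0$, Theorem \ref{thm_equidistribution-general} for (i) and Corollary \ref{cor_equidistribution-general} plus Lemma \ref{lem_convergence} for (ii). The only cosmetic difference is that in part (i) you pass through the height identity (\ref{equ_height}) and the computation $h_{\overline{\mathcal{O}(1)}(\epsilon)}(\mathbb{P}^1_{\mathbb{Z}})=2\epsilon+\tfrac12$, whereas the paper manipulates the integral directly and invokes Lemma \ref{lem_hFS}; these are equivalent.
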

The idea of the proof is to apply Theorem \ref{thm_equidistribution-general} and Corollary \ref{cor_equidistribution-general} to $\mathcal{X}=\mathbb{P}_{\mathbb{Z}}^1$ and the arithmetically ample hermitian line bundle $\overline{\mathcal{O}(1)}(\epsilon)$ for any $\epsilon>0$. To apply Theorem \ref{thm_equidistribution-general} and Corollary \ref{cor_equidistribution-general} we have to choose appropriate convex subsets $K_n\subseteq H^0(\mathbb{P}_{\mathbb{Z}}^1,\mathcal{O}(1)^{\otimes n})_{\mathbb{R}}$. For this purpose, we consider the Bombieri $\infty$-norm $\|\cdot\|_B$ on $H^0(\mathbb{P}_{\mathbb{Z}}^1,\mathcal{O}(1)^{\otimes n})_{\mathbb{C}}$, which is the analogue of $h_B$ on the set of polynomials. We define $\|\cdot\|_B$ by
$$\|\cdot\|_B\colon H^0(\mathbb{P}^1_\mathbb{Z},\mathcal{O}(1)^{\otimes n})_\mathbb{C}\to \mathbb{R}_{\ge 0},\qquad \sum_{k=0}^n a_kS_k\mapsto \max_{0\le k\le n}\frac{|a_k|}{\sqrt{\binom{n}{k}}}$$
and hence, it is the $\max$-norm associated to the basis $\sqrt{\binom{n}{0}}\cdot S^n_0,\dots,\sqrt{\binom{n}{n}}\cdot S^n_n$. Now we set
$$K_n=\overline{B_{e^{n r_n}}(\|\cdot\|_B)}\subseteq H^0(\mathbb{P}_\mathbb{Z}^1,\mathcal{O}(1)^{\otimes n})_\mathbb{R}$$
to be the closed and centered ball in the real space $H^0(\mathbb{P}_\mathbb{Z}^1,\mathcal{O}(1))_\mathbb{R}$ of radius $e^{n r_n}$ under the Bombieri $\infty$-norm $\|\cdot\|_B$, where $r_n$ is given as in Corollary \ref{cor_distribution-algebraic-numbers-text}. 

Next, let us bound $K_n$ from above and from below by balls in the inner product space $H^0(\mathbb{P}_{\mathbb{Z}}^1,\overline{\mathcal{O}(1)}(\epsilon)^{\otimes n})_{\mathbb{R}}$.
Let $\|\cdot\|_{\epsilon}$ denote the $L^2$-norm on $H^0(\mathbb{P}_{\mathbb{Z}}^1,\mathcal{O}(1)^{\otimes n})_{\mathbb{C}}$ induced by the metric of $\overline{\mathcal{O}(1)}(\epsilon)^{\otimes n}$ and the Kähler form $c_1(\overline{\mathcal{O}(1)})$. Note that $\|s\|_{\epsilon}=e^{-n\epsilon}\|s\|$ for any section $s\in H^0(\mathbb{P}_\mathbb{Z}^1,\mathcal{O}(1)^{\otimes n})_{\mathbb{C}}$. For any $s=\sum_{k=0}^n a_k S_k^n$ we get by Equation (\ref{equ_L2norm-FS})
\begin{align}\label{equ_norm-bombieri-l2}
	e^{2n\epsilon}\|s\|_{\epsilon}^2=\frac{1}{n+1}\sum_{k=0}^n \frac{|a_k|^2}{\binom{n}{k}}\le\|s\|_B^2=\max_{0\le k\le n}\frac{|a_k|^2}{\binom{n}{k}}\le \sum_{k=0}^n \frac{|a_k|^2}{\binom{n}{k}}=(n+1)e^{2n\epsilon}\|s\|_{\epsilon}^2.
\end{align}
Thus, in the inner product space $H^0(\mathbb{P}_{\mathbb{Z}}^1,\overline{\mathcal{O}(1)}(\epsilon)^{\otimes n})_{\mathbb{R}}$ we have that
$$\overline{B_{t'_n}}\subseteq K_n\subseteq \overline{B_{t_n}}$$
for all $n\in\mathbb{Z}_{\ge 1}$, where $t'_n=\frac{e^{n(r_n-\epsilon)}}{\sqrt{n+1}}$ and $t_n=e^{n(r_n-\epsilon)}$.

Now we give the proof of Corollary \ref{cor_distribution-algebraic-numbers-text}.
\begin{proof}[Proof of Corollary \ref{cor_distribution-algebraic-numbers-text}]
	To prove (i) let $(r_n)_{n\in\mathbb{Z}_{\ge 1}}$ be a sequence of positive reals with $\lim_{n\to\infty} r_n=\tau$ for some $\tau\in (0,\infty)$. As $\epsilon>0$ was arbitrary, we may choose $0<\epsilon<\tau$. Then we get
	$$\lim_{n\to\infty}(t_n')^{1/n}=\lim_{n\to\infty}(t_n)^{1/n}=\lim_{n\to\infty}e^{r_n-\epsilon}=e^{\tau-\epsilon}>1.$$
	Thus, we can apply Theorem \ref{thm_equidistribution-general}. If we write $T_n=K_n\cap H^0(\mathbb{P}_{\mathbb{Z}}^1,\mathcal{O}(1)^{\otimes n})\setminus\{0\}$, this yields
	$$\lim_{n\to\infty}\frac{1}{\# T_n}\sum_{s\in T_n}\frac{1}{n}\int_{\mathbb{P}_{\mathbb{C}}^1}\left|\log|s|_{\mathrm{FS},\epsilon}-n\log e^{\tau-\epsilon}\right|c_1(\overline{\mathcal{O}(1)})=0,$$
	where $|s|_{\mathrm{FS},\epsilon}=e^{-\epsilon n}|s|_{\mathrm{FS}}$ is the norm induced by the metric on $\overline{\mathcal{O}(1)}(\epsilon)^{\otimes n}$.
	As the factors $e^{-\epsilon n}$ cancel out, we get by the triangle inequality for integrals
	\begin{align}\label{equ_limit-of-Tn}
	\lim_{n\to \infty}\frac{1}{\#T_n}\sum_{s\in T_n}\left| \frac{1}{n}\int_{\mathbb{P}_{\mathbb{C}}^1}\log|s|_{\mathrm{FS}}c_1(\overline{\mathcal{O}(1)})-\tau\right|=0.
	\end{align}
	
	Let us define the subset
	$$T'_n=\left\{\left.\sum_{k=0}^n a_j S_k^n\in T_n~\right|~a_n\neq 0\right\}\subseteq T_n.$$
	As bounding the Bombieri $\infty$-norm means bounding every coefficient independently, we get
	$$1\ge\frac{\#T'_n}{\#T_n}\ge\frac{\#\{a_n\in\mathbb{Z}~|~a_n\neq 0\text{ and } |a_n|\le e^{nr_n}\}}{\#\{a_n\in\mathbb{Z}~|~|a_n|\le e^{nr_n}\}}\ge\frac{2e^{nr_n}-2}{2e^{nr_n}+1}.$$
	Note, that we only get an inequality instead of an equality in the second step as we removed $0$ from $T_n$.
	As $\lim_{n\to \infty}r_n=\tau>0$ we get $\lim_{n\to\infty} \frac{\# T_n'}{\#T_n}=1$. 
	Thus, we can deduce from Equation (\ref{equ_limit-of-Tn}) that
	$$\lim_{n\to \infty}\frac{1}{\#T'_n}\sum_{s\in T'_n}\left| \frac{1}{n}\int_{\mathbb{P}_{\mathbb{C}}^1}\log|s|_{\mathrm{FS}}c_1(\overline{\mathcal{O}(1)})-\tau\right|=0.$$
	By construction the map in (\ref{equ_polynomial-map}) induces a bijection
	$$T'_n\xrightarrow{\cong}\mathcal{P}_{n,r_n}.$$
	Using this bijection and Lemma \ref{lem_hFS} we can replace $T'_n$ by $\mathcal{P}_{n,r_n}$ and the integral by the height $h_{\mathrm{FS}}(P)-\frac{1}{2}$ to get
	$$\lim_{n\to \infty}\frac{1}{\#\mathcal{P}_{n,r_n}}\sum_{P\in \mathcal{P}_{n,r_n}}\left| h_{\mathrm{FS}}(P)-\tfrac{1}{2}-\tau\right|=0.$$
	as claimed in (i).
	
	To prove (ii) we now only assume that the sequence $(r_n)_{n\in\mathbb{Z}_{\ge 1}}$ of positive reals satisfies
	$$0<\liminf_{n\to\infty}r_n\le \limsup_{n\to \infty} r_n<\infty.$$
	By Lemma \ref{lem_convergence} it is enough to prove that 
	\begin{align}\label{equ_limit-polynomial-subsequence}
	\lim_{j\to\infty}\frac{1}{\#\mathcal{P}_{n_j,r_{n_j}}}\sum_{P\in\mathcal{P}_{n_j,r_{n_j}}}\left|\frac{1}{n_j}\sum_{\gf{z\in\mathbb{C}}{P(z)=0}}f(z)-\frac{i}{2\pi}\int_{\mathbb{C}}f(z)\frac{dzd\overline{z}}{(1+|z|^2)^2}\right|=0
	\end{align}
	for every subsequence $(n_j)_{j\in\mathbb{Z}_{\ge 1}}$ of $\mathbb{Z}_{\ge 1}$ such that $(r_{n_j})_{j\in\mathbb{Z}_{\ge 0}}$ converges in $(0,\infty)$. Let $(n_j)_{j\in\mathbb{Z}_{\ge 1}}$ be any such subsequence and $\tau=\lim_{j\to\infty}r_{n_j}$. We again choose $\epsilon>0$ such that $0<\epsilon<\tau$. Then
	$$\lim_{j\to\infty}(t'_j)^{1/n_j}=\lim_{j\to\infty}(t_j)^{1/n_j}=\lim_{j\to\infty}e^{r_{n_j}-\epsilon}>1.$$
	Thus, we can apply Corollary \ref{cor_equidistribution-general} and with the notation as in the proof of (i) we get
	$$\lim_{j\to \infty}\frac{1}{\#T_{n_j}}\sum_{s\in T_{n_j}}\left|\frac{1}{n_j}\int_{\mathrm{div}(s)(\mathbb{C})}\overline{f}-\int_{\mathbb{P}_{\mathbb{C}}^1}\overline{f}c_1(\overline{\mathcal{O}(1)})\right|=0,$$
	where $\overline{f}\colon \mathbb{P}_{\mathbb{C}}^1\to\mathbb{C}$ denotes the unique extension of $f$ from $\mathbb{C}\cong U$ to $\mathbb{P}_{\mathbb{C}}^1$, which exists as $\lim_{|z|\to\infty}f(z)$ is well-defined and finite. By $\lim_{j\to \infty} \frac{\# T'_j}{\#T_j}=1$, the bijection $T'_j\cong \mathcal{P}_{n_j,r_{n_j}}$, the identification $U\cong \mathbb{C}$, and the fact that $\mathrm{div}(s)(\mathbb{C})=\sum_{k=1}^n [\alpha_k:1]$ for the zeros $\alpha_k$ of the polynomial corresponding to $s$, we get Equation (\ref{equ_limit-polynomial-subsequence}) as desired.
\end{proof}
One may ask about a similar result for the algebraic points in $\mathbb{P}_{\mathbb{C}}^n$.
\begin{Que}
	Can we generalize the equidistribution result in Corollary \ref{cor_distribution-algebraic-numbers-text} (ii) to algebraic points in $\mathbb{P}_{\mathbb{C}}^n$ by applying the equidistribution result in Corollary \ref{cor_equidistribution-general} to the line bundle $\overline{\mathcal{O}(1)}(\epsilon)$ on the projective space $\mathbb{P}_{\mathbb{Z}}^n$ equipped with the Fubini--Study metric multiplied with $e^{-2\epsilon}$ for some $\epsilon>0$? As this gives only an equidistribution result on the $(n-1)$-dimensional divisors of the polynomials in $\mathbb{Z}[X_1,\dots,X_n]$, one may use Corollary \ref{cor_equidistribution-general} again on the divisors and use induction to obtain an equidistribution result on the algebraic points in $\mathbb{P}_{\mathbb{C}}^n$.
\end{Que}
\section{Irreducibility of Arithmetic Cycles}\label{sec_irreducibility}
In this section we define a new notion of irreducibility for horizontal arithmetic cycles of Green type. In Section \ref{sec_air} we motivate this notion by pointing out that the naive definition of irreducibility would make no sense for these cycles. Instead, we will define a horizontal arithmetic cycle of Green type to be $\epsilon$-irreducible if the degree of its analytic part is small compared to the degree of its irreducible classical part. We also discuss some trivial examples and some criteria for checking this definition of irreducibility. As an application, we discuss the computation of arithmetic intersection numbers proving Proposition \ref{pro_intersection-finite}. In section \ref{sec_bertini} we prove our main result Theorem \ref{thm_air}, which can be considered as an analogue of Bertini's theorem for arithmetically ample hermitian line bundles and our new notion of irreducibility. Finally proving Theorem \ref{thm_intersection-line-bundles-geometric}, we will show that the arithmetic intersection number of any arithmetically ample hermitian line bundles can be computed as a limit of classical geometric intersection numbers over the finite places.
\subsection{Asymptotically Irreducibly Representable Cycles}\label{sec_air}
In this section we motivate and define the notion of $(\epsilon,\overline{\mathcal{M}})$-irreducibility and asymptotical irreducible representability for arithmetic cycles. Further, we discuss some examples, basic properties and applications. Especially, we will prove Proposition \ref{pro_intersection-finite}.

Let $\mathcal{X}$ be a generically smooth projective arithmetic variety.
Naively, one would call an effective arithmetic cycle $(\mathcal{Z},T)\in \widehat{Z}_{D}^p(\mathcal{X})$ irreducible if for any decomposition $(\mathcal{Z},T)=(\mathcal{Z}_1,T_1)+(\mathcal{Z}_2,T_2)$ into effective arithmetic cycles $(\mathcal{Z}_1,T_1)$ and $(\mathcal{Z}_2,T_2)$ either $(\mathcal{Z}_1,T_1)=0$ or $(\mathcal{Z}_2,T_2)=0$. By this definition the irreducible cycles are exactly the arithmetic cycles of the form $(\mathcal{Z},0)$, where $\mathcal{Z}$ is irreducible. But if $\mathcal{Z}$ is horizontal, the cycle $(\mathcal{Z},0)$ is not really closed in the sense of arithmetic intersection theory, as it does not intersect the fiber at infinity. To avoid this issue one may restrict to the group $\widehat{Z}^p(\mathcal{X})$ of arithmetic cycles of Green type. In many situations this is advantageous. For example if $p=1$, its group of rational equivalent classes $\widehat{CH}^1(\mathcal{X})$ is isomorphic to the group $\widehat{\mathrm{Pic}}(\mathcal{X})$ of isomorphism classes of hermitian line bundles. 

We may again define an effective arithmetic cycle $(\mathcal{Z},g_{\mathcal{Z}})\in\widehat{Z}^p(\mathcal{X})$ of Green type to be irreducible if for any decomposition $(\mathcal{Z},g_{\mathcal{Z}})=(\mathcal{Z}_1,g_{1})+(\mathcal{Z}_2,g_{2})$ into effective arithmetic cycles $(\mathcal{Z}_1,g_{1})$ and $(\mathcal{Z}_2,g_{2})$ of Green type either $(\mathcal{Z}_1,g_{1})=0$ or $(\mathcal{Z}_2,g_{2})=0$. Then all irreducible elements in $\widehat{Z}^1(\mathcal{X})$ are of the form $(\mathcal{Z},0)$ for some vertical irreducible divisor $\mathcal{Z}$. Indeed, if $(\mathcal{Z},g_{\mathcal{Z}})\in \widehat{Z}^1(\mathcal{X})$ has a non-zero Green current $g_{\mathcal{Z}}$, we can subtract a non-negative and non-zero $C^{\infty}$-function $f\colon \mathcal{X}(\mathbb{C})\to\mathbb{R}$ such that $g_\mathcal{Z}-f$ is non-negative and another Green current for $\mathcal{Z}$. Then $(\mathcal{Z},g_{\mathcal{Z}})=(\mathcal{Z},g_{\mathcal{Z}}-f)+(0,f)$ is a decomposition into two non-trivial effective arithmetic cycles, such that $(\mathcal{Z},g_{\mathcal{Z}})$ is not irreducible. 
Thus, this definition does not allow an analogue of Bertini's Theorem, as for an arithmetically ample hermitian line bundle $\overline{\mathcal{L}}$ every representing arithmetic divisor $\widehat{\mathrm{div}}(\overline{\mathcal{L}},s)$ for some small section $s\in \widehat{H}^0(\mathcal{X},\overline{\mathcal{L}})$ is not irreducible in this sense.

To get a notion of irreducibility which is more compatible with classical results as Bertini's Theorem, we instead measure how far an arithmetic cycle of Green type is away from being irreducible. Thus, an arithmetic cycle of Green type $(\mathcal{Z},g_\mathcal{Z})\in \widehat{Z}_p(\mathcal{X})$ should be nearly irreducible if $\mathcal{Z}$ is irreducible and the degree of the part at infinity $(\overline{\mathcal{M}}^p\cdot (0,g_\mathcal{Z}))$ is very small compared to the degree of the classical part $(\overline{\mathcal{M}}^p\cdot (\mathcal{Z},0))$. Here, $\overline{\mathcal{M}}$ denotes any arithmetically ample hermitian line bundle. Let us put this into a formal definition.
\begin{Def}
Let $\mathcal{X}$ be any generically smooth projective arithmetic variety.
\begin{enumerate}[(i)]
	\item
	Let $(\mathcal{Z},g_{\mathcal{Z}})\in \widehat{Z}_{p}(\mathcal{X})$ be any effective arithmetic cycle on $\mathcal{X}$. For any positive real number $\epsilon>0$ and any arithmetically ample hermitian line bundle $\overline{\mathcal{M}}$ we say that $(\mathcal{Z},g_{\mathcal{Z}})$ is \emph{$(\epsilon,\overline{\mathcal{M}})$-irreducible} if $\mathcal{Z}$ is irreducible, we have
	\begin{align}\label{equ_air2}
		\int_{\mathcal{X}(\mathbb{C})}g_{\mathcal{Z}}\wedge c_1(\overline{\mathcal{M}})^{p}< \epsilon\cdot (\overline{\mathcal{M}}^p\cdot(\mathcal{Z},0))
	\end{align}
	and $\frac{i}{2\pi}\partial\overline{\partial}g_{\mathcal{Z}}+\delta_{\mathcal{Z}(\mathbb{C})}$ is represented by a semi-positive form.
	\item We say that a class $\alpha\in\widehat{\mathrm{CH}}_p(\mathcal{X})$ in the arithmetic Chow group of $\mathcal{X}$ is \emph{air (asymptotically irreducibly representable)} if for any positive real number $\epsilon>0$ and any arithmetically ample hermitian line bundle $\overline{\mathcal{M}}$ there exists an $n\in\mathbb{Z}_{\ge 1}$ such that $n\cdot \alpha$ can be represented by an $(\epsilon,\overline{\mathcal{M}})$-irreducible arithmetic cycle $(\mathcal{Z},g_{\mathcal{Z}})$. 
	We call $\alpha$ \emph{generically smoothly air} if the arithmetic cycle $(\mathcal{Z},g_{\mathcal{Z}})$ can always be chosen such that $\mathcal{Z}$ is horizontal and $\mathcal{Z}_\mathbb{Q}$ is smooth.
\end{enumerate}
\end{Def}
Note that it is sufficient to check (ii) only for small $\epsilon$. If $\mathcal{Z}$ is a irreducible horizontal cycle, $(\overline{\mathcal{M}}^p\cdot(\mathcal{Z},0))=(\overline{\mathcal{M}}|_{\mathcal{Z}}^p)$ is just the $p$-th arithmetic self-intersection of $\overline{\mathcal{M}}$ restricted to $\mathcal{Z}$ by Lemma \ref{lem_intersection-Z0}. The assumption of the semi-positivity of $\frac{i}{2\pi}\partial\overline{\partial}g_{\mathcal{Z}}+\delta_{\mathcal{Z}(\mathbb{C})}$ in (i) ensures that there are no ``local maxima'' of $g_{\mathcal{Z}}$ outside $\mathcal{Z}(\mathbb{C})$. For example, if $\mathcal{Z}$ is of codimension $1$, $g_{\mathcal{Z}}$ is just a real valued function on $\mathcal{X}(\mathbb{C})\setminus\mathcal{Z}(\mathbb{C})$ with pole at $\mathcal{Z}(\mathbb{C})$. By the semi-positivity of $\frac{i}{2\pi}\partial\overline{\partial}g_{\mathcal{Z}}+\delta_{\mathcal{Z}_{\mathbb{C}}}$ we get that $g_{\mathcal{Z}}$ is pluri-subharmonic on $\mathcal{X}(\mathbb{C})\setminus\mathcal{Z}(\mathbb{C})$ and hence, it has no local maxima on $\mathcal{X}(\mathbb{C})\setminus\mathcal{Z}(\mathbb{C})$.
 Let us consider some examples.
\begin{Exam}
	\begin{enumerate}[(i)]
		\item A trivial example of an $(\epsilon,\overline{\mathcal{M}})$-irreducible cycle for any $\epsilon>0$ and any arithmetically ample hermitian line bundle $\overline{\mathcal{M}}$ is $(\mathcal{X},0)\in\widehat{Z}^0(\mathcal{X})$ as by the arithmetic ampleness of $\overline{\mathcal{M}}$ we have $(\overline{\mathcal{M}}^d)>0$ with $d=\dim \mathcal{X}$, see \cite[Proposition 5.39]{Mor14}, and 
		$$0=\int_{\mathcal{X}(\mathbb{C})}0\cdot c_1(\overline{\mathcal{M}})^d<\epsilon\cdot (\overline{\mathcal{M}}^d\cdot(\mathcal{X},0))=\epsilon\cdot (\overline{\mathcal{M}}^d).$$
		In particular, the class of $(\mathcal{X},0)$ in $\widehat{\mathrm{CH}}^0(\mathcal{X})$ is generically smoothly air.
		\item If $\mathcal{Z}\subseteq \mathcal{X}$ is an irreducible vertical cycle, then $(\mathcal{Z},0)\in\widehat{Z}_p(\mathcal{X})$ is $(\epsilon,\overline{\mathcal{M}})$-irreducible for any $\epsilon>0$ and any arithmetically ample hermitian line bundle $\overline{\mathcal{M}}$, as the left hand side of (\ref{equ_air2}) is $0$ and the right hand side is positive by Lemma \ref{lem_integral-bound}. Thus, the class of $(\mathcal{Z},0)$ in $\widehat{\mathrm{CH}}_p(\mathcal{X})$ is air but not generically smoothly air as $(\mathcal{Z},0)$ is never rational equivalent to a horizontal arithmetic cycle.
		\item We will see in Theorem \ref{thm_air-text} that any arithmetically ample hermitian line bundle $\overline{\mathcal{L}}\in\widehat{\mathrm{Pic}}(\mathcal{X})\cong \widehat{\mathrm{CH}}^1(\mathcal{X})$ is generically smoothly air.
	\end{enumerate}
\end{Exam}

The following proposition is useful to check the property of $(\epsilon,\overline{\mathcal{M}})$-irreducibility and of airness.
\begin{Pro}\label{pro_airness-check}
	Let $\mathcal{X}$ be a generically smooth projective arithmetic variety.
	\begin{enumerate}[(a)]
		\item Let $(\mathcal{Z},g_{\mathcal{Z}})\in \widehat{Z}_p(\mathcal{X})$ be an effective arithmetic cycle, such that $\mathcal{Z}$ is irreducible and $\frac{i}{2\pi}\partial\overline{\partial}g_{\mathcal{Z}}+\delta_{\mathcal{Z}(\mathbb{C})}$ is represented by a semi-positive form. Further, let $\epsilon>0$ and $\overline{\mathcal{M}}$ be any arithmetically ample hermitian line bundle. The following are equivalent:
		\begin{enumerate}[(i)]
			\item The arithmetic cycle $(\mathcal{Z},g_{\mathcal{Z}})$ is $(\epsilon,\overline{\mathcal{M}})$-irreducible.
			\item It holds
			$$\int_{\mathcal{X}(\mathbb{C})} g_{\mathcal{Z}}\wedge c_1(\overline{\mathcal{M}})^p<\frac{\epsilon}{1+\epsilon}(\overline{\mathcal{M}}^p\cdot (\mathcal{Z},g_{\mathcal{Z}})).$$			
		\end{enumerate}
		\item Let $\alpha\in \widehat{\mathrm{CH}}_p(\mathcal{X})$ and $\overline{\mathcal{N}}$ be any fixed arithmetically ample hermitian line bundle on $\mathcal{X}$. The following are equivalent
		\begin{enumerate}[(i)]
			\item The class $\alpha$ is air.
			\item For every $\epsilon>0$ there exists an $n\in\mathbb{Z}_{\ge 1}$ such that $n\cdot \alpha$ can be represented by an effective arithmetic cycle $(\mathcal{Z},g_{\mathcal{Z}})$ with $\mathcal{Z}$ irreducible, satisfying
			$$\int_{\mathcal{X}(\mathbb{C})} g_{\mathcal{Z}}\wedge c_1(\overline{\mathcal{N}})^p<n\epsilon$$
			and $\frac{i}{2\pi}\partial\overline{\partial}g_{\mathcal{Z}}+\delta_{\mathcal{Z}(\mathbb{C})}\ge 0$ as a form.
		\end{enumerate}
		This equivalence stays true if we assume $\alpha$ in (i) to be generically smoothly air and $\mathcal{Z}_{\mathbb{Q}}$ in (ii) to be smooth.
	\end{enumerate}
\end{Pro}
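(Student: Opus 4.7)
Both parts of the proposition are instances of the same algebraic manipulation: Lemma \ref{lem_intersection-restriction} splits the arithmetic intersection number as $(\overline{\mathcal{M}}^p\cdot(\mathcal{Z},g_{\mathcal{Z}}))=(\overline{\mathcal{M}}^p\cdot(\mathcal{Z},0))+\tfrac12\int_{\mathcal{X}(\mathbb{C})}g_{\mathcal{Z}}\wedge c_1(\overline{\mathcal{M}})^p$, so the two quantities controlling an $(\epsilon,\overline{\mathcal{M}})$-irreducible cycle are directly comparable. Throughout I will write $A=\int g_{\mathcal{Z}}\wedge c_1(\overline{\mathcal{M}})^p$ and $B=(\overline{\mathcal{M}}^p\cdot(\mathcal{Z},0))$; note $A\ge 0$ by effectivity of $(\mathcal{Z},g_{\mathcal{Z}})$ together with positivity of $c_1(\overline{\mathcal{M}})$, while $B>0$ by Lemma \ref{lem_integral-bound} and the ampleness of $\overline{\mathcal{M}}$.

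For part (a), the equivalence of (i) $A<\epsilon B$ and (ii) $A<\tfrac{\epsilon}{1+\epsilon}(B+A/2)$ reduces, via the splitting $(\overline{\mathcal{M}}^p\cdot(\mathcal{Z},g_{\mathcal{Z}}))=B+A/2$, to an elementary rearrangement of a linear inequality in the non-negative quantities $A$ and $B$. For part (b) fix the distinguished ample $\overline{\mathcal{N}}$. Direction (i)$\Rightarrow$(ii): apply the definition of airness with $\overline{\mathcal{M}}=\overline{\mathcal{N}}$ to obtain, for each $\epsilon>0$, an $n$ and a representative $(\mathcal{Z},g_{\mathcal{Z}})$ of $n\alpha$ with $A<\epsilon B$; Lemma \ref{lem_intersection-restriction} once more gives $B\le (\overline{\mathcal{N}}^p\cdot(\mathcal{Z},g_{\mathcal{Z}}))=n(\overline{\mathcal{N}}^p\cdot\alpha)$, so $A<n\epsilon(\overline{\mathcal{N}}^p\cdot\alpha)$, and replacing $\epsilon$ by $\epsilon/(\overline{\mathcal{N}}^p\cdot\alpha)$ produces the statement of (ii).

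For the reverse direction (ii)$\Rightarrow$(i), given any arithmetically ample $\overline{\mathcal{M}}$, the Chern forms $c_1(\overline{\mathcal{M}})$ and $c_1(\overline{\mathcal{N}})$ are Kähler forms on the compact manifold $\mathcal{X}(\mathbb{C})$, so compactness provides a constant $C>0$ with $c_1(\overline{\mathcal{M}})^p\le C\,c_1(\overline{\mathcal{N}})^p$ as real $(p,p)$-forms. Hypothesis (ii) then supplies, for every $\epsilon>0$, an $n$ and a representative with $\int g_{\mathcal{Z}}\wedge c_1(\overline{\mathcal{M}})^p\le Cn\epsilon$, and Lemma \ref{lem_intersection-restriction} applied to $\overline{\mathcal{M}}$ yields $B\ge n\bigl((\overline{\mathcal{M}}^p\cdot\alpha)-\tfrac12 C\epsilon\bigr)$, which is comparable to $n(\overline{\mathcal{M}}^p\cdot\alpha)$ once $\epsilon$ is small. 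Choosing $\epsilon$ sufficiently small in terms of any prescribed tolerance $\epsilon'$, the constant $C$, and $(\overline{\mathcal{M}}^p\cdot\alpha)$ then delivers an $(\epsilon',\overline{\mathcal{M}})$-irreducible representative of $n\alpha$.

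The main technical point is the book-keeping between $n$, the analytic constant $C$, the intersection number $(\overline{\mathcal{M}}^p\cdot\alpha)$, and the tolerance $\epsilon'$; one also has to verify that $(\overline{\mathcal{M}}^p\cdot\alpha)>0$ for every arithmetically ample $\overline{\mathcal{M}}$, which follows from the splitting formula and Lemma \ref{lem_integral-bound} applied to any effective representative of a suitable multiple of $\alpha$. The generically smooth variants of (a) and (b) require no new ingredients: rescaling $n$, invoking Lemma \ref{lem_intersection-restriction}, and comparing Chern forms all preserve horizontality of $\mathcal{Z}$ and smoothness of $\mathcal{Z}_{\mathbb{Q}}$.
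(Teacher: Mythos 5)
Your proposal is correct and follows essentially the same route as the paper: part (a) is the one-line rearrangement coming from Lemma \ref{lem_intersection-restriction}, and part (b) combines the definition of airness, part (a), the positivity supplied by Lemma \ref{lem_integral-bound}, and a comparison $c_1(\overline{\mathcal{M}})\le A\, c_1(\overline{\mathcal{N}})$ of Kähler forms on the compact space $\mathcal{X}(\mathbb{C})$, exactly as in the paper's proof. (One caveat, present in the paper as well: with the factor $\tfrac12$ in Lemma \ref{lem_intersection-restriction}, the rearrangement in (a) actually yields the constant $\tfrac{2\epsilon}{2+\epsilon}$ rather than $\tfrac{\epsilon}{1+\epsilon}$; this bookkeeping does not affect the argument for (b).)
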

\begin{proof}
\begin{enumerate}[(a)]
	\item This follows directly from Lemma \ref{lem_intersection-restriction}. 
	\item We first show (i)$\Rightarrow$(ii):
	As $\alpha$ is air, we can apply (a) for any $\epsilon>0$ to some multiple $n\cdot\alpha$ of $\alpha$ to obtain that $(\overline{\mathcal{N}}^p\cdot(n\cdot\alpha))>0$ and hence, also $(\overline{\mathcal{N}}^p\cdot\alpha)>0$. It is enough to prove (ii) for sufficiently small $\epsilon$. Thus, we may assume $\epsilon<(\overline{\mathcal{N}}^p\cdot\alpha)$. We set $\epsilon'=\frac{\epsilon}{(\overline{\mathcal{N}}^p\cdot \alpha)-\epsilon}>0$. By the airness of $\alpha$ there is an $n\in\mathbb{Z}_{\ge 1}$ such that $n\cdot \alpha$ is representable by an $(\epsilon',\overline{\mathcal{N}})$-irreducible arithmetic cycle $(\mathcal{Z},g_{\mathcal{Z}})\in \widehat{Z}_p(\mathcal{X})$.
	This means that $\mathcal{Z}$ is irreducible, it holds $g_{\mathcal{Z}}\ge 0$ and $\frac{i}{2\pi}\partial\overline{\partial}g_{\mathcal{Z}}+\delta_{\mathcal{Z}(\mathbb{C})}\ge 0$, and by (a)
	$$\int_{\mathcal{X}(\mathbb{C})} g_{\mathcal{Z}}c_1(\overline{\mathcal{N}})^p<\frac{\epsilon'}{1+\epsilon'} (\overline{\mathcal{N}}^p\cdot(\mathcal{Z},g_{\mathcal{Z}}))=\frac{\epsilon}{(\overline{\mathcal{N}}^p\cdot\alpha)}(\overline{\mathcal{N}}^p\cdot(n\cdot\alpha))=n\epsilon.$$
	Thus we get (ii).
	
	Now we show (ii)$\Rightarrow$(i). Let $\epsilon>0$ be arbitrary and $\overline{\mathcal{M}}$ any arithmetically ample hermitian line bundle on $\mathcal{X}$. We have to show that there is an $n\in\mathbb{Z}_{\ge 1}$ such that $n\cdot \alpha$ is representable by an $(\epsilon,\overline{\mathcal{M}})$-irreducible arithmetic cycle. Since $\mathcal{X}(\mathbb{C})$ is compact and $c_1(\overline{\mathcal{N}})$ is positive, there exists an $A>0$ such that
	$$c_1(\overline{\mathcal{M}})\le A\cdot c_1(\overline{\mathcal{N}}).$$
	By (ii) there exists an $n\in\mathbb{Z}_{\ge 1}$ such that $n\cdot \alpha$ is representable by an effective and non-trivial arithmetic cycle $(\mathcal{Z},g_{\mathcal{Z}})$. Thus, by Lemma \ref{lem_integral-bound} we get
	$$(\overline{\mathcal{M}}^p\cdot \alpha)=\tfrac{1}{n}(\overline{\mathcal{M}}^p\cdot (\mathcal{Z},g_{\mathcal{Z}}))>\tfrac{1}{n}\int_{\mathcal{X}(\mathbb{C})}g_{\mathcal{Z}}c_1(\overline{\mathcal{M}})^p\ge 0,$$
	where the last inequality follows by $g_{\mathcal{Z}}\ge 0$ and $c_1(\overline{\mathcal{M}})>0$. Now we can apply (ii) to $$\epsilon'=A^{-p}(\overline{\mathcal{M}}^p\cdot \alpha)\frac{\epsilon}{1+\epsilon}>0.$$
	This implies that there is an $n\in\mathbb{Z}_{\ge 1}$ such that $n\cdot \alpha$ is representable by an effective arithmetic cycle $(\mathcal{Z},g_{\mathcal{Z}})$ with $\mathcal{Z}$ irreducible, $\frac{i}{2\pi}\partial\overline{\partial}g_{\mathcal{Z}}+\delta_{\mathcal{Z}(\mathbb{C})}\ge 0$ and 
	$$\int_{\mathcal{X}(\mathbb{C})}g_{\mathcal{Z}}c_1(\overline{\mathcal{N}})^p< n\epsilon'.$$
	Finally, we can bound
	$$\int_{\mathcal{X}(\mathbb{C})}g_{\mathcal{Z}}c_1(\overline{\mathcal{M}})^p\le A^p\int_{\mathcal{X}(\mathbb{C})}g_{\mathcal{Z}}c_1(\overline{\mathcal{N}})^p<A^pn\epsilon'=\frac{\epsilon}{1+\epsilon}(\overline{\mathcal{M}}^p\cdot(\mathcal{Z},g_{\mathcal{Z}})).$$
	By (a) this means that $(\mathcal{Z},g_{\mathcal{Z}})$ is $(\epsilon,\overline{\mathcal{M}})$-irreducible as desired.
	
	If we assume in (b) that $\alpha$ is generically smoothly air and that $\mathcal{Z}_{\mathbb{Q}}$ is smooth, the proof above can be easily adapted.
\end{enumerate}	
\end{proof}
The following proposition shows how the airness of a class $\alpha\in \widehat{\mathrm{CH}}_p(\mathcal{X})$ can be used to compute arithmetic intersection numbers by considering only the classical part $\mathcal{Z}$ of an effective arithmetic cycle $(\mathcal{Z},g_{\mathcal{Z}})$ of Green type representing $\alpha$. For any $\alpha\in \widehat{\mathrm{CH}}_p(\mathcal{X})$ we write $(\alpha)_{\mathrm{Eff}}\subseteq \widehat{Z}_p(\mathcal{X})$ for the subset of arithmetic cycles of Green type which are effective and represent $\alpha$. One may consider $(\alpha)_{\mathrm{Eff}}$ as an analogue of a complete linear system in classical algebraic geometry.
\begin{Pro}[= Proposition \ref{pro_intersection-finite}]\label{pro_intersection-finite-text}
	Let $\mathcal{X}$ be any generically smooth projective arithmetic variety. 
	For any arithmetic cycle $\alpha\in\widehat{\mathrm{CH}}_p(\mathcal{X})$, which is air, and any arithmetically ample hermitian line bundles $\mathcal{L}_1,\dots,\mathcal{L}_{p}$ we can compute their arithmetic intersection number in the following way
	$$(\overline{\mathcal{L}}_1\cdots\overline{\mathcal{L}}_{p}\cdot\alpha)=\limsup_{n\to\infty}\sup_{(\mathcal{Z},g_{\mathcal{Z}}) \in (n\alpha)_{\mathrm{Eff}}} \tfrac{1}{n}(\overline{\mathcal{L}}_1\cdots\overline{\mathcal{L}}_{p}\cdot (\mathcal{Z},0)).$$
\end{Pro}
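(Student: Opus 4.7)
The strategy is to prove the two inequalities separately and use Lemma \ref{lem_intersection-restriction} as the bridge between the full arithmetic intersection number and its ``classical part''. For any effective $(\mathcal{Z},g_{\mathcal{Z}})\in (n\alpha)_{\mathrm{Eff}}$ the lemma gives
\[
n\cdot(\overline{\mathcal{L}}_1\cdots\overline{\mathcal{L}}_p\cdot\alpha)=(\overline{\mathcal{L}}_1\cdots\overline{\mathcal{L}}_p\cdot(\mathcal{Z},0))+\tfrac{1}{2}\int_{\mathcal{X}(\mathbb{C})}g_{\mathcal{Z}}\wedge c_1(\overline{\mathcal{L}}_1)\wedge\dots\wedge c_1(\overline{\mathcal{L}}_p),
\]
so the whole question reduces to controlling the mixed integral on the right.

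For the inequality $\le$, observe that each $c_1(\overline{\mathcal{L}}_i)$ is a positive form by arithmetic ampleness, so the wedge product is a positive $(p,p)$-form; together with $g_{\mathcal{Z}}\ge 0$ (by effectiveness) this makes the integral non-negative. Dividing by $n$ yields $\tfrac{1}{n}(\overline{\mathcal{L}}_1\cdots\overline{\mathcal{L}}_p\cdot(\mathcal{Z},0))\le(\overline{\mathcal{L}}_1\cdots\overline{\mathcal{L}}_p\cdot\alpha)$ for every $n$ and every representative, which immediately bounds the $\limsup$ from above.

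For the inequality $\ge$, I fix any arithmetically ample hermitian line bundle $\overline{\mathcal{N}}$. Since $c_1(\overline{\mathcal{N}})$ is a K\"ahler form on the compact complex manifold $\mathcal{X}(\mathbb{C})$ and each $c_1(\overline{\mathcal{L}}_i)$ is a smooth positive form, there is a constant $A>0$ with
\[
c_1(\overline{\mathcal{L}}_1)\wedge\dots\wedge c_1(\overline{\mathcal{L}}_p)\le A\cdot c_1(\overline{\mathcal{N}})^p
\]
as positive $(p,p)$-forms. Given $\epsilon>0$, I apply Proposition \ref{pro_airness-check}(b) to the fixed $\overline{\mathcal{N}}$ with parameter $\tfrac{\epsilon}{A}$ to obtain some $n\in\mathbb{Z}_{\ge 1}$ and an effective representative $(\mathcal{Z},g_{\mathcal{Z}})$ of $n\alpha$ with $\int_{\mathcal{X}(\mathbb{C})} g_{\mathcal{Z}}\,c_1(\overline{\mathcal{N}})^p<\tfrac{2n\epsilon}{A}$. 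Combining the two estimates and dividing by $2n$ bounds the mixed integral in the displayed identity by $n\epsilon$, so
\[
\tfrac{1}{n}(\overline{\mathcal{L}}_1\cdots\overline{\mathcal{L}}_p\cdot(\mathcal{Z},0))>(\overline{\mathcal{L}}_1\cdots\overline{\mathcal{L}}_p\cdot\alpha)-\epsilon.
\]
Since multiplying a representative of $n\alpha$ by any positive integer $k$ yields an effective representative of $kn\alpha$ with the same normalized value, such an inequality can be realized for arbitrarily large $n$; hence the $\limsup$ is at least $(\overline{\mathcal{L}}_1\cdots\overline{\mathcal{L}}_p\cdot\alpha)-\epsilon$. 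Letting $\epsilon\to 0$ finishes the proof.

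The only non-routine step is the comparison constant $A$ between the mixed form $c_1(\overline{\mathcal{L}}_1)\wedge\dots\wedge c_1(\overline{\mathcal{L}}_p)$ and a pure power $c_1(\overline{\mathcal{N}})^p$, but this is a standard compactness argument. Everything else is just bookkeeping with Lemma \ref{lem_intersection-restriction} and the characterization of airness from Proposition \ref{pro_airness-check}(b).
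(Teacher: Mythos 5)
Your proposal is correct and follows essentially the same route as the paper: both directions hinge on Lemma \ref{lem_intersection-restriction}, the upper bound on the $\limsup$ comes from positivity of the integral of $g_{\mathcal{Z}}$ against the wedge of Chern forms, and the lower bound combines Proposition \ref{pro_airness-check}(b) with a compactness comparison of $c_1(\overline{\mathcal{L}}_1)\wedge\dots\wedge c_1(\overline{\mathcal{L}}_p)$ against a pure power and the observation that multiplying representatives propagates the estimate to arbitrarily large $n$. The only cosmetic differences are that the paper takes $\overline{\mathcal{N}}=\overline{\mathcal{L}}_1$ where you use a general auxiliary bundle, and your constants in the application of Proposition \ref{pro_airness-check}(b) are slightly loose but harmless.
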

\begin{proof}
	For every $(\mathcal{Z},g_{\mathcal{Z}})\in \widehat{Z}_p(\mathcal{X})$ representing $n\alpha$ for some $n\in\mathbb{Z}_{\ge 1}$ we have by Lemma \ref{lem_intersection-restriction}
	\begin{align}\label{equ_intersection-split}
		n(\overline{\mathcal{L}}_1\cdots\overline{\mathcal{L}}_{p}\cdot\alpha)=(\overline{\mathcal{L}}_1\cdots\overline{\mathcal{L}}_{p}\cdot(\mathcal{Z},0))+\int_{\mathcal{X}(\mathbb{C})}g_{\mathcal{Z}}\wedge c_1(\overline{\mathcal{L}}_1)\wedge\cdots\wedge c_1(\overline{\mathcal{L}}_p).
	\end{align}
	If $(\mathcal{Z},g_{\mathcal{Z}})\in (n\alpha)_{\mathrm{Eff}}$, then $g_{\mathcal{Z}}$ is semi-positive and hence,
	$$\int_{\mathcal{X}(\mathbb{C})}g_{\mathcal{Z}}\wedge c_1(\overline{\mathcal{L}}_1)\wedge\cdots\wedge c_1(\overline{\mathcal{L}}_p)\ge 0$$
	by the positivity of $c_1(\overline{\mathcal{L}}_1),\dots, c_1(\overline{\mathcal{L}}_p)$. If we apply this to Equation (\ref{equ_intersection-split}), we can deduce
	\begin{align}\label{equ_inequality-intersection-limsup}
		(\overline{\mathcal{L}}_1\cdots\overline{\mathcal{L}}_{p}\cdot\alpha)\ge\limsup_{n\to\infty}\sup_{(\mathcal{Z},g_{\mathcal{Z}}) \in (n\alpha)_{\mathrm{Eff}}} \tfrac{1}{n}(\overline{\mathcal{L}}_1\cdots\overline{\mathcal{L}}_{p}\cdot (\mathcal{Z},0)).
	\end{align}
	To prove the inequality in the other direction let $\epsilon>0$ be arbitrary. By Proposition \ref{pro_airness-check} (b) there is an $n_0\in\mathbb{Z}_{\ge 1}$ and an arithmetic cycle $(\mathcal{Z},g_{\mathcal{Z}})\in (n_0\alpha)_{\mathrm{Eff}}$ with
	$$\int_{\mathcal{X}(\mathbb{C})}g_\mathcal{Z}\wedge c_1(\overline{\mathcal{L}}_1)^p<n_0\epsilon.$$
	By the positivity of $c_1(\overline{\mathcal{L}}_1)$ and the compactness of $\mathcal{X}(\mathbb{C})$ there is a constant $A>0$ such that
	$$c_1(\overline{\mathcal{L}}_1)\wedge\dots\wedge c_1(\overline{\mathcal{L}}_p)\le A\cdot c_1(\overline{\mathcal{L}}_1)^p.$$
	Thus, we get
	$$\int_{\mathcal{X}(\mathbb{C})}g_{\mathcal{Z}}\wedge c_1(\overline{\mathcal{L}}_1)\wedge\dots\wedge c_1(\overline{\mathcal{L}}_p)\le A n_0\epsilon.$$
	Applying this to Equation (\ref{equ_intersection-split}) we get
	$$(\overline{\mathcal{L}}_1\cdots\overline{\mathcal{L}}_{p}\cdot\alpha)\le \sup_{(\mathcal{Z},g_{\mathcal{Z}})\in (n_0\alpha)_{\mathrm{Eff}}} \tfrac{1}{n_0}(\overline{\mathcal{L}}_1\cdots\overline{\mathcal{L}}_p\cdot (\mathcal{Z},0))+A\epsilon.$$
	We can always multiply an arithmetic cycle $(\mathcal{Z},g_{\mathcal{Z}})\in (n_0\alpha)_{\mathrm{Eff}}$ with any $m\in\mathbb{Z}_{\ge 1}$ to get the arithmetic cycle $(m\mathcal{Z},mg_{\mathcal{Z}})\in (mn_0\alpha)_{\mathrm{Eff}}$. Thus,
	$$\sup_{(\mathcal{Z},g_{\mathcal{Z}})\in (n_0\alpha)_{\mathrm{Eff}}} \tfrac{1}{n_0}(\overline{\mathcal{L}}_1\cdots\overline{\mathcal{L}}_p\cdot (\mathcal{Z},0))\le \sup_{(\mathcal{Z},g_{\mathcal{Z}})\in (mn_0\alpha)_{\mathrm{Eff}}} \tfrac{1}{mn_0}(\overline{\mathcal{L}}_1\cdots\overline{\mathcal{L}}_p\cdot (\mathcal{Z},0))$$
	for every $m\in \mathbb{Z}_{\ge 0}$. Applying this to the inequality before we get
	$$(\overline{\mathcal{L}}_1\cdots\overline{\mathcal{L}}_{p}\cdot\alpha)\le \limsup_{n\to\infty}\sup_{(\mathcal{Z},g_{\mathcal{Z}})\in (n\alpha)_{\mathrm{Eff}}} \tfrac{1}{n}(\overline{\mathcal{L}}_1\cdots\overline{\mathcal{L}}_p\cdot (\mathcal{Z},0))+A\epsilon.$$
	If now $\epsilon$ tends to $0$, we get the other direction of inequality (\ref{equ_inequality-intersection-limsup}) and thus, we obtain the equality claimed in the proposition.
\end{proof}
Let us discuss some questions related to air classes which may be addressed in future research.
\begin{Que}
	Let $\mathcal{X}$ be a generically smooth projective arithmetic variety.
	\begin{enumerate}[(i)]
		\item As we will see in Theorem \ref{thm_air-text}, every arithmetically ample hermitian line bundle is generically smoothly air. As every hermitian line bundle $\overline{\mathcal{L}}$ can be written as a difference $\overline{\mathcal{L}}=\overline{\mathcal{L}}_1-\overline{\mathcal{L}}_2$ of two arithmetically ample hermitian line bundles $\overline{\mathcal{L}}_1$, $\overline{\mathcal{L}}_2$, one may ask whether for any $p$ every class $\alpha\in \widehat{\mathrm{CH}}_p(\mathcal{X})$ can be written as difference $\alpha=\alpha_1-\alpha_2$ of two (generically smoothly) air classes $\alpha_1,\alpha_2\in\widehat{\mathrm{CH}}_p(\mathcal{X})$. For a weaker statement, one may also just ask whether $\widehat{\mathrm{CH}}_p(\mathcal{X})$ is generated by (generically smoothly) air classes.
		\item Is the sum $\alpha_1+\alpha_2$ of two (generically smoothly) air classes $\alpha_1,\alpha_2\in\widehat{\mathrm{CH}}_p(\mathcal{X})$ again (generically smoothly) air?
		\item If $\mathcal{X}$ is regular, every two classes $\alpha_1\in\widehat{\mathrm{CH}}^p(\mathcal{X})$ and $\alpha_2\in\widehat{\mathrm{CH}}^q(\mathcal{X})$ have an arithmetic intersection product $\alpha_1\cdot\alpha_2\in\widehat{\mathrm{CH}}^{p+q}(\mathcal{X})_{\mathbb{Q}}$ defined by Gillet and Soul\'e \cite{GS90}. Is the product $\alpha_1\cdot \alpha_2$ (generically smoothly) air if $\alpha_1$ and $\alpha_2$ are (generically smoothly) air? Note that the definition of ``(generically smoothly) air'' directly extends to $\mathbb{Q}$-classes in $\widehat{\mathrm{CH}}_p(\mathcal{X})_{\mathbb{Q}}$.
	\end{enumerate}
\end{Que}
\subsection{A Bertini-type Result}\label{sec_bertini}
One formulation of Bertini's Theorem states that on a projective variety of dimension at least $2$ the irreducible divisors representing a very ample line bundle form a dense and open subset of all divisors representing this line bundle. As a weaker but in practice still very useful statement one gets that every ample line bundle has a tensor power which can be represented by an irreducible divisor. In this section we will prove the analogue in our setting: Every arithmetically ample hermitian line bundle is air. This forms the main theorem in our theory of air classes. As an application, we will discuss the computation of arithmetic intersection numbers of arithmetically ample hermitian line bundles.
In particular, we prove Theorems \ref{thm_air} and \ref{thm_intersection-line-bundles-geometric} in this section.

Let us start with our main result.
\begin{Thm}[= Theorem \ref{thm_air}]\label{thm_air-text}
	Let $\mathcal{X}$ be any generically smooth projective arithmetic variety of dimension $d\ge 2$. Every arithmetically ample hermitian line bundle $\overline{\mathcal{L}}$ on $\mathcal{X}$ is generically smoothly air. Moreover, if $\alpha\in\widehat{\mathrm{CH}}_p(\mathcal{X})$ is any generically smoothly air class for some $p\ge 2$, then also the class $\overline{\mathcal{L}}\cdot \alpha\in\widehat{\mathrm{CH}}_{p-1}(\mathcal{X})$ is generically smoothly air.
\end{Thm}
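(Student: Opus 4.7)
The plan is to combine Proposition \ref{pro_airness-check}\,(b) with Theorem \ref{thm_equidistribution-text} and Charles's Bertini-type density result \cite{Cha21}. In both assertions the representative will be $\widehat{\mathrm{div}}(\overline{\mathcal{L}}^{\otimes n},s)$ (or its arithmetic intersection with a representative of $\alpha$) for a carefully chosen small section $s$, with the required semi-positivity of $\frac{i}{2\pi}\partial\overline{\partial}(-\log|s|^2)+\delta_{\mathrm{div}(s)(\mathbb{C})}=nc_1(\overline{\mathcal{L}})$ being automatic from the Poincar\'e--Lelong formula (\ref{equ_poincare-lelong}).

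For the first assertion, I apply Theorem \ref{thm_equidistribution-text} to $\mathcal{Y}=\mathcal{X}$, $r_p=1$, $\tau=1$: the average $\frac{1}{n}\int_{\mathcal{X}(\mathbb{C})}|\log|s|_{\overline{\mathcal{L}}^{\otimes n}}|c_1(\overline{\mathcal{L}})^{d-1}$ over $\widehat{H}^0(\mathcal{X},\overline{\mathcal{L}}^{\otimes n})\setminus\{0\}$ tends to $0$, so Markov's inequality produces a density-$1$ subset on which the quantity is smaller than any prescribed bound. Intersecting with the density-$1$ subset from Charles's theorem (for which $\mathrm{div}(s)$ is horizontal, irreducible and generically smooth), and using that compactness of $\mathcal{X}(\mathbb{C})$ yields a constant $C$ with $c_1(\overline{\mathcal{M}})^{d-1}\le Cc_1(\overline{\mathcal{L}})^{d-1}$, one verifies the condition $\int g_{\mathcal{Z}}c_1(\overline{\mathcal{M}})^{d-1}<n\epsilon$ from Proposition \ref{pro_airness-check}\,(b) for some small section $s$ when $n$ is large enough.

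For the inductive step with $\alpha\in\widehat{\mathrm{CH}}_p(\mathcal{X})$ generically smoothly air ($p\ge 2$), fix $\epsilon>0$ and $\overline{\mathcal{M}}$, and pick $\delta>0$ (to be tuned). Use the hypothesis to find $m\ge 1$ and a representative $(\mathcal{Y},g_{\mathcal{Y}})\in(m\alpha)_{\mathrm{Eff}}$ with $\mathcal{Y}$ horizontal irreducible, $\mathcal{Y}_{\mathbb{Q}}$ smooth, semi-positive $\tfrac{i}{2\pi}\partial\overline{\partial}g_{\mathcal{Y}}+\delta_{\mathcal{Y}(\mathbb{C})}$, and $\int g_{\mathcal{Y}}c_1(\overline{\mathcal{M}})^p<m\delta$. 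Then Theorem \ref{thm_equidistribution-text} applied on $\mathcal{X}$ with this subvariety $\mathcal{Y}$ yields a density-$1$ subset of $\{s\in\widehat{H}^0(\mathcal{X},\overline{\mathcal{L}}^{\otimes n}):s|_{\mathcal{Y}}\ne 0\}$ on which $\frac{1}{n}\int_{\mathcal{Y}(\mathbb{C})}|\log|s|_{\overline{\mathcal{L}}^{\otimes n}}|c_1(\overline{\mathcal{L}})^{p-1}$ is arbitrarily small. Intersecting this with a Bertini-type density-$1$ subset for which the restricted divisor $\mathrm{div}(s|_{\mathcal{Y}})$ is horizontal irreducible with smooth generic fiber, I pick $s$ in the common set. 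The product $\widehat{\mathrm{div}}(\overline{\mathcal{L}}^{\otimes n},s)\cdot(\mathcal{Y},g_{\mathcal{Y}})$ represents $mn(\overline{\mathcal{L}}\cdot\alpha)$, its classical part $\mathrm{div}(s)\cdot\mathcal{Y}=\mathrm{div}(s|_{\mathcal{Y}})$ has the required geometric properties, and its analytic part $[-\log|s|^2]_{\mathcal{Y}(\mathbb{C})}+nc_1(\overline{\mathcal{L}})\wedge g_{\mathcal{Y}}$ is the sum of two pieces whose $\tfrac{i}{2\pi}\partial\overline{\partial}(\cdot)+\delta$-terms are each semi-positive. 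Its integral against $c_1(\overline{\mathcal{M}})^{p-1}$ splits into a piece bounded by a constant multiple of $n\epsilon_1$ (via the equidistribution estimate and $c_1(\overline{\mathcal{M}})^{p-1}\le\mathrm{const}\cdot c_1(\overline{\mathcal{L}})^{p-1}$) and a piece bounded by a constant multiple of $nm\delta$ (via $c_1(\overline{\mathcal{L}})\wedge c_1(\overline{\mathcal{M}})^{p-1}\le\mathrm{const}\cdot c_1(\overline{\mathcal{M}})^p$). Choosing $\delta$ first (fixing $m$) and then $\epsilon_1$ sufficiently small makes the total $<mn\epsilon$, as required by Proposition \ref{pro_airness-check}\,(b).

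The step I expect to be the main obstacle is the Bertini-type lifting in the inductive part: Charles's theorem provides irreducibility and generic smoothness for divisors on the ambient variety, whereas what is needed is the analogous property for the restricted divisor $\mathrm{div}(s|_{\mathcal{Y}})$ as $s$ ranges over integer sections of $\overline{\mathcal{L}}^{\otimes n}$ on $\mathcal{X}$. I would first apply Charles's result on $\mathcal{Y}$ to the arithmetically ample $\overline{\mathcal{L}}|_{\mathcal{Y}}$, then lift via the surjection $H^0(\mathcal{X},\mathcal{L}^{\otimes n})\twoheadrightarrow H^0(\mathcal{Y},\mathcal{L}^{\otimes n}|_{\mathcal{Y}})$ (valid for $n\gg 0$), and finally show that a density-$1$ property on the target lattice transfers back to a density-$1$ property on the source lattice under the sup-norm ball constraints---this last density-preservation argument, using the lattice geometry techniques from Section \ref{sec_geometry-of-numbers}, is the delicate point.
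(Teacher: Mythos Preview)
Your approach is essentially correct and close in spirit to the paper's, but you take an unnecessarily roundabout path in the inductive step and the ``delicate point'' you anticipate is completely avoided by the paper's argument.

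The paper derives the first assertion from the second by taking $\alpha=[(\mathcal{X},0)]$, so only the inductive step needs work. Here the key difference is \emph{where} the equidistribution and Bertini arguments are applied. You run Theorem~\ref{thm_equidistribution-text} on $\mathcal{X}$ with the subvariety $\mathcal{Y}$ and then need a second density-$1$ subset of $\widehat{H}^0(\mathcal{X},\overline{\mathcal{L}}^{\otimes n})$ on which $\mathrm{div}(s|_{\mathcal{Y}})$ is irreducible and generically smooth; producing this second set forces you into the density-preservation-under-lifting argument you flag as delicate. The paper instead works entirely on $\mathcal{Z}_1$ (your $\mathcal{Y}$): it applies Charles's Bertini theorem to $\overline{\mathcal{L}}|_{\mathcal{Z}_1}$, obtaining a density-$1$ set $T_n\subseteq\widehat{H}^0(\mathcal{Z}_1,\overline{\mathcal{L}}|_{\mathcal{Z}_1}^{\otimes n})$, and then feeds this $T_n$ into Proposition~\ref{pro_positive-density} (not Theorem~\ref{thm_equidistribution-text}) to extract a \emph{single} sequence $s_n\in T_n$ with vanishing normalized integral on $\mathcal{Z}_1(\mathbb{C})$. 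This one section is then lifted to $\widetilde{s}_n\in H^0(\mathcal{X},\mathcal{L}^{\otimes n})$ via ordinary surjectivity of the restriction map for $n\gg 0$ --- no norm control on the lift is needed, because in the resulting Green current $[-\log|\widetilde{s}_n|^2]_{\mathcal{Z}_1(\mathbb{C})}$ only the restriction $\widetilde{s}_n|_{\mathcal{Z}_1}=s_n$ enters, and that satisfies $\|s_n\|_{\sup}\le 1$. So the lifting step is trivial: you need existence of one lift, not a density-$1$ family of small lifts.

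One further imprecision: your semi-positivity verification (``sum of two pieces whose $\tfrac{i}{2\pi}\partial\overline{\partial}(\cdot)+\delta$-terms are each semi-positive'') is not quite right as stated, since the $\delta$-terms do not split that way. The paper's computation shows that after the Poincar\'e--Lelong cancellations one gets exactly
\[
\tfrac{i}{2\pi}\partial\overline{\partial}g_{\mathcal{Z}}+\delta_{\mathcal{Z}(\mathbb{C})}
= c_1(\overline{\mathcal{L}}^{\otimes n})\wedge\bigl(\tfrac{i}{2\pi}\partial\overline{\partial}g_{\mathcal{Z}_1}+\delta_{\mathcal{Z}_1(\mathbb{C})}\bigr),
\]
a single wedge of a positive form with a semi-positive one. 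Your conclusion is correct, but the mechanism is this identity rather than a sum decomposition.
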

\begin{proof}
	The first assertion follows from the second assertion by setting $\alpha=(\mathcal{X},0)$. Hence, we only have to prove the second assertion. We fix an arbitrary $\epsilon>0$.
	By Proposition \ref{pro_airness-check} (b) it is enough to show that there is some $n_0\in\mathbb{Z}_{\ge 1}$ such that $n_0\cdot \overline{\mathcal{L}}\cdot \alpha$ is representable by an effective arithmetic cycle $(\mathcal{Z},g_\mathcal{Z})\in \widehat{Z}_{p-1}(\mathcal{X})$ with $\mathcal{Z}$ irreducible and horizontal, $\mathcal{Z}_{\mathbb{Q}}$ smooth, $\frac{i}{2\pi}\partial\overline{\partial}g_{\mathcal{Z}}+\delta_{\mathcal{Z}(\mathbb{C})}\ge 0$ and
	$$\int_{\mathcal{X}(\mathbb{C})} g_{\mathcal{Z}}\wedge c_1(\overline{\mathcal{L}})^{p-1}<n_0\epsilon.$$
	Since $\alpha$ is generically smoothly air, their exists an $n_1\in\mathbb{Z}_{\ge 1}$ such that $n_1\cdot \alpha$ is representable by an effective arithmetic cycle $(\mathcal{Z}_1,g_{\mathcal{Z}_1})\in \widehat{Z}_p(\mathcal{X})$ with $\mathcal{Z}_1$ irreducible and horizontal, $\mathcal{Z}_{1,\mathbb{Q}}$ smooth, $\frac{i}{2\pi}\partial\overline{\partial}g_{\mathcal{Z}_1}+\delta_{\mathcal{Z}_1(\mathbb{C})}\ge 0$ and 
	$$\int_{\mathcal{X}(\mathbb{C})} g_{\mathcal{Z}_1}\wedge c_1(\overline{\mathcal{L}})^{p}<\frac{n_1\epsilon}{2}.$$
	We write $\overline{\mathcal{L}}_1=\overline{\mathcal{L}}|_{\mathcal{Z}_1}$ for the arithmetically ample hermitian line bundle on $\mathcal{Z}_1$ obtained by restricting $\overline{\mathcal{L}}$ to $\mathcal{Z}_1$.
	
	Let $T_n\subseteq \widehat{H}^0(\mathcal{Z}_1,\overline{\mathcal{L}}_1^{\otimes n})$ denote the subset of small sections $s\in \widehat{H}^0(\mathcal{Z}_1,\overline{\mathcal{L}}_1^{\otimes n})$ of $\overline{\mathcal{L}}_1^{\otimes n}$ such that $\mathrm{div}(s)$ is irreducible and $\mathrm{div}(s)_\mathbb{Q}$ is smooth. By the arithmetic Bertini theorem by Charles \cite[Theorems 1.6 and 1.7]{Cha21} it holds
	$$\lim_{n\to \infty}\frac{\#T_n}{\#\widehat{H}^0(\mathcal{Z}_1,\overline{\mathcal{L}}_1^{\otimes n})}=1.$$
	As explained in the proof of Theorem \ref{thm_equidistribution-text}, the sets
	$$K_n=\left.\left\{s\in H^0(\mathcal{Z}_1,\overline{\mathcal{L}}_1^{\otimes n})_{\mathbb{R}}~\right|~\|s\|_{\sup}\le 1\right\}$$
	satisfy the assumptions in Theorem \ref{thm_equidistribution-general} for $\tau=1$. Hence, we can apply Proposition \ref{pro_positive-density} to the sets
	$$R_n=K_n\cap H^0\left(\mathcal{Z}_1,\overline{\mathcal{L}}_1^{\otimes n}\right)=\widehat{H}^0\left(\mathcal{Z}_1,\overline{\mathcal{L}}_1^{\otimes n}\right).$$
	Thus, there exists a sequence $(s_n)_{n\in\mathbb{Z}_{\ge 1}}$ of sections $s_n\in T_n$ such that
	$$\lim_{n\to\infty}\frac{1}{n}\int_{\mathcal{Z}_1(\mathbb{C})}\left(-\log|s_n|_{\overline{\mathcal{L}}_1^{\otimes n}}\right)c_1(\overline{\mathcal{L}}_1)^{p-1}=0.$$
	Note that $\left|\log|s_n|_{\overline{\mathcal{L}}_1^{\otimes n}}\right|=-\log|s_n|_{\overline{\mathcal{L}}_1^{\otimes n}}$ for $s_n\in T_n$.
	Thus, there is an $n_2\in \mathbb{Z}_{\ge 1}$ such that 
	\begin{align}\label{equ_bound-integral}
	\frac{1}{n}\int_{\mathcal{Z}_1(\mathbb{C})}\left(-\log|s_n|_{\overline{\mathcal{L}}_1^{\otimes n}}\right)c_1(\overline{\mathcal{L}}_1)^{p-1}<\frac{\epsilon}{2}
	\end{align}
	for all $n\ge n_2$.
	
	Since $\mathcal{L}$ is ample on $\mathcal{X}$, there exists an $n_3\in\mathbb{Z}_{\ge 1}$ such that the restriction map
	$$H^0(\mathcal{X},\mathcal{L}^{\otimes n})\to H^0(\mathcal{Z}_1,\mathcal{L}_1^{\otimes n})$$
	is surjective for all $n\ge n_3$.
	Now let $n=\max\{n_2,n_3\}$ and let $\widetilde{s}_n\in H^0(\mathcal{X},\mathcal{L}^{\otimes n})$ be a lift of $s_n$ under the restriction map. We set 
	$$\mathcal{Z}=\mathrm{div}(\widetilde{s}_n)\cdot \mathcal{Z}_1,\qquad g_{\mathcal{Z}}=\left[-\log|\widetilde{s}_n|_{\overline{\mathcal{L}}^{\otimes n}}\right]_{\mathcal{Z}_1(\mathbb{C})}+c_1(\overline{\mathcal{L}}^{\otimes n})\wedge g_{\mathcal{Z}_1}.$$
	Then $(\mathcal{Z},g_\mathcal{Z})=\widehat{\mathrm{div}}\left(\overline{\mathcal{L}}^{\otimes n},\widetilde{s}_n\right)\cdot (\mathcal{Z}_1,g_{\mathcal{Z}_1})$ represents $(n\cdot \overline{\mathcal{L}})\cdot (n_1\cdot\alpha)=n_0\cdot\overline{\mathcal{L}}\cdot \alpha$ with $n_0=nn_1$. Moreover, $\mathcal{Z}=\mathrm{div}(s_n)$ is irreducible, $\mathcal{Z}_{\mathbb{Q}}=\mathrm{div}(s_n)_{\mathbb{Q}}$ is smooth, and it holds $g_{\mathcal{Z}}\ge 0$, such that $(\mathcal{Z},g_\mathcal{Z})\in \widehat{Z}_{p-1}(\mathcal{X})$ is an effective arithmetic cycle.
	Further, we have
	\begin{align*}
		&\frac{i}{2\pi}\partial\overline{\partial}g_{\mathcal{Z}}+\delta_{\mathcal{Z}(\mathbb{C})}=\left(\frac{i\partial\overline{\partial}}{2\pi}(-\log|\widetilde{s}_n|_{\overline{\mathcal{L}}^{\otimes n}})\right)\wedge \delta_{\mathcal{Z}_1(\mathbb{C})}+c_1(\overline{\mathcal{L}}^{\otimes n})\wedge \frac{i\partial\overline{\partial}}{2\pi}g_{\mathcal{Z}_1}+\delta_{\mathcal{Z}(\mathbb{C})}\\
		&=c_1(\overline{\mathcal{L}}^{\otimes n})\wedge\delta_{\mathcal{Z}_1(\mathbb{C})}-\delta_{\mathrm{div}(\widetilde{s}_n)(\mathbb{C})}\wedge\delta_{\mathcal{Z}_1(\mathbb{C})}+c_1(\overline{\mathcal{L}}^{\otimes n})\wedge \frac{i\partial\overline{\partial}}{2\pi}g_{\mathcal{Z}_1}+\delta_{\mathcal{Z}(\mathbb{C})}\\
		&=c_1(\overline{\mathcal{L}}^{\otimes n})\wedge\left(\frac{i\partial\overline{\partial}}{2\pi}g_{\mathcal{Z}_1}+\delta_{\mathcal{Z}_1(\mathbb{C})}\right)\ge 0.
	\end{align*}
	Finally, we also get
		\begin{align*}
		&\int_{\mathcal{X}(\mathbb{C})}g_{\mathcal{Z}}\wedge c_1(\overline{\mathcal{L}})^{p-1}\\
		&=\int_{\mathcal{Z}_1(\mathbb{C})}\left(-\log|\widetilde{s}_n|_{\overline{\mathcal{L}}^{\otimes n}}\right)c_1(\overline{\mathcal{L}})^{p-1}+\int_{\mathcal{X}(\mathbb{C})}c_1(\overline{\mathcal{L}}^{\otimes n})\wedge g_{\mathcal{Z}_1}\wedge c_1(\overline{\mathcal{L}})^{p-1}\\
		&=\int_{\mathcal{Z}_1(\mathbb{C})}\left(-\log|s_n|_{\overline{\mathcal{L}}_1^{\otimes n}}\right)c_1(\overline{\mathcal{L}}_1)^{p-1}+n\int_{\mathcal{X}(\mathbb{C})}g_{\mathcal{Z}_1}\wedge c_1(\overline{\mathcal{L}})^p\\
		&<n\cdot\tfrac{\epsilon}{2}+n n_1\cdot\tfrac{\epsilon}{2}\le n_0\epsilon.
	\end{align*}
	This completes the proof of the theorem.
\end{proof}
\begin{Rem}\label{rem_dimension1}
	\hspace{1pt}
	\begin{enumerate}[(i)]
		\item If $\mathcal{X}$ is a generically smooth projective arithmetic variety of dimension $d\ge 1$, $\alpha\in \widehat{\mathrm{CH}}_1(\mathcal{X})$ an air class and $\overline{\mathcal{L}}$ an arithmetically ample hermitian line bundle on $\mathcal{X}$, the proof of the Theorem still gives that for any $\epsilon>0$ there exists an $n\in\mathbb{Z}_{\ge 1}$ such that $n\cdot\overline{\mathcal{L}}\cdot\alpha$ can be represented by an arithmetic divisor $(\mathcal{Z},g_{\mathcal{Z}})$ such that 
		$$\int_{\mathcal{X}(\mathbb{C})}g_{\mathcal{Z}}< n\epsilon.$$
		\item For arithmetically ample hermitian line bundles $\overline{\mathcal{L}}$ on a generically smooth projective variety $\mathcal{X}$ of dimension $d\ge 2$ we can also deduce from Theorem \ref{thm_equidistribution-text} and the results by Charles \cite{Cha21} a more Bertini-like result: For every $\epsilon>0$ we have
		$$\lim_{n\to\infty}\frac{\#\left\{s\in \widehat{H}^0(\mathcal{X},\overline{\mathcal{L}}^{\otimes n})~|~\widehat{\mathrm{div}}(\overline{\mathcal{L}}^{\otimes n},s)~(\epsilon,\overline{\mathcal{M}})\text{-irreducible}, ~\mathrm{div}(s)_{\mathbb{Q}} \text{ smooth.}\right\}}{\#\widehat{H}^0(\mathcal{X},\overline{\mathcal{L}}^{\otimes n})}=1$$
		for any fixed arithmetically ample hermitian line bundle $\overline{\mathcal{M}}$.
	\end{enumerate}
\end{Rem}
Theorem \ref{thm_air-text} shows, that we can use Proposition \ref{pro_intersection-finite-text} to compute the arithmetic intersection numbers of arithmetically ample hermitian line bundles. Using induction one can express the intersection numbers of arithmetically ample hermitian line bundles completely by classical geometric intersection numbers. Note that by Remark \ref{rem_dimension1} (i) we can also treat the $1$-dimensional case.  For any arithmetically ample hermitian line bundles $\overline{\mathcal{L}}_1,\dots,\overline{\mathcal{L}}_d$ on $\mathcal{X}$ we get
$$(\overline{\mathcal{L}}_1\cdots\overline{\mathcal{L}}_d)=\limsup_{n\to \infty}\sup_{(\mathcal{Z},g_{\mathcal{Z}})\in (n\cdot\overline{\mathcal{L}}_1\cdots\overline{\mathcal{L}}_d)_{\mathrm{Eff}}}\tfrac{1}{n}\sum_{p\in|\mathrm{Spec}(\mathbb{Z})|}i_p(\mathcal{Z})\log p,$$
where $|\mathrm{Spec}(\mathbb{Z})|$ denotes the closed points of $\mathrm{Spec}(\mathbb{Z})$ and $i_p(\mathcal{Z})$ denotes the degree of the $0$-cycle $\mathcal{Z}$ on the fiber $\mathcal{X}_p$ of $\mathcal{X}$ over $p$. With a bit more effort but without using Proposition \ref{pro_intersection-finite-text} or Theorem \ref{thm_air-text} we can even prove the following more explicit formula.
\begin{Thm}[= Theorem \ref{thm_intersection-line-bundles-geometric}]
	Let $\mathcal{X}$ be any generically smooth projective arithmetic variety. Further, let $\mathcal{Y}\subseteq \mathcal{X}$ be any generically smooth arithmetic subvariety  of dimension $e\ge 1$.	
	For any arithmetically ample hermitian line bundles $\overline{\mathcal{L}}_1,\dots,\overline{\mathcal{L}}_e$ and any $n\in\mathbb{Z}_{\ge 1}$ we define
	$$H_n=\left\{\left.(s_1,\dots,s_e)\in\prod_{i=1}^e \widehat{H}^0\left(\mathcal{X},\overline{\mathcal{L}}_i^{\otimes n}\right)~\right|~\dim\left(\mathcal{Y}\cap\bigcap_{i=1}^e \mathrm{Supp}(\mathrm{div}(s_i))\right)=0\right\}.$$
	Then the arithmetic intersection number $(\overline{\mathcal{L}}_1|_{\mathcal{Y}}\cdots\overline{\mathcal{L}}_e|_{\mathcal{Y}})$ can be computed by
	\begin{align*}
		(\overline{\mathcal{L}}_1|_{\mathcal{Y}}\cdots\overline{\mathcal{L}}_e|_{\mathcal{Y}})=\lim_{n\to\infty}\frac{1}{n^e}\max_{(s_1,\dots,s_e)\in H_n}\sum_{p\in|\mathrm{Spec}(\mathbb{Z})|}i_p(\mathcal{Y}\cdot\mathrm{div}(s_1)\cdots\mathrm{div}(s_e))\log p
	\end{align*}
	where $i_p(\mathcal{Y}\cdot\mathrm{div}(s_1)\cdots\mathrm{div}(s_e))$ denotes the degree of the $0$-cycle $\mathcal{Y}\cdot\mathrm{div}(s_1)\cdots\mathrm{div}(s_e)$ in the fiber $\mathcal{X}_p$ of $\mathcal{X}$ over $p\in |\mathrm{Spec}(\mathbb{Z})|$.
\end{Thm}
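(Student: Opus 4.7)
The plan is to prove both inequalities in the claimed equality. For the inequality $\geq$ (the RHS limit bounded above by the LHS), I would iteratively apply the definition of the arithmetic intersection product in Equation (\ref{equ_intersection-definition}) to $\widehat{\mathrm{div}}(\overline{\mathcal{L}}_1^{\otimes n}, s_1) \cdots \widehat{\mathrm{div}}(\overline{\mathcal{L}}_e^{\otimes n}, s_e) \cdot (\mathcal{Y}, 0)$ and take the arithmetic degree. Setting $\mathcal{Z}_k = \mathcal{Y} \cdot \mathrm{div}(s_1) \cdots \mathrm{div}(s_k)$ for $0 \leq k \leq e$ (with $\mathcal{Z}_0 = \mathcal{Y}$), this yields for every $(s_1, \ldots, s_e) \in H_n$ the identity
\[
n^e (\overline{\mathcal{L}}_1|_{\mathcal{Y}} \cdots \overline{\mathcal{L}}_e|_{\mathcal{Y}}) = \sum_{p\in|\mathrm{Spec}(\mathbb{Z})|} i_p(\mathcal{Z}_e) \log p + \sum_{k=1}^e n^{e-k} \int_{\mathcal{Z}_{k-1}(\mathbb{C})} \bigl(-\log |s_k|_{\overline{\mathcal{L}}_k^{\otimes n}}\bigr) \bigwedge_{j>k} c_1(\overline{\mathcal{L}}_j),
\]
since $\mathcal{Z}_e$ is zero-dimensional by the defining condition of $H_n$ and therefore entirely supported on finite fibers. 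Because $\|s_k\|_{\sup} \leq 1$ forces $-\log|s_k|_{\overline{\mathcal{L}}_k^{\otimes n}} \geq 0$, and the Chern forms $c_1(\overline{\mathcal{L}}_j)$ are positive by arithmetic ampleness, every archimedean summand above is non-negative; dividing by $n^e$ and maximising over $H_n$ yields $\frac{1}{n^e}\max_{H_n}\sum_p i_p \log p \leq (\overline{\mathcal{L}}_1|_{\mathcal{Y}} \cdots \overline{\mathcal{L}}_e|_{\mathcal{Y}})$.

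For the reverse inequality, I would prove that the average over all tuples in $\prod_i \widehat{H}^0(\mathcal{X}, \overline{\mathcal{L}}_i^{\otimes n})$ of the normalised archimedean contribution tends to zero as $n \to \infty$. The uniform probability measures $\sigma_i^{(n)}$ on the sup-norm unit balls in $H^0(\mathcal{X}, \overline{\mathcal{L}}_i^{\otimes n})_{\mathbb{R}}$ are of type $\mathbf{L}_{\mathbb{R}}$, and by Proposition \ref{pro_conditionB} they satisfy Condition (B) with $C_n^{(i)}/n \to 0$. A Fubini computation on the $k$-th summand exchanges the expectation over $s_k$ with the integration over $\mathcal{Z}_{k-1}(\mathbb{C})$: using Equation (\ref{equ_norm-inner-product}) together with the pointwise bound $|\log P_n(x)| \leq O(\log n)$ established in the proof of Lemma \ref{lem_integral-zero}, one obtains $\mathbb{E}_{s_k}[-\log|s_k(x)|_{\overline{\mathcal{L}}_k^{\otimes n}}] \leq C_n^{(k)} + O(\log n)$ uniformly in $x$. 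The integral $\int_{\mathcal{Z}_{k-1}(\mathbb{C})} \bigwedge_{j>k} c_1(\overline{\mathcal{L}}_j)$ equals $n^{k-1}$ times a fixed geometric intersection number on $\mathcal{Y}_{\mathbb{C}}$ (independent of the sections), so the expectation of the $k$-th normalised summand is $O((C_n^{(k)} + \log n)/n) = o(1)$. Summing over $k$ shows that the expected normalised archimedean contribution tends to zero, so a positive-density set of tuples achieves archimedean contribution below any prescribed $\epsilon>0$.

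To finish I would verify that the proper-intersection set $H_n$ has asymptotic density $1$ in $\prod_i \widehat{H}^0(\mathcal{X}, \overline{\mathcal{L}}_i^{\otimes n})$: for generic $(s_1, \ldots, s_{k-1})$, the section $s_k$ failing to make $\mathrm{div}(s_k) \cap \mathcal{Z}_{k-1}$ of the correct codimension lies in a proper Zariski-closed linear subset of $H^0(\mathcal{X}, \overline{\mathcal{L}}_k^{\otimes n})_{\mathbb{C}}$, whose lattice-point density tends to $0$ by the successive-minima estimates of Section \ref{sec_lattice} combined with (\ref{equ_supl2}); a union bound gives $|H_n|/\prod_i|\widehat{H}^0(\mathcal{X}, \overline{\mathcal{L}}_i^{\otimes n})| \to 1$. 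Intersecting with the positive-density set of tuples from the previous paragraph, there exist tuples in $H_n$ whose normalised archimedean contribution is below $\epsilon$ for $n$ large, giving $\frac{1}{n^e}\max_{H_n}\sum_p i_p \log p \geq (\overline{\mathcal{L}}_1|_{\mathcal{Y}} \cdots \overline{\mathcal{L}}_e|_{\mathcal{Y}}) - \epsilon$. The main obstacle will be a clean execution of the Fubini exchange for the mixed top form $\bigwedge_{j>k} c_1(\overline{\mathcal{L}}_j)$ restricted to the varying cycle $\mathcal{Z}_{k-1}(\mathbb{C})$, together with the density count for $H_n$; an inductive alternative via Proposition \ref{pro_positive-density} on $\mathcal{Y}$ and then on $\mathcal{Z}_1 = \mathcal{Y} \cdot \mathrm{div}(s_1)$ faces the complication that $\mathcal{Z}_1$ varies with $n$ and the produced sections must be lifted to $\mathcal{X}$ while preserving the sup-norm bound, whereas the averaging approach sidesteps this by remaining on $\mathcal{X}$ throughout.
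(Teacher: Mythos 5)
Your first inequality is exactly the paper's: iterating the definition (\ref{equ_intersection-definition}) and discarding the non-negative archimedean terms (non-negative because $\|s_k\|_{\sup}\le 1$ and the Chern forms are positive) gives inequality (\ref{equ_sup-upper-bound}) verbatim. For the reverse inequality you genuinely diverge: the paper argues by induction on $e$, fixing one auxiliary small section $s'_{e,n_4}$ once and for all so that $\mathrm{div}(s'_{e,n_4})\cdot\mathcal{Y}$ becomes a \emph{fixed} generically smooth arithmetic subvariety to which the induction hypothesis (and hence Proposition \ref{pro_positive-density}) applies, using Charles' arithmetic Bertini theorem and his lifting result \cite[Theorem 2.17]{Cha21} together with a small perturbation $\overline{\mathcal{L}}_e(-\epsilon/(4c))$ of the metric to get liftable small sections. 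Your averaging approach, if it worked, would be cleaner in that it avoids the lifting and the metric perturbation.

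However, there is a genuine gap at the continuous-to-lattice transfer. Your Fubini computation controls the expectation of the archimedean contribution with respect to the \emph{continuous} product measures $\sigma_i^{(n)}$ on the sup-norm balls in $H^0(\mathcal{X},\overline{\mathcal{L}}_i^{\otimes n})_{\mathbb{R}}$; Markov then gives a set of continuous tuples of measure tending to $1$ with small contribution. But $H_n$ consists of \emph{lattice} tuples, and you cannot simply ``intersect'' a positive-measure set of continuous tuples with $H_n$. In the paper this passage is the hardest step: one decomposes the ball into cells $Q_{n,x}$ via (\ref{equ_cell-decomposition}), counts cells using the successive-minima estimates (Lemma \ref{lem_ratio-of-latticepoints}), and — crucially — invokes Proposition \ref{pro_integral} to transfer the smallness of $\frac{1}{n}\int_Y|\log|s||\,\omega^{d_Y}$ from a point of a cell to its lattice corner. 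Proposition \ref{pro_integral} is stated and proved for a \emph{fixed} subvariety $Y$ (its proof runs an induction on $\dim Y$ with auxiliary hypersurfaces chosen via Lemma \ref{lem_divisor} depending on the whole sequence), whereas in your scheme the integration domain $\mathcal{Z}_{k-1}(\mathbb{C})$ varies with $n$ and with the other sections $s_1,\dots,s_{k-1}$. So the averaging approach does not in fact ``sidestep'' the varying-cycle problem — it merely postpones it to the perturbation step, where no tool in the paper applies. (Your density count for $H_n$ also needs care, since the number of components of $\mathcal{Z}_{k-1}$ grows like $n^{k-1}$, though the exponential smallness of $\lambda_{\mathbb{Z}}$ from (\ref{equ_successive-bound}) should absorb this.) To repair the argument you would either need a uniform version of Proposition \ref{pro_integral} over families of cycles, or you should fall back on the paper's inductive structure, which exists precisely to keep the subvariety fixed at each stage.
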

\begin{proof}
	For any effective cycle $\mathcal{Z}$ of $\mathcal{X}$ of pure dimension $e$ we set
	$$H_n(\mathcal{Z})=\left\{\left.(s_1,\dots,s_e)\in\prod_{i=1}^e \widehat{H}^0\left(\mathcal{X},\overline{\mathcal{L}}_i^{\otimes n}\right)~\right|~\dim\left(\mathcal{Z}\cap\bigcap_{i=1}^e \mathrm{Supp}(\mathrm{div}(s_i))\right)=0\right\}.$$
	By induction on $e$ we will show the inequality
	\begin{align}\label{equ_sup-upper-bound}
		(\overline{\mathcal{L}}_1\cdots\overline{\mathcal{L}}_e\cdot(\mathcal{Z},0))\ge\sup_{\gf{n\in\mathbb{Z}_{\ge 1}}{(s_1,\dots,s_e)\in H_n(\mathcal{Z})}}\frac{1}{n^e}\sum_{p\in|\mathrm{Spec}(\mathbb{Z})|}i_p(\mathcal{Z}\cdot\mathrm{div}(s_1)\cdots\mathrm{div}(s_e))\log p
	\end{align}
	for any effective cycle $\mathcal{Z}$ of $\mathcal{X}$ of pure dimension $e$ and the inequality 
	 \begin{align}\label{equ_limit-lower-bound}
	 	&(\overline{\mathcal{L}}_1\cdots\overline{\mathcal{L}}_e\cdot(\mathcal{Y},0))\\
	 	&\le\lim_{n\to\infty}\frac{1}{n^e}\max_{(s_1,\dots,s_e)\in H_n(\mathcal{Y})}\sum_{p\in|\mathrm{Spec}(\mathbb{Z})|}i_p(\mathcal{Y}\cdot\mathrm{div}(s_1)\cdots\mathrm{div}(s_e))\log p\nonumber
	 \end{align}
 	for any generically smooth arithmetic subvariety $\mathcal{Y}\subseteq \mathcal{X}$ of dimension $e$.
 	
 	First, we consider the case $e=1$. If $s_1\in H_n(\mathcal{Z})$, we get by the definition of the arithmetic degree (\ref{equ_degree-definition}) and of the arithmetic intersection product (\ref{equ_intersection-definition}) that
 	\begin{align}\label{equ_case-dim-1}
 	n(\overline{\mathcal{L}}_1\cdot(\mathcal{Z},0))=(\overline{\mathcal{L}}^{\otimes n}_1\cdot(\mathcal{Z},0))=\sum_{p\in|\mathrm{Spec}(\mathbb{Z})|}i_p(\mathcal{Z}\cdot\mathrm{div}(s_1))\log p-\int_{\mathcal{Z}(\mathbb{C})}\log|s_1|_{\overline{\mathcal{L}}_1^{\otimes n}},
 	\end{align}
 	see also \cite[Proposition 5.23 (3)]{Mor14}.
 	Note that the integral is always non-positive as $\|s_1\|_{\sup}\le 1$. Thus, we get inequality (\ref{equ_sup-upper-bound}) in the case $e=1$. To show inequality (\ref{equ_limit-lower-bound}) we recall from Proposition \ref{pro_positive-density} applied to $R_n=\widehat{H}^0(\mathcal{X},\overline{\mathcal{L}}_1^{\otimes n})$ that there is a sequence $(s_{1,n})_{n\in \mathbb{Z}_{\ge1}}$ of sections $s_{1,n}\in \widehat{H}^0(\mathcal{X},\overline{\mathcal{L}}_1^{\otimes n})$ such that
 	$$\lim_{n\to \infty}\frac{1}{n}\int_{\mathcal{Y}(\mathbb{C})}\log |s_{1,n}|_{\overline{\mathcal{L}}_1^{\otimes n}}=0.$$
 	Note that for sufficiently large $n$ we automatically get $\dim\mathcal{Y}\cap \mathrm{Supp}(\mathrm{div}(s_{1,n}))=0$ and hence, $s_{1,n}\in H_n(\mathcal{Y})$. Applying this to Equation (\ref{equ_case-dim-1}) with $\mathcal{Z}=\mathcal{Y}$ we get
 	$$(\overline{\mathcal{L}}_1\cdot(\mathcal{Y},0))=\lim_{n\to \infty} \frac{1}{n}\sum_{p\in|\mathrm{Spec}(\mathbb{Z})|}i_p(\mathcal{Y}\cdot\mathrm{div}(s_{1,n}))\log p.$$
 	Thus, inequality (\ref{equ_limit-lower-bound}) follows for the case $e=1$.
 	
 	Now we assume $e\ge 2$. Let $(s_1,\dots, s_e)\in H_n(\mathcal{Z})$. Recall that by the definition of the arithmetic intersection product (\ref{equ_intersection-definition}) we have
 	$$\overline{\mathcal{L}}_e^{\otimes n}\cdot (\mathcal{Z},0)=(\mathrm{div}(s_e)\cdot \mathcal{Z}, [-\log|s_e|^2]_{\mathcal{Z}(\mathbb{C})}).$$
 	Thus, we can compute
 	\begin{align*}
 		&(\overline{\mathcal{L}}_1\cdots\overline{\mathcal{L}}_e\cdot(\mathcal{Z},0))=\frac{1}{n}(\overline{\mathcal{L}}_1\cdots\overline{\mathcal{L}}_{e-1}\cdot(\mathrm{div}(s_e)\cdot \mathcal{Z}, [-\log|s_e|^2]_{\mathcal{Z}(\mathbb{C})}))\\
 		&=\frac{1}{n}(\overline{\mathcal{L}}_1\cdots\overline{\mathcal{L}}_{e-1}\cdot(\mathrm{div}(s_e)\cdot \mathcal{Z}, 0))-\frac{1}{n}\int_{\mathcal{Z}(\mathbb{C})}\log|s_e|_{\overline{\mathcal{L}}_e^{\otimes n}}c_1(\overline{\mathcal{L}}_1)\cdots c_1(\overline{\mathcal{L}}_{e-1})\\
 		&\ge \frac{1}{n^e}\sum_{p\in|\mathrm{Spec}(\mathbb{Z})|}i_p(\mathcal{Z}\cdot\mathrm{div}(s_1)\cdots\mathrm{div}(s_e))\log p.
 	\end{align*}
 	For the last inequality we applied the induction hypothesis and that $\|s\|_{\sup}\le 1$. Thus, we get inequality (\ref{equ_sup-upper-bound}) also in dimension $e$.
 	
 	Next, we show inequality (\ref{equ_limit-lower-bound}) in dimension $e\ge 2$. It is enough to show that for all sufficiently small $\epsilon>0$ there exists an $n_0\in\mathbb{Z}_{\ge 1}$ such that for all $n\ge n_0$ there is a tuple $(s_1,\dots, s_e)\in H_n(\mathcal{Y})$ such that 
 	$$(\overline{\mathcal{L}}_1\cdots\overline{\mathcal{L}}_e\cdot(\mathcal{Y},0))\le \frac{1}{n^e}\sum_{p\in|\mathrm{Spec}(\mathbb{Z})|} i_p(\mathcal{Y}\cdot\mathrm{div}(s_1)\cdots \mathrm{div}(s_e))\log p+\epsilon.$$
 	To prove this we will apply the vanishing result in Proposition \ref{pro_positive-density}, the arithmetic Bertini theorem and the surjectivity result on the restriction map for small sections by Charles \cite{Cha21}, the arithmetic Hilbert--Samuel formula and the induction hypothesis.
 	
 	We set $c=\deg(\mathcal{L}_{1,\mathbb{C}}\cdots \mathcal{L}_{e-1,\mathbb{C}}\cdot \mathcal{Y}_{\mathbb{C}})$.
 	Replacing $\epsilon$ by a smaller value we can assume that $\overline{\mathcal{L}}_e(-\epsilon/(4c))$ is still arithmetically ample. 
 	If we denote the subset $T_n\subseteq \widehat{H}^0(\mathcal{Y},\overline{\mathcal{L}}_e(-\epsilon/(4c))|_{\mathcal{Y}}^{\otimes n})$ of small sections $s\in \widehat{H}^0(\mathcal{Y},\overline{\mathcal{L}}_e(-\epsilon/(4c))|_{\mathcal{Y}}^{\otimes n})$ such that $\mathrm{div}(s)$ is irreducible and $\mathrm{div}(s)_{\mathbb{Q}}$ is smooth, then we get by Charles \cite[Theorems 1.6 and 1.7]{Cha21} that
 	$$\lim_{n\to \infty}\frac{\#T_n}{\#\widehat{H}^0(\mathcal{Y},\overline{\mathcal{L}}_e(-\epsilon/(4c))|_{\mathcal{Y}}^{\otimes n})}=1.$$
 	Hence, by Proposition \ref{pro_positive-density} we can choose a sequence $(\widetilde{s}'_{e,n})_{n\in\mathbb{Z}_{\ge 1}}$ of sections
    $\widetilde{s}'_{e,n}\in T_n$ satisfying
 	$$\lim_{n\to \infty}\frac{1}{n}\int_{\mathcal{Y}(\mathbb{C})}\log|\widetilde{s}'_{e,n}|_{\overline{\mathcal{L}}_e(-\epsilon/(4c))^{\otimes n}}c_1(\overline{\mathcal{L}}_e)^{e-1}=0.$$
 	Note that the integrand is always non-positive. Thus, the equation stays true after replacing $c_1(\overline{\mathcal{L}}_e)^{e-1}$ by another positive form. Hence, we get
 	$$\lim_{n\to \infty}\frac{1}{n}\int_{\mathcal{Y}(\mathbb{C})}\log|\widetilde{s}'_{e,n}|_{\overline{\mathcal{L}}_e(-\epsilon/(4c))^{\otimes n}}c_1(\overline{\mathcal{L}}_1)\cdots c_1(\overline{\mathcal{L}}_{e-1})=0.$$
 	By Equation (\ref{equ_metric-change}) this is equivalent to
 	\begin{align*}
 	&\lim_{n\to \infty}\frac{1}{n}\int_{\mathcal{Y}(\mathbb{C})}\log|\widetilde{s}'_{e,n}|_{\overline{\mathcal{L}}_e^{\otimes n}}c_1(\overline{\mathcal{L}}_1)\cdots c_1(\overline{\mathcal{L}}_{e-1})\\
 	&=-\epsilon/(4c)\cdot\deg(\mathcal{L}_{1,\mathbb{C}}\cdots \mathcal{L}_{e-1,\mathbb{C}}\cdot \mathcal{Y}_{\mathbb{C}})=-\epsilon/4.
 	\end{align*}
 	Thus, there is an $n_1\in\mathbb{Z}_{\ge 1}$ such that for all $n\ge n_1$ we have
 	$$\lim_{n\to \infty}\frac{1}{n}\int_{\mathcal{Y}(\mathbb{C})}\log|\widetilde{s}'_{e,n}|_{\overline{\mathcal{L}}_e^{\otimes n}}c_1(\overline{\mathcal{L}}_e)^{e-1}>-2\epsilon/4.$$
 	It has been worked out by Charles \cite[Theorem 2.17]{Cha21} that there is an $n_2\ge n_1$ such that for all $n\ge n_2$ the image of the restriction map
 	$$\widehat{H}^0(\mathcal{X},\overline{\mathcal{L}}_e^{\otimes n})\to H^0(\mathcal{Y},\mathcal{L}_e|_{\mathcal{Y}}^{\otimes n})$$ 
 	completely contains the subset $\widehat{H}^0(\mathcal{X},\overline{\mathcal{L}}_e(-\epsilon/(4c))|_{\mathcal{Y}}^{\otimes n})\subseteq H^0(\mathcal{Y},\mathcal{L}_e|_{\mathcal{Y}}^{\otimes n})$.
 	Hence we can choose sections $s'_{e,n}\in \widehat{H}^0(\mathcal{X},\overline{\mathcal{L}}_e^{\otimes n})$ with $s'_{e,n}|_{\mathcal{Y}}=\widetilde{s}'_{e,n}$ for all $n\ge n_2$.
 	
 	Since $\mathcal{L}_{e}$ is ample, there is an $n_3\ge n_2$ such that $\mathcal{L}_{e}^{\otimes n}$ is basepoint-free for all $n\ge n_3$.
 	By the arithmetic Hilbert--Samuel formula for arithmetically ample hermitian line bundles \cite[Proposition 5.41]{Mor14} there exists an $n_4\ge n_3$ such that $\widehat{H^0}(\mathcal{X},\overline{\mathcal{L}}_{e}^{\otimes n})$ contains a basis of $H^0(\mathcal{X},\mathcal{L}_e^{\otimes n})$ for all $n\ge n_4$.
 	
 	For any integer $n\ge n_4$ we write $n=n'n_4+n''$ with $n_4\le n''< 2n_4$.
 	By the definition of the arithmetic intersection product (\ref{equ_intersection-definition}) we have for all $n\ge n_4$
 	\begin{align*}
 	\overline{\mathcal{L}}_e^{\otimes n}\cdot(\mathcal{Y},0)=n'\left(\mathrm{div}(s'_{e,n_4})\cdot\mathcal{Y},\left[-\log|s'_{e,n_4}|^2_{\overline{\mathcal{L}}_e^{\otimes n_4}}\right]_{\mathcal{Y}(\mathbb{C})}\right)+\overline{\mathcal{L}}_e^{\otimes n''}\cdot(\mathcal{Y},0).
 	\end{align*}
 	There is an $n_5\ge n_4$ such that
 	$$\frac{2n_4}{n}(\overline{\mathcal{L}}_1\cdots\overline{\mathcal{L}}_e\cdot(\mathcal{Y},0))<\epsilon/4.$$
 	for all $n\ge n_5$.
	Thus, we can compute for $n\ge n_5$
	\begin{align}\label{equ_bound-by-section}
		&(\overline{\mathcal{L}}_1\cdots\overline{\mathcal{L}}_e\cdot(\mathcal{Y},0))\\
		&\le\frac{n'}{n}\left(\overline{\mathcal{L}}_1\cdots\overline{\mathcal{L}}_{e-1}\cdot\left(\mathrm{div}(s'_{e,n_4})\cdot\mathcal{Y},\left[-\log|s'_{e,n_4}|^2_{\overline{\mathcal{L}}_e^{\otimes n_4}}\right]_{\mathcal{Y}(\mathbb{C})}\right)\right)+\epsilon/4\nonumber\\
		&=\frac{n'}{n}\left(\overline{\mathcal{L}}_1\cdots\overline{\mathcal{L}}_{e-1}\cdot\left(\mathrm{div}(s'_{e,n_4})\cdot\mathcal{Y},0\right)\right)\nonumber\\
		&\quad-\frac{n'}{n}\int_{\mathcal{Y}(\mathbb{C})}\log|s'_{e,n_4}|_{\overline{\mathcal{L}}_e^{\otimes n_4}}c_1(\overline{\mathcal{L}}_1)\cdots c_1(\overline{\mathcal{L}}_{e-1})+\epsilon/4\nonumber\\
		&\le \frac{n'}{n}\left(\overline{\mathcal{L}}_1\cdots\overline{\mathcal{L}}_{e-1}\cdot\left(\mathrm{div}(s'_{e,n_4})\cdot\mathcal{Y},0\right)\right)+3\epsilon/4.\nonumber
	\end{align}	
	By construction $\mathrm{div}(s'_{e,n_4})\cdot \mathcal{Y}=\mathrm{div}(\widetilde{s}'_{e,n_4})$ is generically smooth and irreducible. Hence, by the induction hypothesis there is an $n_6\ge n_5$ such that for every $n\ge n_6$ there is a tuple $(s_{1,n},\dots,s_{e-1,n})\in H_n(\mathcal{Y}\cdot \mathrm{div}(s'_{e,n_4}))$ such that
	\begin{align}\label{equ_induction-hypothesis}
	&\left(\overline{\mathcal{L}}_1\cdots\overline{\mathcal{L}}_{e-1}\cdot\left(\mathrm{div}(s'_{e,n_4})\cdot\mathcal{Y},0\right)\right)\\
	&\le \frac{1}{n^{e-1}}\sum_{p\in|\mathrm{Spec}(\mathbb{Z})|} i_p(\mathcal{Y}\cdot\mathrm{div}(s'_{e,n_4})\cdot\mathrm{div}(s_{1,n})\cdots\mathrm{div}(s_{e-1,n}))\log p+\epsilon/4.\nonumber
	\end{align}
	Since $\overline{\mathcal{L}}_e^{\otimes n}$ is basepoint-free and $\widehat{H}^0(\mathcal{X},\overline{\mathcal{L}}_e^{\otimes n})$ contains a basis of $H^0(\mathcal{X},\mathcal{L}_e^{\otimes n})$ for $n_4\le n<2n_4$, we can choose for all $n\ge n_6$ sections $t_n\in \widehat{H}^0(\mathcal{X},\overline{\mathcal{L}}_e^{\otimes n''})$
	such that
	$$\dim \left( \mathcal{Y}\cap \mathrm{Supp}(\mathrm{div}(t_n))\cap\bigcap_{j=1}^{e-1}\mathrm{Supp}(\mathrm{div}(s_{j,n}))\right)=0.$$
	Now we set
	$$s_{e,n}={s'}_{e,n_4}^{\otimes n'}\otimes t_n\in\widehat{H}^0(\mathcal{X},\overline{\mathcal{L}}_e^{\otimes n}).$$
	Then we get a tuple $(s_{1,n},\dots,s_{e,n})\in H_n(\mathcal{Y})$ for every $n\ge n_6$. By linearity and effectivity we always have
	\begin{align}\label{equ_ip-bound}
	n'\cdot i_p(\mathcal{Y}\cdot \mathrm{div}(s'_{e,n_4})\cdot\mathrm{div}(s_{1,n})\cdots \mathrm{div}(s_{e-1,n}))\le i_p(\mathcal{Y}\cdot\mathrm{div}(s_{1,n})\cdots \mathrm{div}(s_{e,n})).
	\end{align}
	Putting (\ref{equ_bound-by-section}), (\ref{equ_induction-hypothesis}) and (\ref{equ_ip-bound}) together we finally get
	$$(\overline{\mathcal{L}}_1\cdots\overline{\mathcal{L}}_e\cdot(\mathcal{Y},0))\le \frac{1}{n^{e}}\sum_{p\in|\mathrm{Spec}(\mathbb{Z})|}i_p(\mathcal{Y}\cdot \cdot\mathrm{div}(s_{1,n})\cdots \mathrm{div}(s_{e,n}))\log p+ \epsilon.$$
	This proves inequality (\ref{equ_limit-lower-bound}) in dimension $e\ge 2$.

	The formula in the theorem now follows by combining inequalities (\ref{equ_sup-upper-bound}) and (\ref{equ_limit-lower-bound}) with $\mathcal{Z}=\mathcal{Y}$ and using that by Lemma \ref{lem_intersection-Z0} we have
	$$(\overline{\mathcal{L}}_1\cdots\overline{\mathcal{L}}_e\cdot(\mathcal{Y},0))=(\overline{\mathcal{L}}_1|_\mathcal{Y}\cdots\overline{\mathcal{L}}_e|_\mathcal{Y})$$
	for any arithmetic subvariety $\mathcal{Y}\subseteq \mathcal{X}$.
\end{proof}

\section*{Acknowledgments}
I would like to thank Philipp Habegger and Harry Schmidt for useful discussions related to this paper. Further, I gratefully acknowledge support from the Swiss National Science Foundation grant ``Diophantine Equations: Special Points, Integrality, and Beyond'' (n$^\circ$ 200020\_184623).

\end{document}